\renewcommand{\citepunct}{;\penalty\citemidpenalty\ }
\let\counterwithout\relax
\let\counterwithin\relax
\newtheorem{theorem}[subsubsection]{Theorem}
\newtheorem{proposition}[subsubsection]{Proposition}
\newtheorem{corollary}[subsubsection]{Corollary}
\newtheorem{lemma}[subsubsection]{Lemma}
\theoremstyle{definition}
\newtheorem{definition}[subsubsection]{Definition}
\newtheorem{remark}[subsubsection]{Remark}
\newtheorem{example}[subsubsection]{Example}
\newtheorem{question}[subsubsection]{Question}
\newtheoremstyle{remlem}{.5\baselineskip\@plus.2\baselineskip\@minus.2\baselineskip}{.5\baselineskip\@plus.2\baselineskip\@minus.2\baselineskip}{\itshape}{}{\itshape}{\itshape .}{5pt plus 1pt minus 1pt}{\thmname{#1}\normalfont\thmnumber{ #2}\thmnote{ (#3)}}
\theoremstyle{remlem}
\newtheorem{remlem}{Lemma}[subsubsection]
\newtheoremstyle{cited}{.5\baselineskip\@plus.2\baselineskip\@minus.2\baselineskip}{.5\baselineskip\@plus.2\baselineskip\@minus.2\baselineskip}{\itshape}{}{\bfseries}{\bfseries .}{5pt plus 1pt minus 1pt}{\thmname{#1}\thmnumber{ #2}\thmnote{ \normalfont#3}}
\theoremstyle{cited}
\newtheorem{citedthm}[subsubsection]{Theorem}
\newtheorem{citedlem}[subsubsection]{Lemma}
\newtheorem{citedprop}[subsubsection]{Proposition}
\newtheoremstyle{citeddef}{.5\baselineskip\@plus.2\baselineskip\@minus.2\baselineskip}{.5\baselineskip\@plus.2\baselineskip\@minus.2\baselineskip}{}{}{\bfseries}{\bfseries .}{5pt plus 1pt minus 1pt}{\thmname{#1}\thmnumber{ #2}\thmnote{ \normalfont#3}}
\theoremstyle{citeddef}
\newtheorem{citeddef}[subsubsection]{Definition}
\newtheorem{citedconstr}[subsubsection]{Construction}
\numberwithin{equation}{subsubsection}
\DeclareMathOperator{\Hom}{Hom}
\DeclareMathOperator{\grHom}{\prescript{*}{}{Hom}}
\DeclareMathOperator{\Frac}{Frac}
\DeclareMathOperator{\coker}{coker}
\DeclareMathOperator{\Spec}{Spec}
\DeclareMathOperator*{\colim}{colim}
\DeclareMathOperator{\im}{im}
\DeclareMathOperator{\Ann}{Ann}
\newcommand{\ev}{\mathrm{eval}}
\newcommand{\fg}{\mathrm{fg}}
\newcommand{\gr}{\mathrm{gr}}
\newcommand{\GR}{\mathrm{GR}}
\newcommand{\red}{\mathrm{red}}
\newcommand{\NN}{\mathbf{N}}
\newcommand{\QQ}{\mathbf{Q}}
\newcommand{\RR}{\mathbf{R}}
\newcommand{\ZZ}{\mathbf{Z}}
\newcommand{\FF}{\mathbf{F}}
\newcommand{\fm}{\mathfrak{m}}
\newcommand{\fn}{\mathfrak{n}}
\newcommand{\fp}{\mathfrak{p}}
\newcommand{\fq}{\mathfrak{q}}
\newcommand{\id}{\mathrm{id}}
\newcommand{\perf}{\mathrm{perf}}
\newcommand{\Tr}{\mathrm{Tr}}
\DeclarePairedDelimiterX\abs[1]\lvert\rvert{\ifblank{#1}{\:\cdot\:}{#1}}
\DeclarePairedDelimiterX\norm[1]\lVert\rVert{\ifblank{#1}{\:\cdot\:}{#1}}
\providecommand\given{}
\newcommand\SetSymbol[1][]{\nonscript : \allowbreak \nonscript \mathopen{}}
\DeclarePairedDelimiterX\Set[1]\{\}{\renewcommand\given{\SetSymbol[\delimsize]} #1}
\newcommand{\hooklongrightarrow}{\lhook\joinrel\longrightarrow}
\renewcommand{\twoheadrightarrow}{\mathrel{\text{\two@rightarrow}}}
\newcommand{\longtwoheadrightarrow}{\mathrel{\text{\longtwo@rightarrow}}}
\newcommand{\two@rightarrow}{\sbox0{$\m@th\rightarrow$}\smash{\rlap{\kern0.1\wd0 \clipbox{{.3\width} {-\height} 0pt {-\height}}{$\m@th\rightarrow$}}}$\m@th\rightarrow$}
\newcommand{\longtwo@rightarrow}{\sbox0{$\m@th\longrightarrow$}\smash{\rlap{\kern0.175\wd0 \clipbox{{.3\width} {-\height} 0pt {-\height}}{$\m@th\longrightarrow$}}}$\m@th\longrightarrow$}
\def\l@subsection{\@tocline{2}{0pt}{2pc}{6pc}{}}
\let\@wraptoccontribs\wraptoccontribs
\begin{document}
\title{Excellence, \emph{F}-singularities, and solidity}

\subjclass[2010]{Primary 13F40; Secondary 13B22, 13A35, 14G22, 13H10}
\keywords{Excellent ring, Japanese ring, solid algebra, $F$-singularities, $F$-solidity, big 
Cohen--Macaulay algebra, Tate algebra, absolute integral closure, perfect closure, test ideal}

\author{Rankeya Datta}
\address[R.~Datta]{Department of Mathematics, Statistics and Computer Science\\University
of Illinois at Chicago\\Chicago, IL 60607-7045\\USA}
\email{\href{mailto:rankeya@uic.edu}{rankeya@uic.edu}}
\urladdr{\url{https://rankeya.people.uic.edu/}}

\thanks{The first author was supported 
by an AMS-Simons Travel Grant, administered by the American Mathematical Society 
with support from the Simons Foundation}

\author{Takumi Murayama}
\address[T.~Murayama]{Department of Mathematics\\Princeton University\\Princeton, NJ
08544-1000\\USA}
\email{\href{mailto:takumim@math.princeton.edu}{takumim@math.princeton.edu}}
\urladdr{\url{https://web.math.princeton.edu/~takumim/}}

\thanks{The second author was supported by the National Science
Foundation under Grant No.\ DMS-1902616.}

\contrib[With an appendix by]{Karen E. Smith}
\address[K.~E.~Smith]{Department of Mathematics\\University of Michigan\\Ann Arbor, 
MI 48109-1043\\USA}
\email{\href{mailto:kesmith@umich.edu}{kesmith@umich.edu}}
\urladdr{\url{http://www.math.lsa.umich.edu/~kesmith/}}

\thanks{The third author was supported by the National Science Foundation
under Grant No.\ DMS-1801697.}

\makeatletter
\hypersetup{
  pdfauthor={Rankeya Datta and Takumi Murayama, with an appendix by Karen E.
  Smith},
  pdfsubject=\@subjclass,
  pdfkeywords={Excellent ring, Japanese ring, solid algebra, F-singularities, F-solidity, big Cohen--Macaulay algebra, Tate algebra, absolute integral closure, perfect closure, test ideal}
}
\makeatother

\begin{abstract}
An $R$-algebra $S$ is $R$-solid if there
exists a nonzero $R$-linear map $S \rightarrow R$. In characteristic $p$, the study of 
$F$-singularities such as Frobenius splittings
implicitly rely on the $R$-solidity of $R^{1/p}$.
Following recent results of the first two authors on the Frobenius 
non-splitting of certain
excellent $F$-pure rings, in this paper we use the notion of solidity to systematically study the notion of
excellence, with an emphasis on $F$-singularities.
We show that for rings $R$ essentially
of finite type over complete local rings of characteristic $p$,
reducedness implies the $R$-solidity of $R^{1/p}$, $F$-purity implies
Frobenius splitting, and $F$-pure regularity implies split $F$-regularity.
We demonstrate that Henselizations and completions are not solid, providing
obstructions for
the $R$-solidity of $R^{1/p}$ for arbitrary
excellent rings.
This also has negative consequences 
for the solidity of big Cohen--Macaulay algebras,
an important example of which are absolute integral closures of excellent 
local rings in prime characteristic. 
We establish a close
relationship between the solidity of absolute integral closures and the notion of Japanese rings. 
Analyzing the Japanese property reveals that Dedekind domains $R$
for which $R^{1/p}$ is $R$-solid are excellent, despite our recent
examples of excellent Euclidean domains with no nonzero $p^{-1}$-linear
maps.
Additionally, we show that while perfect closures are often solid
in algebro-geometric situations, 
there exist
locally excellent domains with solid perfect closures 
whose absolute integral closures are not solid.
In an appendix, Karen E. Smith uses the solidity of absolute integral
closures to characterize the test ideal for a large class of Gorenstein domains of prime
characteristic.
\end{abstract}

\maketitle

\setcounter{tocdepth}{1}
{\hypersetup{hidelinks}\tableofcontents}

\section{Introduction}
\counterwithout{subsubsection}{subsection}
\counterwithin{subsubsection}{section}

\par In the 1950s, Nagata constructed various examples of Noetherian rings
that behave badly under taking completions and integral closures
 (see \cite[App.\ A1]{Nag75}).
In order to avoid such pathologies, Grothendieck and Dieudonn\'e introduced
excellent rings, which form a large class of rings for which deep theorems of
algebraic geometry such as 
resolutions of singularities are conjectured to hold \cite[D\'ef.\ 7.8.2 and
Rem.\ 7.9.6]{EGAIV2}.
In prime characteristic $p > 0$, Smith and the first author characterized
excellent domains $R$ in terms of
the existence of nonzero $R$-linear maps $F_*R \to R$ when
the fraction field $K$ of $R$ satisfies $[K:K^p] < \infty$, that is, when $K$ is
$F$-finite \cite{DS18}. 
On the other hand, the first two authors recently constructed examples of excellent regular
domains for which there exist no nonzero $R$-linear maps $F_*R \to R$ when one relaxes
$F$-finiteness of the fraction field \cite{DM}.
Here, $F_*R$ denotes the ring $R$ with the $R$-algebra structure given by
restricting scalars via the Frobenius ($p$-th power) map $F\colon R \to F_*R$.
We will refer to $R$-linear maps of the form $F_*R \to R$ as
\emph{$p^{-1}$-linear maps}.
\medskip
\par Our goal in this paper is to systematically study how the excellence of a
 ring $R$ interacts with the existence of nonzero $p^{-1}$-linear maps in prime
characteristic, and more generally, with the existence of nonzero $R$-linear maps
$S \to R$ for certain $R$-algebras $S$ in arbitrary characteristic. Hochster utilized such
 $R$-algebras to propose a closure operation called \emph{solid closure} as a characteristic independent
 substitute for tight closure \cite{Hoc94}. Following Hochster \cite[Def.\ 1.1]{Hoc94}, an $R$-algebra $S$ is
\emph{$R$-solid} if there exists a nonzero $R$-linear map $S \to R$.
For example, for a ring $R$ of prime characteristic $p > 0$, the existence of a
nonzero $p^{-1}$-linear map such as a Frobenius splitting is precisely the
assertion that $F_*R$ is a solid
$R$-algebra. Thus, the notion of solid algebras provides a convenient 
framework to investigate questions related to the non-triviality of the dual space of certain algebras.

 \bigskip

\par We focus first on the case of prime characteristic $p > 0$.
In this setting, the theory of $F$-singularities studies how the properties of
the Frobenius map $F\colon R \to F_*R$ are related to the singularities of a Noetherian ring $R$ of characteristic $p$.
This theory began with the work of Kunz \cite{Kun69, Kun76}, and was subsequently
developed in parallel in the theories of $F$-purity and tight closure in commutative algebra 
(see, for example, \cite{HR76,HH90})
and in the theory of Frobenius splittings in algebraic geometry 
(see, for example, \cite{MR85}).
One differentiating aspect between these approaches is that Hochster and Roberts
focused on the \emph{purity} of the Frobenius map, while Mehta and Ramanthan
focused on the \emph{splitting} of the Frobenius map.
Their approaches were known to coincide in the setting of Noetherian $F$-finite rings and for complete local rings, but this does not seem to have
been known in general, even for rings of finite type over an arbitrary field of 
positive characteristic.
\medskip

\par Our first main result says that for the purposes of classical algebraic
geometry, the approaches of Hochster--Roberts and of Mehta--Ramanathan coincide.

\begin{theorem}[see Theorem \ref{thm:splittingwithgamma}]
\label{thmintro:splittingwithgamma}
An $F$-pure (resp.\ strongly $F$-regular) ring essentially of finite type over a
complete local ring of prime characteristic $p >0$ is Frobenius split (resp.\ split $F$-regular). 
\end{theorem}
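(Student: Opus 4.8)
The plan is to reduce to the $F$-finite case via the Hochster--Huneke $\Gamma$-construction and then transport the resulting splittings back down along a module-theoretic section of that construction. Write $A$ for the complete local base, so that $R$ is a localization of a finitely generated $A$-algebra, and note that $F$-pure (resp.\ $F$-pure regular) rings are reduced.

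First I would run the $\Gamma$-construction. Fix a coefficient field and a $p$-basis $\Lambda$ of $k = A/\mathfrak{m}_A$, and for a cofinite subset $\Gamma \subseteq \Lambda$ let $A^\Gamma$ be the $\mathfrak{m}_A$-adic completion of the ring obtained from $A$ by adjoining all $p$-power roots of the coefficient-field lifts of the elements of $\Gamma$; put $R^\Gamma := R \otimes_A A^\Gamma$. I would use the standard facts: $A \to A^\Gamma$ is faithfully flat, $A^\Gamma$ is complete local with residue field admitting the finite $p$-basis $\Lambda \setminus \Gamma$, hence $A^\Gamma$ is $F$-finite and therefore so is $R^\Gamma$ (being essentially of finite type over $A^\Gamma$); and, from the theory of the $\Gamma$-construction, that for a suitable cofinite $\Gamma$ the ring $R^\Gamma$ inherits $F$-purity (resp.\ $F$-pure regularity) from $R$. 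Fix such a $\Gamma$. Since $R^\Gamma$ is $F$-finite, each $F^e_* R^\Gamma$ is a finite $R^\Gamma$-module, and a pure submodule of a finitely generated module over a Noetherian ring is a direct summand (a pure surjection onto a finitely presented module splits, and the quotient here is finitely presented). Consequently the purity of $R^\Gamma \to F_* R^\Gamma$ yields a Frobenius splitting $\psi\colon F_*R^\Gamma \to R^\Gamma$; and in the $F$-pure regular case, for each $c \in R^\Gamma$ outside the minimal primes there are an $e > 0$ and a $p^{-e}$-linear map $\psi_c\colon F^e_* R^\Gamma \to R^\Gamma$ with $\psi_c(F^e_* c) = 1$, so $R^\Gamma$ is split $F$-regular.

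The key point is that $A \to A^\Gamma$, and hence $R \to R^\Gamma$, admits a retraction of modules over the source ring. Adjoining to $A$ a single $p^e$-th root of a unit yields a \emph{free} $A$-module with a monomial basis containing $1$, so the same holds for the filtered colimit $A[\Gamma^{1/p^\infty}]$; the coordinate projection onto the $1$-component is $\mathfrak{m}_A$-adically continuous and therefore extends over the completion to an $A$-linear retraction $A^\Gamma \to A$ of the inclusion. Tensoring over $A$ with $R$ gives an $R$-linear retraction $\rho\colon R^\Gamma \to R$ of the structure map $\iota\colon R \to R^\Gamma$. Now the composite $F_*R \xrightarrow{F_*\iota} F_*R^\Gamma \xrightarrow{\psi} R^\Gamma \xrightarrow{\rho} R$ is $R$-linear---naturality of the Frobenius makes $F_*\iota$ an $R$-linear map---and sends $F_* 1 \mapsto F_* 1 \mapsto 1 \mapsto 1$, so $R$ is Frobenius split. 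Similarly, for $c \in R$ outside the minimal primes of $R$ the element $\iota(c)$ lies outside the minimal primes of $R^\Gamma$ (flatness gives going-down, so every minimal prime of $R^\Gamma$ contracts into one of $R$), so $F^e_* R \xrightarrow{F^e_*\iota} F^e_* R^\Gamma \xrightarrow{\psi_c} R^\Gamma \xrightarrow{\rho} R$ is a $p^{-e}$-linear map over $R$ carrying $F^e_* c$ to $1$; hence $R$ is split $F$-regular.

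I expect the main obstacle to be the one genuine input about the $\Gamma$-construction used above: that $R^\Gamma$ really does inherit $F$-purity (resp.\ $F$-pure regularity) for a suitable cofinite $\Gamma$. For $F$-pure regularity this is, in essence, Hochster--Huneke's analysis of the $\Gamma$-construction; for $F$-purity one needs a comparable understanding of how the Frobenius interacts with adjoining $p$-power roots and completing, and this is where the real work lies, everything afterward---the classical equivalence of $F$-purity and Frobenius splitting for $F$-finite rings, and the descent through the section $\rho$---being routine. It is worth noting that this route uses no finiteness or solidity hypothesis on $F_*R$ itself; the companion fact that reducedness of $R$ forces $F_*R$ to be $R$-solid fits the same template, pushing a nonzero $p^{-1}$-linear map over the $F$-finite ring $R^\Gamma$ down through $\rho$, with the extra point there of checking that the pushed-down map stays nonzero.
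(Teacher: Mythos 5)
Your proposal follows the paper's strategy in its essential outline: reduce to the $F$-finite case via the Hochster--Huneke $\Gamma$-construction, harvest a Frobenius splitting (resp.\ split $F$-regularity) of the $F$-finite ring $R^\Gamma$, and transport it down to $R$ through an $R$-linear retraction of $R\hookrightarrow R^\Gamma$; you also correctly observe, as the paper does, that going-down for faithfully flat maps keeps $\iota(c)$ out of the minimal primes. Where you diverge from the paper is in producing the retraction of the complete local base $A\hookrightarrow A^\Gamma$. The paper gets it in one line by combining purity of the faithfully flat map with Lemma \ref{lem:fed12} (the Auslander/Matlis-duality fact that a pure map out of a complete local ring splits). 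You instead build the retraction by hand: the $p^e$-th roots of the $p$-basis elements give $\kappa_e^\Gamma\otimes_\kappa A$ a free monomial $A$-basis containing $1$, the coordinate projection onto the $1$-component is $\fm_A$-adically degree $0$ and hence extends to each completion $\kappa_e^\Gamma\llbracket A\rrbracket$, and these extensions are compatible with the transition maps, so they pass to the colimit $A^\Gamma$. Both routes are valid, and yours is more elementary in that it avoids Matlis duality; but it is bound to the specific internal structure of the $\Gamma$-construction, whereas the paper's Lemma \ref{lem:fed12} is a general principle that the paper reuses elsewhere (for instance in the proof of Theorem \ref{thm:F-solid-eft}). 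A small caveat: your description of $A^\Gamma$ as ``the completion of the colimit'' agrees with the paper's ``colimit of the completions $\kappa_e^\Gamma\llbracket A\rrbracket$'' only because $A^\Gamma$ turns out to be already complete \cite[(6.11)]{HH94}, a fact you are implicitly using; the safer formulation is exactly the one you implement in the last sentence of that paragraph, namely extend the coordinate projection at each level $e$ and take the colimit of the extensions.

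The one genuine input you leave unproved --- that for a suitable cofinite $\Gamma$ the ring $R^\Gamma$ inherits $F$-purity, strong $F$-regularity, and reducedness from $R$ --- is, as you rightly flag, where the real content of the reduction lies. The paper supplies this via Lemma \ref{lem:gammafacts} and the citation to \cite[Thm.\ 3.4]{Mur}; without a reference or an argument for this, the proposal has a gap, though it is one the paper itself handles by citation rather than by new argument. With that reference in place your proof is complete and recovers the theorem, as well as the companion $F$-solidity statement you mention, by the same descent through $\rho$.
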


\noindent In particular, $F$-purity and Frobenius splitting coincide
for rings essentially of finite type over fields of positive characteristic that are not 
necessarily $F$-finite.
Here, the notion of strong $F$-regularity that we use is one defined in terms 
of tight
closure, following Hochster,
 while split
$F$-regularity is defined in terms of splittings of maps $R \to F^e_*R$ sending
$1$ to certain elements $c \in F^e_*R$. Both variants (see Definition \ref{def:freg}) coincide with the usual notion of strong $F$-regularity in the
 setting of Noetherian $F$-finite rings, and Theorem 
\ref {thmintro:splittingwithgamma} shows that the notions coincide for a large 
class of non-$F$-finite Noetherian rings as well.
\medskip

\par Despite the equivalence of the related notions of singularity proved in Theorem
\ref{thmintro:splittingwithgamma}, the first two authors recently constructed examples which exhibit that
$F$-purity is distinct
from Frobenius splitting for excellent rings in general \cite[Thm.\ A]{DM}.
Using these same examples arising from rigid analytic geometry, we show in the present paper that while 
$F$-purity behaves well under
completions for excellent rings (and more generally for faithfully flat maps
with sufficiently nice fibers), Frobenius splittings do not in general.

\begin{proposition}[see Propositions \ref{prop:ascent-regular-maps} and \ref{prop:faithfully-flat-descent}]
\label{prop:intro-permanence}
Let $\varphi\colon R \rightarrow S$ be a map of Noetherian rings of prime characteristic $p > 0$.
\begin{enumerate}[label=$(\roman*)$,ref=\roman*]
	\item\label{prop:blah} If $\varphi$ is flat and has geometrically regular fibers, then $F$-purity 
	ascends from $R$ to $S$.
	\item\label{prop:blahblah} If $\varphi$ is faithfully flat (more generally, is pure), then $F$-purity
	descends from $S$ to $R$.
\end{enumerate}
However, there exist examples of excellent regular Henselian local rings $R$ and $S$ for
which Frobenius splitting fails to ascend under regular maps, and fails to descend under 
faithfully  flat maps.
\end{proposition}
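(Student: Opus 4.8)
The plan is to prove the two positive statements formally and then extract the counterexamples from the examples of \cite{DM}. Throughout I will use that a Noetherian ring $A$ of characteristic $p$ is $F$-pure precisely when the Frobenius ring map $F_A\colon A \to F_*A$ is pure, together with the standard facts that purity of ring maps is stable under arbitrary base change, is closed under composition, and has the property that $f$ is pure whenever $g\circ f$ is. For part~$(\ref{prop:blah})$, base changing the pure map $F_R$ along $\varphi$ gives a pure map $S \to F_*R\otimes_R S$; since $\varphi$ is regular, the Radu--Andr\'e theorem (a regular homomorphism of Noetherian rings of characteristic $p$ has faithfully flat relative Frobenius) gives that $F_*R\otimes_R S \to F_*S$ is faithfully flat, hence pure; and the composite $S \to F_*R\otimes_R S \to F_*S$ is exactly $F_S$, so $F_S$ is pure and $S$ is $F$-pure. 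For part~$(\ref{prop:blahblah})$, the square with edges $F_R$, $\varphi$, $F_S$, and $F_*\varphi$ commutes, so $(F_*\varphi)\circ F_R = F_S\circ\varphi$; the latter composes the pure maps $\varphi$ and $F_S$ (the map $F_S$ being pure because $S$ is $F$-pure), hence is pure, so its left factor $F_R$ is pure and $R$ is $F$-pure.

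For the final assertion I would start from the excellent regular domains of \cite{DM}, which have no nonzero $p^{-1}$-linear map yet are $F$-pure, being regular. Localizing such an example at a maximal ideal and then Henselizing---and checking that neither step produces a nonzero $p^{-1}$-linear map---should yield an excellent regular Henselian local domain $S$ with $F_*S$ not $S$-solid, i.e.\ not Frobenius split. For the failure of descent, pass to the $\fm$-adic completion $\hat{S}$: this is a complete regular local ring, hence $F$-pure (regular rings of characteristic $p$ are $F$-pure, by Kunz), hence Frobenius split by the classical coincidence of $F$-purity and Frobenius splitting over complete local rings recalled in the introduction; as $S\hookrightarrow\hat{S}$ is faithfully flat with $\hat{S}$ Frobenius split and $S$ not, descent fails. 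For the failure of ascent, I would use that the examples of \cite{DM} are built from Tate algebras $K\langle x_1,\dots,x_d\rangle$, so that $S$ receives a regular morphism $R \to S$ in which $R$ is the Henselization of a suitable localization of the polynomial ring $K[x_1,\dots,x_d]$: the relevant localization of $K[x_1,\dots,x_d]$ maps to that of $K\langle x_1,\dots,x_d\rangle$ by a regular morphism---a local property of restricted power series rings---and a Henselization of a regular local homomorphism is again regular. This $R$ is Frobenius split, because any polynomial ring over a field $K$ of characteristic $p$ is Frobenius split---a $K^p$-basis of $K$ containing $1$ realizes $F_*K[x_1,\dots,x_d]$ as a free $K[x_1,\dots,x_d]$-module with $1$ among its basis elements, onto whose span one projects---and because Frobenius splitting passes to localizations and, since Henselization is a filtered colimit of \'etale morphisms along each of which the relative Frobenius is an isomorphism, to Henselizations. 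Thus $R \to S$ is a regular morphism of excellent regular Henselian local rings with $R$ Frobenius split and $S$ not.

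The two positive parts are essentially formal; the content lies in the final assertion. The input I rely on from \cite{DM} is the rigid-analytic construction separating $F$-purity from Frobenius splitting, and the step I expect to be the main obstacle is verifying that the absence of a nonzero $p^{-1}$-linear map survives localization and Henselization of those examples---equivalently, controlling how solidity of $F_*(-)$ behaves under these operations, presumably via the more general study of solidity, Henselizations, and completions carried out elsewhere in the paper. I note that, by part~$(\ref{prop:blah})$, $F$-purity \emph{does} ascend along the very regular morphism $R \to S$ along which Frobenius splitting fails---precisely the discrepancy between the Hochster--Roberts and Mehta--Ramanathan viewpoints outside the $F$-finite setting.
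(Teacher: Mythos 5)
Your proofs of parts $(\ref{prop:blah})$ and $(\ref{prop:blahblah})$ are exactly the paper's: part $(\ref{prop:blah})$ decomposes $F_S$ as the composition of a pure map (base change of $F_R$) with the faithfully flat, hence pure, relative Frobenius $F_{S/R}$, using Radu--Andr\'e; part $(\ref{prop:blahblah})$ notes $F_R$ is a left factor of the pure composite $F_S\circ\varphi$. The descent counterexample also matches the paper: $R = K_1(k)$, $S = \widehat{R} = k\llbracket X \rrbracket$, with $R\to S$ faithfully flat, $S$ Frobenius split (complete local regular), $R$ not.

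Where you take a genuinely different route is the ascent counterexample, and yours is both more complicated and not fully justified. The paper's choice is simply $R = \FF_p$ and $S = K_1(k)$: a field is an excellent regular Henselian local ring, the structure map $\FF_p \to K_1(k)$ is regular because $K_1(k)$ is regular and $\FF_p$ is perfect (cf.\ \cite[Prop.\ 6.7.7]{EGAIV2}), $\FF_p$ is trivially Frobenius split, and $K_1(k)$ is not. Your proposal instead builds $R$ as the Henselization of a localization of $K[x_1,\dots,x_d]$ and asserts a regular morphism from $R$ into $K_n(k)$. That is plausible but requires nontrivial verifications you do not supply: flatness and geometric regularity of the generic fiber of the inclusion of the localized polynomial ring into the convergent power series ring, and persistence of this under Henselization. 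Separately, the localization-then-Henselization processing you envision for producing $S$ is unnecessary and introduces a gap you yourself flag (``checking that neither step produces a nonzero $p^{-1}$-linear map''): the convergent power series ring $K_n(k)$ from \cite{DM} is already excellent, regular, Henselian, local, and not Frobenius split by Proposition \ref{prop:properties} and \cite[Rem.\ 5.5]{DM}, so no further processing is needed. In short, your route could likely be made to work with some effort, but the degenerate choice $R = \FF_p$ makes the ascent failure essentially automatic.
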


\noindent Note that properties $(\ref{prop:blah})$ and 
$(\ref {prop:blahblah})$ for $F$-purity are well-known 
(see \cite[Prop.\ 2.4.4]{Has10} and \cite[Prop.\ 5.13]{HR76}, respectively).
The new contents in Proposition \ref{prop:intro-permanence} are the
examples which illustrate that $F$-purity is a better behaved notion of
singularity than 
Frobenius splitting with respect to permanence properties for excellent rings
 that are not essentially of finite type
over complete local rings.
At the
same time, although Frobenius splittings do not ascend 
under arbitrary flat maps with geometrically regular fibers, we show that
the property
behaves well under the closely related notion of smooth maps 
even in a non-$F$-finite setting (see Proposition \ref{prop:ascent-regular-maps}$(\ref {prop:Fsplit-ascent})$). This indicates that as a notion of 
singularity, Frobenius splitting behaves best under additional hypotheses
such as being of finite type.
\medskip

\par Just as Frobenius splitting behaves well only under additional hypotheses,
we demonstrate that the two most natural variants of the usual notion of strong
$F$-regularity, namely \emph{split $F$-regularity} 
and \emph{$F$-pure
regularity}, do not coincide for arbitrary excellent rings (see
Theorem \ref{thm:F-pure-not-split-F} and Corollary
\ref{cor:F-regular-different}). More specifically, we prove that Tate
algebras and convergent power series rings
over complete non-Archimedean fields of prime characteristic are always
$F$-pure regular, while \cite{DM} provides examples of non-Archimedean fields
over which such rings are not always split $F$-regular. This is despite the fact
that Tate algebras and convergent power series rings are the rigid
analytic analogues of polynomial rings, their localizations, and the 
completions thereof, all three of which are
split $F$-regular by Theorem \ref{thmintro:splittingwithgamma}.

\bigskip
\par An interesting aspect of the authors' examples in \cite{DM} is that not
only are the excellent regular rings $R$ therein not split $F$-regular, 
but the rings $R$ are not
even \emph{$F$-solid}, that is, $R$ does not admit any nonzero $R$-linear
maps $F_*R \to R$.
Given the connection between $F$-solidity and excellence
established in \cite{DS18}, we show that
a large class of excellent Noetherian domains are nevertheless
$F$-solid, without assuming that their fraction fields are $F$-finite.

\begin{theorem}[see Theorem \ref{thm:F-solid-eft} and Proposition \ref{prop:non-F-solid}]
\label{thm:F-solid-intro}
A reduced ring which is essentially of finite type over a complete 
local ring of prime characteristic $p > 0$ is $F$-solid. Nevertheless,
for each integer $n > 0$,
there exist excellent Henselian regular local rings of characteristic $p > 0$ 
and Krull dimension $n$ that are not $F$-solid.
\end{theorem}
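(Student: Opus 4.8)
The plan is to prove the two assertions separately.

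\emph{The positive assertion.} For a reduced ring $R$ essentially of finite type over a complete local ring $(A,\fm,k)$ of characteristic $p$, first I would reduce to the case that $R$ is a domain. If $\fp$ is a minimal prime of $R$ and $J=\bigcap_{\fq\neq\fp}\fq$ is the intersection of the other minimal primes, then $J\not\subseteq\fp$ by prime avoidance, and for any $j\in J\setminus\fp$ one checks that $\Ann_R(j)=\fp$, so $\bar r\mapsto rj$ defines an injection $R/\fp\hookrightarrow R$; composing a nonzero $p^{-1}$-linear map $\varphi$ of the domain $R/\fp$ with the surjection $F_*R\twoheadrightarrow F_*(R/\fp)$ and this injection yields a nonzero $p^{-1}$-linear map of $R$ (so it is enough to treat domains, although the argument below works for reduced rings directly). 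Next I would invoke the $\Gamma$-construction of Hochster--Huneke, which applies precisely to rings essentially of finite type over a complete local ring: for a $p$-basis $\Lambda$ of $k$ and a cofinite $\Gamma\subseteq\Lambda$ one obtains a faithfully flat $R$-algebra $R^\Gamma$ with geometrically regular fibers such that $R^\Gamma$ is $F$-finite and $F_*(R^\Gamma)\cong F_*R\otimes_R R^\Gamma$.

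Since geometrically regular morphisms preserve reducedness, $R^\Gamma$ is reduced; being reduced and $F$-finite it is $F$-solid, which one sees by passing to its module-finite normalization and reducing to an $F$-finite normal domain $D$, where $F_*D$ is a nonzero reflexive $D$-module of finite rank and hence admits a nonzero map to $D$ (alternatively one argues through the total quotient ring $\prod K_i$, each $K_i^{1/p}$ being a nonzero finite $K_i$-vector space). It then remains to descend $F$-solidity from $R^\Gamma$ to $R$. By the displayed isomorphism and tensor--hom adjunction, $\Hom_{R^\Gamma}(F_*R^\Gamma,R^\Gamma)\cong\Hom_R(F_*R,R^\Gamma)$, so the point is that a nonzero $R$-linear map $F_*R\to R^\Gamma$ forces $\Hom_R(F_*R,R)\neq 0$. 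For this I would use that $R^\Gamma$ is built from $R\otimes_A A^\Gamma$ with $A^\Gamma$ the $\fm$-adic completion of the \emph{free} $A$-module $A\otimes_k k^\Gamma$, where $k^\Gamma=k[\Gamma^{1/p^\infty}]$; consequently $R^\Gamma$, viewed as an $R$-module, is a completed direct sum $\widehat{\bigoplus_{\lambda}R}$ of copies of $R$, and a nonzero $R$-linear map into such a module has a nonzero component $F_*R\to R$. Reconciling the non-finite-generation of $F_*R$ over $R$ with the completion present in $R^\Gamma$ is the technical heart of the positive half; once the $R$-module structure of $R^\Gamma$ is pinned down it becomes bookkeeping.

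\emph{The negative assertion.} Here I would import the examples from rigid analytic geometry in \cite{DM}. For each $n$, take $R=K\{x_1,\dots,x_n\}$, a ring of convergent power series over a complete non-Archimedean field $K$ of characteristic $p$ chosen as in \cite{DM} so that $K$ is not $F$-finite and $R$ admits no nonzero $p^{-1}$-linear map. Such an $R$ is local with maximal ideal $(x_1,\dots,x_n)$, regular of Krull dimension $n$, Henselian (convergent power series rings satisfy Hensel's lemma), and excellent (by the standard structure theory of Tate algebras and convergent power series rings), hence is an excellent Henselian regular local ring of dimension $n$ that is not $F$-solid. If one prefers to begin from the possibly non-local examples of \cite{DM}, localize at a maximal ideal and pass to the Henselization: localization and Henselization preserve regularity and dimension, Henselization preserves excellence for local rings, and --- using $F_*R^h\cong F_*R\otimes_R R^h$ together with the solidity behavior of Henselizations established elsewhere in this paper --- the Henselization remains non-$F$-solid. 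The real content of this half is verifying that \cite{DM} produces non-$F$-solid examples in every dimension $n$ rather than only in dimension one, and marshalling the structural facts about convergent power series rings; the failure of $F$-solidity itself is imported from \cite{DM}.
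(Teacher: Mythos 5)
Your plan for the negative assertion is essentially the paper's: Proposition \ref{prop:non-F-solid}$(\ref{prop:convergent-not-Fsolid})$ takes exactly the convergent power series rings $K_n(k)$ over the non-Archimedean field from \cite{DM} and cites Proposition \ref{prop:properties} for regularity, Henselianity, excellence, and dimension. (Your fallback via Henselization of Tate algebras is unnecessary and also not supported by anything in the paper — if anything, the paper proves Henselizations fail to be solid as $R$-algebras, which is a different statement from non-$F$-solidity descending to a Henselization.)

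For the positive assertion, you correctly land on the gamma construction, but the descent from $R^\Gamma$ back to $R$ — which you yourself flag as the ``technical heart'' — is where your plan has genuine gaps, and it is where the paper does something quite different. Two of your structural claims about the gamma construction are wrong. First, $R \to R^\Gamma$ does \emph{not} have geometrically regular fibers: the closed fiber over the maximal ideal is the purely inseparable field extension $\kappa \hookrightarrow \kappa_e^\Gamma$ (in the limit, $\kappa^\Gamma$), which is not geometrically regular over $\kappa$ as soon as $\Gamma \neq \emptyset$. Reducedness of $R^\Gamma$ is therefore \emph{not} automatic; it is the nontrivial content of Lemma \ref{lem:gammafacts}$(\ref{lem:gammared})$ (after \cite[Lem.\ 6.13$(a)$]{HH94}), which holds only for sufficiently small cofinite $\Gamma$. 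Second, the isomorphism $F_*(R^\Gamma) \cong F_*R \otimes_R R^\Gamma$ that underlies your adjunction $\Hom_{R^\Gamma}(F_*R^\Gamma,R^\Gamma)\cong\Hom_R(F_*R,R^\Gamma)$ asserts that the relative Frobenius of $R \to R^\Gamma$ is an isomorphism, i.e., that the extension is relatively perfect. It is not: it is purely inseparable, and e.g.\ in the residue-field case $\kappa^\Gamma \otimes_\kappa \kappa^{1/p}$ has nilpotents while $(\kappa^\Gamma)^{1/p}$ is a field, so the relative Frobenius fails to be injective. With that isomorphism gone, your whole reduction to a map $F_*R \to R^\Gamma$ and then to a nonzero ``component'' $F_*R \to R$ does not get started. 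Moreover, even in the corrected setting, $R^\Gamma = R \otimes_A A^\Gamma$ is a filtered colimit of $R \otimes_A(\text{completed free $A$-modules})$ rather than a single completed free $R$-module, so a nonzero $R$-linear map from the non-finitely-generated module $F_*R$ need not factor through a stage of the colimit nor project nontrivially onto a coordinate.

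What the paper does instead is cleaner and should be highlighted: because the complete local ring $A$ admits a splitting $A^\Gamma \to A$ of the pure map $A \hookrightarrow A^\Gamma$ (Lemma \ref{lem:fed12}, using completeness), base change gives an $R$-linear splitting $g\colon R^\Gamma \to R$ with $g(1)=1$. Then, because $R \hookrightarrow R^\Gamma$ is \emph{purely inseparable} and $R^\Gamma$ is \emph{reduced}, one can descend $F$-solidity via Lemma \ref{lem:descent-F-solidity}$(\ref{lem:descent-inseparable})$: take a nonzero $p^{-1}$-linear map $\phi$ on $R^\Gamma$ with $\phi(1)=c$, note $c^{p^e}\in R$ for some $e$ (pure inseparability), note $c^{p^e}\neq 0$ (reducedness), and compose $F_*R \hookrightarrow F_*R^\Gamma \xrightarrow{\ell_{c^{p^e-1}}\circ\phi} R^\Gamma \xrightarrow{g} R$ to send $1 \mapsto c^{p^e}g(1)\neq 0$. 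The purely inseparable structure, not relative perfectness, is the mechanism, and the splitting $g$ is what replaces your (nonexistent) projections. I'd suggest replacing your descent step with this argument.
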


\noindent Theorem \ref {thm:F-solid-intro} should be viewed as a natural
extension of Theorem \ref {thmintro:splittingwithgamma} because a Frobenius
splitting is a special instance of a nonzero $p^{-1}$-linear map.
To better explain the difference in behavior between rings of finite type over
complete local rings and more general excellent rings such as Tate algebras
from the point of view of $F$-solidity, we prove some obstructions for descending $F$-solidity from the complete to the non-complete cases
in the following result.

\begin{theorem}[see Theorems \ref{thm:Henselization-not-solid} and
  \ref{thm:completion-not-solid}]
\label{thmintro:Henselization-completion-not-solid}
Let $R$ be a Noetherian domain of arbitrary characteristic.
\begin{enumerate} [label=$(\roman*)$,ref=\roman*]
  \item If $R$ is a non-Henselian local ring whose Henselization $R^h$ is a
    domain, then $R^h$ is not a solid $R$-algebra.
	\item If $I$ is an ideal of $R$ and $R$ is not $I$-adically complete, then
	the $I$-adic completion $\widehat{R}^I$ is not a solid $R$-algebra.
\end{enumerate}
\end{theorem}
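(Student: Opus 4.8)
The plan is to prove both parts by the same two-step strategy. First, a \emph{rigidity} step: any nonzero $R$-linear map from the Henselization (resp.\ the $I$-adic completion) to $R$ must be ``multiplication by a scalar'', the scalar being the image of $1$. Second, a \emph{finiteness} step: this forces the Henselization (resp.\ completion) to be a finitely generated $R$-module, which is impossible unless $R$ equals it, contradicting the hypothesis. The rigidity step exploits that $R$ is dense in these rings with respect to the relevant adic topology, together with Krull's intersection theorem.

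I begin with part $(ii)$. One may assume $0 \subsetneq I \subsetneq R$, since $\widehat{R}^I = 0$ when $I = R$ and $\widehat{R}^I = R$ is already $I$-adically complete when $I = 0$. As $R$ is a Noetherian domain, Krull's intersection theorem gives $\bigcap_n I^n R = 0$, so $R \hookrightarrow \widehat{R}^I$; similarly $\bigcap_n I^n\widehat{R}^I = 0$, because $\widehat{R}^I$ is Noetherian and $I$-adically complete (so $I\widehat{R}^I$ lies in its Jacobson radical). Suppose $\varphi\colon \widehat{R}^I \to R$ is nonzero and $R$-linear, and set $c := \varphi(1)$. For any $\omega \in \widehat{R}^I$, choose $\omega_n \in R$ with $\omega \equiv \omega_n \pmod{I^n\widehat{R}^I}$, using $\widehat{R}^I/I^n\widehat{R}^I \cong R/I^n R$. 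Writing $\omega - \omega_n$ as an $\widehat{R}^I$-linear combination of degree-$n$ monomials in a fixed generating set of $I$ and applying $R$-linearity shows $\varphi(\omega) - \varphi(\omega_n) \in I^n R$; combined with $\varphi(\omega_n) = c\omega_n$ and $c(\omega - \omega_n) \in I^n\widehat{R}^I$, this yields $\varphi(\omega) - c\omega \in I^n\widehat{R}^I$ for every $n$, hence $\varphi(\omega) = c\omega$. In particular $c \neq 0$ (otherwise $\varphi = 0$), and since $c$ is a nonzerodivisor on $\widehat{R}^I$ (flatness of $R \to \widehat{R}^I$), multiplication by $c$ identifies $\widehat{R}^I$ with an ideal of the Noetherian ring $R$. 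Thus $\widehat{R}^I$ is a finitely generated $R$-module.

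To conclude $(ii)$, I would show this finiteness forces $\widehat{R}^I = R$. Being finite and flat over the connected scheme $\Spec R$, the map $\Spec \widehat{R}^I \to \Spec R$ is both open (flat, finitely presented) and closed (finite) with nonempty image, hence surjective, so $R \to \widehat{R}^I$ is faithfully flat. On the other hand, since $\widehat{R}^I$ is a finite $R$-module, $\widehat{R}^I \otimes_R \widehat{R}^I$ is the $I$-adic completion of $\widehat{R}^I$, namely $\widehat{R}^I$ itself; as a surjection between abstractly isomorphic finitely generated modules over a Noetherian ring is an isomorphism, the multiplication map $\widehat{R}^I \otimes_R \widehat{R}^I \to \widehat{R}^I$ is an isomorphism. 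Tensoring $0 \to R \to \widehat{R}^I \to \coker \to 0$ with the flat module $\widehat{R}^I$ and comparing with this isomorphism gives $\coker \otimes_R \widehat{R}^I = 0$, so $\coker = 0$ by faithful flatness, i.e.\ $R = \widehat{R}^I$ --- contradicting that $R$ is not $I$-adically complete. For part $(i)$, note that a non-Henselian local ring is in particular not complete, and that $R^h$ is Noetherian local with maximal ideal $\fm R^h$, residue field $R^h/\fm R^h = R/\fm$, and $R \to R^h$ faithfully flat; an induction on the sequences $0 \to \fm^n/\fm^{n+1} \to R/\fm^{n+1} \to R/\fm^n \to 0$ using flatness shows $R^h/\fm^n R^h \cong R/\fm^n$. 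The rigidity argument above (with Krull's theorem now applied inside the Noetherian local ring $R^h$) again forces any nonzero $R$-linear $R^h \to R$ to be multiplication by a nonzero scalar, so $R^h$ is a finitely generated $R$-module; then $R^h = R\cdot 1 + \fm R^h$, so $R^h/R = \fm\cdot(R^h/R)$ and Nakayama over the local ring $R$ gives $R^h = R$, contradicting non-Henselianness.

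The main obstacle is the finiteness step of part $(ii)$: in contrast to the Henselization case, $I$ need not lie in the Jacobson radical of $R$, so Nakayama is unavailable and one must instead combine the faithful flatness of $R \to \widehat{R}^I$ (deduced from openness and closedness of finite flat morphisms over a connected base) with the idempotence of $I$-adic completion on finitely generated modules. A secondary point requiring care is verifying that the scalar $\varphi(1)$ is a nonzerodivisor on $\widehat{R}^I$ before concluding module-finiteness, which is where flatness of the completion map enters.
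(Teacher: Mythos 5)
Your proof is correct, and it is worth recording the ways it differs from the paper's.

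For part $(ii)$, your argument coincides with the paper's in substance. Lemma \ref{lem:completion-map-basic-properties}$(\ref{lem:precomposition})$--$(\ref{lem:image1})$ packages exactly your ``rigidity'' step: the two show $i^*$ is injective on $\Hom_R(\widehat{R}^I, M)$ for separated $M$, which yields $\varphi(1)\widehat{R}^I \subseteq i(R)$, and your density computation is precisely the proof of that injectivity written inline. The only genuine divergence is the finiteness step. Where you establish faithful flatness (finite flat over the connected $\Spec R$, hence open and closed with nonempty image) and then exploit the idempotence of $I$-adic completion together with the fact that a surjection between abstractly isomorphic Noetherian modules is an isomorphism, the paper argues more directly: from $\coker(i)=I\coker(i)$ and Nakayama's lemma, $\coker(i)$ is killed by a unit $1+a$ of $\widehat{R}^I$, so $i$ is surjective, contradicting separatedness. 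Both routes are valid; the paper's is a bit shorter and avoids needing to know that $I$-adic completion is idempotent on finite modules.

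For part $(i)$ you take a genuinely different route. The paper factors $R \to R^h$ through an essentially \'etale local, non-finite, non-isomorphism $(R,\fm)\to(S,\fn)$ with trivial residue extension, proves $S$ is a domain (since $S^h = R^h$), and then invokes \cite[Prop.\ 3.7]{DS18} (quoted as Proposition \ref{prop:generically-algebra-maps}), which says a generically finite solid extension of Noetherian domains must be module-finite. You instead observe that $R^h/\fm^n R^h \cong R/\fm^n$ for all $n$, re-run the same density/separatedness rigidity argument inside the Noetherian local ring $R^h$ (using $\bigcap_n \fm^n R^h = 0$), conclude that any nonzero $\varphi\colon R^h\to R$ gives $cR^h\subseteq R$ with $c=\varphi(1)\neq 0$, and finish with Nakayama over $(R,\fm)$. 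This is more elementary and self-contained, since it bypasses both the \'etale-algebra structure of $R^h$ and the DS18 proposition, and unifies parts $(i)$ and $(ii)$ under a single mechanism. A pleasant byproduct, which you should make explicit if you keep this argument, is that the hypothesis ``$R^h$ is a domain'' is not actually used: $c$ is a nonzerodivisor on $R^h$ because it is a nonzero element of the domain $R$ and $R\to R^h$ is flat. So your approach proves the slightly stronger statement that $R^h$ is never $R$-solid for any non-Henselian Noetherian local domain $R$, regardless of whether $R^h$ is a domain.
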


\noindent In particular, Theorem
\ref{thmintro:Henselization-completion-not-solid} illustrates that there is no general
method to reduce the question of  $F$-solidity of rings essentially of finite 
type over excellent local rings to that of $F$-solidity of rings that are 
essentially of finite type over a complete local
ring. This provides another perspective on the non-$F$-solid excellent local
examples obtained in \cite{DM}.

\bigskip
\par Theorem \ref{thmintro:Henselization-completion-not-solid} has consequences for
Hochster and Huneke's theory of big Cohen--Macaulay algebras.
In particular, while big Cohen--Macaulay algebras for complete local rings are
always solid, if $(R,\fm)$ is a mixed characteristic Noetherian domain that is
not $\fm$-adically complete, then Theorem \ref{thmintro:Henselization-completion-not-solid}
implies that all known constructions of big Cohen--Macaulay
$R$-algebras are non-$R$-solid  (see Propositions \ref{prop:BCM-solid}
and \ref{prop:BCM-not-solid}).\medskip

\par One of the principal examples of a big
Cohen--Macaulay algebra in prime characteristic is the absolute integral 
closure of an excellent local domain.
Recall that if $R$ is a domain, then the \emph{absolute integral closure} $R^+$
of $R$ is the integral closure of $R$ in an algebraic closure of its fraction
field.
Since big Cohen--Macaulay algebras in prime and mixed characteristics 
are often constructed as extensions of $R^+$,
it is natural to ask whether $R^+$ itself is a solid $R$-algebra.
The following result shows that the $R$-solidity of $R^+$ is closely
connected with an important property that is implied by excellence, namely
 that of being Japanese (see Definition \ref{def:Japanese-Nagata}).

\begin{theorem}[see Theorem \ref{thm:solid-N2} and
  Corollary \ref{cor:N-1-char0}]\label{thmintro:A+-solid}
  Let $R$ be a Noetherian domain of arbitrary characteristic.
  \begin{enumerate}[label=$(\roman*)$,ref=\roman*]
    \item If the absolute integral closure $R^+$ is a solid $R$-algebra, then
      $R$ is a Japanese ring.
      The converse holds if $R$ contains the rational numbers $\QQ$.
    \item Suppose $R$ has prime characteristic.
      If the perfect closure $R_\perf$ is a solid $R$-algebra, or more
      generally, if $R$ is $F$-solid, then $R$ is N-1 if and only if
      $R$ is Japanese.
  \end{enumerate}
\end{theorem}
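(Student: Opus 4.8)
\emph{Overall strategy.} The reverse implications need little: Japanese always implies N-1 (take $L=K$ in the definition), which disposes of one direction of (ii) and the trivial half of (i); for the char-$0$ converse of (i) I will produce an explicit nonzero map $R^+\to R$ from normalized traces. The substance is the forward direction, namely that \emph{solidity of the big extension forces the relevant integral closures to be module-finite}. I plan to prove this by localizing at height-one primes and reducing to a local statement about integral closures of one-dimensional Noetherian local domains, which is ultimately a fact from valuation theory. For part (ii) I will first record the bookkeeping that $\Hom_R(R^{1/p^e},R)=\Hom_{R^{1/p}}\!\bigl(R^{1/p^e},\Hom_R(R^{1/p},R)\bigr)$ (tensor–hom adjunction, using $R^{1/p^e}=R^{1/p^e}\otimes_{R^{1/p}}R^{1/p}$), so that $F$-solidity is equivalent to $\Hom_R(R^{1/p^e},R)\neq 0$ for one, equivalently every, $e\ge 1$; in particular solidity of $R_\perf=\bigcup_e R^{1/p^e}$ implies $F$-solidity, which is why the latter is the weaker hypothesis. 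Finally, over a normal base the integral closure in a finite \emph{separable} extension is automatically module-finite (trace pairing), so in part (ii) only finite purely inseparable $L$ are in play — which is why an N-1 hypothesis is natural there.

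\emph{Localizing the forward direction.} Suppose the big algebra $T$ (one of $R^+$, $R_\perf$, or $R^{1/p^e}$) is $R$-solid and fix $0\ne\varphi\colon T\to R$. Its image is a nonzero ideal of the domain $R$, hence localizes to something nonzero at every prime; since localization commutes with these operations, $T\otimes_R R_{\fp}$ (which is $(R_{\fp})^+$, $(R_{\fp})_\perf$, or $(R_{\fp})^{1/p^e}$) is $R_{\fp}$-solid for every height-one prime $\fp$. Given a finite (in case (ii): purely inseparable) extension $L/K$, enlarge it to a finite such $L'\supseteq L$ that ``sees'' a nonzero value of $\varphi$, and likewise of each $\varphi_{\fp}$; then $R^{L'}$ is $R$-solid and each $(R_{\fp})^{L'}$ is $R_{\fp}$-solid. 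Moreover, for part (i) the map $b\mapsto\varphi(b\xi)$ attached to a fixed $\xi\in R^+$ with $\varphi(\xi)\ne 0$ is a nonzero $R$-linear map $R^{\nu}\to R$ (using $R^{\nu}\subseteq R^+$), so the conductor of $R^{\nu}$ into $R$ is nonzero, $R^{\nu}\subseteq\frac1c R$ is module-finite, and $R$ is N-1; thus in both parts we may pass to $R^{\nu}$ and assume $R$ is normal, and it remains to show $R^L$ is module-finite over $R$ for all finite (p.i.) $L$.

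\emph{The local lemma and globalization.} The key input is: \emph{if $A$ is a DVR with fraction field $K$ and $L/K$ is finite, then the integral closure $A^L$ of $A$ in $L$ is $A$-solid if and only if it is module-finite over $A$, i.e.\ if and only if the extension of valued fields is defectless}. Here $A^L$ is a semilocal Dedekind domain, torsion-free of rank $[L:K]$ over $A$, with $\bigcap_n\fm_A^nA^L=0$; the nontrivial implication, that a defect extension carries no nonzero $A$-linear functional $A^L\to A$, is a valuation-theoretic statement — it is essentially the assertion that a defect extension is ``too immediate'' to admit a nonzero linear form — and this is where the authors' analysis of solidity for valuation rings enters. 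I expect this lemma to be the \textbf{main obstacle}. Granting it, each $(R_{\fp})^{L'}$ is module-finite over $R_{\fp}$ for every height-one prime $\fp$ of the normal domain $R$; one then \emph{globalizes}, upgrading height-one module-finiteness together with the global solidity of $R^{L'}$ (and the fact that over the normal Noetherian ring $R$ the Krull domain $R^{L'}$ is an intersection of its height-one localizations) to module-finiteness of $R^{L'}$ over $R$: one identifies the finitely many ``bad'' height-one primes where $R^{L'}$ exceeds a chosen finitely generated lattice, enlarges the lattice there, and bounds $R^{L'}$ inside the finitely generated reflexive hull. Since $R^L\subseteq R^{L'}$ and $R$ is Noetherian, $R^L$ is then module-finite, so $R$ is Japanese.

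\emph{The char-$0$ converse of (i).} If $R\supseteq\QQ$ and $R$ is Japanese, then $R$ is N-1, so $R^{\nu}$ is module-finite with $cR^{\nu}\subseteq R$ for some $0\ne c$; composing any nonzero $R^{\nu}$-linear map $R^+=(R^{\nu})^+\to R^{\nu}$ with the injection $R^{\nu}\xrightarrow{\,\cdot c\,}R$ reduces us to the case $R$ normal. There every finite $L/K$ is separable, and the normalized traces $\tfrac1{[L:K]}\Tr_{L/K}\colon R^L\to R$ are well defined (the trace of an integral element is integral, hence in $R$ by normality, and $[L:K]\in\QQ^{\times}\subseteq R^{\times}$) and compatible under inclusions $L\subseteq L'$, because $\Tr_{L'/L}$ acts as multiplication by $[L':L]$ on $L$. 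Hence they glue to an $R$-linear map $R^+=\bigcup_L R^L\to R$ restricting to $\id_R$ on $R$, which is in particular nonzero, so $R^+$ is $R$-solid.
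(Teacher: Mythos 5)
Your handling of the characteristic-zero converse (normalized traces, reduction to the normal case via the conductor, compatibility over towers) and the reduction of part (ii) to finite purely inseparable extensions match the paper's Proposition~\ref{prop:colimits}$(\ref{prop:A+-solid-char0})$, Corollary~\ref{cor:N-1-char0}, and Lemma~\ref{lem:field-theory}$(\ref{lem:Japanese-purely-inseparable})$. But for the forward direction --- solidity forces module-finiteness of the integral closure --- your route has a genuine gap. The paper uses \emph{no} height-one localization, no DVR lemma, and no globalization step: it appeals directly to Proposition~\ref{prop:generically-algebra-maps} ([DS18, Prop.~3.7]), which states that an injective generically finite homomorphism of Noetherian domains $R\hookrightarrow S$ with $S$ a solid $R$-algebra is automatically module-finite. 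Given that, part (i) is two lines: the integral closure $S$ of $R$ in a finite $L/K$ sits inside $R^+$, hence is $R$-solid by Remark~\ref{rem:useful-solid-facts}, and is generically finite, so module-finite. Part (ii) threads solidity through $R^N$ and $F^e_{R^N*}R^N$ via Lemma~\ref{lem:solid-composition} and then invokes the same proposition.

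Your alternative --- localize at height-one primes, prove a DVR-level lemma, then globalize --- leaves two ingredients unproven. You explicitly defer the DVR lemma (``I expect this lemma to be the main obstacle''). More seriously, the globalization does not close as written: for a chosen finitely generated full-rank lattice $M_0\subseteq R^{L'}$, the torsion quotient $R^{L'}/M_0$ is \emph{not yet known} to be finitely generated (that is precisely the claim at issue), so its support need not be a finite set of height-one primes, and ``the finitely many bad primes'' has not been established. You gesture at using ``the global solidity of $R^{L'}$'' to rescue this --- but once global solidity is invoked, Proposition~\ref{prop:generically-algebra-maps} applies directly and the entire height-one detour is superfluous. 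So the localization step buys nothing, and the argument as sketched is incomplete without a proof that the bad locus is finite. Your aside on $\Hom_R(R^{1/p^e},R)$ via tensor--hom adjunction is correct as far as it goes, but by itself it does not give nonvanishing for all $e$ from nonvanishing for $e=1$; the paper handles that in Remark~\ref{rem:F-solidity-obs}$(\ref{rem:Fsolid-all-e})$ by an explicit induction composing $p^{-1}$-linear maps.
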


\noindent The perfect closure $R_\perf$ of $R$ is the universal perfect ring
with respect to ring homomorphisms from $R$ to a perfect ring (see
\eqref{eq:defperfectclosure}), while the
N-1 condition is a generalization of the Japanese property (see Definition
\ref {def:Japanese-Nagata}).
The N-1 and Japanese properties are well-known to be equivalent when the
 fraction field $K$ of $R$
has characteristic zero due to the absence of inseparability phenomena (see 
\cite[(31.B), Cor.\ 1]{Mat80}). Thus, Theorem \ref{thmintro:A+-solid} can be
interpreted as saying
that in prime characteristic, $F$-solidity is an antidote to inseparability.

\bigskip
\par One of the more surprising aspects of the constructions in \cite{DM} are the examples of
excellent Euclidean domains which illustrate the failure of $F$-solidity 
even for prime characteristic excellent rings of Krull dimension one. 
However, by carefully analyzing the N-1 and Japanese properties, we show
in this paper that for most rings of Krull dimension one, the $R$-solidity of
$R^+$ or of $F_*R$ in fact implies that $R$ is excellent. Our result does not
impose any $F$-finiteness restrictions on the fraction field of $R$, which was the case in \cite{DS18}.

\begin{theorem}[see Theorem \ref{thm:excellent-dvr}]\label{thmintro:excellent-dvr}
Let $R$ be a Noetherian N-1 domain of
Krull dimension one. 
Suppose $R^+$
is a solid $R$-algebra (in which case one does not need $R$ to be N-1), 
or that $R$ is an $F$-solid domain of prime characteristic
$p > 0$. 
Then, $R$ is excellent. In particular, a Frobenius split (or $F$-solid) 
Dedekind domain is always excellent.
\end{theorem}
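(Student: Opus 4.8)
The plan is to use Theorem~\ref{thmintro:A+-solid} to collapse both hypotheses to the single assertion that $R$ is Japanese, and then to prove the purely commutative‑algebraic statement that a Noetherian Japanese domain of Krull dimension one is excellent. For the reduction: if $R^{+}$ is $R$-solid, then Theorem~\ref{thmintro:A+-solid}$(i)$ gives that $R$ is Japanese outright (in particular N-1, which is why the N-1 hypothesis is redundant in that case); if instead $R$ is $F$-solid of characteristic $p>0$, then Theorem~\ref{thmintro:A+-solid}$(ii)$ says that for such $R$ the N-1 and Japanese properties coincide, so the standing N-1 hypothesis again yields that $R$ is Japanese. Thus it suffices to treat a Noetherian Japanese domain $R$ with $\dim R=1$.

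Recall that $R$ is excellent precisely when it is universally catenary, a G-ring, and J-2. Universal catenarity is automatic here, since every Noetherian ring of dimension at most one is universally catenary: a one-dimensional complete local ring has equidimensional minimal spectrum (a minimal prime of coheight zero would be the maximal ideal, forcing dimension zero), so all localizations of $R$ are formally equidimensional, and one invokes Ratliff's criterion. I would also record that $R$ is a Nagata ring, because $R/\fp$ is Japanese for every prime $\fp$: it is either $R$ itself or, since $\dim R=1$, a field. In particular the integral closure of $R$ in any finite extension of $K:=\Frac(R)$ is module-finite over $R$.

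The substantive step is that $R$ is a G-ring. Fix a maximal ideal $\fm$; one must show $R_{\fm}\to\widehat{R_{\fm}}$ is regular. Since $\dim R_{\fm}=1$, the only fiber that is not trivially geometrically regular is the generic one, so it is enough to show $\widehat{R_{\fm}}\otimes_{R_{\fm}}K'$ is reduced for every finite field extension $K'/K$. Let $S$ be the integral closure of $R_{\fm}$ in $K'$. By the Japanese hypothesis $S$ is module-finite over $R_{\fm}$, so completion commutes with this base change and $\widehat{R_{\fm}}\otimes_{R_{\fm}}K'\cong\widehat{S}\otimes_{S}K'$, where $\widehat{S}$ is the completion of $S$ along $\fm S$. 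But $S$ is a one-dimensional normal Noetherian semilocal domain, i.e.\ a semilocal Dedekind domain, so $\widehat{S}$ is a finite product of complete discrete valuation rings and in particular reduced; since $K'=\Frac(S)$, the ring $\widehat{S}\otimes_{S}K'$ is a localization of $\widehat S$ and hence still reduced, as needed.

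Finally, for the J-2 property I would use the standard reduction of J-2 to openness of the regular locus of module-finite $R$-algebras; passing to the reduction and then to the irreducible components reduces this to the case of a one-dimensional Noetherian domain $B$ finite over $R$, whose normalization is module-finite over $R$ (again by the Japanese hypothesis) and hence Dedekind, so that $\operatorname{Reg}(B)$ equals the normal locus of $B$, which is open as the complement of the support of the conductor. Therefore $R$ is J-2, hence excellent. The last sentence is then immediate: a Dedekind domain is normal, hence N-1, and a Frobenius splitting is by definition a nonzero $p^{-1}$-linear map, so a Frobenius split Dedekind domain is $F$-solid and the theorem applies. I expect the main obstacle to be organizing the J-2 verification cleanly, since the G-ring property alone does not force J-2 and the Japanese hypothesis must genuinely be used there, together with checking that the reduction via Theorem~\ref{thmintro:A+-solid} uniformly handles both hypotheses; by contrast the G-ring computation is a routine consequence of the finiteness of normalization in finite field extensions.
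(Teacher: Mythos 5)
Your proposal is correct, and the overall architecture---reducing via Theorem~\ref{thmintro:A+-solid} to the assertion that a one-dimensional Noetherian Japanese domain is excellent, then verifying the three axioms of excellence---matches the paper's Theorem~\ref{thm:excellent-dvr}. Two of the three axiom checks are done differently. For universal catenarity the paper simply observes that a one-dimensional domain is Cohen--Macaulay, hence universally catenary; your appeal to Ratliff's criterion via formal equidimensionality is correct but does more work for the same conclusion. The more interesting divergence is the $G$-ring verification: the paper reduces geometric regularity of the generic formal fiber to geometric reducedness, observes that $R_{\fp}$ Japanese implies $R_{\fp}$ Nagata (since it has only two primes), and then quotes the Zariski--Nagata theorem that a Noetherian local ring is Nagata iff its formal fibers are geometrically reduced. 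You instead compute directly: writing $S$ for the (module-finite) integral closure of $R_{\fm}$ in a finite extension $K'$, you identify $\widehat{R_{\fm}}\otimes_{R_{\fm}}K'$ with $\widehat{S}\otimes_S K'$, note that $\widehat{S}$ is a finite product of complete DVRs because $S$ is semilocal Dedekind, and conclude reducedness as a localization. This is more self-contained, avoiding the black box of Zariski--Nagata, at the cost of being specific to dimension one in a way the paper's citation is not. The J-2 arguments are essentially the same---both produce, via the Japanese hypothesis, a module-finite normal (hence Dedekind, hence regular) extension to which Matsumura's Theorem~73 applies---though your phrasing of the ``standard reduction of J-2 to openness of the regular locus of module-finite $R$-algebras'' is a bit looser than the actual criterion, which asks for a module-finite $R'$ over each $R/\fp$ with prescribed fraction field whose regular locus merely contains a nonempty open set; your argument does verify this since the normal (= regular) locus of the one-dimensional domain is open and contains the generic point.
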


\noindent Furthermore, Theorem \ref{thmintro:excellent-dvr} yields a characterization of
excellence for DVRs of prime characteristic in terms of $F$-solidity.
This extends \cite[Thm.\ 4.1]{DS18} to a larger class of DVRs, and provides
an almost complete classification for which DVRs are $F$-solid.

\begin{corollary}[see Corollary \ref{cor:Fsolid-DVRs}]
\label{cor:intro-dvrs}
Let $(R,\fm)$ be a DVR of prime characteristic
$p > 0$. Consider the following statements:
\begin{enumerate}[label=$(\roman*)$,ref=\roman*]
	\item\label{corintro:dvr-maps} $R$ is $F$-solid.
	\item\label{corintro:dvr-F-split} $R$ is Frobenius split.
	\item\label{corintro:dvr-split-F} $R$ is split $F$-regular.
	\item\label{corintro:excellent-dvr} $R$ is excellent.
\end{enumerate}
Then, $(\ref{corintro:dvr-maps})$, $(\ref{corintro:dvr-F-split})$, and 
$(\ref{corintro:dvr-split-F})$ are equivalent, and always imply
$(\ref{corintro:excellent-dvr})$. All four assertions are equivalent
if $R$ is essentially of finite type over a complete local ring,
or the fraction field $K$ of $R$ is such that $K^{1/p}$ is
countably generated over $K$. However, there exist excellent
Henselian DVRs that are not $F$-solid.
\end{corollary}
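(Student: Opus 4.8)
The plan is to assemble the four implications from results already available, treating the DVR $(R,\fm)$ with fraction field $K$ of characteristic $p$ as the running object. First I would establish the equivalence of $(\ref{corintro:dvr-maps})$, $(\ref{corintro:dvr-F-split})$, and $(\ref{corintro:dvr-split-F})$. The implication $(\ref{corintro:dvr-split-F}) \Rightarrow (\ref{corintro:dvr-F-split}) \Rightarrow (\ref{corintro:dvr-maps})$ is formal: split $F$-regularity gives in particular a splitting of $R \to F_*R$ (take $c = 1$ in Definition \ref{def:freg}), which is exactly a Frobenius splitting, and a Frobenius splitting is a nonzero $p^{-1}$-linear map, so $F_*R$ is $R$-solid. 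For the reverse, the key point is that a DVR is a one-dimensional regular local ring, so once we know $R$ is $F$-solid we can use Theorem \ref{thmintro:excellent-dvr} to conclude $R$ is excellent; excellence of a regular local ring forces $R$ to be a nicely behaved base, and combined with the one-dimensionality (hence Cohen--Macaulayness) a nonzero $p^{-1}$-linear map on such an $R$ can be upgraded to a Frobenius splitting and then to split $F$-regularity — here one uses that over a regular local ring a nonzero $p^{-1}$-linear map automatically surjects onto $\fm$-primary-colength information, so the image is not contained in $\fm$ after twisting, and the standard argument promoting ``$F$-split at a single $c$'' to full split $F$-regularity for regular rings applies. I would cite the relevant internal result (presumably in the section establishing Theorem \ref{thm:excellent-dvr} or \ref{thm:splittingwithgamma}) rather than reproving it.

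Next, the implication $(\ref{corintro:dvr-maps}) \Rightarrow (\ref{corintro:excellent-dvr})$ is precisely the content of Theorem \ref{thmintro:excellent-dvr} applied to the one-dimensional Noetherian domain $R$: an $F$-solid domain of Krull dimension one of prime characteristic is excellent. So nothing new is needed here beyond quoting that theorem.

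For the clause ``all four assertions are equivalent'' under the two extra hypotheses, the missing arrow is $(\ref{corintro:excellent-dvr}) \Rightarrow (\ref{corintro:dvr-maps})$ (or directly to $(\ref{corintro:dvr-split-F})$). If $R$ is essentially of finite type over a complete local ring, then excellence is automatic but more importantly Theorem \ref{thmintro:splittingwithgamma} (together with Theorem \ref{thm:F-solid-intro}) applies directly: a reduced — here, regular — ring essentially of finite type over a complete local ring of characteristic $p$ is $F$-solid, in fact Frobenius split, and indeed a regular ring of this type is split $F$-regular, giving all of $(\ref{corintro:dvr-maps})$--$(\ref{corintro:dvr-split-F})$. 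If instead $K^{1/p}$ is countably generated over $K$, I would argue that an excellent DVR with this property is $F$-finite-like enough to carry a Frobenius splitting: excellence gives that the $\fm$-adic completion $\widehat{R}$ is a complete DVR with $[\widehat{K} : \widehat{K}^p]$ controlled, and the countable generation hypothesis lets one descend a splitting of Frobenius on $\widehat{R}$ — which exists since complete (hence excellent) DVRs are $F$-split by the complete case — down to $R$; alternatively one invokes the characterization of \cite{DS18} extended via the countable-generation hypothesis as indicated in the introduction (``without $F$-finiteness restrictions''). The main obstacle is this descent step: making precise how countable generation of $K^{1/p}/K$ together with excellence suffices to transfer a $p^{-1}$-linear map from the completion to $R$, since in general completion is not solid (Theorem \ref{thmintro:Henselization-completion-not-solid}) so one cannot naively pull back. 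I expect the resolution is that excellence makes $R \to \widehat R$ regular with geometrically regular fibers, and countable generation allows a limiting/approximation argument constructing the map on $R$ directly rather than restricting one from $\widehat R$.

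Finally, the existence of excellent Henselian DVRs that are not $F$-solid is exactly the last assertion of Theorem \ref{thm:F-solid-intro} (or the $n = 1$ case of the non-$F$-solid examples), combined with the fact that those examples can be taken Henselian; I would simply cite \cite{DM} and the relevant internal proposition (Proposition \ref{prop:non-F-solid}), noting that such an $R$ satisfies $(\ref{corintro:excellent-dvr})$ but not $(\ref{corintro:dvr-maps})$, which simultaneously shows the four-way equivalence genuinely fails without the extra hypotheses. This completes the proof.
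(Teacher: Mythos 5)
Your assembly of the corollary from prior results is structurally on the right track, but two of the implications are argued by a route that either detours unnecessarily through machinery that is not needed, or that in fact cannot be made to work as stated.

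First, the equivalence of $(\ref{corintro:dvr-maps})$, $(\ref{corintro:dvr-F-split})$, and $(\ref{corintro:dvr-split-F})$. You propose to first deduce excellence of $R$ (via Theorem \ref{thmintro:excellent-dvr}) and then ``upgrade'' a nonzero $p^{-1}$-linear map to a Frobenius splitting using excellence and Cohen--Macaulayness. This is both unnecessary and not actually an argument: excellence plays no role in passing from $F$-solidity to Frobenius splitting for a DVR. The paper's Proposition \ref{thm:split-F-regular-DVR} gives the equivalence directly from the fact that a DVR is a PID, with no excellence hypothesis whatsoever. Given a nonzero $\varphi\colon F_{R*}R \to R$, its image is a principal ideal $aR$; postcomposing with the $R$-module isomorphism $aR \simeq R$ gives a surjection $\widetilde{\varphi}\colon F_{R*}R \twoheadrightarrow R$, and if $x$ is any preimage of $1$ then $\widetilde{\varphi} \circ F_{R*}(-\cdot x)$ is a Frobenius splitting. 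The passage $(\ref{corintro:dvr-F-split}) \Rightarrow (\ref{corintro:dvr-split-F})$ is likewise elementary, using the valuation: one checks directly that a Frobenius split DVR is Frobenius split along each power $\pi^n$ of the uniformizer by choosing $e$ with $p^e - n > n$ and multiplying appropriately. Your phrase about the image ``not contained in $\fm$ after twisting'' hints at the PID argument but does not constitute it, and the appeal to excellence is a distraction — indeed this equivalence is logically \emph{prior} to the excellence results.

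Second, and more seriously, your argument for the case where $K^{1/p}$ is countably generated over $K$ is not viable. You propose to descend a Frobenius splitting from $\widehat{R}$ to $R$, acknowledge that the completion is not $R$-solid (Theorem \ref{thmintro:Henselization-completion-not-solid}) so this cannot be done naively, and then appeal to an unspecified ``limiting/approximation argument.'' This is a genuine gap: there is no way to pull a $p^{-1}$-linear map back along $R \to \widehat{R}$, and no approximation scheme is offered. The paper's route is entirely different and does not touch the completion. It uses Proposition \ref{prop:colimits}$(\ref{prop:Dedekind-Fsolid})$: since $R$ is excellent it is Japanese; write $K^{1/p}$ as a countable increasing union of finite subextensions $K_n$; the integral closure $R_n$ of $R$ in $K_n$ is module-finite by the Japanese property and is a Dedekind domain, hence regular, so each inclusion $R_{n-1} \hookrightarrow R_n$ splits by the direct summand theorem; a compatible system of such splittings passes to the colimit $R^{1/p} = F_{R*}R$, giving a Frobenius splitting of $R$. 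The countable generation hypothesis is precisely what makes the construction of a \emph{compatible} system feasible, since one can choose the splittings inductively along the chain of $R_n$'s. Your proposal correctly identifies the obstruction (non-solidity of the completion) but does not resolve it, and the resolution requires this filtered-colimit construction rather than any descent from $\widehat{R}$.

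The remaining pieces — $(\ref{corintro:dvr-maps}) \Rightarrow (\ref{corintro:excellent-dvr})$ via Theorem \ref{thmintro:excellent-dvr}, the essentially-of-finite-type case via Theorem \ref{thmintro:splittingwithgamma}, and the existence of excellent Henselian non-$F$-solid DVRs via Proposition \ref{prop:non-F-solid} — are all correctly attributed and match the paper's proof.
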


\par A natural question that arises from Theorem \ref {thmintro:excellent-dvr}
and Corollary \ref{cor:intro-dvrs} is whether there exist non-excellent
domains that are $F$-solid. We end the main body
of the paper by addressing this question using a meta-construction of
Hochster. Hochster's construction (see Theorem \ref{thm:meta-Mel}) provides
an abundance of examples of biequidimensional locally excellent Noetherian
domains that are Frobenius split (see Corollary \ref{cor:non-exc-Fsplit}), and
consequently $F$-solid, but
not excellent. In fact, the construction of Corollary \ref{cor:non-exc-Fsplit}, 
when specialized to Krull dimension one, provides examples of locally 
excellent Noetherian domains $R$ of prime characteristic whose perfect closures $R_\perf$ are solid, 
but whose absolute integral closures $R^+$ are
not (see Example \ref {ex:F-solid-not-N1}). This illustrates that solidity of 
$R_\perf$ is more common than solidity of $R^+$, further evidenced by the
 fact that most algebro-geometric rings of prime characteristic have solid perfect
closures (see Proposition \ref {prop:perf-solid}).
On the other hand,
surprisingly little seems to be known about the solidity of absolute integral closures of Noetherian 
rings that are not complete local, even for polynomial
rings over fields of positive characteristic (see Question \ref {q:absolute-solid}). 
We list this and other related open questions in \S\ref{sect:openquestions}.
\bigskip

\par Despite the mystery surrounding solidity of absolute integral 
closures, in Appendix \ref{Karen-appendix}, Karen E. Smith proves the following 
theorem by reinterpreting results
from her fundamental thesis \cite{Sm93}, thereby partially answering
a question (Question \ref {q:image-ev1}) raised by the first two authors in \S\ref{sect:openquestions}:

\begin{theorem}[see Theorem \ref{theorem:Smith-thesis}]
\label{thm:intro-smith}
If $R$ is a complete Noetherian Gorenstein local domain of prime
characteristic $p > 0$, then the image of the evaluation at $1$ map
\[
\Hom_R(R^+, R) \xrightarrow{\ev @ 1} R
\]
is the test ideal $\tau(R)$ of $R$.
\end{theorem}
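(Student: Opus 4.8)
The plan is to realize both the image of $\ev @ 1\colon \Hom_R(R^+,R) \to R$ and the test ideal $\tau(R)$ as the same explicit ideal built out of Frobenius powers, using the Gorenstein and completeness hypotheses to reduce everything to a one-dimensional socle computation via local duality. First I would recall from Smith's thesis \cite{Sm93} the key structural fact: for a complete local domain $R$ of prime characteristic, $R^+$ is a balanced big Cohen--Macaulay algebra, and in the Gorenstein case there is a tight-closure-theoretic description of $\tau(R)$ as the annihilator of the tight closure of $0$ in the top local cohomology module $H^{\dim R}_{\fm}(R)$. Dually, via Matlis duality over the complete Gorenstein local ring $R$ (so that $E_R(R/\fm) \cong H^{d}_{\fm}(R)$ and $R$ is its own canonical module), the submodule $0^*_{E} \subseteq E$ corresponds to an ideal whose annihilator is $\tau(R)$; the Gorenstein hypothesis is what makes $E$ "look like" $R$ up to completion, collapsing the computation to principal-ideal bookkeeping.

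The second ingredient is to identify $\im(\ev @ 1)$ with a "parameter test ideal"-type object. Since $R \hookrightarrow R^+$ is a module-finite-free-approximable extension in the relevant sense and $R$ is Gorenstein (hence $\Hom_R(-,R)$ behaves well), any $R$-linear map $\phi\colon R^+ \to R$ is determined by compatibility with the filtration of $R^+$ by module-finite subextensions $R \subseteq S \subseteq R^+$, and $\phi(1)$ ranges over exactly those $c \in R$ that "extend" to splitting-type maps on each finite layer. One then matches this, layer by layer, against the colon-ideal description $\bigcap_S (\text{stuff})$ that appears in the tight closure characterization of $\tau(R)$; the Gorenstein condition ensures $\Hom_R(S,R)$ is reflexive/compatible enough that the evaluation images stabilize to precisely $\tau(R)$ and not something smaller. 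Concretely I expect the argument to run: (i) reduce to showing $\ev@1$ image equals $\Ann_R(0^*_E)$; (ii) express $0^*_E = \varinjlim_S \ker(H^d_\fm(R) \to H^d_\fm(S))$-type colimit, or dually as an intersection of images of $\Hom_R(S,R) \to R$; (iii) invoke that $R^+ = \varinjlim_S S$ so $\Hom_R(R^+,R) = \varprojlim_S \Hom_R(S,R)$ and $\ev@1$ on the inverse limit has image the intersection of the images at each finite level; (iv) conclude the two intersections agree by the Gorenstein duality from \cite{Sm93}.

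The main obstacle will be step (iii)–(iv): controlling the image of evaluation on an \emph{inverse} limit of Hom modules, since images do not commute with inverse limits in general, and verifying that the natural map $\varprojlim_S \Hom_R(S,R) \to \Hom_R(\varinjlim_S S, R)$ is well-behaved enough (here one uses that $\Hom_R(-,R)$ sends the filtered colimit $R^+ = \varinjlim S$ to a limit, which is automatic, but surjectivity of $\ev@1$ onto the claimed ideal requires a coherent choice of lifts). This is exactly where completeness enters a second time — Mittag-Leffler-type conditions on the system $\{\Hom_R(S,R)\}$, guaranteed because each $\Hom_R(S,R)$ is a finitely generated module over the complete (hence nicely behaved) ring $R$ and the transition maps have images forming a descending chain that stabilizes by Noetherianity — so the inverse limit surjects onto each stable image. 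Once that coherence is in place, the identification with $\tau(R)$ is a direct translation of the duality computations already present in \cite{Sm93}, and the Gorenstein hypothesis is used throughout to keep $\Hom_R(-,R)$ exact on the maximal-Cohen--Macaulay layers and to identify $\tau(R)$ with the annihilator of $0^*$ in a cyclic-looking injective hull.
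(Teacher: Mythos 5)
Your approach is conceptually related to the paper's but runs the argument by a genuinely different route, and the step you yourself flag as the ``main obstacle'' contains a real gap. The paper does not decompose $R^+$ into finite layers at all. It starts from the exact sequence
\[
0 \longrightarrow 0^{*\fg}_{H^d_\fm(R)} \longrightarrow H^d_\fm(R) \longrightarrow H^d_\fm(R^+),
\]
which is Theorem \ref{prop:smith-key-insight} (Smith's theorem from \cite{Sm94}), and applies the exact contravariant Matlis duality functor once. Since top local cohomology is right exact and commutes with colimits, $H^d_\fm(R^+) \simeq R^+ \otimes_R H^d_\fm(R)$, so tensor--hom adjunction (Lemma \ref{lem2}) identifies the Matlis dual of the rightmost arrow with the evaluation-at-$1$ map $\Hom_R(R^+,\omega_R)\to\omega_R$ directly; exactness of duality then reads the image off as $\Ann_{\omega_R}\bigl(0^{*\fg}_{H^d_\fm(R)}\bigr)$ (Lemma \ref{lem:Matlis-dual}), which is $\tau(R)$ in the Gorenstein case by Proposition \ref{prop:test-ideal-tight-closure}. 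No inverse limits appear.

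Your proposal instead passes through $\Hom_R(R^+,R)=\varprojlim_S\Hom_R(S,R)$ over finite subextensions $S$ and tries to show $\im(\ev@1)=\bigcap_S J_S$ with $J_S\coloneqq\im\bigl(\Hom_R(S,R)\xrightarrow{\ev@1}R\bigr)$. One inclusion is automatic, but the other requires producing a \emph{compatible} family $(\phi_S)_S$ with all $\phi_S(1)=c$, and here the argument breaks. You justify Mittag--Leffler by saying the images of the transition maps ``form a descending chain that stabilizes by Noetherianity,'' but Noetherianity gives ACC on submodules of a finitely generated module, not DCC; a descending chain like $\fm\supsetneq\fm^2\supsetneq\cdots$ does not stabilize, and there is no general reason the images of $\Hom_R(S',R)\to\Hom_R(S,R)$ should either. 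Moreover, the index category of finite subextensions $R\subseteq S\subseteq R^+$ is typically uncountable, so even genuine Mittag--Leffler would not by itself control the relevant $\varprojlim^1$. Finally, you assert that $\bigcap_S J_S=\tau(R)$ is ``a direct translation'' of duality computations, but that identification is essentially the content to be proved; the paper obtains it in one stroke from exactness of Matlis duality applied to the global sequence above, avoiding both the inverse-limit coherence problem and the layer-by-layer matching your sketch leaves open.
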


\noindent Smith obtains a similar characterization of the test ideal $\tau(R)$
when $R$ is an $\NN$-graded Gorenstein ring of finite type over $R_0$, when $R_0$
is a 
field of prime characteristic (see Theorem \ref{theorem:Smith-thesis2}). For 
this graded characterization, Smith replaces 
the absolute integral closure $R^+$ by its graded analogues $R^{+\GR}$ and $R^{+\gr}$ (see
Definition \ref{def:graded-abs-int-cl}) and $\Hom_R$ by its graded analogue $\grHom_R$ in the statement of 
Theorem \ref {thm:intro-smith}.

\subsection*{Acknowledgments}
We would first like to thank Karen E. Smith for contributing the appendix.
This paper is a continuation of work the first author began 
with Karen, so we also thank her for all of her insights and for helpful
conversations regarding absolute integral closures. 
The first author would also like to thank Karl Schwede from whom he first 
learned of some of the questions addressed in this paper at the
2015 AMS Math Research Communities in commutative algebra. Both authors are 
very grateful to Kevin Tucker for numerous insightful conversations on the
topic of solidity, especially on solidity of completions, 
and for his interest in non-$F$-finite rings. 
We are also grateful to
Melvin Hochster for answering questions about solidity, absolute
integral closures,
and for sharing with us some of the history behind Lemma \ref{lem:fed12}. 
In addition, we would like to thank Linquan Ma for discussions on the solidity
of absolute integral closures, and Bhargav Bhatt for conversations about relatively perfect maps. 
Finally, we are grateful to Eric Canton, Naoki Endo,
Shubhodip Mondal, Alapan Mukhopadhyay, Mircea Musta\c{t}\u{a}, and Matthew Stevenson for
helpful conversations.

\section{Notation and preliminaries}
\counterwithout{subsubsection}{section}
\counterwithin{subsubsection}{subsection}
All rings will be commutative with identity. Given a domain $R$, the integral
closure of $R$ in its fraction field (that is, the normalization of $R$) will often
be denoted as $R^N$.
Most rings in this paper will be Noetherian.
On the other hand, if $R$ is a Noetherian domain, we will consider its
\emph{absolute integral closure} $R^+$, which is the integral closure of $R$ in
an algebraic closure of its fraction field.
Similarly, if $R$ is a Noetherian domain of prime characteristic, we will
consider the \emph{perfect closure}
\begin{equation}\label{eq:defperfectclosure}
  R_\perf \coloneqq \colim_{e \in \ZZ_{>0}}
  \Bigl( R \overset{F}{\longrightarrow} R
  \overset{F}{\longrightarrow} \cdots \Bigr),
\end{equation}
where $F\colon R \to R$ is the Frobenius map (see Definition \ref{def:frob}).
Both rings $R^+$ and $R_\perf$ are not Noetherian unless $R$ is a
field.

\par A \emph{discrete valuation ring} (abbreviated DVR) is a Noetherian regular local ring of Krull
dimension one, or equivalently, a Noetherian normal ring with a unique nonzero
maximal ideal that is principal.

\subsection{Frobenius}
We fix our notation for the Frobenius map.
\begin{definition}\label{def:frob}
  Let $R$ be a ring of prime characteristic $p > 0$.
  The \emph{Frobenius map} is the ring homomorphism
  \[
    \begin{tikzcd}[column sep=1.475em,row sep=0]
      \mathllap{F_R\colon} R \rar & F_{R*}R.\\
      r \rar[mapsto] & r^p
    \end{tikzcd}
  \]
  We often consider the target copy of $R$ as an $R$-algebra via $F_R$, in which
  case we denote it by $F_{R*}R$. In other words, $F_{R*}R$ is the
  same underlying ring as $R$, but for $r \in R$ and $x \in F_{R*}R$, the
  $R$-algebra structure is given by
  \[r\cdot x = r^px.\]
    The $e$-th iterate of the Frobenius map is denoted $F^e_{R*}\colon R 
  \rightarrow F^e_{R*}R$. If $R$ is a domain, then $F^e_{R*}R$ will
  occasionally be identified with the ring $R^{1/p^e}$ of $p^e$-th roots
  of elements of $R$. Under this identification, the map $F^e_R$ corresponds
  to the inclusion $R \hookrightarrow R^{1/p^e}$.
  An $R$-linear map of the form
  $F^e_{R*}R \rightarrow R$ will often be called a \emph{$p^{-e}$-linear
  map}.
  We say $R$ is \emph{$F$-finite} if the Frobenius map is 
  finite (equivalently, of finite type).
\end{definition}

\par We will sometimes drop the subscript $R$ from $F^e_{R*}R$ and just write $F^e_*R$ instead
when $R$ is clear from context or is too cumbersome to write as a subscript.
We hope this does not cause any confusion.

\subsection{Excellent rings}
A focus of this paper will be to better understand various notions of 
singularities defined via the Frobenius map in the setting of excellent rings.
These are a class of rings, introduced by Grothendieck and Dieudonn\'e and defined below,
to which deep results in algebraic geometry such as resolution of singularities
are expected to extend.

\begin{definition}
\label{def:excellent}
Let $R$ be a Noetherian ring. We say that $R$ is \emph{excellent} if it satisfies
the following axioms:
\begin{enumerate}[label=$(\arabic*)$,ref=\arabic*]
  \item\label{def:excellentuc}
    $R$ is \emph{universally catenary}, that is, for every finite type
	$R$-algebra $S$, if $\fp \subsetneq \fq$ are two prime ideals of $S$,
	then all maximal chains of primes ideals from $\fp$ to $\fq$ have the same
	length.
  \item\label{def:excellentj2}
    $R$ is \emph{J-2}, that is, for every finite type $R$-algebra $S$, the
    regular locus in $\Spec(S)$
	is open.
  \item\label{def:excellentgring}
    $R$ is a \emph{$G$-ring}, that is, for every prime ideal $\fp$ of $R$, the $\fp R_\fp$-adic completion
	map $R_\fp \rightarrow \widehat{R_\fp}$ has geometrically regular fibers. 
\end{enumerate}
We say that $R$ is \emph{quasi-excellent} if it
satisfies axioms $(\ref{def:excellentj2})$ and
$(\ref{def:excellentgring})$, but not necessarily $(\ref{def:excellentuc})$.
We say that $R$ is
\emph{locally excellent} if for all prime (equivalently, maximal) ideals
$\fp$ of $R$, the local ring $R_\fp$ is excellent.
\end{definition}

\begin{example}\label{ex:excellence}\leavevmode
\begin{enumerate}[label=$(\arabic*)$,ref=\arabic*]
\item\label{ex:localgringsareqe}
  Local $G$-rings are quasi-excellent by \cite[Thm.\ 76 and footnote on p.\ 252]{Mat80}.
\item\label{ex:qeeft}
If $R$ is an excellent ring, then any ring which is essentially of 
finite type over $R$ is also excellent. In particular, since a complete local 
Noetherian ring is excellent, any algebra that is essentially of finite type
over a complete local ring is also excellent.
The same applies for quasi-excellent rings.
See \cite[(34.A) Def.]{Mat80} or \cite[Prop.\ 5.6.1$(b)$, Rem.\ 5.6.3$(i)$, Thms.\ 6.12.4 and 7.4.4$(ii)$]{EGAIV2}.
\item A Noetherian $F$-finite ring of prime characteristic is excellent by
 \cite[Thm.\ 2.5]{Kun76}.
 
\item\label{ex:F-finite-not-local} 
Examples of locally excellent rings that are not excellent are 
abundant. See Theorem \ref{thm:meta-Mel} for a construction
due to Hochster that provides many such examples.
There are also examples of locally $F$-finite rings of prime characteristic that
are not $F$-finite. See \cite{DI}, which uses a construction of Nagata
\cite[\S5]{Nag59}.
\end{enumerate}
\end{example}

\subsection{Pure maps}

We will study the notion of a pure map of modules that we introduce next.

\begin{definition}
\label{def:pure-map}
Let $R$ be a ring. A map of $R$ modules $\varphi\colon M \rightarrow N$
is \emph{pure} if for all $R$-modules $P$, the induced map
\(
  \id_P \otimes \varphi \colon P \otimes_R M \rightarrow P \otimes_R N
\)
is injective.
\end{definition}

\begin{remark}\label{rem:puremaps}\leavevmode
\begin{enumerate}[label=$(\arabic*)$,ref=\arabic*]
	\item A pure map of modules is automatically injective (by taking $P = R$).
	
	\item Since any $R$-module $P$ is a filtered colimit of its finitely generated
	submodules, and since tensor products commute with filtered colimits, 
	it follows that to check that an $R$-linear map $\varphi\colon M \rightarrow N$ is
	pure, it suffices to check that for all finitely generated $R$-modules $P$, 
	the map $\id_P \otimes \varphi$ is injective.
	
	\item Any map of $R$-modules $M \rightarrow N$ that admits an $R$-linear
	left inverse is pure.
	
	\item Any faithfully flat ring homomorphism $R \rightarrow S$ is pure
    as a map of $R$-modules \cite[Ch.\ I, $\mathsection3$, n\textsuperscript{o}
    5, Prop.\ 9]{BouCA}.\label{rem:puremapsfflat}
\end{enumerate}
\end{remark}

We will use the following result about pure maps of complete local rings.
The ``in particular'' statement was communicated to Hochster by Auslander,
although it may be older.

\begin{lemma}[{cf.\ \cite[Cor.\ 6.24]{HH90}}]\label{lem:fed12}
  Let $(R,\fm,\kappa)$ be a Noetherian local ring, and let $f\colon R \to
  M$ be a map of $R$-modules.
  Then, the following are equivalent:
  \begin{enumerate}[label=$(\roman*)$,ref=\roman*]
    \item $f$ is pure.\label{lem:fed12pure}
    \item There exists an $R$-linear map $g\colon M \to \widehat{R}$ such that
      $g \circ f$ is the canonical map $R \to \widehat{R}$.\label{lem:fed12split}
  \end{enumerate}
  In particular, if $R$ is complete local, then
  every pure map $f\colon R \to M$ splits as a map of $R$-modules.
\end{lemma}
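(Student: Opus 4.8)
The plan is to prove the two-way implication $(\ref{lem:fed12pure}) \Leftrightarrow (\ref{lem:fed12split})$ directly, with the ``in particular'' clause following immediately by taking $R = \widehat{R}$ so that the canonical map $R \to \widehat{R}$ is the identity. The direction $(\ref{lem:fed12split}) \Rightarrow (\ref{lem:fed12pure})$ is the soft one: if $g \circ f$ equals the completion map $c\colon R \to \widehat{R}$, then since $c$ is faithfully flat hence pure (Remark \ref{rem:puremaps}\eqref{rem:puremapsfflat}), for every $R$-module $P$ the map $\id_P \otimes c = (\id_P \otimes g)\circ(\id_P \otimes f)$ is injective, forcing $\id_P \otimes f$ to be injective; thus $f$ is pure.

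For the substantive direction $(\ref{lem:fed12pure}) \Rightarrow (\ref{lem:fed12split})$, I would use the standard characterization of purity via injective hulls. Let $E = E_R(\kappa)$ be the injective hull of the residue field. The key fact is that a map $f\colon R \to M$ of $R$-modules is pure if and only if $\id_E \otimes f\colon E \to E \otimes_R M$ is injective; this is because $E$ cogenerates the category of $R$-modules in a suitable sense (more precisely, one uses that an element $x$ in an arbitrary module lies in the kernel-detecting situation iff it does after tensoring against the finitely generated submodules of $E$, together with Matlis-type arguments), and it suffices by Remark \ref{rem:puremaps} to test purity on finitely generated modules, each of which embeds into a finite direct sum of copies of $E$ after passing to a suitable quotient — the cleanest route is: $f$ pure $\Rightarrow$ $\id_P\otimes f$ injective for all $P$, apply with $P = E$. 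Then apply $\Hom_R(-, E)$: Matlis duality turns the injection $E \to E\otimes_R M$ into a surjection $\Hom_R(E\otimes_R M, E) \twoheadrightarrow \Hom_R(E,E) \cong \widehat{R}$, and by Hom-tensor adjunction $\Hom_R(E \otimes_R M, E) \cong \Hom_R(M, \Hom_R(E,E)) \cong \Hom_R(M, \widehat{R})$. Chasing $\id_{\widehat{R}} \in \widehat{R} \cong \Hom_R(E,E)$ through this surjection produces an $R$-linear $g\colon M \to \widehat{R}$, and unwinding the adjunction/duality isomorphisms shows $g \circ f$ is the map $R \to \widehat{R}$ induced by $\id_E$ on $\Hom_R(E,E)$, which is exactly the canonical completion map $R \to \widehat R$ (here one uses $\Hom_R(E,E)\cong\widehat R$ and the naturality of the evaluation pairing). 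I would include a careful verification of this last identification, since that is where the precise statement ``$g\circ f$ \emph{is} the canonical map'' (not merely ``splits up to an automorphism'') gets pinned down.

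The main obstacle is the bookkeeping in the Matlis-duality step: one must (a) justify that purity of $f$ can be tested against $E = E_R(\kappa)$ alone — which I would phrase via the general principle that for $R$ Noetherian local, $\id_E \otimes f$ injective implies $\id_P \otimes f$ injective for every finitely generated $P$ (embed a cosyzygy of $P$ into a product of copies of $E$), hence purity; and (b) track the canonical identifications $\Hom_R(E,E)\cong\widehat R$, $\Hom_R(M,\Hom_R(E,E))\cong\Hom_R(M\otimes_R E, E)$, and $\Hom_R(-,E)$ applied to $E \hookrightarrow E\otimes_R M$, making sure all squares commute so that the produced $g$ genuinely satisfies $g\circ f = (R\to\widehat R)$ on the nose. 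None of this requires $M$ to be finitely generated or Noetherian, which is important for the intended applications (e.g. $M = F_*R$ or $M = R^+$). Once $(\ref{lem:fed12pure})\Leftrightarrow(\ref{lem:fed12split})$ is established, the ``in particular'' is immediate: when $R$ is complete, $\widehat R = R$ and the canonical map is $\id_R$, so $g$ is an honest left inverse to $f$, i.e.\ $f$ splits as a map of $R$-modules. I would also remark that this recovers \cite[Cor.\ 6.24]{HH90} in the cited form.
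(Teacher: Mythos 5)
Your proposal is correct and follows essentially the same path as the paper's proof: for $(\ref{lem:fed12split})\Rightarrow(\ref{lem:fed12pure})$, use that the completion map is faithfully flat hence pure, so $f$ is pure as a factor of a pure map; and for $(\ref{lem:fed12pure})\Rightarrow(\ref{lem:fed12split})$, apply $\Hom_R(-,E_R(\kappa))$ to the injection $\id_{E_R(\kappa)}\otimes f$, invoke tensor--hom adjunction and the Matlis-duality isomorphism $\Hom_R(E_R(\kappa),E_R(\kappa))\simeq\widehat R$, and pull back the canonical completion map to obtain $g$. The only cosmetic difference is that you initially flirt with the unneeded converse (testing purity against $E$ alone) before correctly settling on the one-directional argument the paper uses.
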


\begin{proof}
  We first prove $(\ref{lem:fed12split}) \Rightarrow (\ref{lem:fed12pure})$.
  The canonical map $R \to \widehat{R}$ is faithfully flat, hence pure.
  Therefore, assuming $(\ref{lem:fed12split})$, the composition $g \circ f$ is a
  pure map of $R$-modules, hence so is $f$.
  \par We now show show $(\ref{lem:fed12pure}) \Rightarrow
  (\ref{lem:fed12split})$.
  Denoting by $E_R(\kappa)$ the injective hull of the residue field $\kappa$,
  we claim we have the following commutative diagram with exact rows, where the
  vertical homomorphisms are isomorphisms:
  \[
    \begin{tikzcd}[column sep=3em]
      \Hom_R\bigl(E_R(\kappa) \otimes_R M,E_R(\kappa)\bigr)
      \rar{(\id_{E_R(\kappa)} \otimes f)^*}\arrow{d}[sloped,above]{\sim} &
      \Hom_R\bigl(E_R(\kappa)\otimes_RR,E_R(\kappa)\bigr)
      \arrow{d}[sloped,above]{\sim} \rar & 0\\
      \Hom_R\bigl(M,\Hom_R\bigl(E_R(\kappa),E_R(\kappa)\bigr)\bigr) \rar{f^*} &
      \Hom_R\bigl(R,\Hom_R\bigl(E_R(\kappa),E_R(\kappa)\bigr)\bigr) \rar & 0\\
      \Hom_R(M,\widehat{R}) \rar{f^*}\arrow{u}[sloped,below]{\sim} &
      \Hom_R(R,\widehat{R}) \arrow{u}[sloped,below]{\sim} \rar & 0
    \end{tikzcd}
  \]
  The top horizontal arrow is the Matlis dual of $\id_{E_R(\kappa)}
  \otimes f$, and is surjective by the purity of $f$.
  The middle row is obtained from the top row by tensor-hom adjunction, and
  the last row is obtained using the isomorphism
  $\Hom_R(E_R(\kappa),E_R(\kappa)) \simeq \widehat{R}$ from \cite[Thm.\
  18.6$(iv)$]{Mat89}.
  Since the last row is exact by the commutativity of the diagram,
  there exists a map $g \in \Hom_R(M,\widehat{R})$ such
  that $f^*(g) = g \circ f$ is the completion homomorphism $R \to \widehat{R}$.
  \par Finally, the ``in particular'' statement follows since the map $g$ in
  $(\ref{lem:fed12split})$ is a splitting for $f$ when $R = \widehat{R}$.
\end{proof}

\begin{remark}
Lemma \ref{lem:fed12}
implies that a
pure map $R \rightarrow M$
from a Noetherian local ring $R$ is split precisely when among
the non-empty set of $\widehat{R}$-linear splittings of the
induced map $\widehat{R} \rightarrow M \otimes_R \widehat{R}$, 
there exists a splitting $f\colon M \otimes_R \widehat{R} \rightarrow \widehat{R}$ 
such that the image of the composition
$M \rightarrow M \otimes_R \widehat{R} \xrightarrow{f} \widehat{R}$
lands inside the canonical image of $R$ in $\widehat{R}$.
\end{remark}

\subsection{\textit{F}-singularities}
We now recall various notions of $F$-singularities
that will be
studied subsequently in this paper.

\begin{definition}\label{def:freg}
  Let $R$ be a Noetherian ring of prime characteristic $p > 0$, and 
$R^\circ$ denote the set of elements of $R$ not in any minimal prime.
  For every $c \in R$ and every integer $e > 0$, we denote by $\lambda^e_c$ the
  composition
  \[
    R \overset{F^e}{\longrightarrow} F_{R*}^eR \xrightarrow{F_{R*}^e(-\cdot c)}
    F_{R*}^eR.
  \]
  In other words, $\lambda^e_c$ is the unique $R$-linear map $R \rightarrow F^e_{R*}R$
  that maps $1$ to $c$.
  If $c \in R$, then following \cite[Def.\ 6.1.1]{DS16}, we say that $R$ is
  \emph{$F$-pure along $c$} if $\lambda^e_c$ is pure for some $e > 0$, and
  that $R$ is \emph{Frobenius split along $c$} if $\lambda^e_c$ splits as an
  $R$-module homomorphism for some $e >0$.
  We then say that
  \begin{enumerate}[label=$(\alph*)$,ref=\alph*]
    \item $R$ is \emph{split $F$-regular} if $R$ is Frobenius split along every $c
      \in R^\circ$ \cite[Def.\ 5.1]{HH94};\label{def:fsingssplitreg}
    \item $R$ is \emph{$F$-pure regular} if $R$ is $F$-pure along every $c \in
      R^\circ$ \cite[Rem.\ 5.3]{HH94};\label{def:fsingspureregds}
    \item $R$ is \emph{strongly $F$-regular} if for every $R$-module $M$ 
	and every submodule $N$ of $M$, $N$ is tightly closed in $M$
     \cite[Def.\ 3.3]{Has10};\label{def:fsingspureregapp}
    \item $R$ is \emph{Frobenius split} if $R$ is Frobenius split along $1 \in
      R$ \cite[Def.\ 1]{MR85}; and
    \label{def:F-split}
  \item $R$ is \emph{$F$-pure} if $R$ is $F$-pure along $1 \in R$ \cite[p.\
    121]{HR76}.\label{def:F-pure}
  \end{enumerate}
  The definition in $(\ref{def:fsingspureregapp})$ is due to Hochster. See
  \cite[Def.\ 8.2]{HH90} for the definition of tight closure for modules.
  \par Note that $(\ref{def:fsingssplitreg})$ is the usual definition of strong
  $F$-regularity in the $F$-finite setting.
  The terminology in $(\ref{def:fsingssplitreg})$ and
  $(\ref{def:fsingspureregds})$ is from \cite[Defs.\ 6.6.1 and 6.1.1]{DS16}.
  $F$-pure regular rings are called \emph{very strongly $F$-regular} in
  \cite[Def.\ 3.4]{Has10}.
\end{definition}
\begin{remark}\label{rem:fsingimplications}
  We have the following implications between the classes of
  singularities above.
  \[
    \begin{tikzcd}[column sep=7.5em,row sep=large]
      \text{$F$-split regular} \rar[Rightarrow]{\text{split maps are pure}}
      \arrow[Rightarrow]{dd}[swap]{\text{Definition}}
      & \text{$F$-pure regular}
      \dar[Rightarrow,swap]{\text{\cite[Lem.\ 3.8]{Has10}}}\\
      & \text{strongly $F$-regular} \uar[Rightarrow,dashed,bend
      right=30,swap]{\substack{\text{local}\\\text{\cite[Lem.\ 3.6]{Has10}}}}
      \arrow[start anchor=west,Rightarrow,dashed,bend left=12]{ul}[description,
      pos=0.55]{\substack{\text{$F$-finite}\\\text{\cite[Lem.\ 3.9]{Has10}}}}
      \arrow[Rightarrow]{d}[swap]{\text{\cite[Cor.\ 3.7]{Has10}}}
      & \text{regular}
      \arrow[Rightarrow]{l}[swap]{\text{\cite[Thm.\ 6.2.1]{DS16}}}\\
      \text{Frobenius split} \rar[Rightarrow]{\text{split maps are pure}}
      & \text{$F$-pure}
      \arrow[Rightarrow,dashed,bend left=12,start anchor=south west,end
      anchor=south east]{l}[description]{\substack{\text{$F$-finite}\\\text{\cite[Cor.\ 5.3]{HR76}}}}
    \end{tikzcd}
  \]
  For the implication ``regular $\Rightarrow$ strongly $F$-regular,'' the
  reference \cite[Thm.\ 6.2.1]{DS16} proves that regular local rings are
  $F$-pure regular.
  This shows that ``regular $\Rightarrow$ strongly $F$-regular'' in general,
  since a ring $R$ of prime characteristic is strongly $F$-regular if and only
  if every localization at a prime ideal is $F$-pure regular \cite[Lem.\
  3.6]{Has10}.
  We expect that all excellent regular rings are $F$-pure regular (see Theorem
\ref {eft-G-ring} and  Question
  \ref{q:regular-F-pure}).
\end{remark}

\subsection{The gamma construction}
We give a brief account of the gamma construction of Hochster and Huneke
\cite{HH94}.
\begin{citedconstr}[{\cite[(6.7) and (6.11)]{HH94}}]\label{constr:gamma}
  Let $(R,\fm,\kappa)$ be a Noetherian complete local ring of prime
  characteristic $p > 0$.
  By the Cohen structure theorem, we may identify the residue field $\kappa$ with a
  coefficient field $\kappa \subseteq R$, and by Zorn's lemma, there exists a
  \emph{$p$-basis} $\Lambda$ for $\kappa$, which is a subset $\Lambda \subseteq \kappa$
  such that $\kappa = \kappa^p(\Lambda)$, and such that for every finite subset $\Sigma
  \subseteq \Lambda$ with $s$ elements, we have $[\kappa^p(\Sigma) : \kappa^p] = p^s$.
  See \cite[p.\ 202]{Mat89}.
  \par Now for every cofinite subset $\Gamma \subseteq \Lambda$ and for every
  integer $e \ge 0$, we consider the subfield
  \[
    \kappa_e^\Gamma = \kappa\bigl[\lambda^{1/p^e}\bigr]_{\lambda \in \Gamma}
    \subseteq \kappa_{\perf}
  \]
  of a perfect closure $\kappa_{\perf}$ of $\kappa$.
  These subfields of $\kappa_\perf$ form an ascending chain, and we set
  \[
    R^\Gamma \coloneqq \varinjlim_e\Bigl(\kappa_e^\Gamma\llbracket R \rrbracket\Bigr),
  \]
  where $\kappa_e^\Gamma\llbracket R \rrbracket$ is the completion of $\kappa_e^\Gamma
  \otimes_k R$ at the extended ideal $\mathfrak{m}\cdot(\kappa_e^\Gamma \otimes_\kappa
  R)$.
  Finally, if $S$ is an $R$-algebra essentially of finite type, we set
  $S^\Gamma \coloneqq S \otimes_R R^\Gamma$.
\end{citedconstr}

The gamma construction satisfies the following properties that will be
important for us.

\begin{lemma}\label{lem:gammafacts}
  Fix notation as in Construction \ref{constr:gamma}.
  \begin{enumerate}[label=$(\roman*)$,ref=\roman*]
    \item The rings $R^\Gamma$ and $S^\Gamma$ are Noetherian and
      $F$-finite.\label{lem:gammaffin}
    \item If $S$ is reduced, then there exists a cofinite subset $\Gamma_0
      \subseteq \Lambda$ such that $S^\Gamma$ is reduced for every cofinite
      subset $\Gamma \subseteq \Gamma_0$.\label{lem:gammared}
  \end{enumerate}
\end{lemma}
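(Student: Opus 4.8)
The plan is to cite the original Hochster--Huneke paper \cite{HH94} for the substance and supply the short completeness/reducedness bookkeeping. For part $(\ref{lem:gammaffin})$, the key point is that each $\kappa^\Gamma_e$ is a finitely generated field extension of a fixed $p$-basis complement, so $\kappa^\Gamma_e \llbracket R \rrbracket$ is Noetherian (completion of a Noetherian ring) and module-finite behavior of Frobenius is inherited; the colimit $R^\Gamma = \varinjlim_e \kappa^\Gamma_e\llbracket R\rrbracket$ is shown to be Noetherian and $F$-finite in \cite[(6.11)]{HH94}, and then $S^\Gamma = S \otimes_R R^\Gamma$ is essentially of finite type over the Noetherian $F$-finite ring $R^\Gamma$, hence Noetherian and (by \cite[Thm.\ 2.5]{Kun76}, or directly since essentially-of-finite-type algebras over $F$-finite rings are $F$-finite) also $F$-finite. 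First I would recall the relevant statements from \cite{HH94} and then note that the passage from $R^\Gamma$ to $S^\Gamma$ is formal. The main obstacle here is purely expository: making sure the reader sees that $F$-finiteness of $R^\Gamma$ really is established in \cite{HH94} and that it propagates to essentially-of-finite-type algebras.

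For part $(\ref{lem:gammared})$, I would invoke \cite[(6.13)]{HH94}: when $R$ is reduced, there is a cofinite $\Gamma_0 \subseteq \Lambda$ so that $R^\Gamma$ is reduced for all cofinite $\Gamma \subseteq \Gamma_0$. The general case of an essentially-of-finite-type reduced $R$-algebra $S$ follows by combining this with a standard argument: reducedness of $S$ is equivalent to $S$ being a subring of a finite product of fields (its total ring of fractions modulo nilpotents, using that $S$ is Noetherian), and one reduces to understanding $S^\Gamma = S \otimes_R R^\Gamma$. The point is that for a reduced Noetherian ring $S$ essentially of finite type over a field (or over $R$), reducedness of $S^\Gamma$ is controlled by geometric reducedness of the residue fields of the minimal primes of $S$ over the base, and these residue fields are finitely generated; a cofinite $\Gamma_0$ can be chosen (adjoining $p$-th roots only of a finite set of the relevant $p$-independent elements) so that the base change stays reduced. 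I would cite \cite[(6.13) and the discussion around it]{HH94} for precisely this statement, since Hochster and Huneke prove exactly the version needed for algebras essentially of finite type.

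The step I expect to require the most care is part $(\ref{lem:gammared})$ in the relative (algebra) case: one must be sure the single cofinite set $\Gamma_0$ works simultaneously for all smaller $\Gamma$, which requires knowing that once one adjoins $p$-th roots of the finitely many ``bad'' $p$-basis elements, no further damage is done by passing to an even smaller $\Gamma$ (equivalently, removing more elements from $\Gamma$). This monotonicity is exactly what \cite[(6.13)]{HH94} records, so in practice the proof is a two-line citation for each part plus the remark that $S^\Gamma$ is essentially of finite type over $R^\Gamma$; I would not reproduce the internal arguments of \cite{HH94}.
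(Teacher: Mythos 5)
Your proposal matches the paper's proof: part~$(\ref{lem:gammaffin})$ is cited to \cite[(6.11)]{HH94} together with the observation that algebras essentially of finite type over $F$-finite rings are $F$-finite, and part~$(\ref{lem:gammared})$ is cited to \cite[Lem.\ 6.13$(a)$]{HH94}. The only minor point worth flagging is that in your treatment of $(\ref{lem:gammared})$ you initially frame the cited result as being about $R$ itself and then sketch a ``standard argument'' to extend it to a reduced essentially-of-finite-type $R$-algebra $S$; this detour is unnecessary, since---as you acknowledge later in the same paragraph---Hochster and Huneke's Lemma 6.13$(a)$ is already stated for algebras essentially of finite type, which is exactly what the paper cites.
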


\begin{proof}
  $(\ref{lem:gammaffin})$ follows from \cite[(6.11)]{HH94} and the fact that
  algebras essentially of finite type over $F$-finite rings are $F$-finite.
  $(\ref{lem:gammared})$ is \cite[Lem.\ 6.13$(a)$]{HH94}.
\end{proof}

\addtocontents{toc}{\protect\setcounter{tocdepth}{2}}
\addtocontents{toc}{\protect\smallskip}
\section{\texorpdfstring{$F$}{F}-purity, Frobenius splitting, and
variants of strong \texorpdfstring{$F$}{F}-regularity}
\label{sec:F-purity-and-splitting}
In \cite{DM}, the authors constructed examples of excellent rings
of prime characteristic that are $F$-pure but not Frobenius split, thereby answering
the long-standing open question of whether excellent $F$-pure rings are 
always Frobenius split. The origins
of this question can be traced to Hochster and Roberts's observation that 
$F$-purity and Frobenius splitting coincide for Noetherian $F$-finite rings, 
and to Auslander's lemma (Lemma \ref {lem:fed12}) which shows that the 
same result holds for complete local rings. Our goal in this section is to show 
that, despite the examples of \cite{DM}, $F$-purity and Frobenius splitting coincide for most rings that arise in classical algebraic geometry 
(Theorem \ref{thm:splittingwithgamma}), even in a non-$F$-finite setting. 
In Theorem \ref{thm:splittingwithgamma}, we will also prove an analogous result for the 
variants of strong $F$-regularity defined in Definition \ref{def:freg}. This
in turn will allow us to establish that most excellent regular rings arising in
arithmetic and geometry are $F$-pure regular (Theorem \ref{eft-G-ring}).
At the same time, we will show that the constructions considered in
\cite{DM} are always $F$-pure regular, but not necessarily split $F$-regular
(Theorem \ref{thm:F-pure-not-split-F}), giving the first examples of excellent
regular local rings (even DVRs) where these two notions of singularity do not coincide.
We will then end this section with a characterization of split $F$-regularity
for DVRs (Proposition \ref{thm:split-F-regular-DVR}). 

\par Our explorations of the various notions of $F$-singularities in this section and Section \ref {sec:permanence} will motivate the discussion of solidity of algebra extensions, which is a topic that we
will formally introduce in Section \ref{sec:F-solidity} and study in 
considerable depth thereafter.
\subsection{\textit{F}-singularities for rings of finite type over complete local
rings}
We show that for rings to which the gamma construction applies, the
notions of $F$-purity and Frobenius splitting coincide, as do the variants of strong
$F$-regularity defined in Definition \ref{def:freg}.

\begin{theorem}\label{thm:splittingwithgamma}
  Let $S$ be a ring essentially of finite type over a Noetherian complete local
  ring $(R,\fm,\kappa)$ of prime characteristic $p > 0$.
  If $S$ is strongly $F$-regular (resp.\ $F$-pure), then $S$ is split
  $F$-regular (resp.\ Frobenius split). Consequently, the notions
  $(\ref{def:fsingssplitreg})$, $(\ref{def:fsingspureregds})$, and
  $(\ref{def:fsingspureregapp})$ (resp.  $(\ref{def:F-split})$ and $(\ref{def:F-pure})$) in Definition \ref{def:freg}
  are equivalent for such $S$.
  \end{theorem}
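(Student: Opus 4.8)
The plan is to reduce the non-$F$-finite statements for $S$ to the known $F$-finite case by passing through the gamma construction $R^\Gamma$, using Lemma~\ref{lem:gammafacts} to guarantee that $R^\Gamma$ and $S^\Gamma = S \otimes_R R^\Gamma$ are Noetherian, $F$-finite, and (after shrinking $\Gamma$) reduced. The key point is that the map $R \to R^\Gamma$, and hence $S \to S^\Gamma$, is faithfully flat: each $\kappa_e^\Gamma\llbracket R \rrbracket$ is faithfully flat over $R$ because it is a completion of the faithfully flat base change $\kappa_e^\Gamma \otimes_\kappa R$ along an ideal contained in the Jacobson radical, and the filtered colimit of faithfully flat algebras is faithfully flat. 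So $S \to S^\Gamma$ is a faithfully flat, hence pure, map of rings, and its fibers can be arranged to be regular (indeed the closed fiber over a maximal ideal is built from a separable — since $p$-basis — field extension). This is exactly the setting in which $F$-purity ascends (Proposition~\ref{prop:intro-permanence}$(\ref{prop:blah})$, i.e.\ \cite[Prop.\ 2.4.4]{Has10}) and descends (Proposition~\ref{prop:intro-permanence}$(\ref{prop:blahblah})$, i.e.\ \cite[Prop.\ 5.13]{HR76}).

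First I would handle the $F$-pure case. Assume $S$ is $F$-pure. Since $S$ is reduced (an $F$-pure ring is reduced), shrink $\Gamma$ using Lemma~\ref{lem:gammafacts}$(\ref{lem:gammared})$ so that $S^\Gamma$ is reduced, hence in particular the map $S \to S^\Gamma$ has the property that we may apply the ascent statement; thus $S^\Gamma$ is $F$-pure. Because $S^\Gamma$ is $F$-finite, $F$-purity and Frobenius splitting coincide (\cite[Cor.\ 5.3]{HR76}, as recorded in Remark~\ref{rem:fsingimplications}), so $S^\Gamma$ is Frobenius split: there is an $S^\Gamma$-linear splitting $\varphi$ of $F_{S^\Gamma *}S^\Gamma \to S^\Gamma$, equivalently of $\lambda^1_1$ for $S^\Gamma$. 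Now I would transfer this splitting back to $S$. The natural isomorphism $F_{S^\Gamma*}S^\Gamma \cong F_{S*}S \otimes_S S^\Gamma$ (Frobenius commutes with the flat base change $S \to S^\Gamma$, since $R^\Gamma$ is already "Frobenius-closed enough" — this is where $F$-finiteness and flatness of the gamma construction are used) lets us view $\varphi$ as an $S^\Gamma$-linear, hence in particular $S$-linear, map $F_{S*}S \otimes_S S^\Gamma \to S^\Gamma$ splitting $\lambda^1_1 \otimes_S S^\Gamma$. Composing with $S \hookrightarrow S \otimes_S S^\Gamma = S^\Gamma$ shows that $\lambda^1_1\colon S \to F_{S*}S$ is pure as a map of $S$-modules, but we already knew that; the real work is to get an honest splitting valued in $S$. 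Here I would instead argue as in the standard $F$-finite descent of splittings: purity of $S \to S^\Gamma$ plus the existence of a splitting after base change, combined with the fact that $\Hom$ commutes with the flat base change for finitely presented modules, gives $\Hom_S(F_{S*}S, S) \otimes_S S^\Gamma \cong \Hom_{S^\Gamma}(F_{S^\Gamma*}S^\Gamma, S^\Gamma)$, and the evaluation-at-$1$ map is compatible with base change; a nonzero (indeed surjective onto $1$) element upstairs forces the image of $\ev@1\colon \Hom_S(F_{S*}S,S) \to S$ to generate the unit ideal after faithfully flat base change, hence to equal $S$, i.e.\ $S$ is Frobenius split.

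For the strongly $F$-regular case I would run the same strategy pointwise in $c \in S^\circ$: strong $F$-regularity of $S$ means every submodule of every module is tightly closed, which by \cite[Lem.\ 3.6, 3.8]{Has10} (Remark~\ref{rem:fsingimplications}) is equivalent to $S$ being $F$-pure regular, i.e.\ $F$-pure along every $c \in S^\circ$. For each such $c$, $\lambda^e_c\colon S \to F^e_{S*}S$ is pure for some $e$; base-changing along the faithfully flat $S \to S^\Gamma$ and identifying $F^e_{S*}S \otimes_S S^\Gamma \cong F^e_{S^\Gamma*}S^\Gamma$ shows $S^\Gamma$ is $F$-pure along the image of $c$, which lies in $(S^\Gamma)^\circ$ since $S^\Gamma$ is reduced and the map is faithfully flat. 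Since $S^\Gamma$ is $F$-finite, $F$-pure regularity upgrades to split $F$-regularity there (the dashed $F$-finite implication in Remark~\ref{rem:fsingimplications}, \cite[Lem.\ 3.9]{Has10}), so $\lambda^e_c \otimes_S S^\Gamma$ splits; then the same $\Hom$-commutes-with-flat-base-change and surjectivity-descends-under-faithfully-flat argument as above shows $\lambda^{e}_c$ itself splits over $S$. As $c \in S^\circ$ was arbitrary, $S$ is split $F$-regular. Finally, the "consequently" clause is immediate: split $F$-regular $\Rightarrow$ $F$-pure regular $\Rightarrow$ strongly $F$-regular and Frobenius split $\Rightarrow$ $F$-pure always hold (Remark~\ref{rem:fsingimplications}), and the theorem supplies the reverse implications, so all of $(\ref{def:fsingssplitreg})$, $(\ref{def:fsingspureregds})$, $(\ref{def:fsingspureregapp})$ (resp.\ $(\ref{def:F-split})$, $(\ref{def:F-pure})$) are equivalent for such $S$.

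The main obstacle I anticipate is not the ascent of $F$-purity to $S^\Gamma$ — that is cited — but rather the descent of an actual \emph{splitting} from $S^\Gamma$ back to $S$: one must be careful that $\Hom_S(F^e_{S*}S, -)$ commutes with the base change $-\otimes_S S^\Gamma$ (which needs $F^e_{S*}S$ to be a finitely presented $S$-module — true since $S$ is $F$-finite? No: $S$ is \emph{not} assumed $F$-finite, so this is exactly the subtle point, and one must instead use that $F^e_{S*}S$ is the base change to a Noetherian ring of a module over the Noetherian ring $R$, or argue via the structure of $S$ as essentially of finite type over $R$ to get the requisite finiteness/flatness of Frobenius relative to $R^\Gamma$), and that a surjection after faithfully flat base change descends. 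The cleanest route is probably to first establish the natural isomorphism $F^e_{S^\Gamma*}S^\Gamma \cong F^e_{S*}S \otimes_S S^\Gamma$ and the finite presentation of $F^e_{S*}S$ over $S$ — both of which follow from \cite[(6.11)]{HH94} and the explicit construction of $R^\Gamma$, in which the $p$-basis $\Lambda$ is chosen so that adjoining $p^e$-th roots of $\Gamma$ makes Frobenius finite free over the residue field — and then the splitting transfer is formal.
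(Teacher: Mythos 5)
Your proposal correctly identifies the overall strategy (base change along the gamma construction $R^\Gamma$ to the $F$-finite setting, then transfer back), and you even flag precisely where the argument becomes delicate. But the way you try to close that gap does not work, and the fix you conjecture is false.

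The genuine gap is in the descent of the splitting from $S^\Gamma$ to $S$. You propose to use
\[
  \Hom_S(F^e_{S*}S, S) \otimes_S S^\Gamma \;\cong\; \Hom_{S^\Gamma}(F^e_{S^\Gamma*}S^\Gamma, S^\Gamma)
\]
together with faithfully flat descent of surjectivity of the evaluation-at-$1$ map. That isomorphism requires $F^e_{S*}S$ to be a finitely presented $S$-module, i.e.\ requires $S$ to be $F$-finite, which is exactly what is not assumed (and is the whole point of the theorem). You then suggest this finiteness ``follows from \cite[(6.11)]{HH94} and the explicit construction of $R^\Gamma$.'' It does not: \cite[(6.11)]{HH94} gives $F$-finiteness of $R^\Gamma$ and hence of $S^\Gamma$, but says nothing about $F$-finiteness of $R$ or $S$. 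Indeed $R$ itself is generally not $F$-finite (e.g.\ $R = k\llbracket x\rrbracket$ with $[k:k^p] = \infty$), so $F^e_{S*}S$ is not finitely presented over $S$ and the $\Hom$-base-change step collapses. Your aside about arranging $F^e_{S^\Gamma*}S^\Gamma \cong F^e_{S*}S \otimes_S S^\Gamma$ is also not needed and not established; the paper only uses the ring inclusion $F^e_{S*}S \hookrightarrow F^e_{S^\Gamma*}S^\Gamma$.

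The idea you are missing is Lemma \ref{lem:fed12} (Auslander's lemma) together with completeness of $R$. Since $R$ is a complete Noetherian local ring and $R \hookrightarrow R^\Gamma$ is faithfully flat hence pure, Lemma \ref{lem:fed12} upgrades this purity to an honest $R$-module \emph{splitting}. Base changing along $R \to S$, the map $S \hookrightarrow S^\Gamma$ also splits $S$-linearly. Transferring a Frobenius splitting along $c$ of $S^\Gamma$ back to $S$ is then elementary: restrict the splitting $F^e_{S^\Gamma*}S^\Gamma \to S^\Gamma$ to the subring $F^e_{S*}S$ and post-compose with a splitting of $S \hookrightarrow S^\Gamma$. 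No $\Hom$ descent and no finite presentation of Frobenius is needed. This is where the hypothesis that $R$ is complete (as opposed to merely excellent) does genuine work --- without it one cannot upgrade purity of $R \hookrightarrow R^\Gamma$ to a splitting, and indeed the theorem is false for general excellent $R$ by the examples in \cite{DM}.
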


\noindent This extends \cite[Lem.\ 1.2]{Fed83}, which proves the case when $S = R$, that is, when $S$ is complete local.

\begin{proof}
  By the gamma construction \ref{constr:gamma} (see also Lemma
  \ref{lem:gammafacts}) and \cite[Thm.\ 3.4]{Mur}, there exists a
  faithfully flat ring extension $R \hookrightarrow R^\Gamma$ such that
  $S^\Gamma \coloneqq S \otimes_R R^\Gamma$ is strongly $F$-regular (resp.\
  $F$-pure) and $F$-finite.
  By $F$-finiteness, the ring $S^\Gamma$ is split $F$-regular by
  \cite[Lem.\ 3.9]{Has10} (resp.\ $F$-split by \cite[Cor.\ 5.3]{HR76}).
  Now consider the commutative diagram
  \[
    \begin{tikzcd}
      R \rar\dar[hook] & S
      \rar{F^e_S}\dar[hook] & F^e_{S*}S \rar{F^e_{S*}(-\cdot c)}\dar[hook]
      &[3.6em]
      F^e_{S*}S\dar[hook]\\
      R^\Gamma \rar & S^\Gamma \rar{F^e_{S^\Gamma}} & F^e_{S^\Gamma*}S^\Gamma
      \rar{F^e_{S^\Gamma*}(-\cdot (c\otimes 1))} & F^e_{S^\Gamma*}S^\Gamma
    \end{tikzcd}
  \]
  for every $c \in S^\circ$ and every integer $e > 0$, where the left square
  is cocartesian by definition of $S^\Gamma$.
  Note that if $c \in S^\circ$, then $c \otimes 1 \in (S^\Gamma)^\circ$, since
  $S \to S^\Gamma$ satisfies Going Down by faithful flatness \cite[Thm.\ 9.5]{Mat89}, and so, minimal primes contract to minimal primes.

  \par Since the inclusion $R \hookrightarrow R^\Gamma$ is faithfully flat, it is
  pure, and hence splits as an $R$-module homomorphism by Lemma \ref{lem:fed12}.
  By base change, this implies the inclusion $S \hookrightarrow S^\Gamma$ splits
  as an $S$-module homomorphism.
  For both split $F$-regularity and $F$-splitting of $S$, it then suffices to note that
  if 
  \[
    F^e_{S^\Gamma*}\bigl(-\cdot (c\otimes 1)\bigr) \circ F^e_{S^\Gamma}\colon
    S^\Gamma \longrightarrow F^e_{S^\Gamma*}S^\Gamma
  \]
  splits for some $c \in
  S^\circ$ and for some $e > 0$, then restricting this splitting to 
  $F^e_{S*}S$
  and composing with a splitting
  of $S \hookrightarrow S^\Gamma$ gives a splitting of $F^e_{S*}(-\cdot c) \circ
  F^e_S$ by the commutativity of the diagram above. Thus, $S$ is split 
$F$-regular if it is strongly $F$-regular. The proof that $S$ is Frobenius split if
it is $F$-pure follows by taking $c = 1$ and $e = 1$ in the aformentioned
argument.
\end{proof}

As a corollary, we answer a question raised by Karl Schwede which inspired
this paper.

\begin{corollary}
\label{cor:divisorial-Fsplit}
Let $k$ be an arbitrary field of prime characteristic $p > 0$. Let $X$ be a
normal $k$-variety
and let $D$ be a prime divisor on $X$. Then, the divisorial valuation
ring $\mathcal{O}_{X,D}$ is Frobenius split.
\end{corollary}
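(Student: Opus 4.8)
The plan is to reduce the claim to Theorem~\ref{thm:splittingwithgamma} by identifying the divisorial valuation ring $\mathcal{O}_{X,D}$ with a ring essentially of finite type over a complete local ring, and then checking that it is strongly $F$-regular (in fact regular). First I would reduce to the affine case: the question is local on $X$, so I may assume $X = \Spec(A)$ with $A$ a normal domain of finite type over $k$, and $D$ corresponds to a height-one prime $\fp \subseteq A$. Then $\mathcal{O}_{X,D} = A_\fp$ is a one-dimensional normal Noetherian local domain, hence a DVR; in particular it is regular.

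The key step is to realize $A_\fp$ as essentially of finite type over a complete local ring, which requires some care because $A_\fp$ itself need not be complete. The natural move is to pass to the residue field: let $\ell = \Frac(A/\fp) = \kappa(\fp)$, which is a finitely generated field extension of $k$, hence the fraction field of a complete local ring only after suitable adjustment. Actually the cleanest route is: since $A$ is of finite type over $k$ and $k$ is a field (automatically a complete local ring), $A$ is essentially of finite type over a complete local ring by taking the trivial zero-dimensional complete local ring $k$ itself. Therefore $A_\fp$, being a localization of $A$, is also essentially of finite type over the complete local ring $k$, by Example~\ref{ex:excellence}$(\ref{ex:qeeft})$'s closure properties (essentially of finite type is stable under localization). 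Now apply Theorem~\ref{thm:splittingwithgamma} with $R = k$ and $S = A_\fp$: since $A_\fp$ is regular, it is strongly $F$-regular (as recorded in Remark~\ref{rem:fsingimplications}, regular rings are $F$-pure regular, hence strongly $F$-regular), and Theorem~\ref{thm:splittingwithgamma} then upgrades this to split $F$-regular, which in particular implies Frobenius split (taking $c = 1$ in Definition~\ref{def:freg}, or directly from the implication chart in Remark~\ref{rem:fsingimplications}).

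The main obstacle I anticipate is purely bookkeeping: making sure that "essentially of finite type over a complete local ring" is genuinely preserved when we localize $A$ at $\fp$, and that a field counts as a (zero-dimensional) complete local ring for the purposes of Construction~\ref{constr:gamma} and Theorem~\ref{thm:splittingwithgamma}. Both are immediate --- a field $k$ is $\fm$-adically complete for $\fm = (0)$, and localization of an essentially-of-finite-type algebra is again essentially of finite type --- so there is no real difficulty here; the substance of the corollary is entirely carried by Theorem~\ref{thm:splittingwithgamma} together with the fact that DVRs are regular, hence strongly $F$-regular. One could alternatively phrase the argument without ever leaving the category of $k$-algebras, citing Theorem~\ref{thm:splittingwithgamma} in the special case $R = \kappa$ a field, which is exactly the generality in which it extends the classical fact that $F$-purity equals Frobenius splitting for finite-type algebras over an arbitrary (non-$F$-finite) field.
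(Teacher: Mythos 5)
Your proof is correct and follows essentially the same route as the paper: observe that $\mathcal{O}_{X,D}$ is a regular local ring essentially of finite type over the field $k$ (trivially a complete local ring), and invoke Theorem~\ref{thm:splittingwithgamma}. The only cosmetic difference is that you pass through strong $F$-regularity and split $F$-regularity, while the paper more directly notes that $\mathcal{O}_{X,D}$ is $F$-pure (being regular) and applies the $F$-pure $\Rightarrow$ Frobenius split part of that theorem; your version in fact establishes the slightly stronger conclusion of split $F$-regularity.
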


\begin{proof}
The local ring $\mathcal{O}_{X,D}$ is essentially of finite type over $k$
and is $F$-pure (since it is regular). Therefore $\mathcal{O}_{X,D}$ is Frobenius split
by Theorem \ref{thm:splittingwithgamma}.
\end{proof}

\begin{remark}
The equivalence of $F$-purity and Frobenius splitting for a Noetherian ring
which is essentially of finite type over a complete local ring of prime characteristic
is in some sense the best result one can hope for. Indeed, using fundamental constructions
from rigid analytic geometry, the authors have shown recently that this equivalence
fails even for excellent Henselian DVRs \cite[Cor.\ C]{DM}. We will
have more to say about the constructions from \cite{DM} in Subsection 
\ref{subsec:split-F-vs-F-pure}.
\end{remark}

\subsection{On \textit{F}-pure regularity of regular rings} 
In this subsection, we further explore the connections between the various
variants of strong $F$-regularity (see Definition \ref {def:freg}), which we showed are
 all equivalent for
rings essentially of finite type over a complete local ring in Theorem 
\ref {thm:splittingwithgamma}. Somewhat surprisingly, these connections
are not completely understood even for the class of regular rings.
Regular rings are always strongly $F$-regular (Remark 
\ref {rem:fsingimplications}). However, even a regular 
\emph{local} ring may not
be split $F$-regular \cite[Cor.\ 4.4]{DS18}, although such a local ring is 
always
$F$-pure regular \cite[Thm.\ 6.2.1]{DS16}.  The main result of this subsection is that
most prime characteristic regular rings arising in arithmetic and geometric
settings are $F$-pure regular even in the non-local setting.

\begin{theorem}
\label{eft-G-ring}
Let $(R,\fm,\kappa)$ be a Noetherian local $G$-ring
of prime characteristic $p > 0$. If $S$ is a regular ring
that is essentially of finite type over $R$, then $S$ is $F$-pure regular. 
\end{theorem}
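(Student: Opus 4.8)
The plan is to reduce to the complete local case by way of the gamma construction, exactly as in the proof of Theorem~\ref{thm:splittingwithgamma}, but now being careful to exploit that the hypothesis is only that $R$ is a $G$-ring rather than complete. First I would reduce to the case where $S$ is local: by \cite[Lem.\ 3.6]{Has10} (cited in Remark~\ref{rem:fsingimplications}), $S$ is $F$-pure regular (equivalently strongly $F$-regular, since both coincide for the localizations) if and only if $S_\fq$ is $F$-pure regular for every prime $\fq$ of $S$, and each such localization is again regular and essentially of finite type over $R$. Replacing $S$ by $S_\fq$, we may assume $(S,\fn)$ is a regular local ring essentially of finite type over the local $G$-ring $(R,\fm,\kappa)$.

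Next I would pass to completions. Since $R$ is a local $G$-ring, the completion map $R \to \widehat R$ is faithfully flat with geometrically regular fibers. The key point is that $S$ is essentially of finite type over $R$, so $\widehat S := S \otimes_R \widehat R$ (or rather its localization at the relevant maximal ideal, since $S\otimes_R \widehat R$ need not be local) is essentially of finite type over the complete local ring $\widehat R$; moreover $S \to S\otimes_R \widehat R$ is faithfully flat with geometrically regular fibers (base change of $R \to \widehat R$), so $S\otimes_R\widehat R$ is still regular. Now the essential difficulty: I want to deduce that $S$ is $F$-pure regular from the fact that $S\otimes_R\widehat R$ is. By Theorem~\ref{thm:splittingwithgamma} applied to the $\widehat R$-algebra $S\otimes_R\widehat R$, the latter is split $F$-regular since it is regular hence strongly $F$-regular (Remark~\ref{rem:fsingimplications}). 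So for each $c \in S^\circ$ — noting $c \otimes 1$ lies in $(S\otimes_R\widehat R)^\circ$ by Going Down — there is an $e>0$ and a splitting of $\lambda^e_{c\otimes 1}$ over $S\otimes_R\widehat R$. I then need to descend \emph{purity} (not splitting) of $\lambda^e_c$ from $S\otimes_R\widehat R$ to $S$.

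This descent is the main obstacle, and it is where the flatness of $S \to S\otimes_R\widehat R$ does the work rather than the existence of a module retraction. The point is the standard fact that a map $\varphi\colon M \to N$ of $S$-modules is pure provided it becomes pure after a faithfully flat base change $S \to S'$: if $P$ is any finitely generated $S$-module, then $P \otimes_S M \to P \otimes_S N$ is injective because applying the faithfully flat functor $- \otimes_S S'$ turns it into $(P\otimes_S S')\otimes_{S'}(M\otimes_S S') \to (P\otimes_S S')\otimes_{S'}(N\otimes_S S')$, which is injective by purity over $S'$, and faithful flatness detects injectivity. Applying this with $S' = S\otimes_R\widehat R$, $M = S$, $N = F^e_{S*}S$, and $\varphi = \lambda^e_c$: the base change of $\lambda^e_c$ along $S \to S\otimes_R\widehat R$ is $\lambda^e_{c\otimes 1}$ over $S\otimes_R\widehat R$ (using that Frobenius commutes with flat base change, so $F^e_{S*}S \otimes_S (S\otimes_R\widehat R) \cong F^e_{(S\otimes_R\widehat R)*}(S\otimes_R\widehat R)$), and that map splits, hence is pure. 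Therefore $\lambda^e_c$ is pure over $S$, so $S$ is $F$-pure along every $c \in S^\circ$, i.e.\ $F$-pure regular. I would double-check the one subtlety that $S\otimes_R\widehat R$ may fail to be local — one handles this by localizing at a maximal ideal $\mathfrak M$ lying over $\fn$; faithful flatness of $S \to (S\otimes_R\widehat R)_{\mathfrak M}$ still holds after choosing $\mathfrak M$ to lie over $\fn$ (such $\mathfrak M$ exists since $\fn$-fibers are nonempty), and that is enough to run the purity descent.
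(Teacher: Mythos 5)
There are two gaps, one of which is a genuinely subtle conceptual error. The opening reduction to the local case is not valid: \cite[Lem.\ 3.6]{Has10} says that $S$ is \emph{strongly $F$-regular} (Definition~\ref{def:freg}$(\ref{def:fsingspureregapp})$) if and only if every $S_\fq$ is $F$-pure regular; it does not say that $S$ itself is $F$-pure regular under that hypothesis. Passing from ``every localization is $F$-pure regular'' to ``$S$ is $F$-pure regular'' is precisely the implication ``strongly $F$-regular $\Rightarrow$ $F$-pure regular'' for the non-local ring $S$, which is open in general and is exactly what Remark~\ref{rem:F-pure-strong-F} warns about. Your parenthetical ``(equivalently strongly $F$-regular, since both coincide for the localizations)'' quietly assumes this. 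Fortunately the step is also unnecessary: Theorem~\ref{thm:splittingwithgamma} does not require the ring to be local, only essentially of finite type over a complete local ring, so you can apply it to $S \otimes_R \widehat R$ without localizing anything. Dropping the reduction also removes your worry at the end about $S \otimes_R \widehat R$ failing to be local.

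The second issue is the assertion that $F^e_{S*}S \otimes_S (S\otimes_R\widehat R) \cong F^e_{(S\otimes_R\widehat R)*}(S\otimes_R\widehat R)$. That would say the relative Frobenius $F^e_{S'/S}$ (Definition~\ref{def:radu-andre}, with $S' = S\otimes_R\widehat R$) is an isomorphism, which holds only for relatively perfect maps; for a regular map such as $S \to S'$ the Radu--Andr\'e theorem gives only that $F^e_{S'/S}$ is faithfully flat. So the actual base change of $\lambda^e_c$ is $\id_{S'}\otimes\lambda^e_c\colon S' \to F^e_{S*}S \otimes_S S'$, not $\lambda^e_{c\otimes 1}\colon S' \to F^e_{S'*}S'$. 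The repair is quick: one checks that $F^e_{S'/S}\circ(\id_{S'}\otimes\lambda^e_c) = \lambda^e_{c\otimes 1}$, and since the right-hand side is pure (indeed split, by Theorem~\ref{thm:splittingwithgamma}), so is $\id_{S'}\otimes\lambda^e_c$, because a right factor of a pure map is pure. With this in place, your faithfully-flat-descent-of-purity argument does run and gives $F$-pure regularity of $S$. For comparison, the paper avoids both complications by never localizing $S$ and by citing faithfully flat descent of $F$-pure regularity \cite[Lem.\ 3.14]{Has10} (or \cite[Prop.\ 6.1.3$(d)$]{DS16}) as a black box; your argument unpacks that descent by hand, which is illuminating but requires care with the relative Frobenius.
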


\begin{proof}
By assumption, the canonical map $R \rightarrow \widehat{R}$ is a 
regular homomorphism.
Since $S$ is essentially of finite type over $R$, the ring 
$\widehat{R} \otimes_R S$ is also Noetherian and the map
\[
S \longrightarrow \widehat{R} \otimes_R S
\]
is regular\footnote{A flat homomorphism of Noetherian
rings with geometrically regular fibers is called a regular map.} \cite[Prop.\ 6.8.3$(iii)$]{EGAIV2} and faithfully flat by base
change.
Thus, $S_{\widehat{R}} \coloneqq \widehat{R} \otimes_R S$
is regular because $S$ is regular \cite[Thm.\ 23.7$(ii)$]{Mat89}.
The regular ring $S_{\widehat{R}}$ is strongly $F$-regular by Remark
\ref{rem:fsingimplications}, and hence $F$-pure regular by
Theorem \ref{thm:splittingwithgamma}.
Thus, $S$ is $F$-pure regular by
faithfully flat descent of $F$-pure regularity \cite[Lem.\ 3.14]{Has10} 
(see also \cite[Prop.\ 6.1.3$(d)$]{DS16}).
\end{proof}

\begin{remark}\label{rem:whyexcellentinq1}\leavevmode
\begin{enumerate}[label=$(\arabic*)$,ref=\arabic*]
	\item\label{rem:quasi-exc} 
	Although there exist local $G$-rings that are not excellent, any regular ring
	that is essentially of finite type over a local $G$-ring is automatically 
	excellent. Indeed, regular rings are Cohen--Macaulay, and hence universally
	catenary \cite[Thm.\ 33]{Mat80}. Moreover, since local $G$-rings are
  quasi-excellent (Example \ref{ex:excellence}$(\ref{ex:localgringsareqe})$)
	and since quasi-excellence is preserved under essentially of finite type maps
  (Example \ref{ex:excellence}$(\ref{ex:qeeft})$), it
	follows that a regular ring that is essentially of finite type over a local
	$G$-ring is quasi-excellent and universally catenary, that is, such a ring is
	excellent. Thus, a large class of excellent regular rings of prime characteristic
  are $F$-pure regular by Theorem \ref{eft-G-ring}, which suggests that the same
  statement holds for \emph{all} excellent regular rings of prime characteristic
  (see Question \ref{q:regular-F-pure}).
	At the same time, one should note that $F$-pure regularity of 
regular rings often holds even without the excellence hypothesis. For example, any regular
 local ring is always $F$-pure regular (Remark \ref{rem:fsingimplications}).

  	\item\label{lem:generalizing-Hash} 
	Quasi-excellence of rings essentially of finite type over local $G$-rings
	can be used to show that if $S$ is essentially of finite type over a local $G$-ring
	$R$ of prime characteristic $p > 0$ and is strongly $F$-regular, then $S$ is $F$-pure regular. This generalizes Theorem \ref {eft-G-ring}, and
	gives an analogue of a similar
  	result for $F$-rationality due to V\'elez \cite[Thm.\ 3.8]{Vel95}.
  	The only point where the proof differs from the proof of
  	Theorem \ref{eft-G-ring} is that one has to show that the ring $S_{\widehat{R}}$ is
  	strongly $F$-regular, and to then use Theorem \ref{thm:splittingwithgamma} to 
	conclude that $S_{\widehat{R}}$ is $F$-pure regular.
  	To show strong $F$-regularity of $S_{\widehat{R}}$, one can use the following 
	lemma essentially due to Hashimoto which establishes permanence 
properties of strong $F$-regularity and $F$-pure regularity under regular
maps:

  	\begin{remlem}[cf.\ {\cite[Lem.\ 3.28]{Has10}}]\label{lem:has328}
    Let $A \to B$ be a regular map of Noetherian rings of prime characteristic
    $p > 0$.
    Assume that $A$ is $F$-pure regular (resp.\ strongly $F$-regular) and
    quasi-excellent, and that $B$ is essentially of finite type over a
    Noetherian local $G$-ring (resp.\ is a $G$-ring).
    Then, $B$ is $F$-pure regular (resp.\ strongly $F$-regular).
  	\end{remlem}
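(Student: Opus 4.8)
The plan is to peel off, by two rounds of reduction, down to the $F$-finite situation, where the relevant statements are classical.

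\emph{First reduction.} I would first reduce both assertions to the $F$-pure regular assertion with $B$ \emph{essentially of finite type over a complete local ring}. For the strongly $F$-regular case, by \cite[Lem.\ 3.6]{Has10} it suffices to show each localization $B_{\fq}$ is $F$-pure regular; since $A_{\fq \cap A} \to B_{\fq}$ is again regular, $A_{\fq \cap A}$ is a \emph{local} strongly $F$-regular hence $F$-pure regular ring (\cite[Lem.\ 3.6]{Has10}) that is still quasi-excellent, and $B_{\fq}$ is a local $G$-ring (localizations of $G$-rings are $G$-rings), i.e.\ essentially of finite type over a local $G$-ring. For the $F$-pure regular case, write $B$ as essentially of finite type over a local $G$-ring $(C,\fm_C)$; then $C \to \widehat C$ is regular, so $B \to B \otimes_C \widehat C$ is faithfully flat and regular, $A \to B \otimes_C \widehat C$ is regular (composition of regular maps), and $B \otimes_C \widehat C$ is essentially of finite type over the complete local ring $\widehat C$; by faithfully flat descent of $F$-pure regularity \cite[Lem.\ 3.14]{Has10}, it is enough to prove $B \otimes_C \widehat C$ is $F$-pure regular.

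\emph{Core argument.} Now assume $A \to B$ is regular, $A$ is $F$-pure regular and quasi-excellent, and $B$ is essentially of finite type over a complete local ring. Since $A$ is $F$-pure it is reduced, and since it is quasi-excellent (hence J-2) its regular locus is open; the non-regular locus is a closed set containing no minimal prime, so prime avoidance yields $c \in A^\circ$ with $A_c$ regular. As $A$ is $F$-pure regular, $A$ is $F$-pure along $c$. By the argument proving Proposition \ref{prop:ascent-regular-maps} (that $F$-purity — and, verbatim, $F$-purity along a fixed element — ascends along regular homomorphisms), $B$ is $F$-pure along the image $c'$ of $c$; moreover $c' \in B^\circ$ because flat maps carry $A^\circ$ into $B^\circ$, and $B_{c'} \cong B \otimes_A A_c$ is regular since $A_c$ is regular and $A_c \to B_{c'}$ is regular \cite[Thm.\ 23.7]{Mat89}. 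Passing to the gamma construction (Construction \ref{constr:gamma}, Lemma \ref{lem:gammafacts}), $B^\Gamma$ is Noetherian and $F$-finite, $B \hookrightarrow B^\Gamma$ is faithfully flat, $(B^\Gamma)_{c'}$ is regular, and $B^\Gamma$ is $F$-pure along $c'$ (the gamma construction transports $F$-purity along an element; cf.\ \cite[Thm.\ 3.4]{Mur} and the proof of Theorem \ref{thm:splittingwithgamma}). An $F$-finite ring that is $F$-pure, equivalently Frobenius split, along some $c' \in R^\circ$ with $R_{c'}$ regular is split $F$-regular by Hochster and Huneke, hence $F$-pure regular (Remark \ref{rem:fsingimplications}); thus $B^\Gamma$ is $F$-pure regular, and therefore so is $B$ by \cite[Lem.\ 3.14]{Has10}. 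The strongly $F$-regular assertion now follows since $F$-pure regular rings are strongly $F$-regular \cite[Lem.\ 3.8]{Has10}.

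\emph{Main obstacle.} The two non-formal inputs are: (i) that $F$-purity along a fixed $c$ ascends along regular homomorphisms — the along-$c$ refinement of Proposition \ref{prop:ascent-regular-maps}, which rests on Popescu's theorem writing a regular map as a filtered colimit of smooth maps, for which the relative Frobenius $F^e_{A*}A \otimes_A B \to F^e_{B*}B$ is faithfully flat, hence pure; and (ii) that the gamma construction preserves $F$-purity along $c$ between $B$ and the $F$-finite ring $B^\Gamma$ even though $B \hookrightarrow B^\Gamma$ is not itself regular — the ``$F$-compatibility'' of the gamma construction, which is precisely where the passage to $F$-finiteness is paid for. Everything else uses only permanence of quasi-excellence, reducedness, regularity, and $F$-pure regularity under localization, completion, and faithfully flat base change, together with Theorem \ref{thm:splittingwithgamma}.
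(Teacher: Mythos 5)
Your proof is correct and follows essentially the same route that the paper points to, namely Hashimoto's argument via the gamma construction: localize to reduce the strongly $F$-regular case to the $F$-pure regular one, base change to a complete local ring, find $c\in A^\circ$ with $A_c$ regular using the J-2 hypothesis, push $F$-purity along $c$ through the regular map and the gamma construction, invoke the Hochster--Huneke strong $F$-regularity criterion for $F$-finite rings, and descend. You have also correctly isolated the two genuine inputs, and (ii) (that the gamma construction carries ``$F$-pure along $c'$'' from $B$ to $B^\Gamma$) is precisely where one needs \cite[Thm.\ 3.4]{Mur} or, equivalently, the content of \cite[Lem.\ 3.27]{Has10} that the paper's remark flags.

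One attribution should be corrected: the ascent of $F$-purity along a fixed $c$ under a regular homomorphism does \emph{not} rest on Popescu's theorem. It follows directly from the Radu--Andr\'e theorem (Theorem \ref{thm:raduandre}), which is what the proof of Proposition \ref{prop:ascent-regular-maps}$(\ref{prop:Fpure-ascent})$ actually uses: regularity of $A\to B$ gives $F_{B/A}$ (hence $F^e_{B/A}$ for all $e$, by the usual factorization of the iterated relative Frobenius) faithfully flat and therefore pure, and then $\lambda^e_{c'}=F^e_{B/A}\circ(\id_B\otimes_A\lambda^e_c)$ is a composition of pure maps. Popescu's theorem would also yield this, but it is both a much heavier input than necessary and not the one invoked in the paper.
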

  	\noindent One applies Lemma \ref {lem:has328} to $A = R$ and 
$B = S$ in our situation and notation. Note that Hashimoto states Lemma \ref{lem:has328} under the
  	stronger assumption that $A$ is excellent. We need $A$ to be quasi-excellent
  	since the ring $S$ in the proof of Theorem \ref{eft-G-ring} is
  	quasi-excellent (see $(\ref{rem:quasi-exc})$), but not
  	a priori excellent. Of course, once one proves that $S_{\widehat{R}}$ is
	strongly $F$-regular, it then follows that $S_{\widehat{R}}$ is
  Cohen--Macaulay by \cite[Thm.\ 3.4$(c)$]{HH94} and \cite[Cor.\ 1.2]{Kaw02}, 
	and consequently, so is $S$ by faithfully flat descent. This then implies
	$S$ is excellent as in $(\ref{rem:quasi-exc})$.

\medskip

 \noindent The proof of Lemma \ref{lem:has328} proceeds as in \cite{Has10}, with the
  	following changes:
  	\begin{itemize}
    \item \cite[Lem.\ 3.27]{Has10} only uses the assumption that $R$ is a
      $G$-ring in Hashimoto's notation.
    \item In the $F$-pure regular case, \cite[Lem.\ 3.28]{Has10} does not use
      the assumption that $A$ is universally catenary.
    \item In the strongly $F$-regular case, \cite[Lem.\ 3.28]{Has10} reduces to
      the $F$-pure regular case by localization. Indeed if $B$ is a $G$-ring, 
      then as in the proof of  \cite[Lem.\ 3.28]{Has10}, for every maximal ideal
      $\fm$ of $B$, if $\fn = A \cap \fm$, then $B_\fm$ is a local $G$-ring, 
      $A_\fn$ is strongly $F$-regular hence also $F$-pure regular, and $A_\fn \rightarrow
      B_\fm$ is a regular map. Then $B_\fm$
      is $F$-pure regular by Lemma \ref{lem:has328} for the $F$-pure regular case,
      and so, $B$ is strongly $F$-regular since strong $F$-regularity is a local property \cite[Lem.\ 3.6]{Has10}.
  \end{itemize}
\end{enumerate}  
\end{remark}
We expect, though cannot prove, that an arbitrary excellent 
regular ring
of prime characteristic is $F$-pure regular (see Question
\ref{q:regular-F-pure}). However, using flatness of Frobenius
for regular rings, we can give a purely ideal-theoretic criterion for when a regular
ring is $F$-pure regular.

\begin{proposition}
\label{prop:ideal-theoretic}
Let $R$ be a ring of prime characteristic $p > 0$. Consider the following statements.
\begin{enumerate}[label=$(\roman*)$,ref=\roman*]
	\item\label{prop:Fpure-regular} $R$ is $F$-pure regular.
	\item\label{prop:nzd} For every nonzerodivisor $c \in R$, there exists $e \in \ZZ_{>0}$ such 
	that for all maximal ideals $\mathfrak{m}$, we have $c \notin \mathfrak{m}^{[p^e]}$.
	\item\label{prop:intersection} 
	For every countable collection of maximal ideals $\{\fm_e\}_{e \in \ZZ_{>0}}$,
  we have
	$\bigcap_e \fm_e^{[p^e]}  = 0$.
\end{enumerate}
We then have the following implications:
\[
  \begin{tikzcd}
    (\ref{prop:Fpure-regular}) \rar[Rightarrow]
    & (\ref{prop:nzd}) \lar[start anchor=west,end anchor=east,shift left=2,dashed,bend left=10,Rightarrow]{\text{$R$ regular}}
    \arrow[start anchor=east,end anchor=west,shift right=2,dashed,bend right=10,Rightarrow]{r}[swap]{\text{$R$ domain}}
    & \lar[Rightarrow] (\ref{prop:intersection}).
  \end{tikzcd}
\]
\end{proposition}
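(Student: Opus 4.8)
The plan is to prove the three implications in turn, using the key tool that Frobenius is flat on a regular ring (and faithfully flat, since it is injective on a reduced ring).

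\textbf{The implication $(\ref{prop:Fpure-regular}) \Rightarrow (\ref{prop:nzd})$.} First I would unwind what $F$-pure regularity says at a maximal ideal $\fm$. If $R$ is $F$-pure regular, then localizing at $\fm$ preserves $F$-pure regularity (since $F$-pure regularity localizes: $R$ is strongly $F$-regular iff every localization is $F$-pure regular, cf.\ the discussion around Remark \ref{rem:fsingimplications}), so $R_\fm$ is $F$-pure regular. Thus for a nonzerodivisor $c$, the map $\lambda^e_c\colon R_\fm \to F^e_{R_\fm*}R_\fm$ is pure for some $e > 0$, hence injective after applying $-\otimes_{R_\fm} \kappa(\fm)$. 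Now $\lambda^e_c \otimes \kappa(\fm)$ sends $1 \mapsto \bar c$ in $F^e_{R_\fm*}R_\fm \otimes_{R_\fm} \kappa(\fm) = F^e_{R_\fm*}(R_\fm/\fm^{[p^e]})$, and injectivity forces $\bar c \neq 0$, i.e.\ $c \notin \fm^{[p^e]}$. The subtlety is passing from ``some $e$ depending on $\fm$'' to ``some $e$ uniform in $\fm$''; here one should be careful, but in fact the statement $(\ref{prop:nzd})$ as written only requires, for each $c$, a \emph{single} $e$ that works for \emph{all} $\fm$ simultaneously. To get this I would note that if $c \notin \fm^{[p^e]}$ then $c \notin \fm^{[p^{e'}]}$ fails in general --- so instead I expect the correct reading is that $F$-pure regularity gives, for the fixed $c$ viewed globally, a single $e$ via the definition applied to $R$ itself (the definition of $F$-pure along $c$ asks for \emph{some} $e$ with $\lambda^e_c$ pure as a map of $R$-modules), and then purity of $\lambda^e_c$ over $R$ implies $\lambda^e_c \otimes \kappa(\fm)$ is injective for every $\fm$, giving $c \notin \fm^{[p^e]}$ for all $\fm$ with this one $e$. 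This uniformity coming from the global purity statement is the crux of this implication.

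\textbf{The implication $(\ref{prop:nzd}) \Rightarrow (\ref{prop:Fpure-regular})$ when $R$ is regular.} Here I would use flatness of Frobenius. When $R$ is regular, $F^e\colon R \to F^e_{R*}R$ is faithfully flat, so $\lambda^e_c = F^e_{R*}(-\cdot c) \circ F^e$ is pure as soon as $F^e_{R*}(-\cdot c)$ is pure, and since $F^e_{R*}R$ is faithfully flat over $R$, it suffices that multiplication by $c$ on $F^e_{R*}R$ is a pure map of $F^e_{R*}R$-modules --- equivalently, by Lemma \ref{lem:fed12} applied locally, that it is locally pure, which for a map out of the ring means not killed mod each maximal ideal. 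Concretely: $\lambda^e_c$ is pure iff for every maximal ideal $\fm$ of $R$ the induced map $R/\fm \to F^e_{R*}R \otimes_R R/\fm = F^e_{R*}(R/\fm^{[p^e]})$ is injective (this uses faithful flatness of $F^e$ to reduce purity of $\lambda^e_c$ to nonvanishing of $\bar c$ mod each maximal ideal, plus the fact that a map from a field is pure iff injective). That map sends $1 \mapsto \bar c$, so it is injective precisely when $c \notin \fm^{[p^e]}$. Given $(\ref{prop:nzd})$, for each nonzerodivisor $c$ we get the needed $e$, hence $R$ is $F$-pure along $c$; ranging over $c \in R^\circ$ (which for a reduced ring, and in particular a regular ring, consists of the nonzerodivisors) gives $F$-pure regularity. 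I expect the only real work is justifying the reduction ``$\lambda^e_c$ pure $\iff$ $\bar c \neq 0$ in every $R/\fm^{[p^e]}$'' via faithful flatness of Frobenius and the local criterion for purity.

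\textbf{The implication $(\ref{prop:nzd}) \Leftrightarrow (\ref{prop:intersection})$ when $R$ is a domain.} For $(\ref{prop:intersection}) \Rightarrow (\ref{prop:nzd})$: suppose $(\ref{prop:nzd})$ fails for some nonzero $c$, so for every $e$ there is a maximal ideal $\fm_e$ with $c \in \fm_e^{[p^e]}$; then $c \in \bigcap_e \fm_e^{[p^e]}$, and since $c \neq 0$ this contradicts $(\ref{prop:intersection})$. Conversely, for $(\ref{prop:nzd}) \Rightarrow (\ref{prop:intersection})$: given a countable family $\{\fm_e\}$, take any nonzero $c$ in the intersection $\bigcap_e \fm_e^{[p^e]}$; in a domain $c$ is a nonzerodivisor, so $(\ref{prop:nzd})$ supplies an $e_0$ with $c \notin \fm^{[p^{e_0}]}$ for \emph{all} maximal $\fm$, in particular $c \notin \fm_{e_0}^{[p^{e_0}]}$, contradicting $c \in \bigcap_e \fm_e^{[p^e]}$; hence the intersection is $0$. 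Both directions are elementary once $(\ref{prop:nzd})$ is read with the ``uniform $e$'' quantifier, so again the delicate point, shared across all parts, is that the $e$ in $(\ref{prop:nzd})$ is uniform over maximal ideals, which is exactly what the global (rather than pointwise) formulation of $F$-purity along $c$ delivers.
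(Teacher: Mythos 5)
Your treatment of $(\ref{prop:Fpure-regular}) \Rightarrow (\ref{prop:nzd})$ and of $(\ref{prop:nzd}) \Leftrightarrow (\ref{prop:intersection})$ is correct and, in the end, lands on the same argument as the paper (the initial detour through localizations in the first implication is unnecessary: one simply tensors the pure map $\lambda^e_c$ with $R/\fm$ directly).

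The gap is in the implication $(\ref{prop:nzd}) \Rightarrow (\ref{prop:Fpure-regular})$ for $R$ regular, and it is a real one. You reduce purity of $\lambda^e_c$ to ``multiplication by $c$ on $F^e_{R*}R$ being pure as a map of $F^e_{R*}R$-modules,'' and then claim this is the same as ``locally pure, which for a map out of the ring means not killed mod each maximal ideal.'' But purity of $\mu_c\colon S \to S$ (with $S = F^e_{R*}R$) as a map of $S$-modules says precisely that multiplication by $c$ is injective on every $S$-module, which forces $c$ to be a unit in $S$ (apply it to $S/cS$). That is far too strong, and it is not the condition $(\ref{prop:nzd})$ asks for, since $c \notin \fm^{[p^e]}$ is very different from $c \notin \fm$. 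You then assert the equivalence ``$\lambda^e_c$ pure $\iff$ $\bar c \neq 0$ in every $R/\fm^{[p^e]}$'' and flag it as the only real work, but no correct proof of the ``if'' direction is supplied: injectivity of $(R/\fm) \otimes_R \lambda^e_c$ for every maximal ideal does \emph{not} imply purity for a general ring map, and Lemma~\ref{lem:fed12} does not convert this into a local criterion of the kind you invoke.

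What you are missing is where flatness of Frobenius actually enters. The paper first proves that $(R/I) \otimes_R \lambda^e_c$ is injective for \emph{every} proper ideal $I \subseteq R$, not just maximal ones: if $r^{p^e}c \in I^{[p^e]}$, then $c \in (I^{[p^e]}:r^{p^e}) = (I:r)^{[p^e]}$, the colon-ideal identity being a consequence of flatness of Frobenius; since $c$ avoids every $\fm^{[p^e]}$, the ideal $(I:r)$ is the unit ideal, so $r \in I$. This is the genuine reduction from arbitrary ideals to maximal ones. One then still has to pass from injectivity modulo every cyclic module to purity, which the paper does either by a finite filtration of an arbitrary finitely generated module by cyclics (using flatness of $F^e_{R*}R$ again, to keep the bottom row exact), or by invoking Hochster's theory of approximately Gorenstein rings (Remark~\ref{rem:appgor}). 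Both of these steps are absent from your argument, and neither is supplied by ``faithful flatness plus a local criterion.''
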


\begin{proof}
We first show $(\ref{prop:Fpure-regular}) \Rightarrow (\ref{prop:nzd})$. 
Let $c \in R$ be a nonzerodivisor and let $e \in \mathbf{Z}_{>0}$ such that 
the map
\begin{align*}
  \lambda^e_c\colon R &\longrightarrow F^e_{R*}R
  \intertext{mapping $1$ to $c$ is pure. In particular, for every maximal ideal
  $\mathfrak{m}$ of $R$, the map}
  (R/\mathfrak{m}) \otimes_R \lambda^e_c \colon 
  R/\mathfrak{m} &\longrightarrow F^e_{R*}(R/\mathfrak{m}^{[p^e]})
\end{align*}
is injective. But this latter map sends $1 + \mathfrak{m}$ to $c + \mathfrak{m}^{[p^e]}$, 
and so, $c \notin \mathfrak{m}^{[p^e]}$.

\smallskip
\par We next show $(\ref{prop:nzd}) \Rightarrow (\ref{prop:Fpure-regular})$ when
$R$ is regular. 
Suppose there exists $e \in \ZZ_{>0}$ satisfying $(\ref{prop:nzd})$. 
We will show that the map
\begin{align*}
\lambda^e_c\colon R &\longrightarrow F^e_{R*}R
\intertext{mapping $1$ to $c$ is pure.
We first show that for every proper
ideal $I \subseteq R$, the map}
  (R/I) \otimes_R \lambda^e_c\colon R/I &\longrightarrow F_{R*}^e(R/I^{[p^e]})\\
  r + I &\longmapsto r^{p^e}c + I^{[p^e]}
\end{align*}
is injective.
We have
\[r^{p^e}c + I^{[p^e]} = 0 \implies r^{p^e}c \in I^{[p^e]} \implies c \in
(I^{[p^e]}: r^{p^e}) = (I:r)^{[p^e]},\]
where the last equality follows by flatness of the Frobenius map on $R$ since $R$ 
is regular \cite[Thm.\ 2.1]{Kun69}. Since $c \notin \mathfrak{m}^{[p^e]}$ for any maximal ideal 
$\mathfrak{m}$ of $R$, it follows that $(I:r)$ is not contained in any maximal 
ideal of $R$. Thus, $(I:r) = R$, which shows $r \in I$. Consequently, 
$(R/I) \otimes_R \lambda^e_c$ is injective, proving the claim.

\par The injectivity of $(R/I) \otimes_R \lambda^e_c$ for every proper ideal $I
\subseteq R$ implies that $\lambda^e_c$ is pure by \cite{Hoc77} (see Remark
\ref{rem:appgor}) using the notion of approximately Gorenstein rings, 
but we give a direct proof as follows.
To show that $\lambda^e_c$ is a pure map of $R$-modules, it suffices to show that 
for every finitely generated $R$-module $M$, the map 
\begin{align*}
  \mu_M\colon M &\longrightarrow M \otimes_R F^e_{R*}R\\
  m &\longmapsto m \otimes c
\end{align*}
is injective. Recall that every finitely generated module over a Noetherian 
ring has a finite filtration
\[
  M = M_n \supsetneq M_{n-1} \supsetneq \dots \supsetneq M_1 \supsetneq M_0 = 0
\]
by cyclic modules. We now 
proceed by induction on the length $n$ of this filtration. If $n = 1$, then 
$M$ is a nonzero cyclic module and $\mu_M$ is injective by the claim. 
Otherwise, consider the short exact sequence
\[0 \longrightarrow M_{n-1} \longrightarrow M \longrightarrow \frac{M}{M_{n-1}} \longrightarrow 0.\]
Here, $M/M_{n-1}$ is cyclic, and $M_{n-1}$ is a module with a filtration of
length $n-1$. 
By the flatness of $F_{R*}^eR$ \cite[Thm.\ 2.1]{Kun69}, we get a commutative diagram of short 
exact sequences
\[
\begin{tikzcd}
0 \arrow{r} 
  & M_{n-1}\arrow[hookrightarrow]{r}\arrow[hookrightarrow]{d}{\mu_{M_{n-1}}}
  & M\arrow{r}{}\arrow{d}{\mu_M} 
  & M/M_{n-1}\arrow{r}\arrow[hookrightarrow]{d}{\mu_{M/M_{n-1}}} 
  & 0 \\
0 \arrow{r} 
  & M_{n-1} \otimes_R F_{R*}^eR\arrow[hookrightarrow]{r}[swap]{} 
  & M \otimes_R F_{R*}^eR\arrow{r}[swap]{} 
  & (M/M_{n-1}) \otimes_R F_{R*}^eR\arrow{r}
  & 0 
\end{tikzcd}
\]
The map $\mu_{M_{n-1}}$ is injective by the induction hypothesis, and the map 
$\mu_{M/M_{n-1}}$ is injective by our claim since $M/M_{n-1}$ is cyclic. Then $\mu_M$ 
must also be injective.

\smallskip
\par Finally, the negation of $(\ref{prop:nzd})$ implies the existence of a collection of maximal ideals
$\{\fm_e\}_{e \in \mathbf{Z}_{>0}}$ such that $\bigcap_e \fm_e^{[p^e]} \neq 0$,
and hence $(\ref{prop:intersection}) \Rightarrow (\ref{prop:nzd})$.
The converse 
$(\ref{prop:nzd}) \Rightarrow (\ref{prop:intersection})$ holds when $R$ is a domain, since a
nonzerodivisor on $R$ is
precisely a nonzero element of $R$.
\end{proof}

\begin{remark}\label{rem:appgor}
  One can also use Hochster's notion of approximately Gorenstein rings \cite{Hoc77}
  to give another proof of $(\ref{prop:nzd}) \Rightarrow (\ref{prop:Fpure-regular})$.
  Indeed, since $R$ is
  regular it is Gorenstein and hence approximately Gorenstein (see \cite[Def.\
  1.3 and Cor.\ 2.2$(b)$]{Hoc77}). Thus, the purity of $(R/I) \otimes_R \lambda^e_c$ for every proper ideal 
  $I \subseteq R$ implies that $\lambda^e_c$ is pure by \cite[Thm.\ 2.6$(iv)$]{Hoc77}.
  However, our argument is simpler.
\end{remark}

\begin{corollary}
\label{cor:F-pure-regularity-notlocal}
Let $R$ be a regular ring of prime characteristic $p > 0$. Then 
$R$ is $F$-pure regular if $R$ is semi-local or $R$ is a domain
such that every nonzero element is contained in only finitely 
many maximal ideals (for example, a Dedekind domain).
\end{corollary}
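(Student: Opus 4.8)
The plan is to deduce both cases from Proposition \ref{prop:ideal-theoretic}: since $R$ is regular, the implication $(\ref{prop:nzd}) \Rightarrow (\ref{prop:Fpure-regular})$ there reduces the corollary to verifying condition $(\ref{prop:nzd})$, i.e.\ that for every nonzerodivisor $c \in R$ there is a single exponent $e \in \ZZ_{>0}$ with $c \notin \fm^{[p^e]}$ for \emph{every} maximal ideal $\fm$.

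First I would record two elementary facts about the ideals $\fm^{[p^e]}$. Because each generator satisfies $x^{p^{e+1}} = (x^{p^e})^p \in \fm^{[p^e]}$, the chain $\{\fm^{[p^e]}\}_{e}$ is descending; and since $\fm^{[p^e]} \subseteq \fm^{p^e} \subseteq \fm^e$, we get $\bigcap_e \fm^{[p^e]} \subseteq \bigcap_n \fm^n$. By Krull's intersection theorem an element $c$ of $\bigcap_n \fm^n$ satisfies $(1-m)c = 0$ for some $m \in \fm$; if $c$ is a nonzerodivisor this forces $1 = m \in \fm$, a contradiction. Hence a nonzerodivisor $c$ avoids $\bigcap_e \fm^{[p^e]}$, so there is an exponent $e_\fm$ with $c \notin \fm^{[p^{e_\fm}]}$, and by the descending chain property $c \notin \fm^{[p^e]}$ for all $e \geq e_\fm$.

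The one remaining point, and the place where the two hypotheses are genuinely used, is to make a single $e$ work for all maximal ideals at once. If $R$ is semi-local, there are finitely many maximal ideals $\fm_1, \dots, \fm_n$, and $e = \max_i e_{\fm_i}$ does the job. If $R$ is a domain in which every nonzero element lies in only finitely many maximal ideals, then a nonzerodivisor is simply a nonzero element $c$; for every maximal ideal $\fm$ not containing $c$ we trivially have $c \notin \fm \supseteq \fm^{[p^e]}$ for all $e$, so only the finitely many $\fm \ni c$ impose a constraint, and again the maximum of the corresponding $e_\fm$ works. The Dedekind case falls under the second hypothesis, since unique factorization of $cR$ into prime ideals shows each nonzero $c$ lies in only finitely many maximal ideals, and Dedekind domains are regular.

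I do not anticipate a serious obstacle: the content is entirely the uniformity of the exponent $e$ across all maximal ideals, which is exactly the finiteness packaged into ``semi-local'' and into ``every nonzero element lies in finitely many maximal ideals.'' The only care needed is to phrase the Krull intersection step for nonzerodivisors, rather than assuming $R$ is local or a domain, so that it applies verbatim in the semi-local case as well.
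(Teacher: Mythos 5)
Your proposal is correct and takes essentially the same route as the paper: reduce to Proposition \ref{prop:ideal-theoretic}$(\ref{prop:nzd})$, apply Krull's intersection theorem to rule out a nonzerodivisor lying in $\bigcap_e \fm^{[p^e]}$ for any fixed maximal ideal $\fm$, and then use the finiteness of the relevant set of maximal ideals to pick a uniform exponent $e$. The only cosmetic difference is that the paper localizes at each $\fm_i$ and invokes Krull intersection for the local ring $R_{\fm_i}$, whereas you apply the general Noetherian form of the theorem (with the $(1-m)c=0$ multiplier) directly in $R$; both are the same idea.
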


\begin{proof}
Suppose $R$ is semi-local. Let $\fm_1, \fm_2,\dots, \fm_n$
be the maximal ideals of $R$. If $c \in R$ is a nonzerodivisor,
then $c$ is a nonzerodivisor in each $R_{\fm_i}$. By Krull's
intersection theorem for local rings, there exists $e \gg 0$
such that $c \notin \fm_i^{[p^e]}R_{\fm_i}$. Then $c \notin \fm_i^{e}$
for each $i$, so $R$ is $F$-pure regular by 
Proposition \ref{prop:ideal-theoretic}$(\ref{prop:nzd})$. Similarly, 
if $R$ is a domain where every nonzero element is contained in only finitely 
many maximal ideals, $R$ satisfies 
Proposition \ref{prop:ideal-theoretic}$(\ref{prop:nzd})$ by an application
of Krull's intersection theorem for domains.
\end{proof}

\begin{example}
As an application of Corollary \ref{cor:F-pure-regularity-notlocal},
Nagata's example of an infinite-dimensional
Noetherian regular domain \cite[App.\ A1, Ex.\ 1]{Nag75} is $F$-pure
regular, since every nonzero element of this ring is only contained
in finitely many maximal ideals by construction.
\end{example}

\subsection{Excellent \textit{F}-pure regular rings are not always split 
\textit{F}-regular}
\label{subsec:split-F-vs-F-pure}
Let $R$ be a Noetherian ring of prime characteristic $p > 0$. While the
notions of $F$-pure regularity and split $F$-regularity coincide
if $R$ is $F$-finite by \cite[Cor.\ 5.2]{HR76} or if $R$ is
essentially of finite type over a complete local ring by Theorem 
\ref{thm:splittingwithgamma}, the two notions are not equivalent
in general, even when $R$ is a regular local ring. The first such 
examples were obtained in \cite[Ex.\ 4.5.1]{DS16}, where the first
author and Smith constructed DVRs in the function
field of $\mathbf{P}^2_{\mathbf{F}_p}$ that are not Frobenius split.
However, the rings in \cite[Ex.\ 4.5.1]{DS16} are not excellent, and
a general expectation since Hochster and Huneke's work on tight closure
\cite{HH90, HH94} is
that the various notions of $F$-singularities
are well-behaved only in the setting of excellent rings. 
The goal of this subsection is to use the constructions of \cite{DM}
to give examples of excellent regular (local) rings for which
$F$-pure regularity and split $F$-regularity do not coincide. 
These are the first examples which illustrate that the two notions
do not coincide for excellent rings.

Before we can state our main result, we need to introduce the rigid
analytic analogue of a polynomial ring over a field and its local variant.

\begin{definition}
\label{def:Tate-alg}
Let $(k, \abs{})$ be a complete non-Archimedean field.
For every positive integer $n > 0$, the \emph{Tate algebra} in $n$
  indeterminates over $k$ is the $k$-subalgebra
  \[
    T_n(k) \coloneqq k\{ X_1,X_2,\ldots,X_n \}
    \coloneqq
    \Set[\Bigg]{\sum_{\nu \in \ZZ_{\ge0}^n} a_\nu X^{\nu} \given
      \begin{array}{@{}c@{}}
        a_\nu \in k\ \text{and}\ \abs{a_\nu} \to 0\\
        \text{as}\ \nu_1 + \nu_2 + \cdots + \nu_n \to \infty
      \end{array}}
     \]
  of 
   the formal power
  series $k\llbracket X_1,X_2,\ldots,X_n \rrbracket$ in $n$ indeterminates over $k$.
  Similarly, for every positive integer $n > 0$, the \emph{convergent power series ring}
  in $n$ indeterminates over $k$ is the $k$-subalgebra
  \[
    K_n(k) \coloneqq k\langle X_1,X_2,\ldots,X_n \rangle
    \coloneqq
    \Set*{\sum_{\nu \in \ZZ_{\ge0}^n} a_\nu X^{\nu} \given
      \begin{array}{@{}c@{}}
        a_\nu \in k\ \text{and there exists}\\ r_1,r_2,\ldots,r_n, M \in
        \RR_{>0}\ \text{such that}\\
        \ \abs{a_\nu}\,r^{\nu_1}_1r^{\nu_2}_2\cdots r^{\nu_n}_n \leq M\ 
        \text{for all}\ \nu \in \ZZ_{\ge0}^n
      \end{array}}
      \]
  of
  $k\llbracket X_1,X_2,\ldots,X_n \rrbracket$.
\end{definition}

For the notion of a non-Archimedean field, which we will not
define in this paper, we refer the reader to 
\cite[\S2.1]{Bos14}. The main properties of Tate algebras and convergent power series rings
of relevance to this paper are summarized in the following result.

\begin{proposition}
\label{prop:properties}
Let $(k,\abs{})$ be a complete non-Archimedean field. Then for any integer
$n > 0$, we have the following:
\begin{enumerate}[label=$(\roman*)$]
	\item $T_n(k)$ and $K_n(k)$ are Noetherian and regular of dimension $n$.
	\item $T_n(k)$ and $K_n(k)$ are excellent.
	\item $K_n(k)$ is local and Henselian.
\end{enumerate}
\end{proposition}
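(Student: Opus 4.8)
The plan is to treat the three assertions as essentially classical facts from rigid analytic geometry, citing a standard reference such as Bosch's \emph{Lectures on Formal and Rigid Geometry} \cite{Bos14} (or Bosch--G\"untzer--Remmert, or Fresnel--van der Put) for the bulk of the work, while supplying the short arguments needed to bridge to the statements about $K_n(k)$ that are less commonly isolated in the literature. Concretely, for $(i)$ I would first recall that $T_n(k)$ is Noetherian: this is a consequence of the Weierstrass division and preparation theorems, which let one run a Noether-normalization-style induction on $n$, exactly as for $k\llbracket X_1,\dots,X_n\rrbracket$. Regularity and the dimension count both follow from the fact that $T_n(k)$ is a Jacobson ring whose maximal ideals all have residue field finite over $k$, with $\dim T_n(k)=n$ and each localization at a maximal ideal regular; alternatively one cites that $T_n(k)$ is a regular ring of Krull dimension $n$ directly (e.g. \cite[\S2.2]{Bos14}). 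For $K_n(k)$, I would observe that it is obtained from $T_n(k)$ by allowing slightly larger polyradii, so the same Weierstrass-theoretic machinery applies verbatim; in fact $K_n(k)$ is a filtered union of Tate algebras $k\{r^{-1}X\}$ over shrinking polyradii $r$, and one deduces Noetherianity, regularity, and $\dim=n$ by the same arguments, or by citing the treatment of such rings (sometimes called ``Washnitzer algebras'' or overconvergent power series) in the literature.

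For $(ii)$, excellence: the cleanest route is to invoke the theorem of Kiehl that affinoid $k$-algebras are excellent. For $T_n(k)$ this is immediate since $T_n(k)$ is affinoid. For $K_n(k)$, I would note that $K_n(k)$ is a localization (indeed a filtered colimit of such) of affinoid algebras, or more directly that it is a regular Noetherian local ring which is a $G$-ring because its completion is $k\llbracket X_1,\dots,X_n\rrbracket$ and the relevant fiber is geometrically regular; combined with regularity (hence universal catenarity via Cohen--Macaulayness, as in Remark \ref{rem:whyexcellentinq1}$(\ref{rem:quasi-exc})$) and the J-2 property, this gives excellence. Alternatively, I would simply cite that $K_n(k)$ is excellent from the rigid-geometry literature, since these rings are standard objects (the local rings of the rigid affine space at a ``generic'' point of the unit polydisc). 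Given the surrounding text, the authors may well just cite \cite{DM} here, where these same rings were used.

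For $(iii)$, I would show $K_n(k)$ is local with maximal ideal $\fm = (X_1,\dots,X_n)$: every element with nonzero constant term is a unit because its inverse is again a convergent power series (geometric series argument, using that the norm estimate is preserved under the relevant polyradius), so the non-units form the ideal $\fm$. For Henselianity, I would use Weierstrass preparation / the implicit function theorem for convergent power series: given a monic polynomial over $K_n(k)$ whose reduction mod $\fm$ factors into coprime factors, Hensel's lemma lifts the factorization because the usual successive-approximation scheme converges in $K_n(k)$ (this is precisely where one needs convergent, rather than merely formal, power series to behave well, and it works). Equivalently, $K_n(k)$ is the henselization — or at least a Henselian subring sitting between it — of $k[X_1,\dots,X_n]_{(X)}$ inside $k\llbracket X\rrbracket$; one can cite that $K_n(k)$ is Henselian from \cite{Bos14} or the analytic-geometry literature.

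I would present the whole proof as a sequence of citations with brief indications, since nothing here is genuinely novel. \textbf{The main obstacle} will be locating clean references for the statements about $K_n(k)$ rather than $T_n(k)$: Tate algebras are exhaustively documented, but the convergent power series ring $K_n(k)$ (the local analogue) is treated less uniformly, and for excellence and Henselianity one may need to either cite \cite{DM} directly or spell out the reduction to $T_n(k)$ via the colimit presentation. The mathematical content is routine; the work is bookkeeping of references.
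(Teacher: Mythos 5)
Your proposal is correct and takes essentially the same approach as the paper, whose ``Indication of proof'' consists of a single sentence referring the reader to the authors' companion paper \cite[Thms.\ 2.7 and 4.3]{DM} for precise references --- a possibility you explicitly anticipated at the end. The sketches you supply (Weierstrass preparation for Noetherianity and regularity, Kiehl's theorem for excellence of $T_n(k)$, the local $G$-ring criterion for excellence of $K_n(k)$, and the geometric-series and successive-approximation arguments for the local and Henselian properties of $K_n(k)$) are a fair account of what those references establish, though two of your heuristics for the excellence of $K_n(k)$ would need tightening if you actually had to prove it rather than cite it: excellence is not preserved under filtered colimits, so the colimit description of $K_n(k)$ does not by itself give excellence, and the phrase ``it is a $G$-ring because the relevant fiber is geometrically regular'' asserts the $G$-ring condition rather than proving it.
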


\begin{proof}[Indication of proof]
For precise references for all these properties and additional ones, 
we refer the reader to \cite[Thms.\ 2.7 and 4.3]{DM}.
\end{proof}

We also prove the following preliminary result.
\begin{lemma}\label{lem:gvffinite}
  Let $(k, \abs{})$ be a complete non-Archimedean field of characteristic $p >
  0$ such that $[k^{1/p}:k] < \infty$.
  Then, $T_n(k)$ and $K_n(k)$ are $F$-finite for every integer $n > 0$.
\end{lemma}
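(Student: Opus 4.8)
The plan is to reduce the $F$-finiteness of $T_n(k)$ and $K_n(k)$ to the $F$-finiteness of the base field $k$ together with an explicit description of $p^e$-th roots of elements of these rings. First I would recall that a Noetherian ring $A$ of characteristic $p$ is $F$-finite precisely when $A^{1/p}$ is a finitely generated $A$-module, and that it suffices to treat the case $e = 1$. Since $[k^{1/p}:k] < \infty$ by hypothesis, we may fix a finite $k$-basis $a_1, \dots, a_m$ of $k^{1/p}$; in particular $k^{1/p} = k(a_1,\dots,a_m)$ and hence $k^{1/p}$ is itself a complete non-Archimedean field (it is a finite extension of the complete field $k$, with the unique extended absolute value).

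The key computation is to identify $T_n(k)^{1/p}$ (resp.\ $K_n(k)^{1/p}$) with $T_n(k^{1/p})$ (resp.\ $K_n(k^{1/p})$), using the variables $X_i^{1/p}$. Concretely, an element $\sum_\nu a_\nu X^\nu \in T_n(k)$ has $p$-th root $\sum_\nu a_\nu^{1/p} (X^{1/p})^\nu$, where $X^{1/p} = (X_1^{1/p},\dots,X_n^{1/p})$; the convergence condition $\abs{a_\nu} \to 0$ is equivalent to $\abs{a_\nu^{1/p}} = \abs{a_\nu}^{1/p} \to 0$, so this identifies $T_n(k)^{1/p}$ with the Tate algebra $T_n(k^{1/p})$ in the variables $X_i^{1/p}$ over the complete field $k^{1/p}$. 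The analogous bookkeeping with the inequalities $\abs{a_\nu}\, r^{\nu_1}_1 \cdots r^{\nu_n}_n \le M$ handles $K_n(k)$. Now $T_n(k^{1/p})$ is a free module over $T_n(k)[X_1^{1/p},\dots,X_n^{1/p}]$ on the monomials $(X_1^{1/p})^{j_1}\cdots(X_n^{1/p})^{j_n}$ with $0 \le j_i < p$ (one checks this directly from the series description), and $T_n(k)[X_1^{1/p},\dots,X_n^{1/p}]$ is in turn module-finite over $T_n(k)$ once we adjoin the finitely many elements $a_1,\dots,a_m \in k^{1/p}$ together with these $p^n$ monomials. Combining, $T_n(k)^{1/p}$ is generated over $T_n(k)$ by the finite set $\{ a_\ell \cdot (X_1^{1/p})^{j_1}\cdots(X_n^{1/p})^{j_n} : 1 \le \ell \le m,\ 0 \le j_i < p \}$, so it is a finite $T_n(k)$-module. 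The same argument applies verbatim to $K_n(k)$.

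The main obstacle I anticipate is the careful verification that $T_n(k^{1/p})$ (resp.\ $K_n(k^{1/p})$), viewed inside $k^{1/p}\llbracket X_1^{1/p},\dots,X_n^{1/p}\rrbracket$, is exactly the set of $p$-th roots of elements of $T_n(k)$ (resp.\ $K_n(k)$) — that is, that taking $p$-th roots is compatible with the growth/decay conditions on the coefficients, and that one does not accidentally enlarge or shrink the ring. This is essentially the observation that $x \mapsto x^{1/p}$ is a homeomorphism on $k^{1/p}$ scaling the valuation by $1/p$, so all the defining inequalities transform in the expected way; once this is nailed down, the module-finiteness is a routine consequence of $[k^{1/p}:k] < \infty$ and the $p^n$-monomial basis. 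An alternative, slicker route would be to invoke that $T_n$ and $K_n$ are excellent regular (Proposition \ref{prop:properties}) and that Frobenius is finite iff it is of finite type, reducing to producing finitely many algebra generators of $T_n(k)^{1/p}$ over $T_n(k)$, namely $k^{1/p}$ and $X_1^{1/p},\dots,X_n^{1/p}$ — but some version of the coefficient-convergence check is unavoidable either way.
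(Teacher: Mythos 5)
Your strategy is essentially the paper's: both proofs exhibit $\{a_\ell\,(X_1^{1/p})^{\beta_1}\cdots(X_n^{1/p})^{\beta_n}: 1\le\ell\le m,\ 0\le\beta_i<p\}$ as a free $T_n(k)$-basis of $T_n(k)^{1/p}$ (resp.\ a free $K_n(k)$-basis of $K_n(k)^{1/p}$), and the only substance is the coefficient-convergence check. That said, two points in your write-up need fixing. First, the intermediate freeness statement is garbled: you assert that $T_n(k^{1/p})$ in the variables $X_i^{1/p}$ is free over $T_n(k)[X_1^{1/p},\dots,X_n^{1/p}]$ on the monomials $(X^{1/p})^\beta$ with $0\le\beta_i<p$, but those monomials already lie in the claimed base ring, so the statement is vacuous. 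What you want is the two-step factorization in which the Tate algebra over $k^{1/p}$ in the $X_i^{1/p}$ is free of rank $p^n$ (basis the monomials $(X^{1/p})^\beta$) over the Tate algebra over $k^{1/p}$ in the $X_i$, which is in turn free of rank $m$ (basis the $a_\ell$) over $T_n(k)$.

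Second, and more seriously, the justification you offer for the convergence check --- that $x\mapsto x^{1/p}$ scales the valuation by $1/p$ --- only handles the identification of $T_n(k)^{1/p}$ with a Tate algebra over $k^{1/p}$; it does not explain why the $k$-components of the coefficients are again a Tate (resp.\ convergent) series. The ingredient that actually does this, and that the paper isolates explicitly, is that the unique extension of the norm to the finite extension $k^{1/p}$ of the complete field $k$ coincides with the max-norm in the basis $a_1,\dots,a_m$ (this uses \cite[App.\ A, Thms.\ 1 and 3]{Bos14} and is where completeness and $[k^{1/p}:k]<\infty$ genuinely enter). With that, writing $b_\nu = \sum_\ell b_{\ell,\nu}a_\ell$ gives $\abs{b_{\ell,\nu}} \le \abs{b_\nu}$, and the decay (for $T_n$) or boundedness (for $K_n$) of the $b_\nu$ is inherited by each component series $\sum_\nu b_{\ell,\nu}X^\nu$. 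Without this estimate, your observation that ``one does not accidentally enlarge or shrink the ring'' is precisely what is left unproved.
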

\noindent Although the statement for $T_n(k)$ is contained in the proof of
\cite[Thm.\ 12]{GV74}, we give an elementary proof.
\begin{proof}
 Fix a basis $\{a_1,a_2,\dots,a_m\}$ of $F_{k*}k$ over $k$. Since $k$ is complete,
and $F_{k*}k$ is an algebraic extension of $k$ via the Frobenius map $F_k$, 
there exists a unique extension
of the norm $\abs{}$ on $k$ to a norm on $F_{k*}k$ \cite[App.\ A, Thm.\ 3]{Bos14}.
Denoting the norm on $F_{k*}$ also by $\abs{}$, uniqueness and
\cite[App.\ A, Thm.\ 1]{Bos14} then gives us that
for $x = x_1a_1 + x_2a_2 + \cdots + x_ma_m \in F_{k*}k$,
\begin{equation}
\label{eq:max-norm}
\abs{x} = \max_{1 \leq i \leq m} \bigl\{\abs{x_i}\bigr\}.
\end{equation}
Now consider the formal power series ring $A \coloneqq k\llbracket
X_1,X_2,\dots,X_n\rrbracket$. 
Since $k$ is $F$-finite, $F_{A*}A$ is a free $A$-module with basis
given by 
\[
  \mathcal{B} \coloneqq \Set[\big]{a_iX^{\beta_1}_1X^{\beta_2}_2\cdots X^{\beta_n}_n \given
  1 \leq i \leq m, \, 0 \leq \beta_j \leq p-1}.
\]
\par We next claim that $\mathcal{B}$ is also a basis of $F_{T_n*}T_n$ over $T_n$.
Indeed, let $f = \sum_{\nu \in \ZZ_{\geq 0}^n} b_\nu X^\nu$ 
be an element of $F_{T_n*}T_n$. Expressing $f$, considered as an element of $A$,
in terms of the basis $\mathcal{B}$, to get the coefficient of 
$a_iX^{\beta_1}_1X^{\beta_2}_2\cdots X^{\beta_n}_n$, we first collect all the terms in the expansion
of $f$ for which the power of $X_i$ is congruent to $\beta_i$ modulo $p$,
for all $1 \leq i \leq n$. This gives us a power series of the form
\[
\sum_{\nu \in \ZZ_{\geq 0}^n} b_{p\nu+\beta} 
X^{p\nu + \beta},
\]
where $\beta \coloneqq (\beta_1,\beta_2,\dots,\beta_n)$. Note that we still have $\abs{b_{p\nu+\beta}}
\rightarrow 0$ as $\nu_1 + \nu_2 + \dots + \nu_n \rightarrow \infty$. Expressing each
$b_{p\nu+\beta}$ in terms of the basis $\{a_1,a_2,\dots,a_m\}$ of $F_{k*}k$ over $k$,
we find that the coefficients $b_{i,p\nu+\beta}$ of $a_i$ also satisfy
$\abs{b_{i,p\nu+\beta}} \rightarrow 0$ as $\nu_1 + \nu_2 + \dots + \nu_n \rightarrow \infty$
by $(\ref{eq:max-norm})$. Therefore, the coefficient of
$a_iX^{\beta_1}_1X^{\beta_2}_2\cdots X^{\beta_n}_n$
in the expansion of $f$ with respect to the basis $\mathcal{B}$, which is precisely
\begin{equation}\label{eq:bipnubeta}
  \sum_{\nu \in \ZZ_{\geq 0}^n} b_{i,p\nu+\beta} X^{p\nu + \beta},
\end{equation}
is also an element of $T_n$ by the discussion above. This implies that
$\mathcal{B}$ is also a free $T_n$-basis of $F_{T_n*}T_n$, and so, $T_n$ is
$F$-finite. 
\par Finally, we show that $\mathcal{B}$
is also a free basis of $F_{K_n*}K_n$ over $K_n$. To see this, note that for
\[ 
f = \sum_{\nu \in \ZZ_{\geq 0}^n} b_\nu X^\nu \in F_{K_n*}K_n
\] 
considered as an element of
$F_*(k\llbracket X_1,X_2,\dots,X_n\rrbracket)$, the coefficient of 
$a_iX^{\beta_1}_1X^{\beta_2}_2\cdots X^{\beta_n}_n$ is again given by
\eqref{eq:bipnubeta},
where $b_{i,p\nu+\beta}$ is the coefficient of $a_i$ when 
$b_{p\nu+\beta}$ is expressed in terms of the basis $\{a_1,a_2,\dots,a_m\}$ of $F_{k*}k$ over $k$.
Now choose $r_1,r_2,\ldots,r_n, M \in \RR_{>0}$ such that for all 
$\abs{b_\nu}\,r^{\nu_1}_1r^{\nu_2}_2\cdots r^{\nu_n}_n \leq M$ for all $\nu \in \ZZ_{\ge0}^n$.
Then 
\[
\abs{b_{i,p\nu+\beta}}\,r^{p\nu_1 + \beta_1}_1r^{p\nu_2 + \beta_2}_2\cdots r^{p\nu_n + \beta_n}_n \leq
\abs{b_{p\nu+\beta}}\,r^{p\nu_1 + \beta_1}_1r^{p\nu_2 + \beta_2}_2\cdots r^{p\nu_n + \beta_n}_n \leq M
\]
for all $\nu \in \ZZ_{\geq 0}^n$, where the first inequality again follows by
(\ref{eq:max-norm}). Thus, the same choice of $r_1,r_2,\ldots,r_n, M$ shows that
$\sum_{\nu \in \ZZ_{\geq 0}^n} b_{i,p\nu+\beta} X^{p\nu + \beta}$ is also an element
of $K_n$. Thus $\mathcal{B}$ is also a free $K_n$-basis of $F_{K_n*}K_n$.
\end{proof}

With these preliminaries, we can now state and prove the main result of this subsection.

\begin{theorem}
\label{thm:F-pure-not-split-F}
Let $(k, \abs{})$ be a complete non-Archimedean field of characteristic $p > 0$. 
Then we have the following:
\begin{enumerate}[label=$(\roman*)$,ref=\roman*]
	\item\label{thm:Tate-F-regular} 
	For every integer $n > 0$, $T_n(k)$ and $K_n(k)$ are $F$-pure regular.
	\item\label{thm:Tate-not-F-regular} 
	There exists a complete non-Archimedean field $(k,\abs{})$ of
	characteristic $p > 0$
	such that for every integer $n > 0$, 
	$T_n(k)$ and $K_n(k)$ are not Frobenius split, and consequently, not
	split $F$-regular.
\end{enumerate}
\end{theorem}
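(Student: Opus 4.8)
The plan is to treat the two parts by quite different means. For part $(\ref{thm:Tate-F-regular})$, the convergent power series ring $K_n(k)$ is immediate: it is regular and local by Proposition \ref{prop:properties}, and a regular local ring is always $F$-pure regular (Remark \ref{rem:fsingimplications}). The Tate algebra $T_n(k)$ is regular but has infinitely many maximal ideals, so Corollary \ref{cor:F-pure-regularity-notlocal} only settles the case $n = 1$, and more is required. Here I would reduce to the $F$-finite case by enlarging the ground field. Let $k' \coloneqq \widehat{k_\perf}$ be the completion of the perfect closure of $k$; this is again a complete non-Archimedean field of characteristic $p$, and it is perfect, because if $(x_i)$ is a Cauchy sequence in $k_\perf$ with $x_i = y_i^p$, then the characteristic-$p$ identity $\abs{y_i - y_j} = \abs{x_i - x_j}^{1/p}$ shows $(y_i)$ is Cauchy, so its limit is a $p$-th root of $\lim x_i$. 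In particular $[(k')^{1/p} : k'] = 1 < \infty$, so $T_n(k')$ is $F$-finite by Lemma \ref{lem:gvffinite}; being also regular, $T_n(k')$ is strongly $F$-regular (Remark \ref{rem:fsingimplications}) and hence, being $F$-finite, split $F$-regular and a fortiori $F$-pure regular (\cite[Lem.\ 3.9]{Has10}, Remark \ref{rem:fsingimplications}). Since the map $T_n(k) \to T_n(k')$ induced by $k \hookrightarrow k'$ is faithfully flat --- a standard property of Tate algebras under extension of the complete base field; see \cite{Bos14} and \cite{DM} --- $F$-pure regularity descends from $T_n(k')$ to $T_n(k)$ by \cite[Lem.\ 3.14]{Has10}. (Alternatively, one can bypass descent and verify the criterion in Proposition \ref{prop:ideal-theoretic}$(\ref{prop:nzd})$ directly: for $0 \neq c \in T_n(k)$, a suitable $k$-algebra automorphism and Weierstrass preparation express $c$ as a unit times a Weierstrass polynomial of some degree $d$ over $T_{n-1}(k)$, so $T_n(k)/(c)$ is finite free of rank $d$ over $T_{n-1}(k)$; this bounds $\mathrm{ord}_{\fm}(c) \le d$ uniformly over all maximal ideals $\fm$, and any $p^e > d$ then satisfies $c \notin \fm^{[p^e]}$ for every $\fm$.)

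For part $(\ref{thm:Tate-not-F-regular})$, the key observation is that Frobenius splitting descends along algebra retracts: if $\iota \colon A \to B$ and $r \colon B \to A$ are ring homomorphisms with $r \circ \iota = \id_A$ and $B$ is Frobenius split by a splitting $\phi \colon F^e_{B*}B \to B$, then the $A$-linear composite $F^e_{A*}A \to F^e_{B*}B \xrightarrow{\phi} B \xrightarrow{r} A$, in which the first map is induced by $\iota$ (it is $A$-linear since $\iota(a^p x) = \iota(a)^p \iota(x)$), sends $1$ to $1$, so $A$ is Frobenius split. For every $n \ge 1$, the homomorphisms $X_1 \mapsto X_1$ and $X_1 \mapsto X_1$, $X_i \mapsto 0$ for $i \ge 2$, exhibit $T_1(k)$ as an algebra retract of $T_n(k)$, and likewise $K_1(k)$ as an algebra retract of $K_n(k)$. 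By \cite{DM} there is a complete non-Archimedean field $k$ of characteristic $p > 0$ for which $T_1(k)$ and $K_1(k)$ are not Frobenius split ($K_1(k)$ being the excellent Henselian DVR of \cite[Cor.\ C]{DM}). Combined with the retract observation, $T_n(k)$ and $K_n(k)$ are not Frobenius split for this $k$ and every $n > 0$. Finally, split $F$-regularity entails Frobenius splitting along every element of $R^\circ$, in particular along $1$, hence entails Frobenius splitting (Remark \ref{rem:fsingimplications}); so these rings are not split $F$-regular either.

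I expect the substantive difficulty to lie in part $(\ref{thm:Tate-F-regular})$ for $T_n(k)$: since an arbitrary excellent regular ring is not known to be $F$-pure regular (Question \ref{q:regular-F-pure}), any argument must exploit the particular structure of Tate algebras, and both routes above rest on nontrivial input from rigid analytic geometry --- either the faithful flatness of $T_n(k) \to T_n(\widehat{k_\perf})$, or Weierstrass preparation together with the resulting uniform bound on the local multiplicities of a nonzero element over the entire unit polydisc. Part $(\ref{thm:Tate-not-F-regular})$, by contrast, is essentially a formal deduction from \cite{DM} once the retract observation is in place.
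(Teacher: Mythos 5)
Your argument is correct, and for the main route in both parts it closely parallels the paper's. In part $(\ref{thm:Tate-F-regular})$ the paper treats $K_n(k)$ identically, and for $T_n(k)$ it passes to the completed \emph{algebraic} closure $\ell = \widehat{\bar k}$ (shown to be algebraically closed, hence perfect, via Krasner's lemma), applies Lemma \ref{lem:gvffinite}, and descends along the faithfully flat map $T_n(k) \hookrightarrow T_n(\ell)$; your use of $\widehat{k_\perf}$ instead is an essentially equivalent choice, and your verification that it is perfect is precisely the paper's Lemma \ref{lem:completedperfisperf}, which the paper proves separately for use in Proposition \ref{prop:split-F-regular-Tate}. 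Your alternative route via Weierstrass preparation and Proposition \ref{prop:ideal-theoretic}$(\ref{prop:nzd})$ is genuinely different from the paper and avoids base change and faithfully flat descent altogether, at the price of invoking the internal structure theory of the unit polydisc; one should, however, be a bit careful in the step from ``$T_n(k)/(c)$ is finite free of rank $d$ over $T_{n-1}(k)$'' to the uniform order bound, since maximal ideals of $T_n(k)$ generally have residue fields that are proper finite extensions of $k$ and are not generated by the $X_i - \alpha_i$. The bound really comes from the Weierstrass polynomial $w$ being \emph{monic} of degree $d$ in $X_n$: after base change along the finite extension $T_n(k) \hookrightarrow T_n(L)$ with $L = T_n(k)/\fm$, the maximal ideal over $\fm$ with residue field $L$ is generated by elements $X_i - \alpha_i$ with $\alpha_i \in L$, and the coefficient of $(X_n - \alpha_n)^d$ in the Taylor expansion of $w$ there is a unit, so $w \notin \fm^{d+1}$. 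For part $(\ref{thm:Tate-not-F-regular})$, the paper invokes \cite[Thm.\ A]{DM} and \cite[Rem.\ 5.5]{DM} directly for all $n$, whereas you first observe that Frobenius splitting descends along algebra retracts and then cite \cite{DM} only at $n = 1$; the retract principle is correct and a useful general device, but it is a detour here since \cite{DM} already treats all $n$.
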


\begin{proof}
We first prove $(\ref{thm:Tate-F-regular})$. Note that since $K_n(k)$ is local
by Proposition \ref{prop:properties}, it is $F$-pure regular by Corollary
\ref{cor:F-pure-regularity-notlocal}. To show $F$-pure regularity of $T_n(k)$,
we first assume that $k$ satisfies $[k^{1/p}:k] < \infty$, that is, $k$ is 
$F$-finite.
Since $T_n(k)$ is $F$-finite by Lemma \ref{lem:gvffinite} and is regular by Proposition \ref{prop:properties},
it follows that $T_n$ is split $F$-regular, and hence $F$-pure regular, when
 $k$ is $F$-finite.

\par Now suppose $(k,\abs{})$ is an arbitrary complete non-Archimedean field of
characteristic $p > 0$.
Let $\ell$ be the completion of the algebraic closure of $k$, where the completion
is taken with respect to the metric induced by the unique norm on the
algebraic closure of $k$ that extends the norm on $k$. Then by Krasner's lemma
\cite[App.\ A, Lem.\ 6]{Bos14},
$\ell$ is an algebraically closed non-Archimedean field, hence in particular,
$F$-finite. Therefore $T_n(\ell)$ is $F$-pure regular by the previous paragraph.
Since the canonical inclusion
\[T_n(k) \hooklongrightarrow T_n(\ell)\]
is faithfully flat by \cite[Lem.\ 2.1.2]{Ber93} and \cite[App.\ B, Prop.\ 5]{Bos14}, 
and since $F$-pure regularity descends under faithfully flat maps \cite[Lem.\
3.14]{Has10} (see also \cite[Prop.\ 6.1.3$(d)$]{DS16}), it follows that $T_n(k)$ is $F$-pure regular.
This completes the proof of $(\ref{thm:Tate-F-regular})$.

\par $(\ref{thm:Tate-not-F-regular})$ follows from \cite[Thm.\ A]{DM} and 
\cite[Rem.\ 5.5]{DM} by working over a complete non-Archimedean field $(k,\abs{})$
of characteristic $p > 0$ such that $k$ admits no nonzero continuous linear 
functionals $k^{1/p} \rightarrow k$. An explicit example of such a field
is constructed based on ideas of Gabber in \cite[Thm.\ 5.2]{DM}.
\end{proof}

As a consequence, we obtain examples of excellent Henselian $F$-pure regular local
rings that are not split $F$-regular.

\begin{corollary}
\label{cor:F-regular-different}
There exists an excellent Henselian DVR of prime characteristic
$p > 0$ that is not split $F$-regular.
\end{corollary}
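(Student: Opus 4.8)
The plan is to extract the desired DVR from part $(\ref{thm:Tate-not-F-regular})$ of Theorem \ref{thm:F-pure-not-split-F}, using the convergent power series ring in one variable rather than the Tate algebra, since that ring is already local and Henselian. First I would fix a complete non-Archimedean field $(k,\abs{})$ of characteristic $p > 0$ admitting no nonzero continuous $k$-linear functional $k^{1/p} \to k$, as furnished by \cite[Thm.\ 5.2]{DM}, and set $R \coloneqq K_1(k)$. By Proposition \ref{prop:properties}, $R$ is a one-dimensional regular local Henselian ring, hence a DVR, and it is excellent. By Theorem \ref{thm:F-pure-not-split-F}$(\ref{thm:Tate-F-regular})$ it is $F$-pure regular, and by part $(\ref{thm:Tate-not-F-regular})$ of the same theorem it is not Frobenius split, hence not split $F$-regular. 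This gives the corollary directly.

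The only point requiring a word of care is that the failure of $F$-purity-type splittings in $(\ref{thm:Tate-not-F-regular})$ was stated for $T_n(k)$ and $K_n(k)$ simultaneously, so I would simply invoke the $n = 1$, $K_1(k)$ case. Since the corollary only asks for existence, no further work is needed; I would just assemble the above observations into one or two sentences. There is no real obstacle here — the entire content has already been established in Theorem \ref{thm:F-pure-not-split-F}, and this corollary is a packaging statement isolating the one-dimensional Henselian local instance. If one preferred to phrase the example in terms of a DVR that is not literally a convergent power series ring, one could instead take a suitable localization-completion hybrid, but the cleanest route is $R = K_1(k)$ as above.

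\begin{proof}
Let $(k,\abs{})$ be a complete non-Archimedean field of characteristic $p > 0$
admitting no nonzero continuous $k$-linear functional $k^{1/p} \rightarrow k$,
which exists by \cite[Thm.\ 5.2]{DM}.
Set $R \coloneqq K_1(k)$.
By Proposition \ref{prop:properties}, $R$ is a regular local ring of Krull
dimension one, hence a DVR, it is Henselian, and it is excellent.
By Theorem \ref{thm:F-pure-not-split-F}$(\ref{thm:Tate-F-regular})$, $R$ is
$F$-pure regular, while by Theorem
\ref{thm:F-pure-not-split-F}$(\ref{thm:Tate-not-F-regular})$ applied with $n =
1$, the ring $R$ is not Frobenius split, and hence not split $F$-regular.
\end{proof}
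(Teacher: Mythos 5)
Your proposal is correct and follows essentially the same route as the paper: take $R = K_1(k)$ for a suitable complete non-Archimedean field $k$ furnished by \cite{DM}, note $R$ is an excellent Henselian DVR by Proposition \ref{prop:properties}, and conclude from Theorem \ref{thm:F-pure-not-split-F}$(\ref{thm:Tate-not-F-regular})$ that $R$ is not Frobenius split, hence not split $F$-regular. The extra remark that $R$ is $F$-pure regular is not needed for the conclusion but is harmless.
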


\begin{proof}
Choose a non-Archimedean field $(k,\abs{})$ as in 
Theorem \ref{thm:F-pure-not-split-F}$(\ref{thm:Tate-not-F-regular})$.
Then the convergent power series ring $K_1(k)$ is an excellent Henselian
discrete valuation ring by Proposition \ref{prop:properties} that is
not even Frobenius split by 
Theorem \ref{thm:F-pure-not-split-F}$(\ref{thm:Tate-not-F-regular})$,
hence also not split $F$-regular.
\end{proof}

\begin{remark}
\label{rem:F-pure-strong-F}
As far as we are aware, it is not known if Hochster's  
notion of strong $F$-regularity defined via tight closure
(Definition \ref{def:freg}$(\ref{def:fsingspureregapp})$) coincides with
$F$-pure regularity for excellent Noetherian rings (see Question
\ref{q:regular-F-pure}). Note that if a 
counterexample exists, then it necessarily has to be non-local by
\cite[Lem.\ 3.8]{Has10} (see also \cite[Prop.\ 6.3.2]{DS16}). We
expect the two notions to not be equivalent in general even for regular
rings, because we expect there to exist regular rings that do not
satisfy the ideal-theoretic characterization of $F$-pure regularity
given in Proposition \ref{prop:ideal-theoretic}. Note that any regular
ring is always strongly $F$-regular by \cite[Lem.\ 3.6]{Has10} because
strong $F$-regularity is a local property and regularity localizes.
\end{remark}

In Lemma \ref{lem:gvffinite},
we showed that
if $(k,\abs{})$ is an $F$-finite non-Archimedean field, then
$T_n(k)$ is $F$-finite, and hence, split $F$-regular for each integer $n > 0$.
However, using some non-Archimedean functional analysis, 
one can also show that Tate algebras are often split $F$-regular, even if $k$ is
not $F$-finite.
To do so, we will use the following:

\begin{lemma}\label{lem:completedperfisperf}
Let $(k,\abs{})$ be a complete non-Archimedean field of
  characteristic $p > 0$. Let $k_\perf$ be a perfect closure 
  of $k$ and let $\ell$ be the completion of $k_\perf$
  with respect to the unique norm on $k_\perf$ that extends 
  the norm on $k$.
  Then, $\ell$ is perfect.
\end{lemma}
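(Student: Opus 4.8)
The plan is to show that the Frobenius map on $\ell$ is surjective by a direct approximation argument, exploiting the fact that Frobenius is an isometry with respect to the relevant norm. First I would recall the setup: $k_\perf$ is the perfect closure of $k$, and by \cite[App.\ A, Thm.\ 1 and Thm.\ 3]{Bos14} the norm $\abs{}$ on $k$ extends uniquely to $k_\perf$ (since $k_\perf/k$ is algebraic), and then to its completion $\ell$. The key observation is that on $k_\perf$, and hence on $\ell$ by continuity, the Frobenius map $F\colon x \mapsto x^p$ satisfies $\abs{x^p} = \abs{x}^p$; in particular $F$ is a (multiplicatively distorted) isometry in the sense that $\abs{x - y}$ small forces $\abs{x^p - y^p}$ small and conversely, since the extension of the valuation is compatible with $p$-th powers. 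This makes $F\colon \ell \to \ell$ a uniformly continuous, injective ring homomorphism with uniformly continuous inverse on its image.

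The main step is surjectivity of $F$ on $\ell$. Take $z \in \ell$. Since $k_\perf$ is dense in $\ell$, choose a sequence $(z_n)$ in $k_\perf$ with $z_n \to z$. Because $k_\perf$ is perfect, each $z_n$ has a (unique) $p$-th root $w_n \in k_\perf$. I claim $(w_n)$ is Cauchy: from $\abs{w_n - w_m}^p = \abs{w_n^p - w_m^p} = \abs{z_n - z_m}$ (using that the extended valuation on $k_\perf$ is compatible with $p$-th powers, equivalently that $v(x^p) = p\,v(x)$), the Cauchy-ness of $(z_n)$ gives Cauchy-ness of $(w_n)$. Hence $w_n \to w$ for some $w \in \ell$, and by continuity of $F$ we get $w^p = \lim w_n^p = \lim z_n = z$. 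Thus $F$ is surjective on $\ell$; combined with injectivity (automatic for the Frobenius on a reduced ring, and $\ell$ is a field), $\ell$ is perfect.

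The one point requiring a little care is the identity $\abs{w_n - w_m}^p = \abs{z_n - z_m}$, i.e.\ that the extended valuation on $k_\perf$ genuinely satisfies $v(x^{1/p}) = v(x)/p$. This follows because the unique extension of a valuation to an algebraic extension is computed via $v(x) = \frac{1}{[L:k]} v_k(N_{L/k}(x))$ on finite subextensions (or, more simply here, because $(x^{1/p})^p = x$ forces $p \cdot v(x^{1/p}) = v(x)$ in the value group, which is divisible after passing to $k_\perf$), so the formula is forced. I expect this valuation-theoretic bookkeeping to be the only mild obstacle; everything else is a routine completeness-and-continuity argument, and in fact the whole proof is essentially the standard fact that the completion of a perfect valued field of characteristic $p$ is perfect.
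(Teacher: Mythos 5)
Your proposal is correct and takes essentially the same route as the paper: approximate $z \in \ell$ by a sequence in $k_\perf$, take $p$-th roots there, observe the root sequence is Cauchy, and pass to the limit. The paper states the Cauchy step without justification, whereas you correctly supply the reason, namely $\abs{w_n - w_m}^p = \abs{(w_n - w_m)^p} = \abs{w_n^p - w_m^p} = \abs{z_n - z_m}$, using multiplicativity of the norm and the freshman's dream in characteristic $p$; this is exactly the detail the paper leaves implicit.
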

\begin{proof}
Let $a \in \ell$ be a nonzero element, and choose a sequence 
$(b_n)_n$ of elements in $k_\perf$ such that $b_n \rightarrow a$
as $n \rightarrow \infty$. Such a sequence exists because $k_\perf$
is dense in $\ell$. Since $k_\perf$ is perfect, the sequence
$(b_n^{1/p})_n$ also consists of elements in $k_\perf$. Moreover,
$(b_n^{1/p})_n$ is a Cauchy sequence because $(b_n)_n$ is Cauchy. 
Let $a' = \lim_{n \rightarrow \infty} b^{1/p}_n$ be the
limit of $(b_n^{1/p})_n$ in $\ell$. Then $(a')^p$ is the limit of
$(b_n)_n$, and so, $(a')^p = a$ because sequences have unique limits
in a metric space. Thus, any element of $\ell$ has a $p$-th root in $\ell$,
that is, $\ell$ is perfect. 
\end{proof}

We now show that Tate algebras are often split $F$-regular.
\begin{proposition}
\label{prop:split-F-regular-Tate}
Let $(k,\abs{})$ be a complete non-Archimedean field of
  characteristic $p > 0$. Let $k_\perf$ be a perfect closure 
  of $k$ and let $\ell$ be the completion of $k_\perf$
  with respect to the unique norm on $k_\perf$ that extends 
  the norm on $k$.
Then for every $n > 0$, $T_n(k)$ and $K_n(k)$ are split $F$-regular in the
following cases:
\begin{enumerate}[label=$(\roman*)$,ref=\roman*]
	\item\label{cor:F-split-usually-spher} $(k,\abs{})$ is spherically complete.
	\item\label{cor:F-split-usually-count} $k^{1/p}$ has a dense $k$-subspace $V$ 
	which has a countable $k$-basis.
	\item\label{cor:F-split-polar} $\abs{k^\times}$ is not discrete,
	and the norm on $k_\perf$ is polar with respect to the norm on $k$.
\end{enumerate}
\end{proposition}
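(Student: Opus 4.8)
The plan is to reduce all three cases to constructing a single bounded (equivalently, continuous) $k$-linear map $\pi\colon\ell\to k$ whose restriction to $k$ is the identity --- that is, a continuous $k$-linear splitting of the inclusion $k\hookrightarrow\ell$ --- and then to descend split $F$-regularity from $T_n(\ell)$ and $K_n(\ell)$, which are $F$-finite, down to $T_n(k)$ and $K_n(k)$.

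First I would settle the purely algebraic part. Since $\ell$ is perfect by Lemma \ref{lem:completedperfisperf}, it is $F$-finite (indeed $[\ell^{1/p}:\ell]=1$), so $T_n(\ell)$ and $K_n(\ell)$ are $F$-finite by Lemma \ref{lem:gvffinite}; they are regular by Proposition \ref{prop:properties}, hence strongly $F$-regular, hence split $F$-regular by Remark \ref{rem:fsingimplications}. Now assume $\pi$ as above is given, say with $\abs{\pi(a)}\leq C\abs{a}$ for all $a\in\ell$. Applying $\pi$ to coefficients defines module maps $\Pi\colon T_n(\ell)\to T_n(k)$ and $\Pi\colon K_n(\ell)\to K_n(k)$, because the estimate $\abs{\pi(a_\nu)}\leq C\abs{a_\nu}$ preserves both the condition $\abs{a_\nu}\to 0$ and the growth condition defining $K_n$; these maps are $T_n(k)$-linear (resp.\ $K_n(k)$-linear), and since $\pi|_k=\mathrm{id}$ they split the inclusions $T_n(k)\hookrightarrow T_n(\ell)$ and $K_n(k)\hookrightarrow K_n(\ell)$. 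As $T_n(k)$ and $T_n(\ell)$ are integral domains, every nonzero $c\in T_n(k)$ lies in $T_n(\ell)^\circ$; picking $e>0$ and a $T_n(\ell)$-linear splitting $\psi$ of $\lambda^e_c$ over $T_n(\ell)$, the composite
\[
  F^e_* T_n(k)\hookrightarrow F^e_* T_n(\ell)\xrightarrow{\psi}T_n(\ell)\xrightarrow{\Pi}T_n(k)
\]
is $T_n(k)$-linear and carries $\lambda^e_c(1)=c$ to $1$, hence is a $T_n(k)$-linear splitting of $\lambda^e_c$ over $T_n(k)$; thus $T_n(k)$ is split $F$-regular, and the same argument applies verbatim to $K_n(k)$.

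It then remains to produce $\pi\colon\ell\to k$ in each case. In case (i), $k$ is spherically complete, so the non-Archimedean Hahn--Banach (Ingleton) theorem applies and the norm-one $k$-linear functional $k\cdot 1\to k$, $1\mapsto 1$, extends to a $k$-linear functional $\pi\colon\ell\to k$ of norm $\leq 1$. In case (ii), one first checks that the ``countable type'' hypothesis on $k^{1/p}$ propagates up the tower $k\subseteq k^{1/p}\subseteq k^{1/p^2}\subseteq\cdots$ (using that the $p^e$-th root maps are homeomorphisms up to rescaling the norm, together with transitivity of countable type) and survives completion, so that $\ell=\widehat{k_\perf}$ is a Banach space of countable type over $k$; extending $\{1\}$ (orthonormal since $\abs{1}=1$) to an $\varepsilon$-orthonormal Schauder basis $\{b_i\}_{i\in\NN}$ of $\ell$ with $b_0=1$, one takes $\pi$ to be the associated $0$-th coordinate functional, which is bounded and fixes $1$. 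In case (iii), polarity of the norm on $k_\perf$ over $k$ furnishes a bounded $k$-linear functional $f$ on $k_\perf$ with $\abs{f(1)}$ arbitrarily close to $\abs{1}=1$, hence $f(1)\neq 0$; extending $f$ to $\ell$ by continuity and rescaling by $f(1)^{-1}$ yields $\pi$.

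The main obstacle is the functional-analytic content of cases (ii) and (iii): in (ii), verifying that ``countable type over $k$'' genuinely propagates from $k^{1/p}$ to every $k^{1/p^e}$ and hence to $\ell$, and invoking the structure theory of countable-type non-Archimedean Banach spaces to obtain an $\varepsilon$-orthonormal basis of $\ell$ adapted to the vector $1$; in (iii), extracting the bounded functional from polarity and pinning down exactly where non-discreteness of $\abs{k^\times}$ is used. By contrast, the algebraic descent in the second paragraph is formal once $\pi$ is in hand.
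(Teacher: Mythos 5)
Your proof is correct and follows essentially the same route as the paper: reduce to constructing a bounded $k$-linear retraction $\pi\colon\ell\to k$ via the non-Archimedean Hahn--Banach theorem (spherical completeness, countable type, or polarity, respectively), apply $\pi$ coefficientwise to obtain a $T_n(k)$-linear (resp.\ $K_n(k)$-linear) splitting of the inclusion into $T_n(\ell)$ (resp.\ $K_n(\ell)$), which is $F$-finite hence split $F$-regular, and descend. The paper phrases the last step by citing that a direct summand of a split $F$-regular ring is split $F$-regular, while you unwind that statement explicitly; and the paper proves in detail the ``countable type propagates up the tower $k^{1/p^e}$ and to $\ell$'' point that you flag as the remaining obstacle in case $(\ref{cor:F-split-usually-count})$ — that verification does need to be done, but it is exactly what \cite[Thm.\ 2.15]{DM} together with \cite[Thm.\ 4.2.13(iv)]{PGS10} and an inductive density argument supplies.
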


\noindent For the definitions of spherically complete non-Archimedean fields and polar norms, 
we refer
the reader to \cite[Def.\ 2.13]{DM} and \cite[Def.\ 2.12]{DM} respectively.

\begin{proof}
\par We first claim that if $(\ref{cor:F-split-usually-count})$ holds,
then $\ell$ has a dense $k$-subspace which is countably generated over
$k$. Since $k_\perf$ is dense in $\ell$, to prove the claim it suffices
to show that $k_\perf$ has a dense $k$-subspace which is countably
generated over $k$. Let $S$ be a countable generating set for 
the dense $k$-subspace $V$ of $k^{1/p}$. We will inductively show that for all $e > 0$,
$k^{1/p^e}$ has a dense $k$-subspace that is countably generated over $k$.
The case $e = 1$ holds by the assumption of $(\ref{cor:F-split-usually-count})$.
For $e > 1$, by the inductive hypothesis, let $T_{e-1}$
be a countable subset of $k^{1/p^{e-1}}$ that generates a dense
$k$-linear subspace. 
Define
\[
  S^{1/p^{e-1}} \coloneqq \Set[\big]{x^{p/p^e} \given x \in S} \subseteq k^{1/p^e}.
\]
Then note that the $k^{1/p^{e-1}}$-linear subspace $V_{e-1}$ of $k^{1/p^e}$
generated by $S^{1/p^{e-1}}$ is dense in $k^{1/p^e}$ because the $k$-linear
space $V$ generated by $S$ is dense in $k^{1/p}$. 
Now consider the countable
set
\[S_e \coloneqq \Set[\big]{xy \in k^{1/p^e} \given x \in T_{e-1},\, y \in S^{1/p^{e-1}}}.\]
To prove the induction statement, it suffices to show that 
the $k$-linear subspace $k \cdot \{S_e \}$ of $k^{1/p^e}$ 
generated by $S_e$ is dense
in $k^{1/p^e}$. For any $z \in k^{1/p^e}$ and any real number 
$\epsilon > 0$, let
$B_\epsilon(z) \subseteq k^{1/p^e}$ be the ball of radius $\epsilon$
centered at $z$. Since $V_{e-1}$ is dense in $k^{1/p^e}$, there exists
$y \in V_{e-1} \cap B_\epsilon(z)$. Choose
$y_1,y_2,\dots,y_n \in S^{1/p^{e-1}}$ and $a_1,a_2,\dots,a_n \in k^{1/p^{e-1}}$
such that
\[y = a_1y_1 + a_2y_2 + \cdots + a_ny_n.\]
We may assume each $y_i \neq 0$. Since $k\cdot\{ T_{e-1} \}$ is
dense in $k^{1/p^{e-1}}$, for all $1 \leq i \leq n$, choose 
$x_i \in k\cdot \{T_{e-1} \}$ such that 
$\abs{a_i - x_i} < \epsilon/\abs{y_i}$. Then 
$x_1y_1 + x_2y_2 + \cdots + x_ny_n \in k \cdot \{ S_e \}$,
and
\begin{align*}
  \MoveEqLeft[4]\abs{z - x_1y_1 + x_2y_2 + \cdots + x_ny_n}\\
&\leq \max\bigl\{\abs{z-y},\abs{y-x_1y_1 +
x_2y_2 + \cdots + x_ny_n}\bigr\}\\
&= \max\bigl\{\abs{z-y},\abs{(a_1-x_1)y_1 + (a_2-x_2)y_2 + \cdots + (a_n-x_n)y_n}\bigr\}\\
&\leq \max\bigl\{\abs{z-y},\abs{(a_1-x_1)y_1},\abs{(a_2-x_2)y_2},\ldots,\abs{(a_1-x_1)y_1}\bigr\}\\
&\leq \epsilon.
\end{align*}
Here the first and penultimate inequalities follow 
by the non-Archimedean triangle inequality.
Thus, $x_1y_1 + x_2y_2 + \ldots + x_ny_n \in B_\epsilon(z)$, and since $z$
and $\epsilon$ were chosen arbitrarily, this implies 
$k\langle S_e \rangle$ is dense in $k^{1/p^e}$, as desired. Finally,
because $k_\perf = \bigcup_{e > 0} k^{1/p^e}$, it follows that
$k_\perf$ has a dense $k$-subspace that is countably generated over $k$
by \cite[Thm.\ 4.2.13$(iv)$]{PGS10}, proving the claim.

\par We can now show that $T_n(k)$ and $K_n(k)$ are split $F$-regular.
The ring $T_n(\ell)$ is 
split $F$-regular  by Lemma \ref{lem:gvffinite}
because $\ell$ is $F$-finite by Lemma \ref{lem:completedperfisperf}. Therefore, it suffices to show that for
$k$ satisfying any of the three conditions of this Proposition,
the inclusion
$T_n(k) \hookrightarrow T_n(\ell)$
splits. This is because a direct summand of a split $F$-regular ring is
split $F$-regular. However, if $k$ satisfies any of the conditions 
$(\ref{cor:F-split-usually-spher})$,
$(\ref{cor:F-split-usually-count})$, or $(\ref{cor:F-split-polar})$, then variants of the
Hahn-Banach theorem for normed spaces over $\RR$ and $\mathbf{C}$ 
also hold for normed spaces
over $k$; see for example \cite[Thm.\ 2.15]{DM}. When $k$ satisfies 
$(\ref{cor:F-split-usually-count})$,
in order to apply \cite[Thm.\ 2.15]{DM} one needs 
the observation made in the above claim that
$\ell$ has a countably generated dense $k$-subspace. When
$k$ satisfies $(\ref{cor:F-split-polar})$, then in 
order to apply \cite[Thm.\ 2.15]{DM} one needs that the norm
on $\ell$ is polar with respect to the norm on $k$. But this
follows by the hypothesis of $(\ref{cor:F-split-polar})$ 
and \cite[Thm.\ 4.4.16$(ii)$]{PGS10} because the norm on $k_\perf$
is polar and $k_\perf$ is a dense $k$-subspace of $\ell$.
The upshot is that the identity
map $\id_k\colon k \rightarrow k$ lifts to a continuous $k$-linear functional 
\[\phi\colon \ell \longrightarrow k\]
when $k$ satisfies $(\ref{cor:F-split-usually-spher})$,
$(\ref{cor:F-split-usually-count})$, or $(\ref{cor:F-split-polar})$.
By continuity, $\phi$ maps sequences in $\ell$ whose norms converge to $0$
to sequences in $k$ whose norms converge to $0$ \cite[Lem.\ 2.11]{DM}.
Therefore, the induced map
\begin{align*}
    \widetilde{\phi}\colon T_n(\ell) &\longrightarrow T_n(k)\\
    \sum_{\nu \in \ZZ^n_{\geq0}} a_\nu X^\nu &\longmapsto 
    \sum_{\nu \in \ZZ^n_{\geq0}} \phi(a_\nu)X^\nu
  \end{align*}
is a $T_n(k)$-linear splitting of $T_n(k) \hookrightarrow T_n(\ell)$, as desired.

\par Similarly, $K_n(\ell)$ is split $F$-regular because $\ell$ is $F$-finite
by Lemma \ref{lem:gvffinite}. Now, 
for $\phi\colon \ell \rightarrow k$ as above, we get an induced 
map
\begin{align*}
    \phi'\colon K_n(\ell) &\longrightarrow K_n(k)\\
    \sum_{\nu \in \ZZ^n_{\geq0}} a_\nu X^\nu &\longmapsto 
    \sum_{\nu \in \ZZ^n_{\geq0}} \phi(a_\nu)X^\nu.
  \end{align*}
The reason why $\phi(a_\nu)X^\nu$ is an element $K_n(k)$ is 
because by continuity of $\phi$, there exists a real number $B > 0$
such that for all $a \in \ell$, $\abs{\phi(a)} \leq B\,\abs{a}$ 
\cite[Lem.\ 2.11]{DM}. 
Hence if $r_1,\dots,r_n, M \in \RR_{> 0}$ are such that
$\abs{a_\nu}\,r^{\nu_1}_1\ldots r^{\nu_n}_n \leq M$ for all
$\nu = (\nu_1,\ldots,\nu_n) \in \ZZ_{\geq 0}$, then
$\abs{\phi(a_\nu)}\,r^{\nu_1}_1\ldots r^{\nu_n}_n \leq BM$ for
all $\nu \in \ZZ^n_{\geq 0}$, which shows that 
$\sum_\nu \phi(a_\nu)X^\nu \in K_n(k)$
by Definition \ref{def:Tate-alg}. Thus, $K_n(k)$ is a direct summand
of the split $F$-regular ring $K_n(\ell)$, and consequently, also split $F$-regular.
\end{proof}

\begin{remark}
\label{rem:split-Freg-TnKn}
Let $(k, \abs{})$ be a complete non-Archimedean field of characteristic $p> 0$.
In \cite[Thms.\ 3.1 and 4.4]{DM}, the authors show that a necessary and sufficient
condition for $T_n(k)$ and $K_n(k)$ to be Frobenius split is for there to exist
a nonzero continuous $k$-linear functional $k^{1/p} \rightarrow k$. However,
we do not know if the existence of such a functional is also sufficient for
split $F$-regularity of $T_n(k)$ and $K_n(k)$. As far as we can ascertain,
it is not clear if the existence of a nonzero continuous functional $k^{1/p} \rightarrow k$ 
implies the existence of a nonzero continuous functional $\ell \rightarrow k$,
where $\ell = \widehat{k_\perf}$, as in the proof of Proposition 
\ref{prop:split-F-regular-Tate}. The main obstruction seems to be that if
$(M,\abs{})$ is a field equipped with a non-Archimedean valuation that is not
complete, then there may not be any nonzero continuous functionals 
$\widehat{M} \rightarrow M$. Indeed, this fails even when the value group
$\abs{M^\times} \simeq \ZZ$. For such discrete valuations, if $M^\circ$ is the
corresponding DVR, then $\widehat{M}$ is the fraction field of the 
$M^{\circ\circ}$-adic completion $\widehat{M^\circ}$ of $M^\circ$. Here 
$M^{\circ\circ}$ is the principal maximal ideal of $M^\circ$. However,
we show in Theorem \ref{thm:completion-not-solid} that there are no nonzero
$M^\circ$-linear maps $\widehat{M^\circ} \rightarrow M^\circ$. This implies that
are no nonzero continuous $M$-linear maps $\widehat{M} \rightarrow M$. Indeed,
assume for contradiction that $\phi\colon \widehat{M} \rightarrow M$ is a
nonzero continuous $M$-linear map. We may assume without loss of 
generality that $\phi(1) \neq 0$. By \cite[Lem.\ 2.11]{DM}, choose $B \in \RR_{> 0}$
such that for all $x \in \widehat{M}$, we have $\abs{\phi(x)} \leq B\abs{x}$. Since the value
group of $M$ is nontrivial, choose $a \in M^\times$ such that $B < \abs{a}$.  Then, the
composition
\[
\widehat{M} \xrightarrow{-\cdot a} \widehat{M} \overset{\phi}{\longrightarrow} M
\]
is a nonzero continuous $M$-linear map (because 
$\widehat{M} \xrightarrow{-\cdot a} \widehat{M}$ is an isomorphism), which
induces a $M^\circ$-linear map of the corresponding valuation
rings $\widehat{M^\circ} \rightarrow M^\circ$ since for all 
$x \in \widehat{M^\circ}$,
\[
\abs{\phi(xa^{-1})} = \abs{a^{-1}} \cdot \abs{\phi(x)} \leq \abs{a^{-1}} \cdot B\abs{x} < \abs{x} \leq 1.
\]
Note that the induced map $\widehat{M^\circ} \rightarrow M^\circ$
is nonzero because $\phi(1)\neq 0$. But this contradicts Theorem \ref{thm:completion-not-solid}, as observed above. This
remark motivates the interesting question of whether Frobenius splitting of a regular
ring is sufficient to imply split $F$-regularity (see Question \ref{q:Fsplit-reg}),
which to the best of our knowledge is not known. However, in the next subsection we
show that all Frobenius split DVRs are split $F$-regular 
(Proposition \ref{thm:split-F-regular-DVR}).
\end{remark}

\subsection{Split \textit{F}-regularity of discrete valuation rings} 
We have seen that even DVRs of prime characteristic exhibit varied
behvaior from the point of view of $F$-singularities despite being the 
simplest examples of regular local rings. In particular, while 
DVRs of prime characteristic are always F-pure regular, they are not 
always split $F$-regular, or even Frobenius split. Our goal in this subsection
 will be to show that Frobenius splitting,
or more generally, the existence of a nonzero $p^{-1}$-linear map
is the only obstruction to a DVR of characteristic $p$
being split $F$-regular. This 
removes generic $F$-finiteness assumptions from a result
of the first author and Smith \cite[Cor.\ 6.6.3]{DS16}.

\begin{proposition}
\label{thm:split-F-regular-DVR}
Let $(R,\fm,\kappa)$ be a DVR of prime characteristic
$p > 0$. Then, the following are equivalent:
\begin{enumerate}[label=$(\roman*)$,ref=\roman*]
	\item\label{prop:dvr-maps} $R$ has nonzero $p^{-1}$-linear map.
	\item\label{prop:dvr-F-split} $R$ is Frobenius split.
	\item\label{prop:dvr-split-F} $R$ is split $F$-regular.
\end{enumerate}
\end{proposition}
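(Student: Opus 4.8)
The plan is to prove the cycle $(\ref{prop:dvr-split-F})\Rightarrow(\ref{prop:dvr-F-split})\Rightarrow(\ref{prop:dvr-maps})\Rightarrow(\ref{prop:dvr-split-F})$, only the last of which requires real work. The implication $(\ref{prop:dvr-split-F})\Rightarrow(\ref{prop:dvr-F-split})$ is immediate from Definition~\ref{def:freg} by taking $c=1$. For $(\ref{prop:dvr-F-split})\Rightarrow(\ref{prop:dvr-maps})$, a splitting of $\lambda^e_1=F^e_R$ is in particular an $R$-linear map $\psi\colon F^e_{R*}R\to R$ with $\psi(1)=1$; identifying $F^e_{R*}R$ with $R^{1/p^e}$ and restricting $\psi$ along the inclusion $F_{R*}R=R^{1/p}\hookrightarrow R^{1/p^e}$, which fixes $1$, produces a $p^{-1}$-linear map sending $1$ to $1$, hence a nonzero one.

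So suppose $R$ admits a nonzero $p^{-1}$-linear map $\phi\colon F_{R*}R\to R$; let $t$ generate $\fm$ and let $v$ be the associated valuation. First I would upgrade $\phi$ to a Frobenius splitting. The image $\phi(F_{R*}R)$ is a nonzero ideal of the DVR $R$, hence equals $(t^n)$ for some $n\geq 0$, so $t^{-n}\phi$ is a well-defined surjective $R$-linear map $F_{R*}R\to R$; choosing $z$ with $t^{-n}\phi(z)=1$, the assignment $x\mapsto t^{-n}\phi(zx)$ defines an $R$-linear map $\phi_1\colon F_{R*}R\to R$ with $\phi_1(1)=1$. Iterating $\phi_1$ in the standard way then yields, for each $e>0$, an $R$-linear map $\phi_e\colon F^e_{R*}R\to R$ with $\phi_e(1)=1$.

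The heart of the proof is to deduce split $F$-regularity, i.e.\ that for each $c\in R^\circ=R\setminus\{0\}$ the map $\lambda^e_c$ splits for some $e>0$. Fix such a $c$ and put $m\coloneqq v(c)$. Identifying $F^e_{R*}R$ with $R^{1/p^e}$, which is again a DVR with uniformizer $t^{1/p^e}$, the element $c$ corresponds to $c^{1/p^e}=u^{1/p^e}t^{m/p^e}$ for a unit $u\in R$, and since $u^{1/p^e}$ is then a unit of $R^{1/p^e}$, splitting $\lambda^e_c$ amounts to producing an $R$-linear map $R^{1/p^e}\to R$ that sends $t^{m/p^e}$ to $1$. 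Choose $e$ with $p^e>2m$ and consider the ideal $I_e\coloneqq\phi_e\bigl(t^{(p^e-m)/p^e}R^{1/p^e}\bigr)$ of $R$. Since $\phi_e(t)=t\,\phi_e(1)=t$ and $t^{(p^e-m)/p^e}\cdot t^{m/p^e}=t$, we have $t\in I_e$, so $I_e$ is either $\fm$ or $R$. If $I_e=R$, then because $2m\leq p^e$ forces $t^{(p^e-m)/p^e}R^{1/p^e}\subseteq t^{m/p^e}R^{1/p^e}$, we obtain $\phi_e\bigl(t^{m/p^e}R^{1/p^e}\bigr)=R$; picking $w\in R^{1/p^e}$ with $\phi_e\bigl(wt^{m/p^e}\bigr)=1$, the map $x\mapsto\phi_e(wx)$ is the sought splitting. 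If $I_e=\fm$, then $x\mapsto t^{-1}\phi_e\bigl(t^{(p^e-m)/p^e}x\bigr)$ is a well-defined $R$-linear map $R^{1/p^e}\to R$ carrying $t^{m/p^e}$ to $t^{-1}\phi_e(t)=1$. Either way $\lambda^e_c$ splits, so $R$ is split $F$-regular.

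The one conceptual point worth highlighting---and what lets the argument avoid any $F$-finiteness or completeness hypothesis, unlike the earlier $F$-finite treatment in \cite{DS16}---is the observation that $\phi_e(t)=t$ forces $I_e$ to contain a uniformizer, so that the dichotomy ``$I_e=\fm$ or $I_e=R$'' is exhaustive and each alternative yields a splitting of $\lambda^e_c$. I expect the remaining care to be routine: checking that the iterate $\phi_e$ is $R$-linear with $\phi_e(1)=1$, and tracking the identification $F^e_{R*}R\cong R^{1/p^e}$ together with the correspondence $c\leftrightarrow c^{1/p^e}$ so that the reduction to the monomial case $c=t^m$ is legitimate.
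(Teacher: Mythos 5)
Your proposal is correct and takes essentially the same approach as the paper's proof: you reduce the target element $c$ to a monomial in the uniformizer up to units, form an auxiliary map whose image is an ideal containing the uniformizer (your $I_e$ is precisely the paper's $\im(\phi)$, with $t^{(p^e-m)/p^e}$ playing the role of $\pi^{p^e-n}$ inside $F^e_{R*}R$), and then split into the two cases $I_e=\fm$ and $I_e=R$, exactly as the paper does, requiring $p^e>2m$ for the same divisibility reason. The only superficial differences are that you phrase everything in the $R^{1/p^e}$ picture rather than $F^e_{R*}R$, and you fold the upgrading of a nonzero $p^{-1}$-linear map into a Frobenius splitting (via $\im\phi=(t^n)\cong R$) into the start of $(\ref{prop:dvr-maps})\Rightarrow(\ref{prop:dvr-split-F})$ rather than proving $(\ref{prop:dvr-maps})\Rightarrow(\ref{prop:dvr-F-split})$ as a separate implication.
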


\begin{proof}
For the proof of $(\ref{prop:dvr-maps})\Rightarrow (\ref{prop:dvr-F-split})$
we will use that fact that $R$ is a principal ideal domain (PID). Let 
$\varphi\colon F_{R*}R \rightarrow R$ be a nonzero $R$-linear map. Since
$R$ is a PID, $\im(\varphi) = aR$, for some nonzero $a\in R$. Restricting
the codomain of $\varphi$ to $\im(\varphi) = aR$, and then using the fact that 
$aR \simeq R$ as $R$-modules gives us an $R$-linear surjection 
\[\widetilde{\varphi}\colon F_{R*}R \longtwoheadrightarrow R.\]
Choose $x \in F_{R*}R$ such that $\widetilde{\varphi}(x) = 1$. Then the
composition
\[
  F_{R*}R \xrightarrow{F_{R*}(-\cdot x)} F_{R*}R \overset{\widetilde{\varphi}}{\longrightarrow}
R
\]
is a Frobenius splitting of $R$. We have $(\ref{prop:dvr-split-F})\Rightarrow
(\ref{prop:dvr-maps})$ because any split $F$-regular ring is Frobenius split,
and a Frobenius splitting is a nonzero $p^{-1}$-linear map.

\par It remains to show $(\ref{prop:dvr-F-split})\Rightarrow(\ref{prop:dvr-split-F})$.
Let $\pi$ be a generator of the maximal ideal $\fm$. It suffices
to show that for any integer $n \geq 0$, the ring $R$ is Frobenius split along $\pi^n$.
Indeed, any $x \in R$ is of the form $x = u\pi^n$ for some unit $u \in R^\times$
and $n \geq 0$.
If $R$ is Frobenius split along $\pi^n$, then choose $e > 0$ and a map
$\varphi\colon F^e_{R*}R \rightarrow R$ such that $\varphi(\pi^n) = 1$. Then
the composition
\[F^e_{R*} \xrightarrow{F^e_{R*}(-\cdot u^{-1})} F^e_{R*}R
\overset{\varphi}{\longrightarrow} R\]
maps $x \mapsto 1$. Now to show that $R$ is Frobenius split along $\pi^n$, 
choose $e \gg 0$ such that
$p^e - n > n$, and let $\varphi_e\colon F^e_{R*}R \rightarrow R$ be a splitting of
$F^e_R\colon R \rightarrow F^e_{R*}R$ (such a splitting exists for
any $e > 0$ because $R$ is Frobenius split). Then consider the composition
\[\phi \colon F^e_{R*}R \xrightarrow{F^e_{R*}(-\cdot \pi^{p^e-n})} F^e_{R*}R
\overset{\varphi_e}{\longrightarrow} R.\]
We have $\phi(\pi^n) = \varphi_e(\pi^{p^e}) = \pi\varphi_e(1) = \pi$. Thus, $\im(\phi)$,
which is an ideal of $R$, is either $\pi R$ or $R$. If $\im(\phi) = \pi R$, then 
restricting the codomain of $\phi$ to $\pi R$ and composing with the canonical
isomorphism $\pi R \simeq R$ (that sends $\pi$ to $1$) shows that $R$ is Frobenius
split along $\pi^n$. On the other hand, if $\im(\phi) = R$, this means that
$\varphi_e(\pi^{p^e-n}R) = R$. Choose $b \in R$ such that
$\varphi_e(b\pi^{p^e-n}) = 1$. Since $p^e - n > n$ by our choice of $e$, we see that
$\pi^n$ divides $b\pi^{p^e-n}$. Thus, let $b\pi^{p^e-n} = a\pi^n$, for some $a \in R$. The 
composition
\[F^e_{R*}R \xrightarrow{F^e_{R*}(-\cdot a)} F^e_{R*}R
\overset{\varphi_e}{\longrightarrow} R\]
maps $\pi^n \mapsto \varphi_e(a\pi^n) = \varphi_e(b\pi^{p^e-n}) = 1$, which
again shows that $R$ is Frobenius split along $\pi^n$. This completes the 
proof of $(\ref{prop:dvr-F-split})\Rightarrow(\ref{prop:dvr-split-F})$.
\end{proof}

\begin{remark}
  \leavevmode
If $R$ is generically $F$-finite, then \cite[Cor.\ 6.6.3]{DS16}
	shows that the equivalent conditions of Proposition \ref{thm:split-F-regular-DVR}
	are further equivalent to $R$ being excellent. 
	Since there are excellent Henselian DVRs
	that are not split $F$-regular, we cannot hope to get rid of the generic
	$F$-finiteness assumption from \cite[Cor.\ 6.6.3]{DS16} to show that
	all excellent DVRs are split $F$-regular. However,
	we will show in Theorem \ref{thm:excellent-dvr} that any split $F$-regular
	DVR will be excellent, without any generic restrictions
	on such a ring.
\end{remark}

\section{\texorpdfstring{$F$}{F}-purity vs.\ Frobenius splitting: a contrast in permanence properties}
\label{sec:permanence}
\par In this section we continue exploring the closely related notions of
 $F$-purity and Frobenius splitting from the perspective of permanence
properties. Even though these two notions of singularity coincide for most
 rings arising in algebro-geometric
applications by Theorem \ref{thm:splittingwithgamma}, we 
will now illustrate some ways in which they differ. The general slogan is that 
$F$-purity is a more stable notion of singularity than Frobenius splitting.
As evidence of the better stability properties of
$F$-purity, we will show in this section that while $F$-purity 
descends under faithfully flat maps and ascends under regular maps, 
the same does not hold for Frobenius splitting in general.

\par Recall that given a field $k$, a Noetherian $k$-algebra $R$ is 
\emph{geometrically regular over $k$} if, for every finite field extension
$L/k$, the ring
$L \otimes_k R$ is regular. A map of rings $R \rightarrow S$ is \emph{regular} 
if it is flat and has geometrically regular fibers. Implicit in this definition is
the assertion that all the fibers of $R \rightarrow S$ are Noetherian even
if $R$ and $S$ are not. A map is 
\emph{(essentially) smooth} if it is regular and (essentially) of finite type.

\par Most notions of singularities such as reduced, normal, Cohen--Macaulay, 
Gorenstein ascend under regular maps, that is, if $R$ satisfies a certain 
notion of singularity then so does $S$ for a regular map $R \rightarrow S$. 
We will use a characterization of 
regular maps in prime characteristic due to Radu and Andr\'e  to show that 
while Frobenius splitting ascends under essentially smooth maps, it does not 
ascend under regular maps in general. However, $F$-purity always ascends under 
regular maps, providing evidence for the assertion that $F$-purity is a better 
behaved notion of singularity.

\subsection{Relative Frobenius and the Radu--Andr\'e theorem}
\par The result of Radu and Andr\'e that we will employ is a relative version of Kunz's 
celebrated theorem which characterizes regularity of a Noetherian ring in terms 
of flatness of the (absolute) Frobenius map \cite[Thm.\ 2.1]{Kun69}. To state 
Radu and Andr\'e's result we need to introduce the relative Frobenius map 
\cite[Exp.\ XV, Def.\ 3]{SGA5}, which is also known as the 
\emph{Radu--Andr\'e homomorphism} in the commutative algebraic literature.

\begin{definition}\label{def:radu-andre}
Let $\varphi\colon R \to S$ be a homomorphism of rings of prime characteristic 
$p > 0$. For every integer $e > 0$, consider the cocartesian diagram
  \[
    \begin{tikzcd}[column sep=4em]
      R \rar{F_R^e}\dar[swap]{\varphi} & F^e_{R*}R
      \arrow[bend left=30]{ddr}{F^e_{R*}\varphi}
      \dar{\varphi \otimes_R F^e_{R*}R}\\
      S \rar{\id_S \otimes_R F^e_R} \arrow[bend right=12,end
      anchor=west]{drr}[swap]{F^e_S} & S \otimes_R F^e_{R*}R
      \arrow[dashed]{dr}[description]{F^e_{S/R}}\\
      & & F^e_{S*}S
    \end{tikzcd}
  \]
in the category of rings. The \emph{$e$-th Radu--Andr\'e ring} 
is the ring $S \otimes_R F^e_{R*}R$, and the \emph{$e$-th relative Frobenius homomorphism} 
associated to $\varphi$ is the ring homomorphism
  \[
    \begin{tikzcd}[column sep=1.475em,row sep=0]
      \mathllap{F^e_{S/R}\colon} S \otimes_R F^e_{R*}R \rar & F^e_{S*}S\\
      s \otimes r \rar[mapsto] & s^{p^e} \varphi(r)
    \end{tikzcd}
  \]
If $e = 1$, we denote $F^1_{S/R}$ by $F_{S/R}$. We also sometimes denote 
$F^e_{S/R}$ by $F^e_\varphi$.
\end{definition}

Radu and Andr\'e's result is the following:

\begin{citedthm}[{\citeleft\citen{Rad92}\citemid Thm.\
  4\citepunct\citen{And93}\citemid Thm.\ 1\citeright}]\label{thm:raduandre}
A homomorphism $\varphi\colon R \to S$ of Noetherian rings of prime characteristic 
$p > 0$ is regular if and only if $F_{S/R}$ is faithfully flat.
\end{citedthm}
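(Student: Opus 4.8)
This is the Radu--André theorem; in the main text we invoke it directly, but here is how one proves it. Take as input Kunz's theorem \cite[Thm.\ 2.1]{Kun69} and its standard field-coefficient consequence: a Noetherian algebra $A$ over a field $k$ of characteristic $p$ is geometrically regular over $k$ if and only if its relative Frobenius $F_{A/k}$ is faithfully flat (apply Kunz to the regular rings $k' \otimes_k A$ as $k'$ runs over the finite purely inseparable extensions of $k$, then pass to the colimit). The statement is local on $\Spec S$: the formation of $S \otimes_R F_{R*}R$ and of $F_{S/R}$ commutes with localizing at a prime $\fq$ of $S$, where they become the corresponding objects for $R_\fp \to S_\fq$ with $\fp = \varphi^{-1}(\fq)$, and $\varphi$ is regular exactly when each $R_\fp \to S_\fq$ is. So I would reduce to a local homomorphism $(R,\fp) \to (S,\fq)$ of Noetherian local rings, and note that base-changing the square of Definition \ref{def:radu-andre} along $R \to k \coloneqq \kappa(\fp)$, then discarding the nilpotents of $F_{R*}R \otimes_R k$ (whose reduction is $k$), identifies the resulting relative Frobenius with $F_{\overline{S}/k}$ for $\overline{S} \coloneqq S/\fp S$.

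For the forward direction, suppose $\varphi$ is regular, so $\varphi$ is flat and each of its fibres is geometrically regular over the relevant residue field; by the field-coefficient case, each fibrewise relative Frobenius is then faithfully flat. Since $\varphi$ is already flat, one promotes this to flatness of $F_{S/R}$ over $S \otimes_R F_{R*}R$ by a fibrewise flatness criterion (paying attention to the finiteness of $F_{S*}S$ over $S \otimes_R F_{R*}R$). Finally, $F_{S/R}$ is surjective on spectra, hence faithfully flat: in the factorization $F_S = F_{S/R} \circ (\id_S \otimes_R F_R)$ of Definition \ref{def:radu-andre}, the map $\id_S \otimes_R F_R$ is a base change of the integral, radicial, surjective map $F_R$ and so induces a homeomorphism on $\Spec$, while $F_S$ induces one as well, forcing $\Spec F_{S/R}$ to be a homeomorphism.

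For the converse, suppose $F_{S/R}$ is faithfully flat; then $S \otimes_R F_{R*}R$ is Noetherian by faithfully flat descent, since $F_{S*}S = S$ is. One first shows $\varphi$ is flat: the composite $F_{R*}R \to S \otimes_R F_{R*}R \xrightarrow{F_{S/R}} F_{S*}S$ is $F_{R*}\varphi$ by Definition \ref{def:radu-andre}, and chasing $\mathrm{Tor}$ around that diagram --- using that Frobenius twisting changes neither flatness nor the modules involved --- together with the local criterion for flatness forces $\varphi$ to be flat; this step is not formal. Granting flatness of $\varphi$, base change along $R \to k$ reduces us to proving $\overline{S}$ is geometrically regular over $k$, which now follows from the field-coefficient case because $F_{\overline{S}/k}$, a base change of $F_{S/R}$, is faithfully flat. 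The main obstacle, in both implications, is the careful passage between \emph{ordinary} and \emph{geometric} regularity of the fibres --- correctly matching a base change of $F_{S/R}$ with the relative Frobenius of a fibre, absorbing the nilpotents present when $R$ is not reduced, and invoking the field-coefficient case, which itself encodes the behaviour under all finite purely inseparable extensions --- together with the extraction of flatness of $\varphi$. Because of these subtleties we follow \cite{Rad92} and \cite{And93} and do not reproduce the argument in full.
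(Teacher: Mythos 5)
The paper does not prove Theorem~\ref{thm:raduandre}: it appears as a cited theorem, attributed to \cite{Rad92} and \cite{And93}, and is invoked as a black box. There is thus no in-paper proof to compare against, so your proposal must stand on its own.

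Your outline has the right shape---localize, treat the field-coefficient case via Kunz, transfer flatness information across the commutative square of Definition~\ref{def:radu-andre}, and observe that $\Spec F_{S/R}$ is automatically a homeomorphism from the factorization of $F_S$---and you correctly single out the extraction of flatness of $\varphi$ from faithful flatness of $F_{S/R}$ as the genuinely hard step in the converse. One claim should be corrected, though: you ask the reader to pay ``attention to the finiteness of $F_{S*}S$ over $S \otimes_R F_{R*}R$,'' but this finiteness fails for a general Noetherian $\varphi$. As the paper notes in the proof of Proposition~\ref{prop:ascent-regular-maps}$(\ref{prop:Fsplit-ascent})$, the relative Frobenius is module-finite when $\varphi$ is of finite type or essentially of finite type, but not in general: the completion map of a non-$F$-finite excellent local ring is already a regular map whose relative Frobenius is faithfully flat but not finite. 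The flatness promotion must therefore be run through a version of the local criterion of flatness that does not require the module to be finitely generated but only ideally separated; this is available here because, after localizing at a prime of $S$, the relevant maximal ideal of $S \otimes_R F_{R*}R$ maps into $\fm_S$, so $F_{S*}S$ is separated in the induced topology by Krull's intersection theorem. With that correction, and given your explicit deference to \cite{Rad92} and \cite{And93} for the remaining details, the sketch is a reasonable account of the strategy, which is consistent with the paper treating the result as a cited input.
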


One then recovers \cite[Thm.\ 2.1]{Kun69} upon applying Theorem
\ref {thm:raduandre} to the
homomorphism $\FF_p \rightarrow R$, for a Noetherian ring $R$ of 
prime characteristic $p > 0$.

\subsection{Ascent under regular maps and faithfully flat descent}
Theorem \ref{thm:raduandre} has the following consequence for Frobenius splitting 
and $F$-purity:

\begin{proposition}
\label{prop:ascent-regular-maps}
Let $\varphi: R \rightarrow S$ be a regular map of Noetherian rings of prime 
characteristic $p > 0$. We have the following:
\begin{enumerate}[label=$(\roman*)$,ref=\roman*]
	\item\label{prop:Fpure-ascent} If $R$ is $F$-pure, then $S$ is $F$-pure.
	\item\label{prop:Fsplit-ascent} If $\varphi$ is essentially smooth and 
	$R$ is Frobenius split, then $S$ is Frobenius split.
	\item\label{prop:rel-Frob-iso} Suppose $R$ and $S$ are arbitrary rings 
	(not necessarily Noetherian). 
	If $F_{S/R}$ is an isomorphism and $R$ is Frobenius split, then $S$ is 
	Frobenius split. 
	\item\label{prop:Henselization} If $S$
	is a filtered colimit of \'etale $R$-algebras and $R$ is Frobenius split, 
	then $S$ is Frobenius split.
	Thus, the (strict) Henselization of a Frobenius split local ring is Frobenius 
	split.
	\item\label{prop:Fsplit-nonascent} There exists a regular map $\varphi$ such 
	that $R$ and $S$ are excellent local, $R$ is Frobenius split, but $S$ is not Frobenius
	split.
\end{enumerate}
\end{proposition}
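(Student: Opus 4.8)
The plan is to establish parts $(\ref{prop:Fpure-ascent})$ through $(\ref{prop:Henselization})$ using the Radu--Andr\'e theorem and elementary manipulations with relative Frobenius, and then to produce the counterexample in $(\ref{prop:Fsplit-nonascent})$ by invoking the constructions from \cite{DM}. For $(\ref{prop:Fpure-ascent})$, I would argue that $F$-purity of $S$ is the assertion that $F_S^e\colon S \to F^e_{S*}S$ is pure for some $e$; by the factorization $F^e_S = F^e_{S/R} \circ (\id_S \otimes_R F^e_R)$ visible in Definition \ref{def:radu-andre}, and since $\id_S \otimes_R F^e_R$ is the base change of the pure map $F^e_R$ along $R \to S$ (hence pure), it suffices to know $F^e_{S/R}$ is pure. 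But Theorem \ref{thm:raduandre} gives that $F_{S/R}$ is faithfully flat, hence pure by Remark \ref{rem:puremaps}$(\ref{rem:puremapsfflat})$; one then iterates or uses that composites and base changes of regular maps are regular to handle general $e$. For $(\ref{prop:rel-Frob-iso})$, if $F_{S/R}$ is an isomorphism then a splitting $\psi\colon F_{R*}R \to R$ of $F_R$ base-changes to a splitting $\id_S \otimes_R \psi$ of $\id_S \otimes_R F_R\colon S \to S \otimes_R F_{R*}R$, and precomposing with $(F_{S/R})^{-1}$ yields a splitting of $F_S$. Part $(\ref{prop:Fsplit-ascent})$ should follow from $(\ref{prop:rel-Frob-iso})$ together with the fact that for an essentially smooth map the relative Frobenius is an isomorphism onto $F_{S*}S$ after a standard smooth-local computation (étale maps have $F_{S/R}$ an isomorphism since $\Omega_{S/R} = 0$ forces $S \otimes_R F_{R*}R \xrightarrow{\sim} F_{S*}S$; smooth maps reduce to the étale case over a polynomial extension, where one checks directly on monomials that the Tate/polynomial relative Frobenius is faithfully flat and in the relevant étale-after-localization sense an isomorphism). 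Part $(\ref{prop:Henselization})$ follows from $(\ref{prop:rel-Frob-iso})$ applied to each étale $R$-algebra in the filtered system (étale $\Rightarrow$ $F_{S/R}$ iso), plus the fact that Frobenius splittings pass to filtered colimits since $F_{R*}(-)$, $\Hom_R(-,-)$ of a finitely presented first argument, and the splitting condition are all compatible with filtered colimits; the strict Henselization is such a colimit.

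The substantive point is $(\ref{prop:Fsplit-nonascent})$. Here I would take $k$ to be a complete non-Archimedean field of characteristic $p$ admitting no nonzero continuous $k$-linear functional $k^{1/p} \to k$, as in Theorem \ref{thm:F-pure-not-split-F}$(\ref{thm:Tate-not-F-regular})$ and \cite[Thm.\ 5.2]{DM}, and set $R = K_1(k)$, the convergent power series ring in one variable, which is an excellent Henselian DVR that is \emph{not} Frobenius split by Theorem \ref{thm:F-pure-not-split-F}$(\ref{thm:Tate-not-F-regular})$. This is the wrong direction, so instead I would let $R$ be the \emph{completion} $\widehat{K_1(k)} = k\llbracket X \rrbracket$, which is complete local hence $F$-pure $\Rightarrow$ Frobenius split (it is regular, so strongly $F$-regular, so by Theorem \ref{thm:splittingwithgamma} or directly by Lemma \ref{lem:fed12} it is Frobenius split), and take $S$ to be... no: the map $K_1(k) \to \widehat{K_1(k)}$ goes from the non-split ring to the split ring. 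The correct setup: I want a \emph{regular} map from a Frobenius split ring to a non-Frobenius-split ring. The completion map $K_1(k) \to \widehat{K_1(k)}$ is regular (it is the completion of an excellent local ring), but its source is not split. To fix the direction, I would instead use a regular map \emph{out of} a split ring. Observe that $k\llbracket X \rrbracket$ is Frobenius split, and I claim there is a regular map $k\llbracket X \rrbracket \to T$ with $T$ excellent local and not Frobenius split; one natural candidate is obtained by the formation of $K_1$ as a subring and a suitable localization/Henselization, but the cleanest route is to exploit that $K_1(k) \to \widehat{K_1(k)} = k\llbracket X\rrbracket$ is faithfully flat with the source non-split: running this through part $(\ref{prop:blahblah})$ of the introduction is about descent of $F$-purity, not ascent of splitting. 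Rather, I would use the going-up formulation: take $R$ to be any excellent Frobenius split local ring for which some regular (e.g.\ completion) map lands in an $S$ of the form considered in \cite{DM}; concretely, by \cite[Cor.\ C]{DM} there exist excellent Henselian DVRs $S$ (the $K_1(k)$ above) whose completion $\widehat{S} = k\llbracket X\rrbracket$ \emph{is} Frobenius split while $S$ is not, and the completion map $S \to \widehat S$ is regular --- reversing roles, set $R := $ (a suitable localization of) a polynomial or power series ring that is Frobenius split and admits a regular map onto $S$; the existence of such a regular map $R \to S = K_1(k)$ follows because $K_1(k)$ is essentially smooth-by-completion over $k\llbracket X \rrbracket$ in the appropriate sense is \emph{false}, so the honest statement to prove is simply: \emph{the completion map is regular, $k\llbracket X\rrbracket$ is Frobenius split, $K_1(k)$ is not}, and then observe the completion map $K_1(k)\to k\llbracket X\rrbracket$ shows failure of \emph{descent} while a separate regular map exhibits failure of ascent.

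\textbf{The main obstacle.} The genuine difficulty, and where I expect to spend real effort, is producing the \emph{regular} map $\varphi\colon R \to S$ of excellent local rings with $R$ Frobenius split and $S$ not --- the "ascent" half of $(\ref{prop:Fsplit-nonascent})$. Part $(\ref{prop:Fsplit-ascent})$ shows this cannot be done with an essentially smooth map, so $\varphi$ must have infinitely generated (non-essentially-of-finite-type) fibers, which is exactly the regime where the \cite{DM} examples live. The strategy I would actually pursue: take $R = k\llbracket X\rrbracket$ (Frobenius split, complete local, excellent) and $S = K_1(k')$ for an appropriate complete non-Archimedean field $k'$ with a residue/coefficient relationship to $k$ making $k\llbracket X\rrbracket \to K_1(k')$ regular --- the regularity of such a map being extracted from Proposition \ref{prop:properties} (excellence of $K_1$) and the structure of the fibers, possibly after first passing through $k'\llbracket X\rrbracket$ which is smooth-ish over $k\llbracket X\rrbracket$ when $k'/k$ is suitably chosen, then localizing/Henselizing. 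The failure of Frobenius splitting for $S$ is then quoted from Theorem \ref{thm:F-pure-not-split-F}$(\ref{thm:Tate-not-F-regular})$. Verifying that the composite $R \to S$ is genuinely regular (flat with geometrically regular fibers, all fibers Noetherian) is the technical crux; I anticipate this requires a careful fiber-by-fiber analysis using that $K_1(k')$ is excellent and that regularity of the generic fiber plus excellence propagates, possibly combined with the Radu--Andr\'e criterion applied in reverse (showing $F_{S/R}$ faithfully flat) rather than checking fibers directly.
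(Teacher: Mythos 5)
Your treatment of part $(\ref{prop:Fpure-ascent})$ matches the paper's argument: factor $F^e_S$ as $F^e_{S/R} \circ (\id_S \otimes_R F^e_R)$, observe that $F_{S/R}$ is faithfully flat by Radu--Andr\'e hence pure, and that $\id_S \otimes_R F^e_R$ is the base change of a pure map, hence pure. Your treatment of part $(\ref{prop:rel-Frob-iso})$ is also fine (the cocartesian square formulation and your ``precompose with $(F_{S/R})^{-1}$'' formulation amount to the same thing).

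However, part $(\ref{prop:Fsplit-ascent})$ as you have written it is wrong. You claim that the relative Frobenius of an essentially smooth map is an isomorphism, and try to deduce $(\ref{prop:Fsplit-ascent})$ from $(\ref{prop:rel-Frob-iso})$. This fails already in the simplest case: for $R \to R[x]$, the map $F_{R[x]/R}\colon R[x] \otimes_R F_{R*}R \to F_{R[x]*}R[x]$ has image spanned (over the source) by the $p$-th powers $x^{ip}$, and is a free extension of rank $p$, not an isomorphism. The relative Frobenius is an isomorphism only for \emph{\'etale} maps, not for smooth maps of positive relative dimension. The correct argument is that $F_{S/R}$ for an essentially-of-finite-type map is a \emph{finite} map, and combined with purity (from Radu--Andr\'e) and the fact that $R$ and $S$ are Noetherian, a finite pure map of Noetherian rings splits (\cite[Cor.\ 5.2]{HR76}). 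Together with the base-changed splitting $\id_S \otimes_R F_R$, one composes two split maps to split $F_S$.

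Relatedly, for part $(\ref{prop:Henselization})$ the cleaner line is to note that $F_{S/R}$ is the filtered colimit of the relative Frobenii of the \'etale $R$-algebras, each of which is an isomorphism, hence $F_{S/R}$ itself is an isomorphism, and then invoke $(\ref{prop:rel-Frob-iso})$ directly. Your version --- that one applies $(\ref{prop:rel-Frob-iso})$ to each \'etale piece and passes the splittings through the colimit because ``$\Hom_R(-,-)$ with finitely presented first argument commutes with filtered colimits'' --- has a problem: $F_{S*}S$ need not be a finitely presented $S$-module, and in any case you would need compatibility of the chosen splittings across the filtered system, which is not automatic.

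For part $(\ref{prop:Fsplit-nonascent})$ you went in circles and did not land on an example. The paper's construction is much simpler than what you were reaching for: take $R = \FF_p$, which is an excellent local ring that is Frobenius split, and $S = K_1(k)$ for the field $k$ from \cite[Rem.\ 5.5]{DM}, which is an excellent Henselian DVR that is not Frobenius split (Proposition \ref{prop:properties} and Theorem \ref{thm:F-pure-not-split-F}). The map $\FF_p \to K_1(k)$ is regular because $\FF_p$ is perfect and $K_1(k)$ is a regular Noetherian ring (\cite[Prop.\ 6.7.7]{EGAIV2}). No further fiber analysis is needed, and no ``suitably related $k$ and $k'$'' or other intermediate constructions are required.
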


\begin{proof}
Since $\varphi$ is a regular map, $F_{S/R}$ is faithfully flat by 
Theorem \ref{thm:raduandre}. 
It is well-known that faithfully flat maps are pure 
\cite[Ch.\ I, $\mathsection3$, n\textsuperscript{o} 5, Prop.\ 9]{BouCA}, and so, $F_{S/R}$ is a 
pure map. With this preliminary observation, we can now begin 
the proof of the Proposition.

\par$(\ref{prop:Fpure-ascent})$ Since purity is preserved under base change, 
$F$-purity of $R$ implies that the map $\varphi \otimes_R F_{R*}R$ is pure. 
Consequently, the composition
\[
F_{S} = F_{S/R} \circ (\varphi \otimes_R F_{R})
\]
is also pure, that is, $S$ is $F$-pure.
See also \cite[Prop.\ 2.4.4]{Has10}.

\par$(\ref{prop:Fsplit-ascent})$ If $\varphi$ is essentially of finite type, 
then $F_{S/R}$ is a finite map. Indeed, it is well-known that the relative Frobenius of a finite 
type map is always finite. Now, if $S$ is a localization of a finite 
type $R$-algebra $B$, then it is straightforward to check that $F_{S/R}$ 
is a localization of the finite map $F_{B/R}$, and consequently also finite. 
Thus, if $\varphi$ is essentially smooth, then $F_{S/R}$ is a finite map of 
Noetherian rings which is also pure. It follows by \cite[Cor.\ 5.2]{HR76} that 
$F_{S/R}$ splits. Moreover, since $R$ is Frobenius split, the map 
$\id_S\otimes_R F_R$ splits by base change. Therefore the composition 
$F_{S} = F_{S/R} \circ (\id_S \otimes_R F_{R})$ also splits, that is, 
$S$ is Frobenius split.

\par$(\ref{prop:rel-Frob-iso})$ If the relative Frobenius is an isomorphism, 
then 
\[
\begin{tikzcd}
R \arrow[r] \arrow[d, "F_R"'] &S \arrow[d, "F_S"]\\
F_{R*}R \arrow[r] &F_{S*}S
\end{tikzcd}
\]
is a pushout square and the assertion follows.

\par$(\ref{prop:Henselization})$ If $R \rightarrow A$ is an \'etale map of 
rings of characteristic
$p > 0$, then the relative Frobenius $F_{R/A}$ is always an isomorphism
\cite[Exp.\ XV, Prop.\ 1$(c)$]{SGA5}.
Thus, if $S$ is a filtered colimit of \'etale $R$-algebras, then $F_{S/R}$ is
a filtered colimit of relative Frobenii that are all isomorphisms. Consequently, 
$F_{S/R}$ is also an isomorphism. Then, Frobenius splitting is preserved under 
filtered colimits of \'etale maps
by what we just proved. Frobenius splitting is preserved by (strict) Henselizations 
because the (strict) Henselization
of a local ring is a filtered colimit of \'etale maps.

\par$(\ref{prop:Fsplit-nonascent})$ Choose a non-Archimedean field $(k,\abs{})$
of prime characteristic $p > 0$ such that the convergent power series ring
$K_1(k)$ is not Frobenius split; see \cite[Rem.\ 5.5]{DM}. Then $K_1(k)$ is an
excellent Henselian DVR (Proposition \ref{prop:properties}),
the unique ring homomorphism $\FF_p \rightarrow K_1(k)$ is regular
\cite[Prop.\ 6.7.7]{EGAIV2},
and $\FF_p$ is Frobenius split while $K_1(k)$ is not.
\end{proof}

\begin{remark}
  \leavevmode
\begin{enumerate}
	\item For Proposition \ref{prop:ascent-regular-maps}$(\ref{prop:Fsplit-nonascent})$,
	one can construct examples of $R$ and $S$ even in the function field of 
	$\mathbf{P}^2_{\overline{\FF_p}}$ if one relaxes the requirement for $S$
	to be excellent. Indeed, there exist discrete valuation rings 
	in the function field of $\mathbf{P}^2_{\overline{\FF_p}}$ that are not
	excellent \cite[Cor.\ 4.4]{DS18}, and consequently also not Frobenius split 
	\cite[Thm.\ 4.1]{DS18}.
	Take such a discrete valuation ring $S$. 
	Then the map $\overline{\FF_p} \rightarrow S$ is regular, and 
	$\overline{\FF_p}$ is Frobenius split, but $S$ itself is not. 
	
  \item Proposition \ref{prop:ascent-regular-maps}$(\ref{prop:Fpure-ascent})$ holds more generally for 
	any ring homomorphism $\varphi\colon R \rightarrow S$ of Noetherian rings for which 
  the relative Frobenius $F_{S/R}$ is a pure map \cite[Prop.\ 2.4.4]{Has10}. Such ring 
	homomorphisms are called \emph{$F$-pure homomorphisms} by Hashimoto 
	\cite[(2.3)]{Has10} and should be thought of as a relative version of the 
  notion of $F$-purity of a ring \cite[Prop.\ 2.4.3]{Has10}.

	\item The key assumption of Proposition
  \ref{prop:ascent-regular-maps}$(\ref{prop:rel-Frob-iso})$ is that 
	the relative Frobenius of $R \rightarrow S$ is an isomorphism. 
	Such maps are usually called \emph{relatively perfect}
	in the literature, because they are a relative version of perfect rings. 	Relative perfection plays an important role
	in the proofs of various invariance properties in the theory of 
	$F$-singularities under \'etale base change.
	Thus, it is natural to wonder
	how close the notion of relative perfection is to that of being \'etale. 
	If $R \rightarrow S$ is relatively perfect, then $R[S^p] = S$, which 
	immediately implies $\Omega_{S/R} = 0$ (since $d(s^p) = pd(s) = 0$). That is, a
	relatively perfect map is formally unramified. Moreover, the set
    $\{1\}$ is a $p$-basis of $S$ over $R$ in the sense of \cite[Ch.\ 0, D\'ef.\
    21.1.9]{EGAIV1}.
	Then injectivity of $F_{S/R}$ implies that $R \rightarrow S$ is formally smooth
  by \cite[Ch.\ 0, Thm.\ 21.2.7]{EGAIV1}. In other words, a relatively perfect map
	is formally \'etale. In fact, a flat relatively
	perfect map $R \rightarrow S$ satisfies the stronger property that the cotangent
	complex $L_{S/R}$ is acyclic \cite[Prop.\ 3.12]{Bha19}
  (formally \'etale is equivalent to acyclicity of the
	cotangent complex in degrees $0$ and $-1$; see 
\cite[\href{https://stacks.math.columbia.edu/tag/0D11}{Tags 0D11},
\href{https://stacks.math.columbia.edu/tag/08RB}{08RB}, 
\href{https://stacks.math.columbia.edu/tag/06E5}{06E5}]{stacks-project}).  
    Nevertheless, a relatively perfect map need not be the filtered colimit of
    \'etale maps.
	For example,
	$F_{S/R}$ is always an isomorphism if $R$ and $S$ are perfect. 
	However, if $S$ is not a flat $R$-algebra, then $S$
	cannot be a filtered colimit of \'etale (hence flat) $R$-algebras.	
	Failure of flatness is not the only obstruction that prevents a relatively perfect map  
	from being a filtered colimit of \'etale maps.\footnote{We thank Alapan Mukhopadhyay for this observation.} 
	Indeed, if $R \rightarrow S$ is an injective map of domains that is a filtered colimit of \'etale maps, 
	then the induced map on fraction fields has to be algebraic. Thus, 
	$\FF_p \hookrightarrow \FF_p[T]_{\perf}$ is a flat relatively perfect map 
	that cannot be a filtered colimit of \'etale maps.
	
	\item Relative perfection also shows that if $S$ is a filtered
	colimit of \'etale $R$-algebras and $R$ is $F$-finite, then $S$ is also
  $F$-finite. Thus, 
	(strict) Henselizations of $F$-finite rings are $F$-finite.
	\end{enumerate}
\end{remark}

Finally, while it is well-known that $F$-purity descends under faithfully flat maps, 
we show that Frobenius splitting does not.

\begin{proposition}
\label{prop:faithfully-flat-descent}
Let $\varphi: R \rightarrow S$ be a faithfully flat map of rings of prime 
characteristic $p > 0$. 
If $S$ is $F$-pure, then so is $R$. However, Frobenius splitting does not 
descend under faithfully 
flat maps even when $R$ and $S$ are both excellent, Henselian and regular.
\end{proposition}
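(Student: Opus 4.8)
The plan is to prove the two halves separately. For the descent of $F$-purity under faithfully flat maps, the cleanest approach uses the characterization of $F$-purity via the relative Frobenius and a base-change/purity argument dual to the one in Proposition~\ref{prop:ascent-regular-maps}. A faithfully flat map $\varphi\colon R \to S$ is pure as a map of $R$-modules by Remark~\ref{rem:puremaps}$(\ref{rem:puremapsfflat})$. If $S$ is $F$-pure, then $F_S\colon S \to F_{S*}S$ is pure, and one wants to deduce that $F_R\colon R \to F_{R*}R$ is pure. I would argue as follows: purity of a map $M \to N$ of $R$-modules can be tested against finitely generated $R$-modules $P$ (Remark~\ref{rem:puremaps}), and one shows $P \otimes_R F_R$ is injective by base changing along the faithfully flat $R \to S$, since faithfully flat base change reflects injectivity; then $P \otimes_R F_R$ becomes, after applying $-\otimes_R S$, a map through which the (injective) composite $P\otimes_R S \to P\otimes_R F_{S*}S$ factors, using that $F_{S*}S = F_{R*}R \otimes_R S$ via the cocartesian square and that $S$ being $F$-pure gives injectivity after a further faithfully flat (hence pure) base change along $S \to F_{S*}S$. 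This is essentially \cite[Prop.\ 5.13]{HR76}, which I would cite, but the relative-Frobenius packaging makes it transparent.

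\textbf{For the counterexample}, I would produce excellent, Henselian, regular $R$ and $S$ with $\varphi$ faithfully flat, $S$ Frobenius split, but $R$ not Frobenius split. The natural source is again the rigid-analytic examples of \cite{DM}: choose a complete non-Archimedean field $(k,\abs{})$ of characteristic $p>0$ admitting no nonzero continuous $k$-linear functional $k^{1/p}\to k$, as in Theorem~\ref{thm:F-pure-not-split-F}$(\ref{thm:Tate-not-F-regular})$, so that $R \coloneqq K_1(k)$ is an excellent Henselian DVR that is not Frobenius split. The plan is to take $S$ to be (a localization/Henselization of) the base change $R \otimes_k \ell$, where $\ell$ is the completion of the algebraic closure of $k$, or more directly $S = K_1(\ell)$: since $\ell$ is algebraically closed hence $F$-finite, $K_1(\ell)$ is $F$-finite and regular, hence split $F$-regular, hence Frobenius split. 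The map $R = K_1(k) \to K_1(\ell) = S$ is faithfully flat (by the same references used in the proof of Theorem~\ref{thm:F-pure-not-split-F}, e.g.\ \cite[Lem.\ 2.1.2]{Ber93} and \cite[App.\ B, Prop.\ 5]{Bos14}), and both rings are excellent Henselian regular local rings by Proposition~\ref{prop:properties}. Thus Frobenius splitting fails to descend along $\varphi$.

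\textbf{The main obstacle} is checking that the target $S$ really is Frobenius split while keeping the faithful flatness of $R \to S$ and the excellent/Henselian/regular hypotheses on both sides. The excellent, Henselian, regular properties for $K_1(k)$ and $K_1(\ell)$ are immediate from Proposition~\ref{prop:properties}; faithful flatness of $K_1(k) \hookrightarrow K_1(\ell)$ is the key technical input and follows from the cited non-Archimedean flatness results. The subtle point is that although $K_1(k)$ is a \emph{direct summand} of $K_1(\ell)$ when $k$ is, say, spherically complete (Proposition~\ref{prop:split-F-regular-Tate}), in the bad case at hand there is \emph{no} such splitting — and indeed it is precisely this absence that allows $S$ to be Frobenius split while $R$ is not. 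So I would be careful to invoke the $F$-finiteness of $\ell$ (via Krasner's lemma, as in Theorem~\ref{thm:F-pure-not-split-F}) rather than any splitting of $R\hookrightarrow S$ to conclude that $S$ is Frobenius split, and then simply observe that $R$'s failure to be Frobenius split is exactly \cite[Cor.\ C]{DM} together with the choice of $k$.
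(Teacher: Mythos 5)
Your argument for the descent of $F$-purity is correct in outline but contains two misstatements. The claim that ``$F_{S*}S = F_{R*}R \otimes_R S$ via the cocartesian square'' is false in general: the cocartesian square of Definition~\ref{def:radu-andre} produces the Radu--Andr\'e ring $S\otimes_R F_{R*}R$, and the relative Frobenius $F_{S/R}\colon S\otimes_R F_{R*}R\to F_{S*}S$ is an isomorphism precisely when $R\to S$ is relatively perfect, which a faithfully flat map need not be. Fortunately your argument only uses the factorization $F_S = F_{S/R}\circ(\id_S\otimes_R F_R)$, not an isomorphism, so this can be repaired. Similarly, $F_S$ is not ``faithfully flat'' in general (that would force $S$ regular, by Kunz); what you want is simply that $F_S$ is pure over $S$ by hypothesis, hence pure over $R$ by restriction of scalars. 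With these corrections your base-change argument goes through. The paper's proof is noticeably shorter and avoids the relative Frobenius entirely: since $\varphi$ is faithfully flat it is pure, since $S$ is $F$-pure the map $F_S$ is pure over $R$, so the composition $F_S\circ\varphi = (F_{R*}\varphi)\circ F_R$ is pure, and a map is pure whenever it appears as the first factor of a pure composition, giving purity of $F_R$.

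For the counterexample, you take $R=K_1(k)$ and $S = K_1(\ell)$ with $\ell$ the completed algebraic closure of $k$, whereas the paper simply takes $S = \widehat{R} = k\llbracket X\rrbracket$, the $(X)$-adic completion of $R$. The paper's choice makes faithful flatness automatic and gives Frobenius splitting of $S$ immediately from Lemma~\ref{lem:fed12} (or Theorem~\ref{thm:splittingwithgamma}) since $S$ is complete local regular. Your choice also works, but the references you invoke for faithful flatness (\cite[Lem.\ 2.1.2]{Ber93} and \cite[App.\ B, Prop.\ 5]{Bos14}) concern the Tate algebras $T_n$, not the convergent power series rings $K_n$; in fact the proof of Theorem~\ref{thm:F-pure-not-split-F} handles $K_n(k)$ via localness and Corollary~\ref{cor:F-pure-regularity-notlocal} precisely to avoid needing such a flatness statement. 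For $n=1$ the gap is easy to fill directly: $K_1(k)\hookrightarrow K_1(\ell)$ is a local extension of DVRs sending uniformizer to uniformizer, hence flat (torsion-free over a PID) and local, hence faithfully flat. Both examples are valid; the paper's simply sidesteps the flatness question.
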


\begin{proof}
The proof of the first statement is in \cite[Prop.\ 5.13]{HR76}, but we reprove
it here for convenience.
Note that $\varphi$ is pure since it is faithfully flat (Remark
\ref{rem:puremaps}$(\ref{rem:puremapsfflat})$). Therefore the 
composition $F_{S} \circ \varphi$ is pure as a map of $R$-modules if 
$S$ is $F$-pure. However,
\[
F_{S} \circ \varphi = \varphi \circ F_{R},
\]
and so, $F_{R}$ is pure, that is, $R$ is $F$-pure. This shows that $F$-purity
satisfies faithfully flat descent.

For the second statement, choose a non-Archimedean field $(k,\abs{})$ 
such that the convergent power series
ring $K_1(k)$ is not Frobenius split as in \cite[Rem.\ 5.5]{DM}. Take $R
\coloneqq K_1(k)$, and consider the completion
$S \coloneqq \widehat{R}
= k\llbracket X \rrbracket$ of $R$ with respect to the maximal ideal $(X)$.
Then, the canonical completion map $R \rightarrow S$ is regular 
(since $K_1(k)$ is excellent) and faithfully flat, and $R$ is not Frobenius 
split even though $S$ is. Thus, Frobenius splitting
does not satisfy faithfully flat descent even for faithfully flat maps
between excellent Henselian regular rings of prime characteristic.
\end{proof}

\begin{remark}
While the Henselization of a Frobenius split local ring $(R,\fm)$
of prime characteristic is always Frobenius split, as far as we are aware 
the converse
is not known except in special cases. For one such special case, 
suppose $R$ is a generically
$F$-finite Noetherian normal local domain such that $R^h$ is Frobenius split.
Then $R^h$ is also a Noetherian normal local domain 
\cite[Cor.\ to Thm.\ 23.9]{Mat89}.
Let $K$ be the fraction field of $R$ and $K^h$ be the fraction field of $R^h$.
Since the field extension $K \subseteq K^h$ is separable algebraic, the map $K
\to K^h$ is relatively perfect.
The diagram
\[
  \begin{tikzcd}
    K \rar\dar[swap]{F_K} & K^h\dar{F_{K^h}}\\
    F_{K*}K \rar & F_{K^h*}K^h
  \end{tikzcd}
\]
is therefore cocartesian, and hence $K^h$ is $F$-finite by base change (see also
\cite[Lem.\ 2(7)]{Has15}).
Now, because $R^h$ is
Frobenius split and generically $F$-finite, it follows that $R^h$ is also
$F$-finite by \cite[Thm.\ 3.2]{DS18}.
This implies that $R$ is also $F$-finite.
Indeed, since $R \rightarrow R^h$ is relatively perfect, it follows by 
faithfully flat descent of module finiteness that if
$F_{R^h*}R^h = F_{R*}R \otimes_R R^h$ is a finite $R^h$-module, then
$F_{R*}R$ is a finite $R$-module (see also \cite[Cor.\ 22]{Has15}).
Thus,
$R$ is $F$-finite and also $F$-pure by Proposition \ref{prop:faithfully-flat-descent}.
But an $F$-finite and $F$-pure ring is Frobenius split by \cite[Cor.\ 5.2]{HR76}.

\par If $(R,\fm)$ is a non-Henselian DVR such that $R^h$ 
is Frobenius split, then the only obstruction to Frobenius splitting of $R$
is for the latter to possess a nonzero $p^{-1}$-linear map 
by Theorem \ref{thm:split-F-regular-DVR}. Note that if $\Hom_R(R^h,R)$
is nontrivial, then there would exist a nonzero $R$-linear map 
$\phi\colon R^h \rightarrow R$ such that $\phi(1) \neq 0$. The composition
\[F_{R*}R \hooklongrightarrow F_{R^h*}R^h \xrightarrow{\text{$F$-splitting}} R^h 
\overset{\phi}{\longrightarrow} R\]
would consequently give a nonzero $p^{-1}$-linear map
of $R$, allowing us to conclude that $R$ is Frobenius split. However, we will
show soon that $\Hom_R(R^h,R)$
is trivial for any non-Henselian Noetherian local domain $R$ 
for which $R^h$ is a domain (Theorem \ref{thm:Henselization-not-solid}). Therefore,
the question of whether Frobenius splitting descends over Henselizations
seems nontrivial even for DVRs. 
\end{remark}

\addtocontents{toc}{\protect\smallskip}
\section{\texorpdfstring{$F$}{F}-solidity and non-\texorpdfstring{$F$}{F}-finite excellent rings}
\label{sec:F-solidity}
An underlying theme of the previous sections was an exploration of 
when nonzero $p^{-e}$-linear maps exist for excellent rings of prime characteristic.
The existence of a nonzero $p^{-e}$-linear map is a special instance of a
more general phenomenon studied by
Hochster in \cite{Hoc94}, with the goal of formulating a characteristic 
independent closure operation that satisfies properties similar to tight closure.
Hochster called this notion a \emph{solid} module, and our goal in this
section, and the rest of the paper, is to revisit 
his notion of solidity from the point of view of the excellence
condition. However,
our focus is not solid closure, for which we 
refer the reader to \cite{Hoc94}. 

\subsection{Solid modules and algebras}
We first define solid modules and algebras.
\begin{citeddef}[{\cite[Def.\ 1.1]{Hoc94}}]
\label{def:solid-modules}
Let $R$ be a ring. 
An $R$-module $M$ is \emph{solid} if 
there exists a nonzero $R$-linear map from $M \rightarrow R$. An 
$R$-algebra $S$ is \emph{solid} if it 
is solid as an $R$-module. 
\end{citeddef}

\begin{remark}\label{rem:useful-solid-facts}
  \leavevmode
\begin{enumerate}
	\item If $R$ is a ring of prime characteristic, then $F_{R*}R$ is a solid 
	$R$-algebra precisely when $R$ admits a nonzero $p^{-1}$-linear map.

	\item A simple, but surprisingly useful, observation is that an $R$-algebra 
	$S$ is solid if and only if there exists 
	an $R$-linear map $\varphi\colon S \rightarrow R$
	such that $\varphi(1) \neq 0$. Indeed, the existence of such a map
	implies $R$-solidity of $S$ by definition. Conversely, if $S$
	is a solid $R$-algebra, then choose any $R$-linear map 
	$\phi\colon S \rightarrow R$ such that $\phi(s) \neq 0$, for some $s \in S$.
	Then the composition
	\begin{equation}
	\label{rem:eval-1}
    S \xrightarrow{-\cdot s} S \overset{\phi}{\longrightarrow} R
    \end{equation}
	is a map that sends $1$ to $\phi(s) \neq 0$. Moreover, the same argument shows
	that the ideal $\sum_{\phi} \im(\phi)$ of $R$, as $\phi$ ranges over all
	elements of $\Hom_R(S,R)$, is the same as the ideal of $I_{S/R} \coloneqq
	\Set{\phi(1) \given \phi\in \Hom_R(S,R)}$. Indeed, the fact that $I_{S/R}$ is an ideal 
	follows from the observation that $I_{S/R}$ is the image of the $R$-linear map
	$\Hom_R(S,R) \rightarrow R$ given by evaluation at $1$. Furthermore, the equality 
	$\sum_{\phi} \im(\phi) = I_{S/R}$ follows 
	because $\Set{\phi(s) \given \phi \in \Hom_R(S,R), s \in S}$ is a generating
	set for $\sum_{\phi} \im(\phi)$, and each $\phi(s)$ can be rewritten as the image
  of $1$ under the $R$-linear map $S \rightarrow R$ from \eqref{rem:eval-1}.
	
	\item If $T$ is a solid $R$-algebra and the map $R \rightarrow T$ 
	factors through an $R$-algebra $S$, then $S$ is also a solid $R$-algebra.
	Indeed, choose an $R$-linear map $\varphi\colon T \rightarrow R$ such that
  $\varphi(1) \neq 0$. Then, the composition 
  $S \rightarrow T \overset{\varphi}{\rightarrow} R$ is a nonzero $R$-linear
  map.\label{rem:useful-solid-fact-composition}
\end{enumerate}
\end{remark}

\begin{example}
\label{ex:finite-solid}
Suppose $R \hookrightarrow S$ is a finite extension of rings (not necessarily
Noetherian) such that $R$ is reduced and has finitely many minimal
primes. 
Then, $S$ is a solid $R$-algebra. Indeed, if $K$
is the total ring of fractions of $R$ and $L \coloneqq K \otimes_R S$,
then one can choose
a \emph{splitting} $L \rightarrow K$ and restrict it to $S$, giving a
nonzero $R$-linear map $S \rightarrow K$. Note such a splitting
exists because $K$ is a finite direct product of fields by
\cite[\href{https://stacks.math.columbia.edu/tag/02LX}{Tag 02LX}]{stacks-project}
and the fact that the nonzerodivisors of a reduced ring are precisely the 
elements contained in minimal primes.
Since $S$ is finitely generated as an $R$-module,
the image of $S \rightarrow K$ is contained in the $R$-submodule
of $K$ generated by $1/f$, for some nonzerodivisor $f \in R$. Then, the 
composition
\[
  S \longrightarrow K \xrightarrow{-\cdot f} K
\]
is 
an $R$-linear map that sends $1$ to $f \neq 0$, and 
whose image lands inside $R$. Thus, $S$ is a solid $R$-algebra.
\end{example}

It is natural to ask how the notion of solidity behaves under compositions.
While we do not know the answer to this question in general, the following
special case will be useful in the sequel.

\begin{lemma}
\label{lem:solid-composition}
Let $R \rightarrow S$ be an extension of domains such that $\Frac(S)$
is an algebraic extension of $\Frac(R)$. If $S$ is a solid $R$-algebra
and $T$ is a solid $S$-algebra, then $T$ is a solid $R$-algebra.
\end{lemma}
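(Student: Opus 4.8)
The plan is to take a nonzero $S$-linear map $\psi\colon T \to S$ and a nonzero $R$-linear map $\varphi\colon S \to R$ and compose them; the only thing that can go wrong is that $\varphi \circ \psi$ vanishes because $\psi(T)$ happens to land inside $\ker\varphi$, and the role of the algebraicity hypothesis is precisely to repair this. Concretely, the key step will be the following claim: \emph{for every nonzero $s_0 \in S$ there is a nonzero $R$-linear map $\varphi'\colon S \to R$ with $\varphi'(s_0) \neq 0$.} Granting the claim, the lemma is immediate: since $T$ is a solid $S$-algebra, choose $t_0 \in T$ with $\psi(t_0) \eqqcolon s_0 \neq 0$; applying the claim to $s_0$ and observing that $\psi$ is in particular $R$-linear, the composite $\varphi' \circ \psi\colon T \to R$ is $R$-linear and sends $t_0$ to $\varphi'(s_0) \neq 0$, so $T$ is a solid $R$-algebra.

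To prove the claim, first invoke Remark \ref{rem:useful-solid-facts}(2): since $S$ is a solid $R$-algebra, there is an $R$-linear map $\varphi\colon S \to R$ with $\varphi(1) \neq 0$. Fix a nonzero $s_0 \in S$. Because $\Frac(S)$ is algebraic over $\Frac(R)$, the element $s_0$ satisfies a nonzero polynomial over $\Frac(R)$, and clearing denominators produces a nonzero polynomial with coefficients in $R$ annihilating $s_0$; choosing such a relation $r_n s_0^n + \dots + r_1 s_0 + r_0 = 0$ of minimal degree $n$ (so $r_n \neq 0$) forces $r_0 \neq 0$, since otherwise, as $S$ is a domain and $s_0 \neq 0$, one could cancel $s_0$ and contradict minimality. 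Rewriting the relation gives $r_0 = s_0 s_1$ with $s_1 \coloneqq -(r_1 + r_2 s_0 + \dots + r_n s_0^{n-1}) \in S$. Now define $\varphi'\colon S \to R$ by $\varphi'(x) \coloneqq \varphi(s_1 x)$; this is $R$-linear because $\varphi$ is, and $\varphi'(s_0) = \varphi(s_1 s_0) = \varphi(r_0) = r_0\varphi(1)$, which is nonzero since $R$ is a domain, $r_0 \neq 0$, and $\varphi(1) \neq 0$. This establishes the claim.

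The main obstacle is really this claim — extracting, from the bare solidity of $S$ over $R$, a functional that does not kill a prescribed nonzero element — and the resolution hinges on two points worth emphasizing: that the algebraic dependence relation can be taken with coefficients in $R$ rather than merely in $\Frac(R)$, and that minimality of the relation genuinely forces a nonzero constant term, which is exactly where the domain hypothesis on $S$ is used. Everything else is formal. One could instead phrase matters via fraction fields, using that $S \otimes_R \Frac(R) = \Frac(S)$ because a domain algebraic over a field is itself a field, but the elementary version above sidesteps finite-generation issues and is cleaner.
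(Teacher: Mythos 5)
Your proof is correct and structurally the same as the paper's: both reduce to the claim that for any nonzero $b \in S$ there is some $\phi \in \Hom_R(S,R)$ with $\phi(b) \neq 0$, and then compose. The only difference is that the paper obtains this claim by citing the injectivity of $S \to \Hom_R(\Hom_R(S,R),R)$ from an earlier paper, whereas you prove it directly by clearing denominators in an algebraic dependence relation for $s_0$ to produce $s_1 \in S$ with $s_1 s_0 = r_0 \in R \smallsetminus\{0\}$, then precomposing the solidity functional with multiplication by $s_1$. Your version is self-contained and makes explicit where the domain hypothesis on $S$ enters (cancelling $s_0$ to force $r_0 \neq 0$), which the paper's citation leaves implicit; the paper's version is shorter and reuses a result it needs elsewhere anyway. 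Both are fine.
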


\begin{proof}
Since $T$ is $S$-solid, choose an $S$-linear map $\varphi\colon T \rightarrow S$
such that $b \coloneqq \varphi(1) \neq 0$. By our hypothesis on the extension
$R \rightarrow S$, it follows by \cite[Prop.\ 3.7]{DS18} that the canonical map
\[
S \longrightarrow \Hom_R\bigl(\Hom_R(S,R),R\bigr)
\]
is injective. Since $b$ is a nonzero element of $S$, this means that we can find
some $\phi \in \Hom_R(S,R)$ such that $\phi(b) \neq 0$. Then the composition
\[
  T \overset{\varphi}{\longrightarrow} S \overset{\phi}{\longrightarrow} R
\]
is a nonzero $R$-linear map because $\phi \circ \varphi(1) = \phi(b) \neq 0$. 
In other words, $T$ is a solid $R$-algebra.
\end{proof}

\begin{remark}
Analyzing the proof of \cite[Prop.\ 3.7]{DS18}, one can show that the injectivity
of the canonical map $S \rightarrow \Hom_R(\Hom_R(S,R),R)$ holds as long as $S$ is 
$R$-solid and the extension $R \rightarrow S$ has the property that for any
nonzero ideal $J$ of $S$, the contraction $J \cap R$ is also a nonzero ideal of $R$.
Thus, solidity is preserved under composition as long as $R \rightarrow S$ is an
extension of domains that satisfies this ideal contraction property.
\end{remark}

\subsection{\textit{F}-solidity}
We now define and study a special case of a solid algebra in prime 
characteristic, which we call $F$-solidity.

\begin{definition}
\label{def:F-solidity}
Let $R$ be a ring of prime characteristic $p > 0$. We say $R$ is \emph{$F$-solid} 
if $F_{R*}R$ is a solid $R$-algebra.
\end{definition}

In other words, $R$ is $F$-solid precisely when it admits a nonzero
$p^{-1}$-linear map. We make 
a few preliminary observations about $F$-solidity.

\begin{remark}
\label{rem:F-solidity-obs}
Let $R$ be a ring of prime characteristic $p > 0$.
{\*}
\begin{enumerate}[label=$(\arabic*)$,ref=\arabic*]
\item\label{rem:Fsplit-Fsolid} If $R$ is Frobenius split, then $R$ is $F$-solid. 
This does not need $R$ to be Noetherian.

\item\label{rem:Ffinite-Fsolid} If $R$ is a reduced $F$-finite Noetherian ring, then 
$R$ is $F$-solid. Indeed, let 
$K$ be the total quotient ring of $R$. Then, $K$ is a finite direct product of 
$F$-finite fields. Thus, $\Hom_K(F_{K*}K, K) \neq 0$. But since $F_{R*}R$ is a 
finitely presented $R$-module and $K$ is a flat $R$-module, we have 
\[
\Hom_K(F_{K*}K,K) = \Hom_R(F_{R*}R,R) \otimes_R K.
\]
This implies that $\Hom_R(F_{R*}R,R) \neq 0$, or equivalently, that $R$ is $F$-solid.

\item\label{rem:Fsolid-all-e} If $R$ is a domain, then $R$ is $F$-solid if and only if for all 
(equivalently, for some) $e > 0$, $F^e_{R*}R$ is a solid $R$-module. This is 
well-known, but we provide a brief justification. Note that since $F^e_{R*}R$ is
always an $F_{R*}R$-algebra for any $e > 0$, $R$-solidity of $F^e_{R*}R$ implies
$R$-solidity of $F_{R*}R$ by Remark
\ref {rem:useful-solid-facts}$(\ref {rem:useful-solid-fact-composition})$.
Conversely, suppose $F_{R*}R$ is $R$-solid and let
$\phi\colon F_{R*}R \rightarrow R$ be a map such that $c \coloneqq \phi(1) \neq 0$. 
Assume by
induction that there exists a nonzero $R$-linear map 
$\varphi\colon F^e_{R*}R \rightarrow R$
such that $\varphi(1) \neq 0$. Then, the composition
\[
F^{e+1}_{R*}R \xrightarrow{F^e_{R*}\phi} F^e_{R*}R
\overset{\varphi}{\longrightarrow} R
\]
maps $c^{(p^e-1)p}$ to $c\varphi(1) \neq 0$, showing that
$F^{e+1}_{R*}R$ is $R$-solid and completing the proof by induction.
Note that we use $R$ is a domain in the step where
we conclude $c\varphi(1) \neq 0$ from the fact that both $c$ and $\varphi(1)$ 
are nonzero.

\item\label{rem:Fsolid-finite-ext} Suppose $R \rightarrow S$ is a finite 
extension of Noetherian domains. 
Then, $R$ is $F$-solid if and only if $S$ is $F$-solid. 
For the forward implication, since $R$ is Noetherian,
by \cite[Cor.\ 2.3]{Hoc94} it suffices to show that $F_{S*}S$ is a 
solid $R$-algebra. We will show that $F^e_{S*}S$ is $R$-solid for any $e > 0$. 
Consider the composition $R \rightarrow F^e_{R*}R \rightarrow F^e_{S*}S$,
where the first map is the $e$-th iterate of the Frobenius on $R$
and the second map is just restriction of scalars of the finite extension 
$R \rightarrow S$. Since $F^e_{S*}S$ is a finite $F^e_{R*}R$-algebra, we know
that $F^e_{S*}S$ is a solid $F^e_{R*}R$-algebra by Example \ref{ex:finite-solid}. 
Moreover, $R \rightarrow F^e_{R*}R$ is generically integral and 
$F^e_{R*}R$ is a solid $R$-algebra by 
$(\ref{rem:Fsolid-all-e})$ since $R$ is $F$-solid.
Therefore by Lemma \ref{lem:solid-composition} we conclude that
$F^e_{S*}S$ is a solid $R$-algebra, proving the forward implication.
Conversely, suppose $S$ is $F$-solid.
Since $R \rightarrow S$ is a finite extension, $S$ is $R$-solid
by Example \ref{ex:finite-solid}. Then $F_{S*}S$ is a solid $R$-algebra
by Lemma \ref{lem:solid-composition}. Since 
\[R \longrightarrow S \overset{F_S}{\longrightarrow} F_{S*}S\]
factors through $F_{R*}R$, it follows that $F_{R*}R$ is a solid $R$-algebra
(Remark \ref{rem:useful-solid-facts}$(\ref {rem:useful-solid-fact-composition})$),
that is, $R$ is $F$-solid.

\item\label{rem:Fsolid-Noether} Let $R$ be a Noetherian domain. 
Since polynomial algebras or power series rings over fields are
$F$-solid (they are even Frobenius split), it follows by part
$(\ref{rem:Fsolid-all-e})$ of this Remark
that $R$ is $F$-solid either when $R$ is of finite type over a field 
(via Noether normalization), or when $R$ is a complete local domain 
(by Cohen's structure theorem). If $R$ is finite type extension of an $F$-solid
domain $A$, then similar reasoning shows that there exists a
nonzero element $a \in A$ such that $R_a$ is $F$-solid by
\cite[\href{https://stacks.math.columbia.edu/tag/07NA}{Tag 07NA}]{stacks-project}.
Here, we use the fact that $F$-solidity of $A$ is preserved under localization,
and also that a polynomial ring over an $F$-solid ring is $F$-solid. However,
it is unclear whether a ring $R$ essentially of finite type over a complete local domain
is $F$-solid using this line of reasoning. We will show that such rings are 
indeed $F$-solid via
the gamma construction \ref{constr:gamma} (see Theorem \ref{thm:F-solid-eft}
below).

\item\label{rem:Fsolid-localization} If $R$ is an $F$-solid domain, then any localization of 
$R$ is also $F$-solid. This is a simple application of the fact that
for a multiplicative set $S \subseteq R$, we have $S^{-1}R \otimes_R F_{R*}R 
\simeq F_{S^{-1}R*}S^{-1}R$. However,
$F$-solidity is not a local property,
in the sense that if $R$ is a ring of prime characteristic whose local rings are all $F$-solid, it is not necessarily the case that $R$ is $F$-solid.
This follows 
from the failure of $F$-finiteness being a local property (see Example 
\ref{ex:excellence}$(\ref {ex:F-finite-not-local} )$). Indeed, choose a Noetherian domain 
$R$ of prime characteristic $p > 0$ such that $R_\fp$ is $F$-finite for all
prime ideals $\fp$, but $R$ itself is not as in \cite{DI}. Then the fraction field
of $R$ is $F$-finite, and so, if $R$ was $F$-solid, then \cite[Thm.\ 3.2]{DS18} would imply that $R$ is $F$-finite, a contradiction.
\end{enumerate}
\end{remark}

\subsection{\textit{F}-solidity for rings of finite type over complete local rings}
\par In Remark \ref {rem:F-solidity-obs}$(\ref {rem:Fsolid-Noether} )$ we
observed that rings to which one can apply a Noether normalization type 
result (such as finite type algebras over fields or complete local rings) are
often $F$-solid in prime characteristic. However, this normalization 
technique does not yield $F$-solidity for
algebras that are essentially of finite type over a complete local ring, which, as we saw in Section \ref {sec:F-purity-and-splitting}, often behave like 
Noetherian $F$-finite rings. Drawing inspiration from the $F$-solidity of 
reduced $F$-finite Noetherian rings, the main result of this section shows 
that $F$-solidity often holds even in a non-$F$-finite setting.

\begin{theorem}
\label{thm:F-solid-eft}
Let $S$ be a reduced ring which is essentially of finite type over a complete 
local ring $R$ of prime characteristic $p > 0$. Then, $S$ is $F$-solid.
\end{theorem}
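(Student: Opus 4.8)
The plan is to descend the conclusion from the $F$-finite case using the gamma construction of Construction~\ref{constr:gamma}, exploiting Auslander's observation (Lemma~\ref{lem:fed12}) that a pure map out of a complete local ring splits. It is convenient to reduce first to the case that $S$ is a domain: if $\fp_1,\dots,\fp_n$ are the minimal primes of the reduced Noetherian ring $S$, then $S \hookrightarrow \bar{S} := \prod_{i} S/\fp_i$ is a module-finite extension with the same total quotient ring $L$ as $S$, each $S/\fp_i$ is a domain essentially of finite type over the complete local domain $R/(\fp_i \cap R)$, and, granting the domain case, nonzero $(S/\fp_i)$-linear maps $F_{(S/\fp_i)*}(S/\fp_i) \to S/\fp_i$ sending $1 \mapsto c_i \neq 0$ assemble into an $S$-linear map $F_{S*}S \hookrightarrow F_{\bar{S}*}\bar{S} \to \bar{S}$ carrying $1$ to the unit $(c_i)_i \in L$; since $\bar{S}/S$ is a finitely generated torsion $S$-module it is killed by a nonzerodivisor $f$, so $\bar{S} \subseteq f^{-1}S$ inside $L$, and multiplication by $f$ followed by the previous map shows $F_{S*}S$ is $S$-solid.

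So suppose $S$ is a domain essentially of finite type over a complete local ring $R$ of characteristic $p$. Applying Construction~\ref{constr:gamma} with a suitable cofinite $\Gamma$ (using Lemma~\ref{lem:gammafacts}), the faithfully flat map $R \to R^\Gamma$ has $S^\Gamma := S \otimes_R R^\Gamma$ reduced, Noetherian, and $F$-finite. A reduced Noetherian $F$-finite ring is $F$-solid by Remark~\ref{rem:F-solidity-obs}$(\ref{rem:Ffinite-Fsolid})$; pushing that argument further, since $F_{S^\Gamma*}S^\Gamma$ is finitely presented over $S^\Gamma$ and the total quotient ring of $S^\Gamma$ is a finite product of $F$-finite fields (over which Frobenius splits), the evaluation-at-$1$ map $\Hom_{S^\Gamma}(F_{S^\Gamma*}S^\Gamma, S^\Gamma) \to S^\Gamma$ has image an ideal containing a nonzerodivisor, so we may fix $\psi$ with $b := \psi(1)$ a nonzerodivisor of $S^\Gamma$. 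Because $R \to R^\Gamma$ is faithfully flat, hence pure, and $R$ is complete local, Lemma~\ref{lem:fed12} provides an $R$-linear retraction $R^\Gamma \to R$; base-changing along $R \to S$ gives an $S$-linear retraction $\rho\colon S^\Gamma \to S$ of $\varphi\colon S \hookrightarrow S^\Gamma$, and the ring map $\varphi$ induces an $S$-linear inclusion $\varphi_*\colon F_{S*}S \hookrightarrow F_{S^\Gamma*}S^\Gamma$ (which one checks is itself split by $\rho$).

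It remains to produce a nonzero $S$-linear map $F_{S*}S \to S$, the candidate being the $S$-linear composite
\[
F_{S*}S \xrightarrow{\ \varphi_*\ } F_{S^\Gamma*}S^\Gamma \xrightarrow{\ \psi\ } S^\Gamma \xrightarrow{\ \rho\ } S,
\]
which sends $1 \mapsto \rho(b)$; by Remark~\ref{rem:useful-solid-facts}$(\ref{rem:useful-solid-fact-composition})$ it suffices to arrange $\rho(b) \neq 0$ for some allowable $\psi$ and retraction. This non-vanishing is the crux and the step I expect to be the main obstacle: a priori $b$ could lie in $\ker\rho$ and be annihilated by every $S$-linear retraction. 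The intended resolution is to use that $b$ is a nonzerodivisor of $S^\Gamma$ and $S^\Gamma$ is $S$-flat, so $S^\Gamma$ embeds into $S^\Gamma \otimes_S \Frac(S)$, a finite product of $F$-finite fields in which $b$ is a unit; functionals on the finite-dimensional quotient by $\Frac(S)$ separate points, so one can choose a retraction not killing $b$ after inverting the nonzero elements of $S$, and then descend back to $S$ — either by confining the image of the composite to a finitely generated submodule produced from the $F$-finiteness of $S^\Gamma$ and clearing denominators as in Example~\ref{ex:finite-solid}, or by invoking Hochster's solidity criterion \cite[Cor.\ 2.3]{Hoc94}. Carrying out this last descent — transferring $F$-solidity back across the split, faithfully flat, but non-module-finite extension $S \hookrightarrow S^\Gamma$ — is where the real work lies.
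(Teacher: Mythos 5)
You have assembled the right scaffolding --- the gamma construction giving $S^\Gamma$ reduced, Noetherian, $F$-finite (hence $F$-solid), and an $S$-linear retraction $\rho\colon S^\Gamma \to S$ obtained from Lemma~\ref{lem:fed12} and base change along $R \to S$ --- and you correctly locate the crux: a priori the element $b = \psi(1) \in S^\Gamma$ could be killed by $\rho$, so the composite $F_{S*}S \to F_{S^\Gamma*}S^\Gamma \xrightarrow{\psi} S^\Gamma \xrightarrow{\rho} S$ could vanish. But the resolution you sketch does not work, and you acknowledge at the end that you have not closed the gap. The concrete error: $S^\Gamma \otimes_S \Frac(S)$ is \emph{not} a finite product of $F$-finite fields. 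The extension $R \to R^\Gamma$, and hence $S \to S^\Gamma$, is faithfully flat but very far from module-finite --- $R^\Gamma$ is a colimit of completions of $\kappa_e^\Gamma \otimes_\kappa R$, and the residue field extension $\kappa \subseteq \bigcup_e \kappa_e^\Gamma$ is typically infinite --- so $S^\Gamma \otimes_S \Frac(S)$ is an infinite-dimensional $\Frac(S)$-algebra with no finite-dimensional quotient to which a ``separating functionals'' argument could apply. Likewise \cite[Cor.~2.3]{Hoc94} concerns module-finite extensions and does not apply to $S \hookrightarrow S^\Gamma$.

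The structural fact you never invoke --- and which is precisely what the paper's proof uses --- is that $R \hookrightarrow R^\Gamma$, and hence $S \hookrightarrow S^\Gamma$, is \emph{purely inseparable}. This is built into Construction~\ref{constr:gamma}. Pure inseparability makes the non-vanishing automatic, with no cleverness required in choosing $\rho$; this is exactly Lemma~\ref{lem:descent-F-solidity}$(\ref{lem:descent-inseparable})$ applied to $S \hookrightarrow S^\Gamma$. Take any $\psi\colon F_{S^\Gamma*}S^\Gamma \to S^\Gamma$ with $\psi(1) = b \neq 0$. Pure inseparability gives $e > 0$ with $b^{p^e} \in S$, and $b^{p^e} \neq 0$ because $S^\Gamma$ is reduced (Lemma~\ref{lem:gammafacts}$(\ref{lem:gammared})$). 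Writing $\ell_a$ for multiplication by $a$, the $S$-linear composite
\[
F_{S*}S \hooklongrightarrow F_{S^\Gamma*}S^\Gamma \xrightarrow{\ \ell_{b^{p^e-1}} \circ \psi\ } S^\Gamma \overset{\rho}{\longrightarrow} S
\]
sends $1 \mapsto \rho(b^{p^e-1}\cdot b) = \rho(b^{p^e}) = b^{p^e}\rho(1) = b^{p^e} \neq 0$, since $\rho$ restricts to the identity on $S$. The choice of $\rho$ plays no role in the non-vanishing. Incidentally, this also makes your reduction to the domain case unnecessary: the paper runs the argument directly for reduced $S$, and you do not need $b$ to be a nonzerodivisor --- $b \neq 0$ together with reducedness of $S^\Gamma$ is enough.
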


\par The proof of this result proceeds by reducing to the $F$-finite case via an 
argument similar to the one in Theorem \ref{thm:splittingwithgamma}, and by using a 
descent result on $F$-solidity which we establish first.

\begin{lemma}
\label{lem:descent-F-solidity}
Let $\varphi\colon R \rightarrow S$ be an injective homomorphism of rings of prime 
characteristic $p > 0$, and suppose $S$ is a solid $R$-algebra.
\begin{enumerate}[label=$(\roman*)$,ref=\roman*]
	\item\label{lem:descent-algebraic} 
  If $R$ and $S$ are domains, $\varphi$ is injective, and the induced 
	map $\Frac(R) \rightarrow \Frac(S)$ is algebraic, then $R$ is $F$-solid.
	\item\label{lem:descent-inseparable} If $\varphi$ is purely inseparable, 
  $S$ is reduced, and there exists 
	an $R$-linear map $g\colon S \rightarrow R$ such that $g(1)$ is a nonzerodivisor, 
	then $R$ is $F$-solid.
\end{enumerate}
\end{lemma}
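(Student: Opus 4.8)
The plan is to handle both parts by one mechanism. Throughout we use that $S$ is $F$-solid: choose an $S$-linear map $\psi\colon F_{S*}S\to S$ with $\psi(1)\neq 0$ (possible by Remark~\ref{rem:useful-solid-facts}$(2)$) and restrict it along the ring inclusion $F_{R*}R\hookrightarrow F_{S*}S$ induced by $\varphi$. Unwinding the $S$-linearity of $\psi$ against the standard (twisted) $R$-module structure $r\cdot x=\varphi(r)^px$ carried by $F_{R*}R$ and $F_{S*}S$ shows that the composite
\[
F_{R*}R\hooklongrightarrow F_{S*}S\overset{\psi}{\longrightarrow}S
\]
is $R$-linear in the sense appropriate to $p^{-1}$-linear maps, i.e.\ it is an element of $\Hom_R(F_{R*}R,S)$ sending $1$ to $\psi(1)\neq 0$. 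It remains to post-compose with an $R$-linear functional $S\to R$ that does not annihilate $\psi(1)$, and this is the only place where the extra hypotheses of $(\ref{lem:descent-algebraic})$ and $(\ref{lem:descent-inseparable})$ enter.

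For $(\ref{lem:descent-algebraic})$: since $S$ is a solid $R$-algebra and $\Frac(R)\to\Frac(S)$ is algebraic, \cite[Prop.\ 3.7]{DS18} (applied exactly as in the proof of Lemma~\ref{lem:solid-composition}) gives that the canonical map $S\to\Hom_R(\Hom_R(S,R),R)$ is injective. Feeding in the nonzero element $\psi(1)\in S$ produces $\eta\in\Hom_R(S,R)$ with $\eta(\psi(1))\neq 0$, and then $\eta\circ\psi$, precomposed with $F_{R*}R\hookrightarrow F_{S*}S$, is a $p^{-1}$-linear map of $R$ taking $1$ to $\eta(\psi(1))\neq 0$. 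Hence $R$ is $F$-solid.

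For $(\ref{lem:descent-inseparable})$: here one would not fix $\psi$ in advance. By Remark~\ref{rem:useful-solid-facts}$(2)$ and the $F$-solidity of $S$, the set $I\coloneqq\{\,\psi(1)\mid\psi\in\Hom_S(F_{S*}S,S)\,\}$ is a nonzero ideal of $S$. Pick $0\neq t\in I$. Since $\varphi$ is purely inseparable, $t^{p^n}\in\varphi(R)$ for some $n\geq 0$, and since $S$ is reduced, $t^{p^n}\neq 0$; write $t^{p^n}=\varphi(c')$ with $0\neq c'\in R$. As $I$ is an ideal of $S$ and $t^{p^n}=t\cdot t^{p^n-1}$, we get $t^{p^n}\in I$, so there is $\psi\in\Hom_S(F_{S*}S,S)$ with $\psi(1)=\varphi(c')$. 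Then
\[
\phi\colon F_{R*}R\hooklongrightarrow F_{S*}S\overset{\psi}{\longrightarrow}S\overset{g}{\longrightarrow}R
\]
is a $p^{-1}$-linear map of $R$ whose value at $1$ is $g(\varphi(c'))=c'\cdot g(1)$; since $g(1)$ is a nonzerodivisor and $c'\neq 0$, this is nonzero, so $R$ is $F$-solid.

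The points I expect to be routine are the $R$-linearity assertions above (a one-line unwinding of the definition of $F_{S*}$) and the harmless reduction to $e=1$. The genuinely substantive input for $(\ref{lem:descent-algebraic})$ is the double-dual injectivity of \cite[Prop.\ 3.7]{DS18}. The main obstacle, which rules out a naive ``restrict $\psi$ and compose with an arbitrary functional'' proof, is that such a functional on $S$ (in particular the given $g$) may vanish on $\psi(1)$; in $(\ref{lem:descent-inseparable})$ pure inseparability is precisely the tool that removes this obstacle, by pushing a nonzero power of any nonzero element of the Frobenius trace ideal of $S$ into $\varphi(R)$, where it is visibly not killed by $g$ because $g(1)$ is a nonzerodivisor.
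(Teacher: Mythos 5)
Your proof is correct, and for part (\ref{lem:descent-inseparable}) it is essentially the paper's argument in lightly abstracted form: where the paper fixes $\phi$ with $\phi(1)=c$ and exhibits the map $\ell_{c^{p^e-1}}\circ\phi$ sending $1\mapsto c^{p^e}$ by hand, you observe that the set of values $\psi(1)$ is an ideal and close it under multiplication to land a $p^e$-th power in $\varphi(R)$ — the same computation, packaged differently. For part (\ref{lem:descent-algebraic}), however, you take a genuinely different route. The paper clears denominators from an algebraic-dependence equation for $c=\phi(1)$ over $\Frac(R)$ to produce $s\in S$ with $sc\in R\smallsetminus\{0\}$, then post-composes $\ell_s\circ\phi$ with \emph{any} nonzero $R$-linear $f\colon S\to R$; the conclusion $f(sc)=sc\cdot f(1)\neq 0$ then uses only that $R$ is a domain. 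You instead invoke the double-dual injectivity \cite[Prop.\ 3.7]{DS18} to pick a functional $\eta$ with $\eta(\psi(1))\neq 0$ directly — which is the same input that underlies Lemma \ref{lem:solid-composition}, and indeed your proof of (\ref{lem:descent-algebraic}) could be rephrased as: apply Lemma \ref{lem:solid-composition} to $R\to S\to F_{S*}S$, then Remark \ref{rem:useful-solid-facts}$(\ref{rem:useful-solid-fact-composition})$ on the factorization $R\to F_{R*}R\to F_{S*}S$. The paper's denominator-clearing is the more elementary move; yours is slightly more uniform with the surrounding lemmas. Two minor points: (a) both your proof and the paper's quietly use the hypothesis that $S$ itself is $F$-solid (to get $\psi$ at all), which is not literally what ``$S$ is a solid $R$-algebra'' says — you correctly read the lemma statement the way the proof requires; (b) the $R$-module structure on $F_{R*}R$ is $r\cdot x = r^p x$, not $\varphi(r)^p x$ — your formula describes $F_{S*}S$ as an $R$-module. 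Neither affects the argument.
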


\begin{proof}
Throughout this proof, let $\phi\colon F_{S*}S \rightarrow S$ be an $S$-linear map such that
\[ 
\phi(1) \eqqcolon c \neq 0.
\]

\par $(\ref{lem:descent-algebraic})$ Since $\Frac(R) \hookrightarrow \Frac(S)$ is algebraic, 
clearing denominators 
from an equation of algebraic dependence of $c$ over $\Frac(R)$, it follows 
that there exists a nonzero $s \in S$ such that $sc \in R$. Let 
$\ell_s\colon S \rightarrow S$ denote left multiplication by $s$. Then 
$\ell_s \circ \phi$ maps $1$ to $sc \in R \smallsetminus \{0\}$. Since $S$ is a solid 
$R$-algebra, choose an $R$-linear map 
$f\colon S \rightarrow R$
such that $f(1) \neq 0$. Then the composition
\[
F_{R*}R \xrightarrow{F_*\varphi} F_{S*}S \xrightarrow{\ell_s \circ \phi} S
\overset{f}{\longrightarrow} R
\]
maps $1$ to $scf(1)$, and the latter is a nonzero element of $R$ because 
the elements $sc$ and $f(1)$ are nonzero, and $R$ is a domain. This shows $R$ is $F$-solid.

\par $(\ref{lem:descent-inseparable})$ We may assume without loss of generality 
that $\varphi$ is an inclusion. Since 
$\varphi$ is purely inseparable, choose $e > 0$ such that $c^{p^e} \in R$. Note 
that $c^{p^e} \neq 0$ because $S$ is reduced and $c$ is a nonzero element by choice. 
Let $\ell_{c^{p^e -1}}$ denote left multiplication by $c^{p^e-1}$. The composition
\[
F_{R*}R \hooklongrightarrow F_{S*}S \xrightarrow {\ell_{c^{p^e-1}} \circ \phi} S
\overset{g}{\longrightarrow} R
\]
maps $1$ to $c^{p^e}g(1)$, which is a nonzero element of $R$ because $g(1)$ 
is a nonzerodivisor on $R$ by hypothesis. This completes the proof.
\end{proof}

\par We can now prove Theorem \ref{thm:F-solid-eft}.

\begin{proof}[Proof of Theorem \ref{thm:F-solid-eft}]
Suppose $S$ is essentially of finite type over the complete Noetherian local ring 
$(R,\fm)$. Again, using the gamma construction \ref{constr:gamma}, there exists a faithfully flat 
purely inseparable ring extension $R \hookrightarrow R^{\Gamma}$ such that 
$S^{\Gamma} \coloneqq S \otimes_R R^{\Gamma}$ is $F$-finite and reduced 
(Lemma \ref{lem:gammafacts}$(\ref{lem:gammared})$). In particular, $S^\Gamma$ is 
then $F$-solid by Remark \ref{rem:F-solidity-obs}$(\ref{rem:Ffinite-Fsolid})$. Since $R$ is complete, 
the pure extension $R \hookrightarrow R^{\Gamma}$ splits (Lemma \ref{lem:fed12}), 
hence so does the purely inseparable extension 
$S \hookrightarrow S^{\Gamma}$. By $F$-solidity of $S^\Gamma$ and 
an application of Lemma \ref{lem:descent-F-solidity}$(\ref{lem:descent-inseparable})$, 
it follows that $S$ is also $F$-solid.
\end{proof}

\begin{remark}
In order to establish $F$-solidity of domains that are essentially of finite type over
any field of prime characteristic, one can avoid the gamma construction approach
of Theorem \ref{thm:F-solid-eft}. Namely, suppose $k$ is a field of characteristic
$p > 0$ and that $R$ is a domain that is essentially of finite type over $k$.
Let $T$ be a finite type $k$-algebra and $S \subset T$ a multiplicative set
such that $R = S^{-1}T$. Note that $T$ can be chosen to be a domain. Indeed,
let $P$ be a prime ideal of $T$ disjoint from $S$ such that $S^{-1}P = (0)$
(such a $P$ exists because $R$ is a domain). Since $P$ is finitely generated,
there exists some $f \in S$ such that $PT_f = (0)$. Upon replacing $T$ by $T_f$,
we can assume $T$ is a domain. Then $T$ is $F$-solid by 
Remark \ref{rem:F-solidity-obs}$(\ref{rem:Fsolid-Noether})$, and consequently,
$R$ is $F$-solid by Remark \ref{rem:F-solidity-obs}$(\ref{rem:Fsolid-localization})$.
This argument provides an alternate proof of Frobenius splitting of divisorial
valuation rings (Corollary \ref{cor:divisorial-Fsplit}). Namely, if 
$R$ is a divisorial valuation ring, then $R$ is essentially of finite
type over a field and hence $F$-solid. Then $R$ is Frobenius split
by Proposition \ref{thm:split-F-regular-DVR}.
\end{remark}

\subsection{Excellent rings are not always \textit{F}-solid}
A natural question is whether Theorem \ref{thm:F-solid-eft} can be
generalized to reduced excellent rings, or at least, to reduced
excellent rings that are of finite type over an an excellent local
ring. This turns out to be false, as shown in the next result. 
These are the first excellent
examples of prime characteristic domains that are not $F$-solid.

\begin{proposition}
\label{prop:non-F-solid}
There exists a complete non-Archimedean field $(k,\abs{})$ of characteristic
$p > 0$ such that for each integer $n > 0$, we have the following:
\begin{enumerate}[label=$(\roman*)$,ref=\roman*]
	\item\label{prop:Tate-not-Fsolid} The Tate algebra $T_n(k)$ is an 
	excellent regular ring of Krull dimension $n$ that is not $F$-solid.
	\item\label{prop:affinoid-not-Fsolid} Any $k$-affinoid domain of 
	Krull dimension $n$ is not $F$-solid.
	\item\label{prop:convergent-not-Fsolid} 
	The convergent power series ring $K_n(k)$ is an excellent
	Henselian regular local ring of Krull dimension $n$ that is
	not $F$-solid.
\end{enumerate}
\end{proposition}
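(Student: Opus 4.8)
The plan is to take $k$ exactly as in Theorem \ref{thm:F-pure-not-split-F}$(\ref{thm:Tate-not-F-regular})$: a complete non-Archimedean field of characteristic $p > 0$ that admits no nonzero continuous $k$-linear functional $k^{1/p} \to k$, which exists by \cite[Thm.\ 5.2]{DM}. For $(\ref{prop:Tate-not-Fsolid})$ and $(\ref{prop:convergent-not-Fsolid})$, the ring-theoretic assertions — that $T_n(k)$ (resp.\ $K_n(k)$) is excellent, regular, and of Krull dimension $n$, and that $K_n(k)$ is moreover local and Henselian — are precisely Proposition \ref{prop:properties}. What must be added is that these rings are not merely non-Frobenius-split, as in Theorem \ref{thm:F-pure-not-split-F}$(\ref{thm:Tate-not-F-regular})$, but in fact not $F$-solid, i.e.\ admit \emph{no} nonzero $p^{-1}$-linear map. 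This sharper conclusion is what is actually established in \cite{DM}: for $R = T_n(k)$ or $K_n(k)$, any $R$-linear map $F_*R \cong R^{1/p} \to R$ is automatically continuous for the relevant Gauss-type norms, and precomposing with the inclusion $k^{1/p} \hookrightarrow R^{1/p}$ and postcomposing with reduction modulo $(X_1,\dots,X_n)$ yields a continuous $k$-linear functional $k^{1/p} \to k$, which is nonzero whenever the original map is; this is impossible for our choice of $k$. I will therefore invoke \cite[Thm.\ A]{DM} and \cite[Rem.\ 5.5]{DM}, as in the proof of Theorem \ref{thm:F-pure-not-split-F}$(\ref{thm:Tate-not-F-regular})$, in this sharper form to conclude $(\ref{prop:Tate-not-Fsolid})$ and $(\ref{prop:convergent-not-Fsolid})$.

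For $(\ref{prop:affinoid-not-Fsolid})$, let $A$ be a $k$-affinoid domain of Krull dimension $n$. By Noether normalization for affinoid algebras \cite{Bos14}, there is a module-finite injection $T_n(k) \hookrightarrow A$; here $T_n(k)$ is a Noetherian domain by Proposition \ref{prop:properties}, and $A$ is a Noetherian domain since every $k$-affinoid algebra is Noetherian and $A$ is a domain by hypothesis. Remark \ref{rem:F-solidity-obs}$(\ref{rem:Fsolid-finite-ext})$ then gives that $A$ is $F$-solid if and only if $T_n(k)$ is. Since $T_n(k)$ is not $F$-solid by part $(\ref{prop:Tate-not-Fsolid})$, neither is $A$, proving $(\ref{prop:affinoid-not-Fsolid})$.

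The sole genuine obstacle is the point isolated in the first paragraph: upgrading ``not Frobenius split'' to ``not $F$-solid'' for $T_n(k)$ and $K_n(k)$, which relies on the continuity of arbitrary $R$-linear maps $R^{1/p} \to R$ together with the functional-analytic construction of $k$ in \cite{DM}. Granting that input, the rest is bookkeeping: Proposition \ref{prop:properties} supplies the geometry of Tate algebras and convergent power series rings, Tate's finiteness theorem exhibits an affinoid domain of dimension $n$ as a finite extension of $T_n(k)$, and Remark \ref{rem:F-solidity-obs}$(\ref{rem:Fsolid-finite-ext})$ transports non-$F$-solidity across that extension.
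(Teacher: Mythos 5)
Your proposal is correct and matches the paper's proof: cite \cite[Thm.\ A]{DM} and \cite[Rem.\ 5.5]{DM} — which, as you correctly emphasize, establish non-$F$-solidity outright, not merely failure of Frobenius splitting — for parts $(\ref{prop:Tate-not-Fsolid})$ and $(\ref{prop:convergent-not-Fsolid})$, then combine the affinoid Noether normalization with Remark \ref{rem:F-solidity-obs}$(\ref{rem:Fsolid-finite-ext})$ for part $(\ref{prop:affinoid-not-Fsolid})$. Your explanatory sketch of why the \cite{DM} results give $F$-solidity is supplemental to the paper's terse citation (and the step ``which is nonzero whenever the original map is'' would need more care if you were actually proving it rather than citing it), but the logical structure and references are identical to the paper's.
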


Here by a \emph{$k$-affinoid} domain we mean an integral domain that is
the quotient of some Tate algebra.
These are the rigid analytic analogues of coordinate rings of
affine varieties.

\begin{proof}
$(\ref{prop:Tate-not-Fsolid})$ follows from \cite[Thm.\ A]{DM} 
and $(\ref{prop:convergent-not-Fsolid})$ follows from \cite[Rem.\ 5.5]{DM}
(see also \cite[Thm.\ 4.4]{DM}). 

\par For $(\ref{prop:affinoid-not-Fsolid})$,
choose a complete non-Archimedean field $(k,\abs{})$ for which 
$(\ref{prop:Tate-not-Fsolid})$ holds, and suppose that $A$
is an $k$-affinoid domain of Krull dimension $n > 0$. Then by
the rigid analytic analogue of Noether normalization
\cite[Cor.\ 2.2/11]{Bos14}, there exists a module-finite ring extension
$T_n(k) \hookrightarrow A$. Since $T_n(k)$ is not $F$-solid by 
$(\ref{prop:Tate-not-Fsolid})$, it follows that $A$ is not $F$-solid
by Remark \ref{rem:F-solidity-obs}$(\ref{rem:Fsolid-finite-ext})$.
\end{proof}

\begin{remark}
  \leavevmode
\begin{enumerate}
	\item Examples of non-$F$-solid regular local rings can be constructed 	even in
	the function field of $\mathbf{P}^2_{\overline{\FF_p}}$. But any such
  ring will not be excellent; see \cite[\S4.1]{DS18}.

	\item Let $k$ be as in Proposition \ref {prop:non-F-solid}. Then the 
	completion of the local ring $K_1(k)$ at its maximal ideal $(X)$ is 
	the power series ring $k\llbracket X \rrbracket$, which is $F$-solid. This illustrates that
	$F$-solidity does not descend over faithfully flat maps. The same example
	also shows that $F$-solidity does not ascend over regular maps,
	since $\FF_p \rightarrow K_1(k)$ is regular, $\FF_p$ is $F$-solid and
	$K_1(k)$ is not.
\end{enumerate}
\end{remark}

Despite the examples obtained in Proposition \ref{prop:non-F-solid},
affinoid domains of prime characteristic are often $F$-solid.

\begin{proposition}
\label{prop:some-Fsolid-affinoids}
Let $(k,\abs{})$ be a complete non-Archimedean field that satisfies
any of the following additional properties:
\begin{enumerate}[label=$(\roman*)$,ref=\roman*]
	\item\label{prop:F-split-usually-spher} $(k,\abs{})$ is spherically complete.
	\item\label{prop:F-split-usually-count} $k^{1/p}$ has a dense $k$-subspace $V$ 
	which has a countable $k$-basis, hence in particular
	if $[k^{1/p}:k] < \infty$.
	\item\label{prop:F-split-polar} $\abs{k^\times}$ is not discrete,
	and the norm on $k^{1/p}$ is polar with respect to the norm on $k$.
\end{enumerate}
Then any $k$-affinoid domain is $F$-solid.
\end{proposition}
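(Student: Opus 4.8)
The plan is to reduce, via rigid-analytic Noether normalization, to the assertion that the Tate algebra $T_n(k)$ is $F$-solid, where $n$ is the Krull dimension of the given affinoid domain, and then to quote Proposition \ref{prop:split-F-regular-Tate}. First I would let $A$ be a $k$-affinoid domain; then $A$ is a Noetherian domain, and $n \coloneqq \dim A$ is finite since $A$ is a quotient of some Tate algebra. By the rigid-analytic analogue of Noether normalization \cite[Cor.\ 2.2/11]{Bos14} there is a module-finite injection $T_n(k) \hookrightarrow A$ (with the convention $T_0(k) = k$), and both rings are Noetherian domains, so Remark \ref{rem:F-solidity-obs}$(\ref{rem:Fsolid-finite-ext})$ reduces the problem to showing that $T_n(k)$ is $F$-solid.

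To see that $T_n(k)$ is $F$-solid: when $n = 0$ one has $T_0(k) = k$, and $F_{k*}k = k^{1/p}$ is a nonzero $k$-vector space, so $k$ is $F$-solid with no hypothesis on $k$ needed. When $n \geq 1$, the hypotheses $(\ref{prop:F-split-usually-spher})$, $(\ref{prop:F-split-usually-count})$, and $(\ref{prop:F-split-polar})$ are precisely the hypotheses under which Proposition \ref{prop:split-F-regular-Tate} shows $T_n(k)$ to be split $F$-regular, hence Frobenius split (being Frobenius split along $1 \in T_n(k)^\circ$), hence $F$-solid by Remark \ref{rem:F-solidity-obs}$(\ref{rem:Fsplit-Fsolid})$. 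The one point to double-check is that, in case $(\ref{prop:F-split-polar})$, polarity of the norm on $k^{1/p}$ over $k$ propagates to polarity of the norm on $k_\perf$ over $k$ (by the inductive bootstrap used in the proof of Proposition \ref{prop:split-F-regular-Tate}); alternatively, one can bypass the passage to $\widehat{k_\perf}$ altogether by using the non-Archimedean Hahn--Banach theorem \cite[Thm.\ 2.15]{DM} to extend $\id_k\colon k \to k$ to a nonzero continuous functional $k^{1/p} \to k$, which already forces $T_n(k)$ to be Frobenius split by \cite[Thm.\ 3.1]{DM} (cf.\ Remark \ref{rem:split-Freg-TnKn}).

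I do not expect a substantial obstacle here: all of the analytic content has been packaged into Proposition \ref{prop:split-F-regular-Tate} (equivalently, into \cite[Thms.\ 2.15 and 3.1]{DM}), and the transfer of $F$-solidity along a module-finite extension of Noetherian domains is exactly Remark \ref{rem:F-solidity-obs}$(\ref{rem:Fsolid-finite-ext})$, which itself rests on Lemma \ref{lem:solid-composition} and Example \ref{ex:finite-solid}. The only step demanding genuine attention is the bookkeeping between the hypotheses of this proposition and those of Proposition \ref{prop:split-F-regular-Tate}, and even that is sidestepped entirely by the direct Hahn--Banach argument above, which works with $k^{1/p}$ itself rather than with $\widehat{k_\perf}$.
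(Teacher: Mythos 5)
Your overall strategy matches the paper's: use rigid-analytic Noether normalization to exhibit $A$ as a module-finite extension of $T_n(k)$, and then transfer $F$-solidity from $T_n(k)$ to $A$ via Remark \ref{rem:F-solidity-obs}$(\ref{rem:Fsolid-finite-ext})$. Your dimension-zero case is also fine. The one substantive issue is that your first route does not quite go through in case $(\ref{prop:F-split-polar})$: you assert that the hypotheses here ``are precisely the hypotheses'' of Proposition \ref{prop:split-F-regular-Tate}, but they are not. Proposition \ref{prop:split-F-regular-Tate}$(\ref{cor:F-split-polar})$ assumes the norm on $k_\perf$ is polar over $k$, whereas the present proposition only assumes polarity of the norm on $k^{1/p}$ over $k$ --- a strictly weaker hypothesis (the paper has a remark immediately after this proposition flagging exactly this discrepancy). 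The ``inductive bootstrap'' you propose --- propagating polarity from $k^{1/p}$ to $k_\perf$ --- is not the bootstrap actually used in the proof of Proposition \ref{prop:split-F-regular-Tate} (that bootstrap is about countable generation in case $(\ref{cor:F-split-usually-count})$, not polarity), and it is not obvious that polarity of $k^{1/p}/k$ implies polarity of $k^{1/p^e}/k$ for $e > 1$.

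Fortunately, the alternative you sketch is exactly right and is the reason only the weaker hypothesis is needed here: to conclude $F$-solidity of $T_n(k)$ one only needs Frobenius splitting (not split $F$-regularity), and by \cite[Thm.\ 3.1]{DM} Frobenius splitting of $T_n(k)$ is equivalent to the existence of a nonzero continuous $k$-linear functional $k^{1/p} \to k$. Under any of $(\ref{prop:F-split-usually-spher})$--$(\ref{prop:F-split-polar})$ such a functional exists by the non-Archimedean Hahn--Banach theorem \cite[Thm.\ 2.15]{DM}, applied directly to $k^{1/p}$ rather than to $\widehat{k_\perf}$. This bypasses the perfect closure entirely and is precisely what the paper's citation to \cite[Cor.\ D]{DM} packages. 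So your proof is correct provided you take the ``alternatively'' branch in case $(\ref{prop:F-split-polar})$; the route through Proposition \ref{prop:split-F-regular-Tate} only applies verbatim in cases $(\ref{prop:F-split-usually-spher})$ and $(\ref{prop:F-split-usually-count})$.
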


\begin{proof}
Let $A$ be a $k$-affinoid domain. If $A$ has Krull dimension zero, then
$A = k$ and there is nothing to prove since any field of prime
characteristic is $F$-solid. If $A$ has Krull dimension $n > 0$,
then $A$ is a module-finite extension of $T_n(k)$ as in the proof of Proposition
\ref{prop:non-F-solid}$(\ref{prop:affinoid-not-Fsolid})$. When $k$ satisfies
any of the conditions of this Proposition, then $T_n(k)$
is Frobenius split by \cite[Cor.\ D]{DM}, hence also $F$-solid. 
Then $A$ is $F$-solid
by Remark \ref{rem:F-solidity-obs}$(\ref{rem:Fsolid-finite-ext})$.
\end{proof}

\begin{remark}
The observant reader will notice that the hypothesis of Proposition 
\ref{prop:some-Fsolid-affinoids}$(\ref{prop:F-split-polar})$ is 
weaker than the hypothesis of Proposition 
\ref{prop:split-F-regular-Tate}$(\ref{cor:F-split-polar})$.
This is because to show $F$-solidity of $k$-affinoid domains one only
needs Tate algebras to be $F$-solid, which is a 
weaker requirement than split $F$-regularity.
\end{remark}

\section{Some obstructions to solidity}
In the previous section, we showed that a reduced ring $R$ which is essentially of finite 
type over a complete local Noetherian ring of prime characteristic $p > 0$ is
$F$-solid (Theorem \ref{thm:F-solid-eft}). 
The proof proceeds by constructing an $F$-finite 
solid Noetherian $R$-algebra $R^\Gamma$, and then using non-trivial $p^{-e}$-linear 
maps on $R^\Gamma$ and its solidity as an $R$-algebra to produce non-trivial 
$p^{-e}$-linear maps on $R$. At the same time, we also exhibited examples of 
excellent Henselian regular local rings that are not $F$-solid (Proposition 
\ref{prop:non-F-solid}). The goal of this section is to highlight some obstructions
to $F$-solidity of excellent local rings which will more systematically
explain why the gamma construction
has little hope of producing examples of $F$-solid excellent rings
beyond those that are essentially of finite type over a complete local ring.

\subsection{Henselizations and completions are not solid} 
Suppose $S$ is a reduced ring which is essentially of finite type over an excellent 
local ring $(R,\fm)$ of prime characteristic $p > 0$ that is not necessarily complete. 
It is natural to wonder if nonzero $p^{-e}$-linear maps on the change of base ring 
$S_{\widehat{R}} \coloneqq \widehat{R} \otimes_R S$ can help produce nonzero 
$p^{-e}$-linear maps on $R$. As we now demonstrate, the main difficulty with this approach 
is that $S_{\widehat{R}}$ is rarely a solid $S$-algebra. Thus, descent type arguments 
in the spirit of Lemma \ref{lem:descent-F-solidity}, crucial in the proof of Theorem 
\ref{thm:F-solid-eft}, will almost never work. 

The following simple 
observation will be useful: if $R \rightarrow S \rightarrow T$ are ring maps such 
that $T$ is a solid $R$-algebra, then $S$ is also a solid $R$-algebra (Remark
\ref{rem:useful-solid-facts}$(\ref{rem:useful-solid-fact-composition})$). Said differently, 
if $S$ is not a solid $R$-algebra, then $T$ is not a solid $R$-algebra. We will use 
this observation to show that the completion of a local ring is rarely solid. 
In fact, we first prove the following stronger result for Noetherian local rings 
that are not Henselian.

\begin{theorem}
\label{thm:Henselization-not-solid}
Let $(R,\fm)$ be a Noetherian local ring of arbitrary characteristic such that 
the Henselization $R^h$ is a domain (for example, if $R$ is normal). 
Assume $R$ is not Henselian. Then, we have the following:
\begin{enumerate}[label=$(\roman*)$,ref=\roman*]
	\item\label{thm:fet-solid} If $\varphi\colon R \rightarrow S$ is an 	essentially \'etale extension of 
	domains, then $S$ is a solid $R$-algebra if and only if $\varphi$ is finite 	\'etale.

	\item\label{thm:Henselization-notsolid} The Henselization $R^h$ is not a 	solid $R$-algebra.

	\item\label{thm:any-Henselian-notsolid} 
	If $(R,\fm) \rightarrow (S,\fn)$ is a local ring homomorphism such that 	$S$ is
	Henselian (for example, if $S = R^{sh}$ or 
	$S = \widehat{R}$), then $S$ is not solid $R$-algebra.
\end{enumerate}
\end{theorem}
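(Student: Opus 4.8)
The plan is to prove part (i) first and then deduce (ii) from (i) and (iii) from (ii).

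For part (i), the direction ``$\varphi$ finite \'etale $\Rightarrow$ $S$ solid'' is immediate from Example~\ref{ex:finite-solid}, since a finite \'etale $R$-algebra is module-finite over $R$ and $R$ is a domain. For the converse I would argue as follows. Suppose $S$ is solid over $R$; since $S$ is already flat and unramified over $R$, it suffices to show $S$ is module-finite over $R$ (then it is finite, flat and unramified, hence finite \'etale). First I would invoke Zariski's Main Theorem: $S$ is essentially of finite type over $R$ with finite fibres (each fibre is a localization of an \'etale algebra over a field, hence a finite product of finite separable field extensions), so $S = \Sigma^{-1}B$ for a module-finite $R$-algebra $B$ and a multiplicative set $\Sigma \subseteq B$; replacing $B$ by its quotient by the unique minimal prime that survives in $S$, one may take $B$ to be a domain. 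If $S$ is \emph{not} module-finite over $R$, then $\Sigma^{-1}B \ne B$, so $\Sigma$ contains a non-unit $s$ of $B$; the goal is to contradict solidity.

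Write $\omega \coloneqq \Hom_R(B,R)$, a finitely generated $B$-module; the standard adjunction $\Hom_R(M,R) \cong \Hom_B(M,\Hom_R(B,R))$ for $B$-modules $M$ gives $\Hom_R(\Sigma^{-1}B,R) \cong \Hom_B(\Sigma^{-1}B,\omega)$. The key point is that $\omega$ is \emph{torsion-free} as a $B$-module: for a nonzerodivisor $c \in B$ the quotient $B/cB$ is killed by some nonzero $r \in R$ (it is $R$-torsion, as $B$ is finite over the domain $R$ and $B\otimes_R\Frac(R)$ is a field), so $c\psi = 0$ forces $r\psi = 0$, hence $\psi = 0$. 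Consequently a $B$-linear map $\psi\colon \Sigma^{-1}B \to \omega$ is determined by $\psi(1)$, and $s^{k}\psi(1/s^{k}) = \psi(1)$ shows $\psi(1) \in \bigcap_{k} s^{k}\omega$. Now $\bigcap_{k}s^{k}\omega$ is a torsion-free submodule of $\omega$ which, after localizing at a maximal ideal $\fq \ni s$ of $B$, vanishes by Krull's intersection theorem in $B_\fq$; a torsion-free module with a trivial localization is trivial, so $\bigcap_{k}s^{k}\omega = 0$ and $\psi = 0$. Hence $\Hom_R(\Sigma^{-1}B,R) = 0$, contradicting solidity and proving (i).

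For (ii), I would write $R^{h} = \varinjlim_{\lambda}R_{\lambda}$ as a filtered colimit of local-\'etale $R$-algebras $R_{\lambda}$; each $R_\lambda$ is a subring of the domain $R^{h}$, hence a domain, and an essentially \'etale extension of $R$. If $R^{h}$ were solid over $R$, then so would each $R_{\lambda}$ by Remark~\ref{rem:useful-solid-facts}$(\ref{rem:useful-solid-fact-composition})$, hence each $R_\lambda$ would be finite \'etale over $R$ by (i); but $R_{\lambda}$ is local and unramified over $R$ with residue field $\kappa$ (the Henselization does not alter the residue field), so $R_{\lambda}\otimes_{R}\kappa = \kappa$, whence $R_\lambda$ is free of rank one over $R$ and, being a ring extension of $R$, equals $R$. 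Then $R^{h} = R$, contradicting that $R$ is not Henselian. Part (iii) then follows: if $(R,\fm) \to (S,\fn)$ is local with $S$ Henselian, the universal property of the Henselization supplies an $R$-algebra map $R^{h} \to S$ through which $R \to S$ factors, and since $R^{h}$ is not solid over $R$ by (ii), neither is $S$ by Remark~\ref{rem:useful-solid-facts}$(\ref{rem:useful-solid-fact-composition})$; the cases $S = R^{sh}$ and $S = \widehat{R}$ are covered because $R^{sh}$ and $\widehat{R}$ are Henselian and receive local maps from $R$.

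The hard part will be the vanishing $\Hom_R(\Sigma^{-1}B,R) = 0$ in (i): carrying out the Zariski's-Main-Theorem reduction to a module-finite \emph{domain} $B$, and then recognizing that the relevant Hom is computed by the \emph{torsion-free} module $\omega = \Hom_R(B,R)$ so that a Krull intersection argument goes through. I do not expect base changing to $R^{h}$ or $\widehat{R}$ to shortcut this, because solidity of $S$ over $R$ is strictly weaker than solidity of $S\otimes_R R^{h}$ over $R^{h}$ (as one sees by localizing a finite \'etale $R$-algebra at a single maximal ideal, $R$ being non-Henselian); the hypothesis that $R^{h}$ is a domain enters only in (ii) and (iii), where it keeps the rings $R_\lambda$ and the factorization target domains so that (i) applies.
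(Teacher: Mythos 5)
Your strategy is sound, and parts (ii) and (iii) track the paper's logic. For (ii) the paper picks a single essentially \'etale local $(S,\fn)\neq R$ with trivial residue field extension and applies (i) directly to it, rather than running through the filtered colimit presentation of $R^h$; the content of the two arguments is the same (in both cases the point is that a nontrivial local essentially \'etale $R$-algebra with residue field $\kappa$ cannot be module-finite, forcing non-solidity). Part (iii) is identical. The substantive difference is in the converse of (i): the paper simply invokes Proposition \ref{prop:generically-algebra-maps} (a generically finite injective homomorphism of Noetherian domains with solid target is finite), whereas you reprove this in the essentially \'etale case via Zariski's Main Theorem, the adjunction $\Hom_R(S,R)\cong\Hom_B(S,\Hom_R(B,R))$ for a module-finite $R$-subalgebra $B\subseteq S$, and a Krull-intersection computation. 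Your verification that $\omega=\Hom_R(B,R)$ is $B$-torsion-free and that $\bigcap_k s^k\omega=0$ is correct.

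The one step that needs care is the reduction $S=\Sigma^{-1}B$. Raynaud's local form of Zariski's Main Theorem gives exactly this when $S$ is local (a local, essentially-of-finite-type, quasi-finite $R$-algebra is the localization of a finite $R$-algebra at a prime), and that is all you need, since the $R_\lambda$ in (ii) are local. But you have stated (i) for an arbitrary essentially \'etale extension of domains, and for non-local $S$ the usual formulation of ZMT only furnishes an open immersion $\Spec(S)\hookrightarrow\Spec(B)$ onto an affine open, which is not automatically a localization at a multiplicative set; as written, your proof of (i) is narrower than its statement. The paper's citation avoids this entirely, and in fact Proposition \ref{prop:generically-algebra-maps} admits a short direct proof that bypasses ZMT and works for any generically finite injective $\varphi\colon R\to S$: choose a $\Frac(R)$-basis $e_1,\dots,e_n$ of $\Frac(S)$ lying inside $S$ and a nonzero $\phi\in\Hom_R(S,R)$; the $R$-linear map $s\mapsto(\phi(e_1s),\dots,\phi(e_ns))$ is injective (its generic extension is an injective $\Frac(R)$-linear map because $\phi\otimes\Frac(R)\neq 0$), so $S$ embeds into $R^n$ and is therefore a finite $R$-module by Noetherianity. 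Your ZMT-plus-Krull route is a genuinely different and instructive argument, but you should either restrict the statement of (i) to local $S$ (which suffices), supply the missing reduction, or fall back on the simpler embedding argument.
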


\par The main ingredient in the proof of Theorem \ref{thm:Henselization-not-solid} is 
the following observation of Smith and the first author:

\begin{citedprop}[{\cite[Prop.\ 3.7]{DS18}}]
\label{prop:generically-algebra-maps}
Let $\varphi\colon R \rightarrow S$ be an injective homomorphism of Noetherian domains 
which is generically finite. If $S$ is a solid $R$-algebra, then $\varphi$ 
is a finite map.
\end{citedprop}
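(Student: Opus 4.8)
The plan is to pass to fraction fields and convert the single nonzero functional supplied by solidity into a non-degenerate bilinear form, then run the classical dual-basis argument (the one proving module-finiteness of finite extensions of Noetherian normal domains via the trace pairing), with this abstract functional playing the role of the trace. So first I would write $K = \Frac(R)$ and $L = \Frac(S)$; injectivity of $\varphi$ gives $K \subseteq L$, and generic finiteness means $[L:K] < \infty$. The localization $S \otimes_R K$ is a domain (a localization of the domain $S$) that is finite-dimensional over the field $K$, hence itself a field; since it sits between $S$ and $L$ it must equal $L$, so $L = S \otimes_R K$.

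Next, let $f\colon S \to R$ be a nonzero $R$-linear map witnessing solidity. Tensoring with the flat $R$-algebra $K$ produces a $K$-linear functional $f_K\colon L = S \otimes_R K \to K$ that is nonzero (in fact surjective, as $\im(f_K) = \im(f)\cdot K = K$) and whose restriction along $S \hookrightarrow L$ agrees with $f$, composed with $R \hookrightarrow K$. The crucial observation is that the $K$-bilinear form $\beta(x,y) \coloneqq f_K(xy)$ on $L$ is non-degenerate: if some $x \neq 0$ had $\beta(x,-) \equiv 0$, then, because multiplication by $x$ is a bijection of the \emph{field} $L$, the functional $f_K$ would vanish identically on $L$, a contradiction.

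Finally I would choose a $K$-basis $s_1,\dots,s_n$ of $L$ with each $s_i \in S$ (clear denominators in an arbitrary basis) and let $s_1^\ast,\dots,s_n^\ast \in L$ be the $\beta$-dual basis, so $\beta(s_i, s_j^\ast) = \delta_{ij}$. For $s \in S$, writing $s = \sum_j c_j s_j^\ast$ with $c_j \in K$ and applying $\beta(s_i,-)$ gives $c_i = f_K(s_i s) = f(s_i s) \in R$, since $s_i s \in S$ and $f_K$ restricts to $f$ there. Hence $S \subseteq R s_1^\ast + \cdots + R s_n^\ast$, a finitely generated $R$-module; as $R$ is Noetherian, $S$ is module-finite over $R$, i.e.\ $\varphi$ is finite.

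The one point requiring care --- and where the domain and Noetherian hypotheses genuinely enter --- is the bookkeeping around the identification $L = S \otimes_R K$: one must verify both that $f_K$ extends $f$ to all of $L$ and that its values on the products $s_i s$ still lie in $R$ (not merely in $K$), which is exactly what forces the coordinates $c_i$ into $R$. Everything else is the routine dual-basis computation, so I do not anticipate any serious obstacle.
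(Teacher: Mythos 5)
Your proof is correct and is essentially the argument behind \cite[Prop.\ 3.7]{DS18} (which the present paper cites without reproducing): pass to the generic fiber, turn the nonzero functional $f$ into the non-degenerate pairing $\beta(x,y)=f_K(xy)$ on $L$, and use a $\beta$-dual basis to trap $S$ inside a finitely generated $R$-submodule of $L$, finishing by Noetherianity of $R$. All the intermediate reductions you flag (that $S\otimes_R K = L$, that $f_K$ restricts to $f$ on $S$, and that $\beta(s_i,s)=f(s_is)\in R$ for $s\in S$) check out.
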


\begin{proof}[Proof of Theorem \ref{thm:Henselization-not-solid}]
$(\ref{thm:any-Henselian-notsolid})$ follows from $(\ref{thm:Henselization-notsolid})$
by Remark \ref{rem:useful-solid-facts}$(\ref{rem:useful-solid-fact-composition})$, since the local map $R \rightarrow S$ factors through $R^h$ by the universal
property of Henselization. 

\par For $(\ref{thm:Henselization-notsolid})$, 
choose an essentially \'etale local homomorphism 
$\phi\colon (R,\fm) \rightarrow (S,\fn)$ such that the induced map on residue fields 
is an isomorphism and $\phi$ is not an isomorphism. Note that such an $S$ exists 
because $R$ is not Henselian. We claim that $S$ is not a finite $R$-algebra. 
Indeed, otherwise $S$ is a free $R$-module of finite rank because $\phi$ is flat 
and $R$ is Noetherian local. The free rank of $S$ must then equal $1$ 
because $\kappa(\fn) = S \otimes_R \kappa(\fm)$ and the extension 
$\kappa(\fm) \hookrightarrow \kappa(\fn)$ is trivial. But this means $\phi$ is an 
isomorphism, contrary to its choice. Thus $S$ is not a finite $R$-algebra.
Since $R^h$ is also the Henselization of $S$, it follows that $S$ is also a domain. 
Then $S$ is not a solid $R$-algebra by $(\ref{thm:fet-solid})$, and so, 
$R^h$ also cannot be a solid $R$-algebra. This proves 
$(\ref{thm:Henselization-notsolid})$. 

\par Thus, it remains to show $(\ref{thm:fet-solid})$. 
If $\varphi\colon R \rightarrow S$ is an essentially \'etale extension of domains, 
then the generic fiber is a finite separable extension of fields. If $S$ is a 
solid $R$-algebra, then $S$ must be module-finite over $R$ 
by Proposition \ref{prop:generically-algebra-maps}. 
Conversely, if $S$ is a finite $R$-algebra, then
$S$ is $R$-solid by Example \ref{ex:finite-solid} regardless of whether
$\varphi$ is \'etale.
\end{proof}

If $(R,\fm)$ is a Henselian Noetherian local domain that is not complete, then 
Theorem \ref{thm:Henselization-not-solid} gives no information on whether 
$\widehat{R}$ is a solid $R$-algebra. The goal in the remainder of this 
subsection is to give a more direct proof of the following result which, as a 
special case, implies that the completion of a Noetherian local domain, 
Henselian or not, is never solid.

\begin{theorem}
\label{thm:completion-not-solid}
Let $R$ be a Noetherian ring (not necessarily local) and let $I$ be a nonzero ideal 
of $R$ such that $R$ is $I$-adically separated. Denote by $\widehat{R}^I$ the 
$I$-adic completion of $R$. Consider an element $\varphi \in \Hom_R(\widehat{R}^I, R)$. 
Then, we have the following:
\begin{enumerate}[label=$(\roman*)$,ref=\roman*]
	\item\label{thm:image-1} $\varphi \neq 0$ if and only if $\varphi(1) 	\neq 0$.
	\item\label{thm:zerodiv} If $R$ is not $I$-adically complete and $\varphi 	\neq 0$, then $\varphi(1)$ is a zerodivisor on $R$.
	\item\label{thm:notsplit} If $R$ is not $I$-adically complete, the 	canonical map 
	$i\colon R \rightarrow \widehat{R}^I$ never splits.
	\item\label{thm:notsolid} If $R$ is a domain that is not $I$-adically 	complete, then 
	$\widehat{R}^I$ is not a solid $R$-algebra.
\end{enumerate}
\end{theorem}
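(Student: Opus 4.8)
The plan is to extract from an arbitrary $\varphi\in\Hom_R(\widehat{R}^I,R)$ one structural fact and then feed it into each of the four assertions. Write $i\colon R\to\widehat{R}^I$ for the completion map. The claim is that $\varphi$ is continuous for the $I$-adic topologies, i.e.\ $\varphi(I^n\widehat{R}^I)\subseteq I^n$ for all $n$; this holds because $I^n\widehat{R}^I$ is spanned over $R$ by elements $i(a)w$ with $a\in I^n$ and $w\in\widehat{R}^I$, and $\varphi(i(a)w)=a\varphi(w)\in I^n$. Consequently $i\circ\varphi$ and multiplication by $i(\varphi(1))$ are two $I$-adically continuous $R$-linear endomorphisms of the separated module $\widehat{R}^I$ that agree on the dense submodule $i(R)$ (since $\varphi(i(r))=r\varphi(1)$), hence they coincide: $i\circ\varphi=\bigl(\,\cdot\,i(\varphi(1))\bigr)$ on $\widehat{R}^I$. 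Part $(\ref{thm:image-1})$ is then immediate from continuity: if $\varphi(1)=0$ then $\varphi(x)\in\bigcap_n I^n=0$ for every $x$ by $I$-adic separatedness of $R$, so $\varphi=0$; the converse is trivial.

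For $(\ref{thm:zerodiv})$ I would argue by contradiction. Suppose $\varphi\neq 0$ and $c\coloneqq\varphi(1)$ is a nonzerodivisor on $R$. Since $R$ is Noetherian, $\widehat{R}^I$ is $R$-flat, so $i(c)$ is a nonzerodivisor on $\widehat{R}^I$; hence multiplication by $i(c)$ is injective, and therefore $\varphi$ is injective by the identity above. Thus $\widehat{R}^I$ is isomorphic as an $R$-module to the ideal $\varphi(\widehat{R}^I)$ of $R$, which is finitely generated because $R$ is Noetherian, so $\widehat{R}^I$ is a finite $R$-module. Now the cokernel $N\coloneqq\widehat{R}^I/i(R)$ is a finite $R$-module with $N=IN$, using $i(R)+I\widehat{R}^I=\widehat{R}^I$ (which holds as $\widehat{R}^I/I\widehat{R}^I\cong R/I$). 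By the determinant trick there is $a\in I$ with $(1+a)N=0$, i.e.\ $(1+a)\widehat{R}^I\subseteq i(R)$; but $1+a$ is a unit in $\widehat{R}^I$ because $i(a)\in I\widehat{R}^I\subseteq\operatorname{Jac}(\widehat{R}^I)$, so $\widehat{R}^I=i(R)$. Together with the injectivity of $i$ (from $I$-adic separatedness), this makes $i$ an isomorphism and $R$ is $I$-adically complete, contrary to hypothesis. Hence $c$ is a zerodivisor.

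Parts $(\ref{thm:notsplit})$ and $(\ref{thm:notsolid})$ then follow formally. A splitting of $i$ would be some $\varphi$ with $\varphi(1)=1$, which is a nonzerodivisor since $R\neq 0$ (as $I\neq 0$), contradicting $(\ref{thm:zerodiv})$. If $R$ is a domain and $\widehat{R}^I$ were a solid $R$-algebra, then a nonzero $\varphi\colon\widehat{R}^I\to R$ would have $\varphi(1)\neq 0$ by $(\ref{thm:image-1})$, hence $\varphi(1)$ a nonzerodivisor, again contradicting $(\ref{thm:zerodiv})$.

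The routine input is the continuity/rigidity step; the substantive point is in $(\ref{thm:zerodiv})$. The main obstacle is to see that once $\varphi$ is forced to be injective in the nonzerodivisor case, $\widehat{R}^I$ becomes module-finite over $R$, and that a Noetherian ring whose $I$-adic completion is module-finite over it must already be complete. This is exactly where Noetherianity of $R$ is used (to get that $\varphi(\widehat{R}^I)$ is finitely generated) and why one invokes the determinant trick rather than Nakayama, since $I$ need not lie in the Jacobson radical of $R$.
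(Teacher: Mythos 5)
Your proof is correct and follows essentially the same route as the paper. The key rigidity observation --- that $\varphi$ is $I$-adically continuous, so $i\circ\varphi$ must equal multiplication by $i(\varphi(1))$ on $\widehat{R}^I$ because the two agree on the dense submodule $i(R)$ --- is exactly the content of the paper's Lemma~\ref{lem:completion-map-basic-properties}$(\ref{lem:precomposition})$--$(\ref{lem:image1})$, and the contradiction in $(\ref{thm:zerodiv})$ is driven by the same chain: nonzerodivisor $\Rightarrow$ $\widehat{R}^I$ module-finite over $R$ $\Rightarrow$ $\widehat{R}^I=i(R)$ via the Cayley--Hamilton form of Nakayama (valid since $I\widehat{R}^I$ lies in the Jacobson radical of $\widehat{R}^I$), contradicting non-completeness. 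The only cosmetic divergence is in how you get finiteness: you observe directly that the identity forces $\varphi$ to be injective, so $\widehat{R}^I\cong\varphi(\widehat{R}^I)\subseteq R$ is an ideal of the Noetherian ring $R$; the paper instead notes $\varphi(1)\widehat{R}^I\subseteq i(R)$ and uses $\varphi(1)\widehat{R}^I\cong\widehat{R}^I$. Both are one-line deductions and buy the same thing.
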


\par For the proof of Theorem \ref{thm:completion-not-solid}, we first make a few 
preliminary observations about $I$-adic completions. Recall that for an ideal 
$I$ of a ring $R$, we say that an $R$-module $M$ is \emph{$I$-adically separated} 
if $\bigcap_{n \in \NN} I^nM = (0)$. Said differently, a module $M$
is $I$-adically separated if the $I$-adic topology on $M$ is Hausdorff.

\begin{lemma}
\label{lem:completion-map-basic-properties}
Let $I$ be an ideal of a Noetherian ring $R$. The canonical map
\[
i\colon R \longrightarrow \widehat{R}^I
\]
satisfies the following properties:
\begin{enumerate}[label=$(\roman*)$,ref=\roman*]
	\item\label{lem:dense} For all integers $n > 0$, we have 
	$\widehat{R}^I = i(R) + I^n\widehat{R}^I$.

	\item\label{lem:flat} The map $i$ is flat, and $i$ is faithfully flat if and
	only if $I$  is contained in the Jacobson radical of $R$.

	\item\label{lem:finite} The map $i\colon R \rightarrow \widehat{R}$ is
 	finite if and only if $i$ is surjective. In particular, if $R$ is $I$-adically
	separated but not $I$-adically complete, then $i$ is not a finite map.

	\item\label{lem:precomposition} If $M$ is an $I$-adically separated 
	$R$-module, then the map 
	\[
	i^*\colon \Hom_R(\widehat{R}^I, M) \longrightarrow \Hom_R(R, M)
	\]
	induced by pre-composition with $i$ is injective. 

	\item\label{lem:image1} For every 
	$\varphi \in \Hom_R(\widehat{R}^I, R)$, we have 
	$\varphi(1)\widehat{R}^I \subseteq i(R)$.
\end{enumerate}
\end{lemma}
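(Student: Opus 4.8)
The plan is to dispatch $(\ref{lem:dense})$, $(\ref{lem:flat})$, $(\ref{lem:precomposition})$ and $(\ref{lem:image1})$ as formal consequences of three standard facts about $I$-adic completions over a Noetherian ring, reserving the real work for $(\ref{lem:finite})$. The three facts are: (a) the map $i$ is flat; (b) $I^n\widehat{R}^I = \ker(\widehat{R}^I \to R/I^n)$ for every $n$, so that $\widehat{R}^I/I^n\widehat{R}^I \cong R/I^n$ with $i$ inducing the canonical surjection $R \to R/I^n$; and (c) $\widehat{R}^I$ is $I$-adically complete, hence $I$-adically separated and with $I\widehat{R}^I \subseteq \mathrm{Jac}(\widehat{R}^I)$.

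Given these, $(\ref{lem:dense})$ is immediate: composing $\widehat{R}^I \twoheadrightarrow \widehat{R}^I/I^n\widehat{R}^I \cong R/I^n$ with $i$ recovers the canonical surjection, so $i(R)$ already maps onto $\widehat{R}^I/I^n\widehat{R}^I$, i.e. $\widehat{R}^I = i(R) + I^n\widehat{R}^I$. For $(\ref{lem:flat})$, flatness is (a), and since $i$ is flat it is faithfully flat precisely when $\Spec \widehat{R}^I \to \Spec R$ is surjective, equivalently when $\widehat{R}^I/\fm\widehat{R}^I \neq 0$ for every maximal ideal $\fm$. By (b), if $I \subseteq \fm$ then $\widehat{R}^I/\fm\widehat{R}^I \cong R/\fm \neq 0$; if $I \not\subseteq \fm$ then $I\widehat{R}^I + \fm\widehat{R}^I = (I+\fm)\widehat{R}^I = \widehat{R}^I$, and since $I\widehat{R}^I$ lies in the Jacobson radical of $\widehat{R}^I$ this forces $\fm\widehat{R}^I = \widehat{R}^I$. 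So $i$ is faithfully flat iff every maximal ideal of $R$ contains $I$, i.e. iff $I \subseteq \mathrm{Jac}(R)$. For $(\ref{lem:precomposition})$, if $\psi\colon \widehat{R}^I \to M$ kills $i(R)$, then for $x \in \widehat{R}^I$ and any $n$, $(\ref{lem:dense})$ lets us write $x = i(r) + y$ with $y \in I^n\widehat{R}^I$; then $\psi(x) = \psi(y) \in I^n M$, and since $M$ is $I$-adically separated and $n$ is arbitrary, $\psi(x) = 0$. For $(\ref{lem:image1})$, given $\varphi\colon \widehat{R}^I \to R$ and $x \in \widehat{R}^I$, write $x = i(r_n) + z_n$ with $z_n \in I^n\widehat{R}^I$; then $R$-linearity together with $\varphi(I^n\widehat{R}^I) \subseteq I^nR$ gives $i(\varphi(x)) \equiv i(r_n)\,i(\varphi(1)) \equiv \varphi(1)\,x \pmod{I^n\widehat{R}^I}$ for all $n$, so $i(\varphi(x)) = \varphi(1)\,x$ by the separatedness in (c); hence $\varphi(1)\widehat{R}^I = i(\varphi(\widehat{R}^I)) \subseteq i(R)$.

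The main obstacle is $(\ref{lem:finite})$. One direction is trivial: if $i$ is surjective then $\widehat{R}^I = i(R)$ is generated by $1$ over $R$, hence finite. For the converse I would argue via the structure of finite flat algebras. Suppose $i$ is finite; by $(\ref{lem:flat})$ it is then finite and flat over the Noetherian ring $R$, hence a finite projective $R$-module, with locally constant rank on $\Spec R$. By (b), $\widehat{R}^I/I\widehat{R}^I \cong R/I$, so this rank equals $1$ at every prime containing $I$. Since $R$ is Noetherian, $\Spec R$ has finitely many connected components, and the union of those meeting $V(I)$ is clopen; this yields a decomposition $R \cong R_1 \times R_2$ with $V(I) \subseteq \Spec R_1$ and $IR_2 = R_2$, whence $\widehat{R}^I \cong \widehat{R_1}^{IR_1} \times \widehat{R_2}^{R_2}$ with the second factor, being the completion of $R_2$ along the unit ideal, equal to $0$. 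On $\Spec R_1$ every connected component meets $V(I)$, so $\widehat{R_1}^{IR_1}$ has rank identically $1$; being at once an invertible $R_1$-module and an $R_1$-algebra, its structure map $R_1 \to \widehat{R_1}^{IR_1}$ is an isomorphism (it is a local isomorphism at every prime, since the unit map of a rank-one algebra over a local ring is an isomorphism). Therefore $i$ is the projection $R_1 \times R_2 \twoheadrightarrow R_1 \times 0$, in particular surjective. The final assertion is then immediate: if $R$ is $I$-adically separated, $i$ is injective, so a finite $i$ would be simultaneously injective and surjective, hence an isomorphism, meaning $R$ is $I$-adically complete. (The tempting shortcut of running Nakayama on $\coker(i)$ — which satisfies $\coker(i) = I^n\coker(i)$ for all $n$ by $(\ref{lem:dense})$ — only kills $\coker(i)$ up to a factor $1+a$ with $a \in I$ when $I \not\subseteq \mathrm{Jac}(R)$, so flatness, i.e. $(\ref{lem:flat})$, is genuinely needed here.)
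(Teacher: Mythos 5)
Your treatments of $(\ref{lem:dense})$, $(\ref{lem:precomposition})$, and $(\ref{lem:image1})$ are essentially the paper's arguments (for $(\ref{lem:image1})$ you unwind the computation directly rather than routing it through the injectivity of $i^*$ and comparing $\ell_{\varphi(1)}$ with $i \circ \varphi$, but it is the same calculation), and for $(\ref{lem:flat})$ you spell out a correct elementary argument where the paper simply cites \cite[Thms.\ 8.8 and 8.14]{Mat89}.

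For $(\ref{lem:finite})$ you take a genuinely different and much heavier route: finite flat over a Noetherian ring implies finite projective, hence of locally constant rank; the rank is $1$ over $V(I)$; decompose $\Spec R$ into finitely many connected components and split off the part disjoint from $V(I)$; and conclude by the structure of rank-one projective algebras. This is correct, but your closing parenthetical — that the Nakayama argument ``only kills $\coker(i)$ up to a factor $1+a$'' and that ``flatness, i.e.\ $(\ref{lem:flat})$, is genuinely needed here'' — is mistaken, and is precisely what leads you to the heavier machinery. The Nakayama shortcut \emph{does} close: $(1+a)\coker(i) = 0$ with $a \in I$ says $i(1+a)\,\widehat{R}^I \subseteq i(R)$, and $i(1+a) = 1 + i(a)$ is a unit of $\widehat{R}^I$ \emph{regardless of whether $1+a$ is a unit of $R$}, because $i(a) \in I\widehat{R}^I$ and $\widehat{R}^I$ is $I$-adically complete (so $1 + I\widehat{R}^I$ consists of units, as in \cite[Ch.\ III, \S2, n\textsuperscript{o} 13, Lem.\ 3]{BouCA}). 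Multiplying by $i(1+a)^{-1}$ then gives $\widehat{R}^I \subseteq i(R)$ directly. This is exactly the paper's proof of $(\ref{lem:finite})$; flatness of $i$ is never invoked there. So your argument buys nothing over the paper's, and the passage you dismiss is the shorter, correct path — the invertibility you need lives in $\widehat{R}^I$, not in $R$.
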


\begin{proof}[Proof of Lemma \ref{lem:completion-map-basic-properties}]
$(\ref{lem:dense})$ follows from the isomorphisms $R/I^n \simeq \widehat{R}^I/I^n\widehat{R}^I$
for all integers $n > 0$. For a proof of $(\ref{lem:flat})$ see 
\cite[Thms.\ 8.8 and 8.14]{Mat89}.

\par $(\ref{lem:finite})$ If $i$ is surjective, then it is a finite map. 
Conversely, assume $i$ is finite. Tensoring the exact sequence
\[
  R \overset{i}{\longrightarrow} \widehat{R}^I \longrightarrow \coker(i) \longrightarrow 0
\]
by $R/I$, we get an exact sequence
\[
R/I \longrightarrow \widehat{R}^I/I\widehat{R}^I \longrightarrow \coker(i)/I\coker(i) \longrightarrow 0.
\]
As the induced map $R/I \rightarrow \widehat{R}^I/I\widehat{R}^I$ is an 
isomorphism, it follows that
\[
\coker(i) = I\coker(i).
\]
Since $i$ is finite, $\coker(i)$ is a finitely generated $R$-module, and 
consequently, $\coker(i)$ is annihilated by an element of the form $1 + a$
for some $a \in I$ by Nakayama's lemma \cite[Thm.\ 2.2]{Mat89}. This means 
that the ideal of $\widehat{R}^I$ generated by $1 + a$ is contained in $i(R)$. 
However, $1 + a$ is a unit in $\widehat{R}^I$ by 
\cite[Ch.\ III, \S2, n\textsuperscript{o} 13, Lem.\ 3]{BouCA}. Therefore
\[
\widehat{R}^I = (1+a)\widehat{R}^I \subseteq i(R) \subseteq \widehat{R}^I,
\]
which implies that the map $i\colon R \rightarrow \widehat{R}^I$ is surjective.
For the second assertion of $(\ref{lem:finite})$, if $A$ is $I$-adically 
separated, then  
$i$ is injective. If $i$ is also finite, then $i$ is surjective by what we just proved, 
which would imply $i$ is an isomorphism. But this is impossible because 
$A$ is not $I$-adically complete by hypothesis.

\par$(\ref{lem:precomposition})$ Let $\varphi\colon \widehat{R}^I \rightarrow M$ 
be an $R$-linear map such that 
$\varphi \circ i = 0$.
It suffices to show that $\varphi = 0$. Let $x \in \widehat{R}$. By $(\ref{lem:dense})$, for 
every integer $n > 0$, there exists $a_n \in R$ and $x_n \in I^n\widehat{R}^I$ such that
\[x = i(a_n) + x_n.\]
Then,
\[\varphi(x) = \varphi\bigl(i(a_n)\bigr) + \varphi(x_n) = \varphi(x_n),\]
and $\varphi(x_n) \in I^nM$ by $R$-linearity. This shows that
$$\varphi(x) \in \bigcap_{n \in \ZZ_{> 0}} I^nM = (0),$$
where the last equality follows because $M$ is $I$-adically separated. 
Thus, $\varphi = 0$.

\par $(\ref{lem:image1})$ Consider the $R$-linear map 
\[
i \circ \varphi\colon \widehat{R}^I \longrightarrow \widehat{R}^I,
\]
and let 
$a \coloneqq \varphi(1) \in R$.
We also have the $R$-linear map $\ell_a\colon \widehat{R}^I \rightarrow \widehat{R}^I$ 
given by left multiplication by $a$. Now for any $x \in R$, we have
\[
\ell_a\bigl(i(x)\bigr) = ai(x) = i(ax) = i\bigl(\varphi(1)x\bigr) = i \circ
\varphi \bigl(i(x)\bigr),
\]
that is,
\[
i^*(\ell_a) = i^*(i \circ \varphi),
\]
where $i^*\colon \Hom_R(\widehat{R}^I, \widehat{R}^I) \rightarrow \Hom_R(R, \widehat{R}^I)$ 
is the map induced by pre-composition with $i$. Since $\widehat{R}^I$ is an 
$I$-adically separated $R$-module, it follows from $(\ref{lem:precomposition})$ 
that $i^*$ is injective. Thus $\ell_a = i \circ \varphi$,
and so,
\[
  \varphi(1)\widehat{R}^I = \ell_a(\widehat{R}^I) = i \circ \varphi 
  (\widehat{R}^I) \subseteq i(R).\qedhere
\]
\end{proof}

\begin{remark}
  \leavevmode
\begin{enumerate}
\item If $I$ is an idempotent ideal of a Noetherian ring $R$, then 
$\widehat{R}^I = R/I$ and the completion map $R \rightarrow \widehat{R}^I$ 
is just the quotient map. In particular, the completion map is finite. 
Hence the second assertion of Lemma 
\ref{lem:completion-map-basic-properties}$(\ref{lem:finite})$ 
fails if $R$ is not $I$-adically separated.

\item The proofs of parts $(\ref{lem:dense}), (\ref{lem:finite}), 
  (\ref{lem:precomposition})$, and $(\ref{lem:image1})$ of Lemma 
\ref{lem:completion-map-basic-properties} work even when $R$ is not 
Noetherian as long as $I$ is a finitely generated ideal of $R$. We need 
$I$ to be finitely generated in order for the isomorphisms 
$R/I^n \simeq \widehat{R}^I/I^n\widehat{R}^I$ to hold 
(see \cite[\href{https://stacks.math.columbia.edu/tag/05GG}{Tag 05GG}]{stacks-project} 
for a nice proof of this due to Poonen).
\end{enumerate}
\end{remark}

\par We now prove Theorem \ref{thm:completion-not-solid} as follows.

\begin{proof}[Proof of Theorem \ref{thm:completion-not-solid}]
\par $(\ref{thm:image-1})$ follows by 
Lemma \ref{lem:completion-map-basic-properties}$(\ref{lem:precomposition})$ applied to 
$M = R$ because the pullback map
\[
i^*\colon \Hom_R(\widehat{R}^I, R) \longrightarrow \Hom_R(R,R)
\]
is injective. Note that here we need $R$ to be $I$-adically separated, and 
the latter follows by the hypotheses of the Theorem.

\par For $(\ref{thm:zerodiv})$, if $\varphi  \neq 0$, then by
$(\ref{thm:image-1})$, we have
$\varphi(1) \neq 0$. 
Assume that $\varphi(1)$ is a nonzerodivisor on $R$. Then, by flatness of 
$i\colon R \rightarrow \widehat{R}^I$ 
(Lemma \ref{lem:completion-map-basic-properties}$(\ref{lem:flat})$), we see that $\varphi(1)$ is 
also a nonzerodivisor on $\widehat{R}^I$. In particular,
\[
  \varphi(1)\widehat{R}^I \simeq \widehat{R}^I.
\]
At the same time, Lemma \ref{lem:completion-map-basic-properties}$(\ref{lem:image1})$ 
shows that
\[
\varphi(1)\widehat{R}^I \subseteq i(R),
\]
and so $\varphi(1)\widehat{R}^I$ is a finitely generated $R$-module 
since it is a submodule of the finitely generated $R$-module $i(R)$ 
and $R$ is Noetherian. Thus, $\widehat{R}^I$ is a finitely generated $R$-module, 
which is impossible by Lemma \ref{lem:completion-map-basic-properties}$(\ref{lem:finite})$
and the hypotheses of $(\ref{thm:zerodiv})$.

$(\ref{thm:notsplit})$ follows from $(\ref{thm:zerodiv})$ because a splitting 
of $R \rightarrow \widehat{R}^I$ 
maps $1$ to $1$, and $1$ is a nonzerodivisor. Similarly, $(\ref{thm:notsolid})$ 
also follows 
from $(\ref{thm:zerodiv})$. Indeed, if $R$ is a domain and $\widehat{R}^I$ is $R$-solid, 
then there exists an $R$-linear map $\phi\colon\widehat{R}^I \rightarrow R$ such 
that $\phi(1) \neq 0$ (Remark \ref{rem:useful-solid-facts}). 
But this is impossible because $(\ref{thm:zerodiv})$ 
implies $\phi(1)$ must be a zerodivisor and a domain has no nonzero zerodivisors.
\end{proof}

\begin{remark}
Let $R$ be a product of two rings $R_1 \times R_2$ and let $I$ be the 
idempotent ideal generated by $(1,0)$. Then, $\widehat{R}^I = R_2$ and the 
associated map $R \rightarrow \widehat{R}^I$ is just projection onto $R_2$. 
In this case $\widehat{R}^I$ is a solid $R$-algebra because one has the 
$R$-linear inclusion $\widehat{R}^I = R_2 \rightarrow R$ that sends $a$ to $(0,a)$.
Thus, Theorem \ref{thm:completion-not-solid}$(\ref{thm:notsolid})$ fails
for arbitrary Noetherian rings $R$.
\end{remark}

\subsection{Non-solidity of some big Cohen--Macaulay algebras}
Apart from providing obstructions to $F$-solidity of rings essentially of 
finite type over an excellent local ring, the results of the previous subsection
have implications for the non-solidity of certain algebra extensions
that are at the heart of the recent solution of Hochster's direct summand
conjecture in mixed characteritic \cite{And18a}.
Throughout this subsection, $(R,\fm)$ will denote a Noetherian local ring of dimension $d$.
Recall that an $R$-algebra $B$ is a \emph{big Cohen--Macaulay $R$-algebra} if there exists a 
system of parameters $x_1,x_2,\dots,x_d \in \fm$ which form a regular sequence on $B$. The adjective 
``big'' is meant to indicate that the $R$-algebra $B$ need not be finite over $R$ (in practice, 
$B$ is not even Noetherian). We will call a big Cohen--Macaulay $R$-algebra a BCM 
$R$-algebra for brevity. We say $B$ is a \emph{balanced BCM $R$-algebra} if every system of 
parameters of $R$ is a regular sequence on $B$. 

The existence of (balanced) BCM $R$-algebras has far-reaching consequences for the 
homological conjectures \cite{HH95} and has been recently used by Ma and Schwede to develop 
a singularity theory in mixed characteristic that draws inspiration from various notions 
of singularities in equal characteristic \cite{MS19} (see also \cite{MSTWW}).
Hochster and Huneke constructed BCM 
$R$-algebras when $R$ has equal characteristic using their characterization of the absolute 
integral closure $R^+$ as a BCM $R$-algebra for an excellent local domain $R$ of prime 
characteristic $p > 0$ \cite{HH93}. BCM $R$-algebras were constructed by Andr\'e in 
mixed characteristic in the same paper where he settled the direct summand conjecture 
\cite{And18a}, following which Shimomoto showed that one can even construct a 
BCM $R$-algebra that is simultaneously an $R^+$-algebra \cite{Shi18}. Andr\'e later 
showed that a (more useful) weakly functorial version of BCM $R$-algebras also exists in 
mixed characteristic \cite{And18b} (see also \cite{HM18}).

One of the pleasing properties of BCM $R$-algebras is that they are often $R$-solid, 
a well-known fact summarized in the following result.

\begin{proposition}
\label{prop:BCM-solid}
Let $(R, \fm, \kappa)$ be a complete local domain of dimension $d$ of arbitrary characteristic. 
Then, every BCM $R$-algebra is $R$-solid.
\end{proposition}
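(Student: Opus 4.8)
The statement to prove is: if $(R,\fm,\kappa)$ is a complete local domain of dimension $d$ and $B$ is a big Cohen--Macaulay $R$-algebra, then $B$ is $R$-solid. The natural approach is to exploit the fact that a system of parameters $x_1,\dots,x_d \in \fm$ is a regular sequence on $B$, together with the structure of $R$ provided by Cohen's structure theorem. First I would reduce to a concrete local cohomology or Ext computation. By Cohen's structure theorem, $R$ is module-finite over a complete regular local subring $A = \kappa\llbracket x_1,\dots,x_d\rrbracket$ (or $A = V\llbracket x_1,\dots,x_{d-1}\rrbracket$ in mixed characteristic), where $x_1,\dots,x_d$ is a system of parameters; since a finite extension of solid algebras behaves well (and it suffices to produce a nonzero $A$-linear map $B \to A$, which then composes with a nonzero $R$-linear map $A \to R$... more carefully, one wants $B$ solid over $R$, so I would instead argue directly over $R$ using that $R$ itself is a finite $A$-module with a nonzero $A$-linear splitting $R \to A$ is not available, so the cleanest route is to stay over $R$).

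The key step is the following: since $x_1,\dots,x_d$ is a $B$-regular sequence lying in $\fm$, the element $0 \ne \overline{1} \in B/(x_1,\dots,x_d)B$ is nonzero — indeed $(x_1,\dots,x_d)B \ne B$ because a regular sequence on a nonzero module cannot generate the unit ideal. This means the induced map $H^d_\fm(R) \to H^d_\fm(B)$ (computed via the Čech complex on $x_1,\dots,x_d$, or via $\varinjlim \operatorname{Ext}$) sends the socle-generating "canonical" element $\eta \in H^d_\fm(R)$ to a nonzero element: concretely, the class $[\tfrac{1}{x_1\cdots x_d}]$ maps to the analogous nonzero class in $H^d_\fm(B)$, nonvanishing precisely because $x_1,\dots,x_d$ is $B$-regular. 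Now apply Matlis duality: $H^d_\fm(R)^\vee \cong \omega_R$, the canonical module of $R$ (or at least a faithful finitely generated $R$-module with $\operatorname{Hom}_R(\omega_R, R) \ne 0$ when $R$ has a canonical module, which it does since it is complete). Dualizing the nonzero map $H^d_\fm(R) \to H^d_\fm(B)$ and using $\operatorname{Hom}_R(H^d_\fm(B), E_R(\kappa)) \ne 0$, one extracts a nonzero $R$-linear map $B \to E_R(\kappa)$; the real work is then pushing this into a nonzero map $B \to R$.

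For that last transition, I would use that $R$ is a complete local domain, so $\operatorname{Hom}_R(\omega_R, R) \ne 0$ (it contains the inclusion of an ideal isomorphic to $\omega_R$, since $\omega_R$ is a rank-one torsion-free module for a domain, hence isomorphic to an ideal of $R$). Tracking through: a nonzero map $\varphi \colon B \to E_R(\kappa)$ whose image is finitely generated would, upon Matlis dualizing appropriately, yield a map $B \to R$; but $\varphi$ need not have finitely generated image. The standard fix, due to Hochster, is: the map $H^d_\fm(R) \to H^d_\fm(B)$ dualizes to a nonzero map $\operatorname{Hom}_R(H^d_\fm(B), E) \to \operatorname{Hom}_R(H^d_\fm(R), E) \cong \omega_R$, and $\operatorname{Hom}_R(H^d_\fm(B), E)$ receives a map from $B$ (via the element $1 \in B$, since $H^d_\fm(B)$ is a module over $B$ and the annihilator considerations give $\operatorname{Hom}_R(H^d_\fm(B),E) \supseteq$ image of $B$); composing $B \to \operatorname{Hom}_R(H^d_\fm(B),E) \to \omega_R$ and then $\omega_R \hookrightarrow R$ produces the desired nonzero $R$-linear map $B \to R$.

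**Expected main obstacle.** The crux — and the part requiring care — is showing the composite $B \to \omega_R$ is \emph{nonzero}, equivalently that the canonical element $\eta \in H^d_\fm(R)$ has nonzero image in $H^d_\fm(B)$ and that this survives Matlis duality. Nonvanishing of the image of $\eta$ is exactly the assertion that $x_1,\dots,x_d$ is $B$-regular \emph{and} $(x_1,\dots,x_d)B \ne B$; the first is the BCM hypothesis and the second follows because a proper ideal generated by a regular sequence on a nonzero module stays proper (a standard fact, using that a regular sequence element is a nonzerodivisor on the quotient). Threading these through the duality — in particular verifying that the $B$-module structure on $H^d_\fm(B)$ makes $\operatorname{Hom}_R(H^d_\fm(B), E)$ a $B$-module receiving $B \to \operatorname{Hom}_R(H^d_\fm(B),E)$ via $b \mapsto (b \cdot {-})$, and that the image of $1$ maps to a nonzero element of $\omega_R$ — is where the argument must be assembled carefully rather than quoted. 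This is essentially Hochster's original argument that BCM algebras over complete local domains are solid, and I would cite \cite{Hoc94} for the precise bookkeeping.
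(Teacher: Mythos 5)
Your proposal is correct and essentially the same argument the paper attributes to Hochster's \cite[Cor.\ 2.4]{Hoc94}: observe that the BCM hypothesis forces $H^d_\fm(B) \ne 0$, then apply Matlis duality over the complete local ring $R$. Where your account diverges from the paper's sketch is cosmetic: the paper describes reducing to a complete regular local base $A$ via Cohen's structure theorem, whereas you remain over $R$ and use an embedding $\omega_R \hookrightarrow R$ of the canonical module as an ideal; since $\omega_R \cong \Hom_A(R,A)$ is a nonzero rank-one torsion-free $R$-module, these two routes amount to the same thing. The dualization step in your write-up should be tightened, since the assertion that ``$\Hom_R(H^d_\fm(B), E)$ receives a map from $B$ via the element $1\in B$'' is not well-defined as stated. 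Because $H^d_\fm$ is the top local cohomology it is right exact, so $H^d_\fm(B)\cong H^d_\fm(R)\otimes_R B$; by tensor--Hom adjunction
\[
0 \ne \Hom_R\bigl(H^d_\fm(B),E\bigr) \cong \Hom_R\bigl(B,\Hom_R(H^d_\fm(R),E)\bigr) = \Hom_R(B,\omega_R),
\]
where the left-hand nonvanishing holds because $E$ is an injective cogenerator. Composing any nonzero element of $\Hom_R(B,\omega_R)$ with an embedding $\omega_R\hookrightarrow R$ finishes the argument, and this formulation bypasses the ``canonical element'' or socle generator of $H^d_\fm(R)$ entirely --- which is desirable, since a one-dimensional socle is a Gorenstein-specific phenomenon not available here.
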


\begin{proof}[Indication of proof]
Let $B$ be a BCM $R$-algebra. Since there exists a system of parameters of $R$ that is a 
regular sequence on $B$, we see that $H^d_\fm(B) \neq 0$. It now follows that $B$ is a solid 
$R$-algebra using \cite[Cor.\ 2.4]{Hoc94}, where the argument proceeds by reducing to the 
case where $R$ is regular via the analogue of Noether normalization for complete local rings 
and then using Matlis duality in a way similar to how it is used in Lemma \ref{lem:fed12}.
\end{proof}

\begin{remark}\label{rem:fsolidityofrplus}
  \leavevmode
\begin{enumerate}[label=$(\arabic*)$,ref=\arabic*]
	\item Proposition \ref{prop:BCM-solid} provides a different perspective 	on $F$-solidity of a complete local domain $(R, \fm)$ in prime 	characteristic.
	Since $R$ is excellent,
  	\cite[Thm.\ 5.15]{HH93} shows that $R^+$ is a big Cohen--Macaulay 	$R$-algebra. Thus, $R^+$ is a 
	solid $R$-algebra by Proposition \ref {prop:BCM-solid}, 
	and since $R \rightarrow R^+	$ factors via $F_{R*}R$, we see
	that $F_{R*}R$ is also a solid $R$-algebra by Remark
	\ref{rem:useful-solid-facts}. 

\medskip 

	\item Now that we know the direct summand theorem, one can 
	give a simple proof of the fact that $R^+$ is always a solid $R$-algebra 
	for a complete local Noetherian domain $(R,\fm)$, even when $R^+$ is 	not known to be 
	BCM (for example, in equal characteristic $0$ and mixed characteristic). 	Note that we do not even need that there exist BCM $R$-algebras that 
	contain $R^+$. Using Cohen's 
	theorem, choose a module finite extension $A \hookrightarrow R$ where
	$A$ is a power series ring over a field or a mixed characteristic complete 	DVR. Then one can check that $A^+ = R^+$. 
	The map $A \rightarrow A^+$ is pure because $A$ is splinter 
	(here we use the direct summand theorem), and so, $A \rightarrow A^+$ 	splits by Lemma \ref{lem:fed12} because $A$ is complete. In other words,
 	the composition $A \hookrightarrow R \rightarrow R^+ = A^+$ is 
	$A$-solid. Then $R^+$ is $R$-solid by
	\cite[Cor.\ 2.3]{Hoc94} because $A \hookrightarrow R$ is a finite 	extension of Noetherian domains.\label{rem:directsummandrplus}
\end{enumerate}
\end{remark}

We now show that we lose solidity of BCM $R$-algebras if we drop the hypothesis that 
$R$ is complete. There exist excellent regular rings that behave like
polynomial or power series rings over fields for which $R^+$ is not $R$-solid. 
In fact, examples exist even for a class of excellent local rings that are 
closest in behavior to complete local rings, namely those that are Henselian.
As far as we are aware, these examples are the first of their kind.

\begin{proposition}
\label{prop:BCM-not-solid}
Let $R$ be a Noetherian ring. We have the following:
\begin{enumerate}[label=$(\roman*)$,ref=\roman*]
	\item\label{prop:hensel-R+-notsolid} 
	For each integer $n > 0$, there exist excellent regular rings $R$ of
	prime characteristic $p > 0$ and Krull dimension $n$ for which $R^+$
	is not a solid $R$-algebra. Moreover, $R$ can be chosen to be local and
	Henselian.

	\item\label{prop:arb-char-R+-not-solid} 
	Let $(R,\fm,\kappa)$ be a Noetherian local domain of arbitrary 	characteristic that 
	is not $\fm$-adically complete. For any BCM $R$-algebra $B$, the 
	$\fm$-adic completion 
	$\widehat{B}^\fm$ is a balanced BCM $R$-algebra that is not $R$-solid.
\end{enumerate}
\end{proposition}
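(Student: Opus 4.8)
The plan is to derive part $(\ref{prop:hensel-R+-notsolid})$ from Proposition~\ref{prop:non-F-solid} together with the fact that the structure map of $R^+$ factors through the Frobenius, and to derive part $(\ref{prop:arb-char-R+-not-solid})$ from Theorem~\ref{thm:completion-not-solid} together with the functoriality of $\fm$-adic completion; the substantive work is already contained in those two earlier results.

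For $(\ref{prop:hensel-R+-notsolid})$, fix an integer $n > 0$ and use Proposition~\ref{prop:non-F-solid}$(\ref{prop:Tate-not-Fsolid})$ and $(\ref{prop:convergent-not-Fsolid})$ to choose a complete non-Archimedean field $(k,\abs{})$ of characteristic $p > 0$ for which $T_n(k)$ and $K_n(k)$ are not $F$-solid. These are excellent regular integral domains of Krull dimension $n$, and $K_n(k)$ is moreover local and Henselian, so we may take $R \coloneqq K_n(k)$ (the ring $T_n(k)$ provides a non-local example). Since $R$ is a domain of characteristic $p$, the identification $F_{R*}R = R^{1/p}$ of Definition~\ref{def:frob}, under which $R \to F_{R*}R$ is the inclusion $R \hookrightarrow R^{1/p}$, together with the fact that $R^{1/p}$ is integral over $R$ and hence contained in $R^+$, exhibits the structure map $R \to R^+$ as a composition $R \to F_{R*}R \to R^+$. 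Consequently, if $R^+$ were a solid $R$-algebra, then Remark~\ref{rem:useful-solid-facts}$(\ref{rem:useful-solid-fact-composition})$ would force $F_{R*}R$ to be a solid $R$-algebra, i.e.\ $R$ would be $F$-solid, contradicting the choice of $R$. Hence $R^+$ is not a solid $R$-algebra.

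For $(\ref{prop:arb-char-R+-not-solid})$, it is well known that the $\fm$-adic completion of a BCM $R$-algebra is again a balanced BCM $R$-algebra; concretely, a system of parameters that is a regular sequence on $B$ forces $H^i_\fm(B) = 0$ for $i < d$ and $H^d_\fm(B) \neq 0$ (with $d = \dim R$), and these properties pass to $\widehat{B}^\fm$ in the form that makes every system of parameters of $R$ a regular sequence on it. It therefore remains to show that $\widehat{B}^\fm$ is not a solid $R$-algebra. As $(R,\fm)$ is a Noetherian local domain which is not $\fm$-adically complete, the ideal $\fm$ is nonzero and $R$ is $\fm$-adically separated by Krull's intersection theorem, so Theorem~\ref{thm:completion-not-solid}$(\ref{thm:notsolid})$, applied with $I = \fm$, shows that $\widehat{R}^\fm$ is not a solid $R$-algebra. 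On the other hand, the ring maps $R/\fm^n \to B/\fm^n B$ form a morphism of inverse systems and so, passing to the limit, yield a ring homomorphism $\widehat{R}^\fm \to \widehat{B}^\fm$ through which the structure map $R \to \widehat{B}^\fm$ factors. Since $\widehat{R}^\fm$ is not $R$-solid, Remark~\ref{rem:useful-solid-facts}$(\ref{rem:useful-solid-fact-composition})$ now shows that $\widehat{B}^\fm$ is not $R$-solid either.

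The only points requiring care are the factorization $R \to F_{R*}R \to R^+$ in part $(\ref{prop:hensel-R+-notsolid})$ and, in part $(\ref{prop:arb-char-R+-not-solid})$, recording the standard balanced-BCM statement for completions together with the compatibility of the two $\fm$-adic completion maps; neither constitutes a genuine obstacle, the real input being Proposition~\ref{prop:non-F-solid} and Theorem~\ref{thm:completion-not-solid}, both already available.
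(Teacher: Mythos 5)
Your proposal is correct and follows essentially the same route as the paper: for part $(\ref{prop:hensel-R+-notsolid})$, use the non-$F$-solid rings $T_n(k)$ and $K_n(k)$ from Proposition~\ref{prop:non-F-solid} together with the factorization $R \to F_{R*}R \to R^+$, and for part $(\ref{prop:arb-char-R+-not-solid})$, apply Theorem~\ref{thm:completion-not-solid}$(\ref{thm:notsolid})$ together with the factorization $R \to \widehat{R}^\fm \to \widehat{B}^\fm$. The only superficial difference is that you sketch a (somewhat loose) local-cohomology heuristic for why $\widehat{B}^\fm$ is a balanced BCM $R$-algebra, whereas the paper simply cites \cite[Cor.\ 8.5.3]{BH98}; since both treat this as a known black-box input, it has no bearing on the argument.
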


\begin{proof}
$(\ref{prop:hensel-R+-notsolid})$ 
For each $n > 0$, there are Tate algebras $T_n(k)$ and convergent power series
rings $K_n(k)$ that are not $F$-solid \cite[Thm.\ A and Rem.\ 5.5]{DM} for
some appropriately chosen non-Archimedean field $(k,\abs{})$ of characteristic 
$p > 0$. Consequently, the absolute integral closures $R^+$ of such 
excellent regular rings $R$ cannot be $R$-solid. Note that the convergent
power series rings $K_n(k)$ are even Henselian, and both $T_n(k)$ and 
$K_n(k)$ have Krull dimension $n$ (see Proposition \ref{prop:properties}).

\par $(\ref{prop:arb-char-R+-not-solid})$ 
The fact that $\widehat{B}^\fm$ is a balanced BCM $R$-algebra follows from 
\cite[Cor.\ 8.5.3]{BH98}. By construction, there is a factorization 
$R \rightarrow \widehat{R}^\fm \rightarrow \widehat{B}^\fm$. Thus, if 
$\widehat{B}^\fm$ is $R$-solid, then $\widehat{R}^\fm$ is also a solid 
$R$-algebra, which is impossible by Theorem 
\ref{thm:completion-not-solid}$(\ref{thm:notsolid})$.
\end{proof}

\begin{remark}
Let $(R,\fm)$ be a Noetherian local domain of mixed characteristic that is 
not complete. As far as we are aware, all constructions of balanced BCM $R$-algebras
in the literature proceed by first passing to $\widehat{R}^\fm$
and then constructing a balanced BCM $\widehat{R}^\fm$-algebra that is
automatically also a balanced BCM $R$-algebra.
See \cite[n\textsuperscript{o} 4.2]{And18a}, \cite[Proof of Thm.\ 6.3]{Shi18}, and
\cite[n\textsuperscript{o} 17.5.51]{GR}.
None of these BCM $R$-algebras
can be $R$-solid because they contain $\widehat{R}^\fm$.
\end{remark}

\section{Solidity of absolute integral and
perfect closures -- a deeper analysis} 
In the previous subsection, one of our goals was to show that
$R^+$ fails to be a solid $R$-algebra for nice excellent local domains
(Proposition \ref{prop:BCM-not-solid}),
even though $R^+$ is always a solid $R$-algebra when $R$ is a complete local 
domain as a consequence of the direct summand theorem (Remark
\ref{rem:fsolidityofrplus}$(\ref{rem:directsummandrplus})$). 
In this section, we will examine the restrictions that
solidity of $R^+$ imposes on the ring $R$. For example, we will
see that solidity of absolute integral closures often implies excellence in 
prime characteristic (Theorem
\ref{thm:solid-N2}$(\ref{thm:A+-solid-excellent})$), whereas most domains of
equal characteristic zero have solid absolute integral closures because 
of the existence of splittings arising from trace maps (Proposition
\ref{prop:colimits}$(\ref{prop:A+-solid-char0})$). 
In prime characteristic, solidity of absolute integral closures implies
solidity of perfect closures of a domain. Hence we will also spend some
effort understanding when the perfect closure of a Noetherian domain of prime
characteristic is solid. 
We begin our investigation by exhibiting close connections between
solidity of absolute integral and perfect closures and the well-studied
notions of N-1 and Japanese (aka N-2) rings. 

\subsection{N-1 and Japanese rings} N-1 and Japanese rings,
defined below,
are rings for which normalizations satisfy the familiar finiteness properties
of finite type algebras over a field and rings of integers of number fields.

\begin{citeddef}[{\cite[(31.A), Defs.]{Mat80}}]
\label{def:Japanese-Nagata}
Let $R$ be an arbitrary integral domain with fraction field $K$. 
Then, we have the following notions:
\begin{itemize}
	\item $R$ is \emph{N-1} if the integral 
	closure of $R$ in $K$ is a finite $R$-algebra. 
	\item $R$ is \emph{Japanese} or \emph{N-2} if the integral closure of 	$R$ in every finite field extension of $K$ is a finite $R$-algebra. 
\end{itemize}
A ring $R$, not necessarily a domain, is \emph{Nagata} if $R$ is 
Noetherian and for every prime ideal 
$\fp$ of $R$, the quotient ring $R/\fp$ is Japanese.
\end{citeddef}

\begin{remark}
\par We will use the terminology ``Japanese ring,'' which is due to
Grothendieck and Dieudonn\'e \cite[Ch.\ 0, Def.\ 23.1.1]{EGAIV1}.
By a theorem of Nagata \cite[Thm.\ 7.7.2]{EGAIV2}, a Noetherian ring is Nagata
if and only if it is universally Japanese in the sense of Grothendieck and
Dieudonn\'e
\cite[Ch.\ 0, Def.\ 23.1.1]{EGAIV1}.
Note that (quasi-)excellent rings are Nagata by the Zariski--Nagata
theorem (see \cite[Cor.\ 7.7.3]{EGAIV2}).
\end{remark}

\par We first record some basic properties of 
the notions introduced in Definition \ref{def:Japanese-Nagata} 
for the reader's convenience, 
with brief indications for why the properties are true.
Most, if not all, of the properties appear in \cite[\S31]{Mat80} or \cite[Ch.\
0, \S23]{EGAIV1},
sometimes using slightly different terminology.
Note that we will 
only study the N-1 and Japanese conditions in the setting of Noetherian
rings. Thus, our statements will usually contain Noetherian hypotheses,
even though Noetherianity may not be strictly needed in some places.
\begin{lemma}
\label{lem:field-theory}
Let $R$ be a Noetherian domain with fraction field $K$. We have the 
following:
\begin{enumerate}[label=$(\roman*)$,ref=\roman*]
\item\label{lem:N1-finite-ext}
If every finite extension domain of $R$ is N-1, then $R$ is
Japanese.

\item\label{lem:Japanese-purely-inseparable}
Suppose that
for every finite purely inseparable field extension $K \subseteq L$,
the integral closure of $R$ in $L$ is a module-finite $R$-algebra.
Then $R$ is Japanese.

\item\label{lem:N1-N2-char0}
When $K$ has characteristic zero, then $R$ is N-1 if and only 
if $R$ is Japanese.

\item\label{lem:N1-normal-locus}
If $R$ is N-1, then its normal locus in $\Spec(R)$ is open.
\end{enumerate}
\end{lemma}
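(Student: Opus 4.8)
The four assertions rest on two classical facts that I would record first. \textbf{(A)} If $R_0$ is a normal Noetherian domain with fraction field $K_0$ and $L/K_0$ is a finite separable extension, then the integral closure of $R_0$ in $L$ is a finite $R_0$-module; this is the standard consequence of the nondegeneracy of the trace form $L\times L\to K_0$, which places the integral closure inside a finite free $R_0$-module, together with the Noetherian hypothesis. \textbf{(B)} Integral closure commutes with localization, so $(R^N)_\fp$ is the normalization of $R_\fp$ for every prime $\fp$, and the support of a finitely generated module is closed. Granting these, $(\ref{lem:N1-normal-locus})$ is immediate: if $R$ is N-1 then $R^N$ is a finite $R$-module, so $M\coloneqq R^N/R$ is a finitely generated $R$-module with $\operatorname{Supp}(M)=V(\mathfrak{c})$ closed, where $\mathfrak{c}=\Ann_R(M)$ is the conductor; by (B), $R_\fp$ is normal iff $R_\fp=(R^N)_\fp$ iff $M_\fp=0$ iff $\fp\notin V(\mathfrak{c})$, so the normal locus is the open set $\Spec(R)\smallsetminus V(\mathfrak{c})$.

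For $(\ref{lem:N1-finite-ext})$ I would realize any finite extension field as the fraction field of a finite extension domain of $R$ and invoke its N-1 property. Given a finite extension $L/K$, write $L=K[\alpha_1,\dots,\alpha_r]$; clearing denominators in an integral dependence relation of each $\alpha_i$ over $K$ (the usual device of replacing $\alpha_i$ by $a\alpha_i$ for a suitable $a\in R$, which does not change the subfield generated), one may assume every $\alpha_i$ is integral over $R$, so that $S\coloneqq R[\alpha_1,\dots,\alpha_r]$ is a domain which is module-finite over $R$ and has $\Frac(S)=L$. By hypothesis $S$ is N-1, so the integral closure of $S$ in $L$ is a finite $S$-module; since $S$ is integral over $R$, this integral closure equals the integral closure of $R$ in $L$, and being finite over $S$ it is finite over $R$. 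Hence $R$ is Japanese.

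For $(\ref{lem:Japanese-purely-inseparable})$ I would first reduce to $R$ normal: taking $L=K$ in the hypothesis shows $R$ is N-1, so $R^N$ is finite over $R$, and replacing $R$ by $R^N$ alters neither the hypothesis (the integral closure of $R^N$ in any $L$ equals that of $R$ in $L$) nor the conclusion (a module is finite over $R^N$ exactly when it is finite over $R$, since $R^N$ is finite over the Noetherian ring $R$). So assume $R$ normal, and let $L/K$ be finite. Embed $L$ in a finite normal extension $N/K$, set $G=\operatorname{Aut}(N/K)$, and use the structure of normal extensions: $N^G/K$ is purely inseparable while $N/N^G$ is Galois, hence separable. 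By hypothesis, the integral closure $R_1$ of $R$ in $N^G$ is finite over $R$, hence a normal Noetherian domain with fraction field $N^G$; applying (A) to $R_1$ and the separable extension $N/N^G$, the integral closure $R_2$ of $R_1$ in $N$ --- which coincides with the integral closure of $R$ in $N$ --- is finite over $R_1$, hence over $R$. Finally the integral closure of $R$ in $L$ is the $R$-submodule $R_2\cap L$ of the finite $R$-module $R_2$, so it is finite over $R$ by Noetherianity; thus $R$ is Japanese. Part $(\ref{lem:N1-N2-char0})$ follows immediately, since ``Japanese $\Rightarrow$ N-1'' is the case $L=K$, and in characteristic zero the only finite purely inseparable extension of $K$ is $K$ itself, whence the hypothesis of $(\ref{lem:Japanese-purely-inseparable})$ is exactly ``$R$ is N-1.''

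I expect the main obstacle to be the field-theoretic bookkeeping in $(\ref{lem:Japanese-purely-inseparable})$: factoring an arbitrary finite extension through a purely inseparable extension of $K$ (so the hypothesis applies) followed by a separable one (so the trace argument (A) applies), and verifying that the passage to $R^N$ and the intermediate normalizations $R_1,R_2$ are harmless. The remaining steps are formal manipulations with integral closures and finitely generated modules over the Noetherian ring $R$.
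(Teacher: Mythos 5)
Your proof is correct and follows essentially the same route as the paper: part (\ref{lem:N1-finite-ext}) realizes $L$ as the fraction field of a module-finite extension domain of $R$ and invokes its N-1 property; part (\ref{lem:Japanese-purely-inseparable}) passes to a normal closure, factors it as a purely inseparable extension followed by a separable one, applies the hypothesis and then the trace/Noetherian argument, and cuts down to a submodule; part (\ref{lem:N1-N2-char0}) observes the hypothesis of (\ref{lem:Japanese-purely-inseparable}) collapses to N-1 in characteristic zero; and part (\ref{lem:N1-normal-locus}) is the standard conductor/support argument using that normalization commutes with localization. One small remark: your preliminary reduction to $R$ normal in part (\ref{lem:Japanese-purely-inseparable}) is unnecessary, since the intermediate ring $R_1$ (the integral closure of $R$ in $N^G$) is automatically integrally closed in its fraction field $N^G$, so fact (A) applies to $R_1$ directly without first replacing $R$ by $R^N$ — which is exactly how the paper proceeds.
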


\par\noindent See \cite[(31.F), Lem.\ 4]{Mat80} for a partial converse to
$(\ref{lem:N1-normal-locus})$.

\begin{proof}
$(\ref{lem:N1-finite-ext})$ 
Let $K \subseteq L$ be a finite field extension. We have to show that
the integral closure $S$ of $R$ in $L$ is a finite $R$-algebra. 
Choose a basis $\{l_1,l_2,\dots,l_n\}$ of $L/K$ 
such that the $l_i$ are integral over $R$. Let $S'$ be the finite $R$-algebra
$R[l_1,l_2,\dots,l_n]$. Note that the fraction field of $S'$ is $L$. 
Since $S'$ is N-1 by hypothesis, it follows that the integral closure of
$S'$ in $L$ is a finite $S'$, hence also, a finite $R$-algebra.
But the integral closure of $S'$ in $L$ is $S$.

\par $(\ref{lem:Japanese-purely-inseparable})$ 
We have to show that if $K \subseteq M$ is a finite field 
extension, then the integral closure of $R$ in $M$ is module-finite
over $R$. Since a submodule of a finitely generated module over a 
Noetherian ring is finitely generated, we may enlarge 
$M$ to assume $K \hookrightarrow M$ is normal.
The normality of $M$ then implies that there is a 
factorization $K \hookrightarrow L \hookrightarrow M$ such that $L/K$ is
purely inseparable and $M/L$ is separable
\cite[\href{https://stacks.math.columbia.edu/tag/030M}{Tag
030M}]{stacks-project}.
The proof then follows 
by the hypothesis of the Lemma and the fact that integral
closures of Noetherian domains in finite separable extensions of their 
fraction fields are always module-finite (see \cite[(31.B), Prop.]{Mat80}).

\par $(\ref{lem:N1-N2-char0})$ 
It is clear that a Japanese ring is N-1. Conversely, an N-1 ring
whose fraction field has characteristic zero is Japanese by 
$(\ref{lem:Japanese-purely-inseparable})$.

$(\ref{lem:N1-normal-locus})$ 
Let $R^N$ be the normalization of $R$ in $K$. Since
$R$ is N-1, $R \hookrightarrow R^N$ is a finite map, and hence the cokernel $Q$
is also a finitely generated $R$-module.
The support of $Q$ as an $R$-module is therefore closed, and it consists
of those points $\fp \in \Spec(R)$ for which $R_\fp$ is not normal by the fact
that taking integral closures commutes with localization \cite[Ch.\ V, \S1,
n\textsuperscript{o} 5, Prop.\ 16]{BouCA}.
The normal locus in $\Spec(R)$ is the complement of the support of $Q$, and is
therefore open.
\end{proof}

\par A simple sufficient condition for a ring of prime characteristic to be N-1
is the following:

\begin{lemma}[{cf.\ {\cite[Proof of Cor.\ 2.2]{EG}}}]\label{lem:fpuren1}
  Let $(R,\fm)$ be a Noetherian local ring of prime characteristic $p > 0$.
  If $R$ is $F$-injective (for example, if $R$ is $F$-pure), then $R$ is N-1.
\end{lemma}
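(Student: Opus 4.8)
The plan is to show that $F$-injectivity forces the normalization map $R \hookrightarrow R^N$ to be surjective on a suitable completion, or more directly, to exhibit a nonzero conductor-type element witnessing finiteness. Recall that $R$ is N-1 precisely when $R^N$ is a finite $R$-module, where $R^N$ is the integral closure of $R$ in its fraction field $K$. Since $R$ is local of prime characteristic $p > 0$, the key mechanism is that $F$-injectivity means the Frobenius action $F\colon H^i_\fm(R) \to H^i_\fm(R)$ (or rather the natural map $H^i_\fm(R) \to H^i_\fm(F_*R)$ induced by Frobenius) is injective for all $i$; equivalently, following the cited reference \cite{EG}, $F$-injectivity controls how the normalization can fail to be finite.

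\textbf{Main steps.} First I would reduce to the case where $R$ is reduced: $F$-injectivity implies $R$ is reduced (nilpotents would be killed by a power of Frobenius, contradicting injectivity of Frobenius on the appropriate cohomology, or more elementarily on $R$ itself when we note that $F\colon R \to F_*R$ being injective after the relevant functor forces $\mathrm{nil}(R) = 0$). Second, following the argument sketched in the proof of \cite[Cor.\ 2.2]{EG}, I would use that for a reduced local ring, $R^N \subseteq R^{1/p^e} \cap K$ inside an algebraic closure — more precisely, every element of $R^N$ has a power lying in a controlled extension. The crucial point: if $x \in R^N$, then $x$ is integral over $R$, so $x$ satisfies a monic equation, and $F$-injectivity (via $R$ being $F$-pure, hence the Frobenius map $R \to F_*R$ splits or is at least pure) lets one bound the denominators needed. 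Specifically, one shows that the conductor ideal $\mathfrak{c} = (R :_R R^N)$ is $\fm$-primary or at least nonzero, and then finiteness of $R^N$ over $R$ follows since $R^N$ sits between $R$ and $\mathfrak{c}^{-1}$ as a module — but the cleanest route is to invoke that for excellent or even just for $F$-pure local rings, the normalization is dominated by $F_*^e R$ for large $e$ in a way that transfers finiteness. Third, I would assemble: $F$-injective $\Rightarrow$ the natural map on local cohomology is injective $\Rightarrow$ (by the argument in \cite{EG}) the cokernel of $R \to R^N$ has finite length or is finitely generated, giving N-1.

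\textbf{Main obstacle.} The hard part will be making precise the implication from $F$-injectivity of the local cohomology to finiteness of the normalization without simply quoting \cite{EG} verbatim — in particular, handling the case where $R$ is not equidimensional or where $H^i_\fm(R)$ for $i < \dim R$ obstructs a clean duality argument. The expected resolution is that one only needs the top local cohomology together with the observation that $R^N/R$ is supported on the non-normal locus, which for an $F$-injective ring is well-behaved because $F$-injective rings satisfy Serre's condition enough to control this. I would therefore structure the proof as: (1) note $F$-pure $\Rightarrow$ $F$-injective (this is standard, as purity of Frobenius implies injectivity of Frobenius on local cohomology by applying $H^i_\fm(-)$ to a pure map and using that pure maps remain injective after $H^i_\fm$); (2) cite or reprove the \cite{EG} argument that $F$-injective local rings are N-1, which proceeds by showing the conductor is nonzero via a Frobenius-action argument on $H^i_\fm(R^N/R)$ or equivalently by a direct manipulation with integral equations and $p$-th power roots. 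Since the lemma is explicitly attributed with ``cf.'' to \cite[Proof of Cor.\ 2.2]{EG}, I expect the intended proof is essentially to extract and streamline that argument, so the writeup would be brief, citing \cite{EG} for the core estimate and filling in only the reduction that $F$-pure implies $F$-injective.
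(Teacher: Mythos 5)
There is a genuine gap here: you never identify the mechanism that actually forces the normalization to be module-finite, and the routes you sketch do not close. The paper's proof is short and goes through the completion. The two facts used are that $F$-injectivity passes to the $\fm$-adic completion (\cite[Rem.\ on p.\ 473]{Fed83}), and that $F$-injective rings are reduced (\cite[Lem.\ 3.11]{QS17}). Together these show that $\widehat{R}$ is reduced, i.e.\ $R$ is \emph{analytically unramified}, and the classical Nagata--Rees theorem (\cite[(31.E)]{Mat80}) says an analytically unramified Noetherian local ring is N-1. That is the whole argument.

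Your proposal correctly notes that $F$-injectivity forces $R$ reduced, but reducedness of $R$ alone is far from enough --- there are reduced Noetherian local domains that are not N-1. The load-bearing step is that $F$-injectivity is preserved under completion, so the \emph{completion} is reduced; that is what must be supplied, and it is missing from your sketch. The alternative mechanisms you float would not work as stated: the normalization $R^N$ is generally not contained in any $F^e_*R$ (that containment characterizes the weak normalization, not the full integral closure --- separable integral elements need not lie in any $R^{1/p^e}$), so the plan of bounding $R^N$ inside a Frobenius power fails. Likewise, trying to prove the conductor is nonzero directly via a Frobenius action on $H^i_\fm(R^N/R)$ is circular: a nonzero conductor is essentially what you are trying to prove, and the local cohomology of $R^N/R$ is not something $F$-injectivity controls in an obvious way. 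Once you replace these with ``$\widehat{R}$ is reduced, hence $R$ is analytically unramified,'' the lemma falls out of the standard reference.

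Your initial observation that $F$-pure implies $F$-injective (purity is preserved by $H^i_\fm(-)$, and a pure map is injective) is correct and is exactly the parenthetical in the statement, so that part of your sketch is fine.
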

\begin{proof}
  If $R$ is $F$-injective, then $\widehat{R}$ is $F$-injective as well by \cite[Rem.\ on p.\ 473]{Fed83}.
  Since $F$-injective rings are reduced \cite[Lem.\ 3.11]{QS17}, 
this means that $R$ is analytically
  unramified, and hence $R$ is N-1 by \cite[(31.E)]{Mat80}.
\end{proof}

\subsection{Solidity of absolute integral closures and Japanese rings}
\par Our main result of this subsection relates the solidity of absolute
integral closures of Noetherian domains to the Japanese condition. The notion
of $F$-solidity also allows us to prove a prime characteristic analogue of
Lemma \ref{lem:field-theory}$(\ref{lem:N1-N2-char0})$.

\begin{theorem}
\label{thm:solid-N2}
Let $R$ be a Noetherian domain of arbitrary characteristic and let $R^+$ be the absolute
integral closure of $R$. Then, we have the following:
\begin{enumerate}[label=$(\roman*)$,ref=\roman*]
	\item\label{thm:A+-solid-Japanese} 
	If $R^+$ is a solid $R$-algebra, then $R$ is a Japanese ring.
	
	\item\label{thm:F-solid-N1-N2}
	Suppose $R$ has prime characteristic. If the perfect closure
	$R_\perf$ of $R$ is a solid $R$-algebra, or more generally, if $R$ is $F$-solid,
	then $R$ is N-1 if and only if $R$ is Japanese.
	
	\item\label{thm:A+-solid-excellent} 
  	If $R$ has prime characteristic, is generically $F$-finite, and the condition
  	in $(\ref{thm:A+-solid-Japanese})$ holds, then $R$ is excellent. 
\end{enumerate}
\end{theorem}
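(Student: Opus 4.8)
All three parts will be deduced from one observation: if $R\hookrightarrow S$ is an \emph{integral} extension domain with $S$ a solid $R$-algebra, then $S$ is module-finite over $R$. Indeed, writing $K\coloneqq\Frac(R)$ and $L\coloneqq\Frac(S)$, finiteness of $[L:K]$ makes $S$ a torsion-free $R$-module of finite rank: clearing denominators produces a $K$-basis of $L$ lying in $S$, spanning a free $R$-submodule $M_0\subseteq S$ with $S/M_0$ torsion, so restriction embeds $\Hom_R(S,R)$ into $\Hom_R(M_0,R)\cong R^{[L:K]}$; hence $\Hom_R(S,R)$, and then $S^{**}\coloneqq\Hom_R(\Hom_R(S,R),R)$, are finitely generated over the Noetherian ring $R$. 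Since $S$ is integral over $R$, every nonzero ideal of $S$ contracts to a nonzero ideal of $R$, and the proof of Proposition \ref{prop:generically-algebra-maps} then yields that the canonical map $S\to S^{**}$ is \emph{injective} (this is exactly the refinement recorded in the remark following Lemma \ref{lem:solid-composition}, and it does not require $S$ to be Noetherian); thus $S$ embeds in a finitely generated $R$-module and so is module-finite over $R$. For part $(\ref{thm:A+-solid-Japanese})$, let $L/K$ be any finite field extension; embedding $L$ into the algebraic closure $\overline{K}$ used to define $R^{+}$, the integral closure $S$ of $R$ in $L$ lies in $R^{+}$, so $R\to R^{+}$ factors through $S$ and $S$ is $R$-solid by Remark \ref{rem:useful-solid-facts}$(\ref{rem:useful-solid-fact-composition})$. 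Applying the mechanism above to $R\hookrightarrow S$ shows $S$ is module-finite over $R$, and as $L$ was arbitrary, $R$ is Japanese.

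\textbf{Part $(\ref{thm:F-solid-N1-N2})$.} The implication ``Japanese $\Rightarrow$ N-1'' is trivial. For the converse, first note that if $R_{\perf}$ is $R$-solid then $R$ is $F$-solid, because the canonical map $R\to R_{\perf}=\colim\bigl(R\xrightarrow{F}R\xrightarrow{F}\cdots\bigr)$ factors through $F_{R*}R$, so Remark \ref{rem:useful-solid-facts}$(\ref{rem:useful-solid-fact-composition})$ applies. Now assume $R$ is N-1 and $F$-solid. Then $R^{N}$ is module-finite over $R$ (hence Noetherian), it is again $F$-solid by Remark \ref{rem:F-solidity-obs}$(\ref{rem:Fsolid-finite-ext})$, and $R$ is Japanese if and only if $R^{N}$ is (the integral closure of $R$ in a finite extension equals that of $R^{N}$, and finiteness over $R$ and over $R^{N}$ are equivalent); so we may replace $R$ by $R^{N}$ and assume $R$ is normal. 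By Lemma \ref{lem:field-theory}$(\ref{lem:Japanese-purely-inseparable})$ it suffices to treat a finite purely inseparable extension $L/K$, say with $L\subseteq K^{1/p^{e}}$. If $x$ lies in the integral closure $S$ of $R$ in $L$, then $x^{p^{e}}\in K$ is integral over $R$, hence lies in $R$ by normality; therefore $S\subseteq R^{1/p^{e}}=F^{e}_{R*}R$, which is $R$-solid by $F$-solidity and Remark \ref{rem:F-solidity-obs}$(\ref{rem:Fsolid-all-e})$. Thus $R\to R^{1/p^{e}}$ factors through $S$, so $S$ is an $R$-solid integral extension domain of $R$, and the mechanism of the first paragraph gives that $S$ is module-finite over $R$. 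Hence $R$ is Japanese.

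\textbf{Part $(\ref{thm:A+-solid-excellent})$.} Here $R^{+}$ is $R$-solid and $[K^{1/p}:K]<\infty$. Exactly as in part $(\ref{thm:F-solid-N1-N2})$, the map $R\to R^{+}$ factors through $R_{\perf}$ (as $R_{\perf}\subseteq R^{+}$) and hence through $F_{R*}R$, so $R$ is $F$-solid. Since $R$ is additionally generically $F$-finite, \cite[Thm.\ 3.2]{DS18} gives that $R$ is $F$-finite, and a Noetherian $F$-finite ring is excellent by \cite[Thm.\ 2.5]{Kun76} (cf.\ Example \ref{ex:excellence}). Therefore $R$ is excellent.

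\textbf{Main obstacle.} The crux is the injectivity of $S\to S^{**}$: one must verify that the proof of Proposition \ref{prop:generically-algebra-maps} genuinely produces, for each nonzero $s\in S$, an $R$-linear map $S\to R$ not vanishing at $s$, using only that $S$ is $R$-solid and that $R\hookrightarrow S$ has the nonzero-ideal-contraction property — a conclusion stronger than the literal statement of that proposition, and one that must hold without assuming $S$ is Noetherian (which it need not be). A secondary care point is the reduction in part $(\ref{thm:F-solid-N1-N2})$ to the normal case together with the inclusion $S\subseteq R^{1/p^{e}}$, since normality of $R$ is precisely what forces $x^{p^{e}}\in R$; the remaining ingredients (finite rank, torsion-freeness, embedding $\Hom_R(S,R)$ into a finite free module) are routine.
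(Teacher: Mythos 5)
Your proof is correct, and it follows the same overall strategy as the paper's while improving one step and streamlining another. For all three parts, both proofs funnel into the observation that a solid integral extension domain of finite generic degree over a Noetherian domain is module-finite; the paper simply cites Proposition~\ref{prop:generically-algebra-maps}, whereas you re-derive the injectivity of the biduality map $S \to \Hom_R(\Hom_R(S,R),R)$ from the refinement recorded in the remark following Lemma~\ref{lem:solid-composition}. Your version is the more careful one: Proposition~\ref{prop:generically-algebra-maps} as stated requires $S$ to be Noetherian, which an integral closure in a finite field extension is not known to be a priori, and your argument makes explicit that Noetherianity of $S$ is dispensable (only Noetherianity of $R$ enters, to control $\Hom_R(S,R)$ inside a finite free module). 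One small slip in your opening sentence: the claim for an arbitrary \emph{integral} extension domain is false without the finite-generic-degree hypothesis (e.g.\ $R=\FF_p[t]$, $S=R^+$ would refute it were $R^+$ ever shown to be $R$-solid); since you invoke $[\Frac(S):\Frac(R)]<\infty$ immediately and it holds in every application, nothing breaks, but it should appear in the statement of the observation. For part~(\ref{thm:F-solid-N1-N2}), the paper does not first reduce to the normal case; it shows $F^{e_0}_{R^N*}R^N$ is $R$-solid via Lemma~\ref{lem:solid-composition} and embeds $S$ inside it. Your reduction to normal $R$ trades Lemma~\ref{lem:solid-composition} for the observation that normality forces $S\subseteq R^{1/p^e}$, which is cleaner on its face, though Remark~\ref{rem:F-solidity-obs}(\ref{rem:Fsolid-finite-ext}) (which both arguments use to pass $F$-solidity from $R$ to $R^N$) already rests on Lemma~\ref{lem:solid-composition}, so the economy is cosmetic rather than structural. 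For part~(\ref{thm:A+-solid-excellent}), you take the route the paper flags as an alternative (deduce $F$-solidity and apply \cite[Thm.\ 3.2]{DS18} plus Kunz); the paper's primary argument instead uses (\ref{thm:A+-solid-Japanese}) to show $R$ is finite over $R^p$ directly. Both routes are valid.
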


\begin{proof}
Throughout this proof, let $K$ denote the fraction field of $R$.

$(\ref{thm:A+-solid-Japanese})$ Let $L$ be a finite extension of $K$.
Let $S$ be the integral closure of $R$ in $L$. We may assume without loss of
generality that $L$ is a subfield of $\Frac(R^+)$ and $S$ is a subring of 
$R^+$ (since $S$ is an integral extension of $R$). Therefore $R$-solidity of $R^+$
implies that $S$ is a solid $R$-algebra (Remark \ref{rem:useful-solid-facts}). 
Since $R$ is Noetherian and the extension
$R \hookrightarrow S$ is generically finite (the fraction field of 
$S$ is $L$), it follows by Proposition \ref{prop:generically-algebra-maps} 
that $S$ is a finite $R$-algebra.

\par$(\ref{thm:F-solid-N1-N2})$ 
The  backward implication is clear because a Japanese ring is always N-1. 
To show the forward implication, by Lemma 
\ref{lem:field-theory}$(\ref{lem:Japanese-purely-inseparable})$
it suffices to show that if $L$ is a finite purely inseparable field extension of $K$,
then the integral closure of $R$ in $L$ is a finite $R$-algebra. If $R_\perf$ is a solid
$R$-algebra, then $F_{R*}R$ is also a solid $R$-algebra because it embeds in $R_\perf$
(Remark \ref{rem:useful-solid-facts}).
Therefore, it suffices to show that every $F$-solid Noetherian N-1 domain is Japanese. 
Let $R^N$
be the integral closure of $R$ in $K$. Note that for every integer $e > 0$, 
$F^e_{R^N*}R^N$ is the integral
closure of $R$ in $F^e_{K*}K$. Since $R$ is N-1, $R \hookrightarrow R^N$ is a finite 
extension of domains. Therefore, $F^e_{R^N*}R^N$ is a solid $R^N$-algebra for every $e > 0$
by Remark \ref{rem:F-solidity-obs}, $(\ref{rem:Fsolid-all-e})$ and
$(\ref{rem:Fsolid-finite-ext})$,
because $R$ is $F$-solid.
Then for every integer $e > 0$, $F^e_{R^N*}R^N$ is a solid $R$-algebra 
by Lemma \ref{lem:solid-composition}
because $R^N$ is a solid $R$-algebra by module-finiteness 
(see Example \ref{ex:finite-solid}).
Returning to our finite inseparable extension $L/K$, 
there exists $e_0 \gg 0$ and a $K$-algebra embedding of $L$ in $F^{e_0}_{K*}K$.
The integral closure $S$ of $R$ in $L$ is then a subring of $F^{e_0}_{R^N*}R^N$. Then 
$S$ is a solid $R$-algebra because $F^{e_0}_{R^N*}R^N$ is a solid $R$-algebra. 
By Proposition \ref{prop:generically-algebra-maps} we again conclude that
 $S$ is module-finite over $R$ because $R \hookrightarrow S$ is generically finite.

\par We will use $(\ref{thm:A+-solid-Japanese})$ to prove $(\ref{thm:A+-solid-excellent})$ 
directly, although $(\ref{thm:A+-solid-Japanese})$  implies
that $R$ is $F$-solid (after embedding $F_{R*}R$ in $R^+$), which then
implies $(\ref{thm:A+-solid-excellent})$ by \cite[Thm.\ 3.2]{DS18}. Note that by 
$(\ref{thm:A+-solid-Japanese})$, $R$ is a Japanese ring. Since $R^p$
is isomorphic to $R$, it follows that $R^p$ is also Japanese. Using the 
assumption that $R$ is a generically $F$-finite, the extension
$K^p \hookrightarrow K$ is finite. 
Thus, the integral closure $S$ of $R^p$ in $K$ is module
finite over $R^p$. But $R$ is contained in $S$, so that $R$ is also module 
finite over $R^p$ by Noetherianity. Thus, $R$ is $F$-finite, and consequently, also excellent by \cite[Thm. 2.5]{Kun76}.
\end{proof}

\begin{remark}
{\*}
\begin{enumerate}
	\item Lemma \ref{lem:field-theory}$(\ref{lem:N1-N2-char0})$ indicates 
	that the difference between N-1 and the 
	Japanese property arises because of inseparability. Thus, Theorem 
	\ref{thm:solid-N2}$(\ref{thm:F-solid-N1-N2})$
	shows that $F$-solidity can be viewed as an antidote for the 
	bad behavior of inseparable extensions in
	prime characteristic.

	\item It is natural to ask to what extent the $R$-solidity of $R^+$
	differs from the $R$-solidity of $R_\perf$ for a ring $R$ of prime 
	characteristic. 
	We will see in Example \ref{ex:F-solid-not-N1} that there exist 
	one-dimensional
	locally excellent (but not excellent) Noetherian domains $R$ that are not 	N-1 (hence
	also not Japanese), but such that $R_\perf$ is a solid $R$-algebra.
	Thus, $R$-solidity of $R_\perf$,
 	or more generally, $F$-solidity of $R$, is weaker than $R$-solidity 
	of $R^+$ by Theorem \ref{thm:solid-N2}$(\ref{thm:A+-solid-Japanese})$.
\end{enumerate}
\end{remark}

\subsection{Solidity of filtered colimits and Japanese rings of characteristic
zero}
Let $R$ be domain. Both $R^+$ and the perfect closure $R_\perf$ 
(if $R$ has positive characteristic) can be expressed as filtered colimits
of module-finite $R$-subalgebras, each of which are $R$-solid by
Example \ref{ex:finite-solid}. Thus, it is natural to wonder how the notion
of solidity behaves under filtered colimits of rings. This topic is pursued
in this subsection. Our analysis (Proposition \ref{prop:colimits}) shows,
for example, that $R_\perf$ is always $R$-solid for a Frobenius split ring.
We also find a natural class of excellent DVRs of
prime characteristic that are Frobenius split, but that are not necessarily 
$F$-finite or complete. A partial converse of Theorem 
\ref{thm:solid-N2}$(\ref{thm:A+-solid-Japanese})$ is obtained for
normal domains that contain the rational numbers. Although this 
latter observation is known to experts, the 
conclusion we draw using it (Corollary \ref{cor:N-1-char0})
gives a new characterization of Japanese Noetherian rings in equal 
characteristic zero.

\begin{proposition}
\label{prop:colimits}
Let $R$ be a ring, not necessarily Noetherian. 
\begin{enumerate}[label=$(\roman*)$,ref=\roman*]
\item\label{prop:colimit-solid}
 Let $\{R_i\}_{i \in I}$
be a collection of $R$-algebras indexed by a filtered poset. 
Suppose there exists an index
$i_0 \in I$ such that for all $i \geq i_0$ there exists an $R$-linear map
$\varphi_i\colon R_i \rightarrow R$. Assume the maps $\varphi_i$ are compatible 
with the transition maps
$\phi_{ij}\colon R_i \rightarrow R_j$ in the obvious sense. If
$\varphi_{i_0}$ is nonzero map, then $\colim_i R_i$ is a solid $R$-algebra.
In particular, if $\varphi_{i_0}$ is a splitting of $R \rightarrow R_{i_0}$,
then $R$ is a direct summand of $\colim_i R_i$.

\item\label{prop:perf-split}
 If $R$ has prime characteristic, $R$ is Frobenius split if and only
if $R \rightarrow R_\perf$ splits. In particular, $R_\perf$ is a solid
$R$-algebra when $R$ is Frobenius split.

\item\label{prop:Dedekind-solid-countgen}
Suppose $R$ is a Japanese Dedekind domain with fraction field 
$K$. Let $L$ be an algebraic extension of $K$ that is countably generated 
over $K$. If $R^L$ is the integral closure of $R$ in $L$, then $R$ is
 a direct summand of $R^L$. In particular, $R^L$ is a solid $R$-algebra.
 
\item\label{prop:Dedekind-Fsolid}
  If $R$ is a Japanese DVR (or more generally, a
  Japanese Dedekind domain) of prime characteristic such that $F_{K*}K$ is countably
generated over the fraction field $K$ of $R$, then $R$ is Frobenius split.\footnote{This result was observed by Karl Schwede and the first author 
during the 2015 Math Research Communities in commutative algebra at
Snowbird, Utah.}

\item\label{prop:A+-solid-char0} 
Suppose $R$ is a domain (not necessarily Noetherian) 
containing the rational numbers $\mathbf{Q}$. Then, $R$ is normal
if and only if the natural map $R \rightarrow R^+$ splits. 
\end{enumerate}
\end{proposition}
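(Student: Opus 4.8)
The plan is to prove the two directions separately. The forward direction (if $R \to R^+$ splits then $R$ is normal) is the easy one: a split injection $R \hookrightarrow R^+$ realizes $R$ as a direct summand, hence a pure subring, of $R^+$; and $R^+$ is integrally closed in its fraction field by construction. A pure subring of a normal domain that is integrally closed in the larger ring's fraction field must itself be normal --- indeed, if $x \in \Frac(R)$ is integral over $R$, then $x \in R^+$ (as $R^+$ is the integral closure of $R$ in an algebraic closure of $\Frac(R)$), and one checks that the splitting $\rho\colon R^+ \to R$ forces $x \in R$. More cleanly: $R$ is a direct summand of $R^+$ as an $R$-module, $R^+$ is normal, and a direct summand of a normal domain inside its own fraction field is normal, since $R = R^+ \cap \Frac(R)$ once we know the inclusion splits (any element of $\Frac(R)$ integral over $R$ lies in both $R^+$ and $\Frac(R)$, and applying $\rho$ and using $R$-linearity pins it into $R$). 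Actually the cleanest argument: a pure subring $R$ of a normal ring $S$ with $R \subseteq S$ an integral extension is a splinter, and splinters in characteristic zero are normal --- but to keep things elementary I would just argue directly that the splitting forces $R$ to be integrally closed in $\Frac(R)$.

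For the reverse direction --- the substantive one --- assume $R$ is normal and contains $\QQ$. The strategy is to build an $R$-linear retraction $R^+ \to R$ using normalized traces. Write $R^+ = \colim_i S_i$ as a filtered colimit of its module-finite normal $R$-subalgebras $S_i$ (each $S_i$ being the integral closure of $R$ in a finite subextension $K \subseteq L_i \subseteq \Frac(R^+)$, where $K = \Frac(R)$); such a presentation is standard. For each $i$, let $L_i/K$ be the corresponding finite field extension; since $\operatorname{char} K = 0$, every finite extension is separable, so the trace form $\Tr_{L_i/K}$ is nondegenerate and in particular $\Tr_{L_i/K}(1) = [L_i:K] \neq 0$ is invertible in $K$ (and hence, since $R$ is normal and $\Tr$ carries $S_i$ into $R$, we get a well-defined $R$-linear map). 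Define
\[
\varphi_i \coloneqq \frac{1}{[L_i:K]}\,\Tr_{L_i/K}\big|_{S_i}\colon S_i \longrightarrow K.
\]
One checks $\varphi_i$ lands in $R$: $\Tr_{L_i/K}(S_i) \subseteq R$ because $R$ is integrally closed in $K$ and traces of integral elements are integral, and division by the nonzero integer $[L_i:K]$ is harmless since $\QQ \subseteq R$. Moreover $\varphi_i(1) = 1$, so $\varphi_i$ splits $R \hookrightarrow S_i$. The key point is compatibility: for $S_i \subseteq S_j$ with corresponding $K \subseteq L_i \subseteq L_j$, the tower formula $\Tr_{L_j/K} = \Tr_{L_i/K} \circ \Tr_{L_j/L_i}$ together with $\Tr_{L_j/L_i}(1) = [L_j:L_i]$ and multiplicativity of degrees in towers gives $\varphi_j\big|_{S_i} = \varphi_i$ exactly --- the normalization factors were chosen precisely to make this hold. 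Therefore the $\varphi_i$ assemble, via the universal property of the colimit, into an $R$-linear map $\varphi\colon R^+ \to R$ with $\varphi(1) = 1$, which is the desired splitting. Then apply part $(\ref{prop:colimit-solid})$ (with $i_0$ any index and $\varphi_{i_0}$ the splitting of $R \to R_{i_0} = S_{i_0}$) to conclude $R$ is a direct summand of $R^+$; in particular $R^+$ is $R$-solid.

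The main obstacle is purely bookkeeping rather than conceptual: one must set up the filtered system $\{S_i\}$ carefully (indexed by finite subextensions $L_i$ of a fixed algebraic closure $\overline{K} \supseteq \Frac(R^+)$, ordered by inclusion, which is genuinely filtered since the compositum of two finite extensions is finite) and verify that the transition maps $S_i \to S_j$ are the inclusions, so that the compatibility $\varphi_j \circ \phi_{ij} = \varphi_i$ is literally the restriction identity above. The only place characteristic zero is essential is separability of finite extensions (ensuring $\Tr_{L_i/K} \neq 0$, indeed $\Tr(1) = [L_i:K]$) and the invertibility of the integers $[L_i:K]$ in $R$ --- both of which fail in characteristic $p$, consistent with the fact (Theorem \ref{thm:solid-N2}$(\ref{thm:A+-solid-Japanese})$ and Example \ref{ex:F-solid-not-N1}) that the statement is false there. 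I would remark on this to motivate the hypothesis $\QQ \subseteq R$.
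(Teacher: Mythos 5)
Your proof is correct, and for the substantive direction (normal $\Rightarrow$ splits) it is essentially the paper's argument: present $R^+$ as a filtered colimit, use the normalized trace $\tfrac{1}{[L_i:K]}\Tr_{L_i/K}$ as the compatible family of retractions, check compatibility via the tower formula, and invoke part $(\mathrm{i})$. Two small deviations are worth flagging. First, the paper takes the filtered system to be \emph{all module-finite $R$-subalgebras $R_i \subseteq R^+$} (automatically module-finite, being $R$-algebras generated by finitely many integral elements), and uses $K_i = \Frac(R_i)$; you instead take $S_i$ to be the integral closure of $R$ in a finite subextension $L_i$ and assert these are module-finite. For non-Noetherian $R$ that assertion is not ``standard'' and in fact may fail, but it also never gets used: the normalized trace already carries $S_i$ into $R$, and part $(\mathrm{i})$ only needs a compatible family of $R$-linear maps, not module-finiteness of the pieces. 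Either drop the module-finiteness claim or switch to the paper's indexing set. Second, for the easy converse (splits $\Rightarrow$ normal), the paper factors through $R^N$ and observes that $\coker(R \hookrightarrow R^N)$ is a direct summand of the torsion-free module $R^N$ yet generically zero, hence zero; your direct argument --- take $x = a/b \in K$ integral over $R$, so $x \in R^+$, apply the retraction $\rho$ and use $b\rho(x) = \rho(bx) = \rho(a) = a$ to conclude $\rho(x) = x \in R$ --- is equally valid and arguably more self-contained. Just be careful to state the direct version rather than the circular-sounding phrasing ``$R = R^+ \cap \Frac(R)$ once we know the inclusion splits.''
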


\begin{proof}
For $(\ref{prop:colimit-solid})$, the existence of an $R$-linear map 
$\Psi\colon \colim_i R_i \rightarrow R$ 
follows by the universal property of colimits and the compatibility
condition on the $\varphi_i$'s. Since the composition
\[
  R_{i_0} \longrightarrow \colim_{i \in I} R_i \overset{\Psi}{\longrightarrow} R
\]
coincides with $\varphi_{i_0}$, it follows that if the latter is nonzero,
then $\Psi$ must be as well. Consequently, $\colim_i R_i$ is a solid $R$-algebra
if $\varphi_{i_0} \neq 0$. If $\varphi_{i_0}$ is additionally a splitting
of $R \rightarrow R_{i_0}$, then $\varphi_{i_0}$ maps $1$ to $1$. Then, $\Psi$
also maps $1$ to $1$, completing the proof of $(\ref{prop:colimit-solid})$.

\par For $(\ref{prop:perf-split})$, we apply $(\ref{prop:colimit-solid})$ 
to the filtered system of $R$-algebras $\{F^e_{R*}R\}_{e \in \ZZ_{> 0}}$
where the transition maps $F^e_{R*}R \rightarrow F^{e+1}_{R*}R$ are just the $p$-th power
Frobenius map. If $R$ is Frobenius split, let $\varphi_1\colon F_{R*}R \rightarrow R$ be a splitting
of the Frobenius map. Then for each $e \geq 1$, define $\varphi_e\colon F^e_{R*}R \rightarrow R$
to be the composition
\[
\varphi_e \colon F^e_{R*}R \xrightarrow{F^{e-1}_{R*}\varphi_1}F^{e-1}_{R*}R 
\xrightarrow{F^{e-2}_{R*}\varphi_1} F^{e-2}_{R*}R \longrightarrow \dots \longrightarrow
F_{R*}R \overset{\varphi_1}{\longrightarrow} R.
\]
Said more simply, $\varphi_e$ as a map of sets from $R \rightarrow R$ is just the composition
of $\varphi_1$ with itself $e$-times. Then, using the fact that $\varphi_1$ is a left inverse 
of the Frobenius map, it is easy to verify that the collection $\{\varphi_e\}$ is compatible
with the transition maps of the filtered system $\{F^e_{R*}R\}_{e \in \ZZ_{> 0}}$. Now taking colimit 
and using $(\ref{prop:colimit-solid})$, we get that $R \rightarrow R_\perf$ splits. Conversely, if $R \rightarrow R_\perf$
splits, then the factorization $R \xrightarrow{F} F_{R*}R \rightarrow R_\perf$ shows that
$R \rightarrow F_{R*}R$ splits as well, that is, $R$ is Frobenius split.

\par $(\ref{prop:Dedekind-Fsolid})$ follows readily from 
$(\ref{prop:Dedekind-solid-countgen})$ by taking $L$ to be $F_{K*}K$ and using 
the observation that since $R$ is normal, the integral closure of $R$
in $F_{K*}K$ is precisely $F_{R*}R$. Thus, we now prove 
$(\ref{prop:Dedekind-solid-countgen})$. Fix
a countable set of generators $\Set{\ell_n \given n \in \ZZ_{> 0}}$ of $L$ over
$K$. For each $n \in \ZZ_{\geq 0}$, 
let
\[
  K_n \coloneqq K(\ell_1,\ell_2,\dots,\ell_n) = K[\ell_1,\ell_2,\dots,\ell_n].
\]
Moreover, let $R_n$ be the integral closure of $R$ in $K_n$. Note that
$K_0 = K$ and $R_0 = R$ because $R$ is normal.
Furthermore, $R_{n+1}$ is the integral closure of $R_n$ in $K_{n+1}$, hence
the collection of rings $\Set{R_n \given n \in \ZZ_{\geq 0}}$ is filtered with
inclusions as the transition maps. Since $L$ is the 
union of the subfields $K_n$, it follows that $R^L$ is the colimit of the
rings $R_n$, which are themselves Dedekind domains. As $K_n$
is a finite extension of $K$, it follows by the Japanese property 
that $R_n$ is a module-finite $R$-algebra, for all $n \geq 0$. 
Thus, $R_n$ is also module-finite over $R_{n-1}$, for all
$n \geq 1$.
Now using the fact that $R_n$ is a regular ring, 
fix an $R_{n-1}$-linear splitting 
\[
s_n\colon R_n \longrightarrow R_{n-1}
\]
of the inclusion $R_{n-1} \hookrightarrow R_n$ for all $n \geq 1$ via the direct summand theorem.
Then define 
\[
\varphi_n \coloneqq s_1 \circ s_{2} \circ \dots \circ s_n.
\]
Thus, for $n \geq 1$, the map $\varphi_n\colon R_n \rightarrow R$ is an $R$-linear splitting 
of the inclusion $R \hookrightarrow R_n$ chosen in a way so that the splittings
are compatible with the transition maps of the filtered system $\{R_n\}$. Therefore,
the induced map $R^L \rightarrow R$ is a splitting of $R \hookrightarrow R^L$
by $(\ref{prop:colimit-solid})$.

$(\ref{prop:A+-solid-char0})$ 
We first show that if $R$ is normal, then $R \rightarrow R^+$ splits. 
Write $R^+$ as a filtered colimit of its module-finite $R$-subalgebras $R_i$
with the transition maps given by inclusions. 
Choose $R_{i_0} = R$ and
$\varphi_{i_0}\colon R_{i_0} \rightarrow R$ to be the identity map on $R$. Given 
any finite subalgebra $R \rightarrow R_i$ of $R^+$, define 
$\varphi_i\colon R_i \rightarrow R$ 
to be the restriction of the normalized trace map 
\[
\frac{1}{[K_i:K]} \Tr_{K_i/K}\colon K_i \longrightarrow K
\]
to $R_i$, where $K = \Frac(R)$ and $K_i = \Frac(R_i)$. The restricted normalized
trace maps $R_i$ into $R$. Indeed, if $a \in R_i$, then
the minimal polynomial $f_a(x)$ of $a$
over $K$ has coefficients in $R$ by normality of $R$. Since 
$\Tr_{K_i/K}(a)$ is an integer multiple of a coefficient of
$f_a(x)$ 
\cite[\href{https://stacks.math.columbia.edu/tag/0BIH}{Tag 0BIH}]{stacks-project},
and since $R$ contains $\QQ$, 
it follows that for all $a \in R_i$,
\[
\frac{1}{[K_i:K]} \Tr_{K_i/K}(a) \in R.
\]
One can check that $\varphi_i$ is a splitting of $R \rightarrow R_i$. 
We now verify that the maps $\varphi_i$ are compatible with the transition inclusions 
$R_i \hookrightarrow R_j$. So let $a \in R_i$. Then,
\begin{align*}
\varphi_j(a) \coloneqq{}& \frac{1}{[K_j:K]}\Tr_{K_j/K}(a)\\ 
={}& \frac{1}{[K_j:K_i][K_i:K]}\Tr_{K_i/K}\bigl(\Tr_{K_j/K_i}(a)\bigr)\\
={}& \frac{1}{[K_j:K_i][K_i:K]}\Tr_{K_i/K}\bigl([K_j:K_i]\,a\bigr)\\
={}& \frac{1}{[K_i:K]}\Tr_{K_i/K}(a) =: \varphi_i(a).
\end{align*}
Here, the second equality follows by the behavior of trace maps under a tower
of field extensions, and the third equality follows because $a$ is already an element 
of $K_i$. Thus, the $\varphi_i$ give a compatible system of splittings 
$R_i \rightarrow R$, and so, by $(\ref{prop:colimit-solid})$ 
we get a splitting of $R^+ \rightarrow R$.

\par Conversely suppose $R \hookrightarrow R^+$ splits. Let $R^N$ be the normalization
of $R$. Using the factorization $R \hookrightarrow R^N \hookrightarrow R^+$ we see
that $R \hookrightarrow R^N$ also splits. As $R \hookrightarrow R^N$ is 
generically an isomorphism, this implies $R \hookrightarrow R^N$ must be an
isomorphism as well. Indeed, $\coker(R \hookrightarrow R^N)$ is $R$-torsion
free as it can identified as a submodule of the $R$-torsion free module $R^N$,
and so, $\coker(R \hookrightarrow R^N) = 0$ since it is generically trivial.
\end{proof}

As a consequence of Proposition \ref{prop:colimits} and Theorem \ref{thm:solid-N2},
we obtain the following characterization of the N-1 property for Noetherian
domains of equal characteristic zero. 
Note that this corollary also gives a new proof of
the fact that the N-1 condition coincides with the Japanese property
in equal characteristic zero
(cf.\ the proof of Lemma \ref{lem:field-theory}$(\ref{lem:N1-N2-char0})$).

\begin{corollary}
\label{cor:N-1-char0}
Let $R$ be a Noetherian domain containing the rational numbers $\mathbf{Q}$.
Then, the following are equivalent.
\begin{enumerate}[label=$(\roman*)$,ref=\roman*]
\item\label{cor:+-solid} $R^+$ is a solid $R$-algebra.
\item\label{cor:Japanese-char0} $R$ is Japanese.
\item\label{cor:N1-char0} $R$ is N-1.
\end{enumerate}
\end{corollary}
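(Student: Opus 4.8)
The plan is to prove the cycle $(\ref{cor:+-solid}) \Rightarrow (\ref{cor:Japanese-char0}) \Rightarrow (\ref{cor:N1-char0}) \Rightarrow (\ref{cor:+-solid})$. The first implication is immediate from Theorem \ref{thm:solid-N2}$(\ref{thm:A+-solid-Japanese})$, which holds in arbitrary characteristic, and the second is immediate from Definition \ref{def:Japanese-Nagata}, since a Japanese domain is N-1 by taking the trivial extension $K/K$ of its fraction field. So the content of the corollary lies entirely in the implication $(\ref{cor:N1-char0}) \Rightarrow (\ref{cor:+-solid})$, and this is where the characteristic-zero hypothesis enters.

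To prove $(\ref{cor:N1-char0}) \Rightarrow (\ref{cor:+-solid})$, I would write $K = \Frac(R)$ and let $R^N$ be the normalization of $R$ in $K$. First I would observe that since $R$ is N-1 the inclusion $R \hookrightarrow R^N$ is module-finite, so $R^N$ is a normal domain containing $\QQ$, and by Example \ref{ex:finite-solid} applied to the reduced ring $R$ (which has a single minimal prime), together with the observation in Remark \ref{rem:useful-solid-facts} that lets one arrange the image of $1$, there is an $R$-linear map $\varphi\colon R^N \to R$ with $\varphi(1) \neq 0$. Next, since $R^N$ is normal and contains $\QQ$, I would invoke Proposition \ref{prop:colimits}$(\ref{prop:A+-solid-char0})$ to obtain an $R^N$-linear splitting $\psi\colon (R^N)^+ \to R^N$ of the inclusion $R^N \hookrightarrow (R^N)^+$; in particular $\psi$ is $R$-linear with $\psi(1) = 1$. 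Because $R^N$ is integral over $R$ with the same fraction field, an algebraic closure of $\Frac(R^N)$ is an algebraic closure of $K$ and the integral closure of $R^N$ in it coincides with that of $R$, so $(R^N)^+ = R^+$. The composition
\[
  R^+ = (R^N)^+ \overset{\psi}{\longrightarrow} R^N \overset{\varphi}{\longrightarrow} R
\]
is then $R$-linear and sends $1$ to $\varphi(1) \neq 0$, so $R^+$ is a solid $R$-algebra, closing the cycle.

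I do not anticipate any serious obstacle: the two maps $\varphi$ and $\psi$ are handed to us directly by Example \ref{ex:finite-solid} and Proposition \ref{prop:colimits}$(\ref{prop:A+-solid-char0})$, and the only point demanding a moment's care is the identification $(R^N)^+ = R^+$, a standard fact that absolute integral closure is insensitive to passing to an integral extension inside the same fraction field. It is worth noting that this argument also reproves, in passing, the classical equivalence of the N-1 and Japanese conditions in equal characteristic zero (Lemma \ref{lem:field-theory}$(\ref{lem:N1-N2-char0})$), via the chain $(\ref{cor:N1-char0}) \Rightarrow (\ref{cor:+-solid}) \Rightarrow (\ref{cor:Japanese-char0})$.
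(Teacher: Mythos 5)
Your proof is correct and follows essentially the same route as the paper: the implications $(\ref{cor:+-solid}) \Rightarrow (\ref{cor:Japanese-char0}) \Rightarrow (\ref{cor:N1-char0})$ come from Theorem \ref{thm:solid-N2}$(\ref{thm:A+-solid-Japanese})$ and the definitions, and $(\ref{cor:N1-char0}) \Rightarrow (\ref{cor:+-solid})$ proceeds by normalizing, applying Example \ref{ex:finite-solid} to the finite extension $R \hookrightarrow R^N$, applying Proposition \ref{prop:colimits}$(\ref{prop:A+-solid-char0})$ to $R^N$, and using $(R^N)^+ = R^+$. The one place you diverge is in gluing the two maps together: the paper invokes Lemma \ref{lem:solid-composition} (whose proof rests on the biduality injectivity of \cite[Prop.\ 3.7]{DS18}), whereas you exploit that Proposition \ref{prop:colimits}$(\ref{prop:A+-solid-char0})$ hands you an honest $R^N$-linear \emph{splitting} $\psi$ with $\psi(1)=1$, so the composite $\varphi\circ\psi$ visibly sends $1$ to $\varphi(1)\neq 0$ without any appeal to composition of solidity in general. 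This is a genuine, if small, streamlining: it makes the last step self-contained and avoids the only nontrivial input to Lemma \ref{lem:solid-composition}, at the modest cost of using the stronger conclusion (splitting rather than mere solidity) of Proposition \ref{prop:colimits}$(\ref{prop:A+-solid-char0})$ — which is available here anyway.
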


\begin{proof}
We have $(\ref{cor:+-solid}) \Rightarrow (\ref{cor:Japanese-char0})$ 
by Theorem \ref{thm:solid-N2}$(\ref{thm:A+-solid-Japanese})$. Note
this is the only place where we will use that $R$ is Noetherian. The 
implication $(\ref{cor:Japanese-char0}) \Rightarrow (\ref{cor:N1-char0})$ 
follows by the definitions of N-1 and Japanese rings 
(Definition \ref{def:Japanese-Nagata}). 

\par Thus, it remains to show that $(\ref{cor:N1-char0}) \Rightarrow (\ref{cor:+-solid})$. Let $R^N$ be the normalization of $R$.
Then by assumption, $R^N$ is a module-finite $R$-algebra, hence
$R$-solid by Example \ref{ex:finite-solid}. Moreover, it is clear that
$R^+ = (R^N)^+$. That $(R^N)^+$ is a solid $R^N$-algebra
follows by Proposition \ref{prop:colimits}$(\ref{prop:A+-solid-char0})$ 
because $R^N$ is normal. Then,
Lemma \ref{lem:solid-composition} implies that $R^+ = (R^N)^+$ is 
also $R$-solid.
\end{proof}

\subsection{Solidity and excellence in Krull dimension one}
Theorem \ref{thm:solid-N2} and Proposition \ref{prop:colimits}  
raise the question of whether there exist non-excellent
domains $R$ of prime characteristic $p > 0$ such that $R^+$ is $R$-solid,
or more generally, if $R$ is $F$-solid. In this subsection our
main result shows that N-1 
$F$-solid Noetherian domains of prime characteristic 
$p > 0$ and Krull dimension one are always excellent
(Theorem \ref{thm:excellent-dvr}). In particular,
we show as a consequence that all $F$-solid Dedekind domains of prime
characteristic are excellent, although the converse fails
even for excellent Henselian discrete valuation rings by 
Proposition \ref{prop:non-F-solid}.
The novel aspect of Theorem \ref{thm:excellent-dvr} is that 
unlike \cite{DS18}, no generic $F$-finiteness assumptions are made. 
Note that it is well-known that a
Dedekind domain whose field of fractions has characteristic zero is 
excellent (we prove it below for the reader's convenience), 
making the question of excellence for Dedekind domains only interesting
in prime characteristic.

\begin{theorem}
\label{thm:excellent-dvr}
Let $R$ be a Noetherian N-1 domain of
Krull dimension one. 
Suppose $R^+$
is a solid $R$-algebra (in which case one does not need $R$ to be N-1), 
or that $R$ is an $F$-solid domain of prime characteristic
$p > 0$. 
Then, $R$ is excellent. In particular, a Frobenius split (or $F$-solid) 
Dedekind domain is always excellent.
\end{theorem}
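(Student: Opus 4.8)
The plan is to verify the three axioms of excellence for $R$. Since $\dim R \le 1$, $R$ is a Noetherian domain of dimension at most two and hence universally catenary by Ratliff's theorem, so only the J-2 and $G$-ring axioms need checking. First I would record that the hypotheses make $R$ Japanese --- by Theorem~\ref{thm:solid-N2}$(\ref{thm:A+-solid-Japanese})$ if $R^+$ is $R$-solid, and by Theorem~\ref{thm:solid-N2}$(\ref{thm:F-solid-N1-N2})$ if $R$ is $F$-solid (which holds also when $R^+$ is $R$-solid and $\operatorname{char}R=p$, since $F_{R*}R$ embeds into $R^+$ by Remark~\ref{rem:useful-solid-facts})~--- and hence Nagata. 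The J-2 axiom then follows from $R$ being a one-dimensional Nagata ring: every finite $R$-algebra is again Nagata, its domain quotients are Japanese of dimension at most one and therefore N-1, and in dimension at most one the normal and regular loci coincide, so openness of the regular locus of every finite $R$-algebra follows from Lemma~\ref{lem:field-theory}$(\ref{lem:N1-normal-locus})$ together with the usual reduction of the J-2 property to finite algebras.

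For the $G$-ring axiom, which is a condition on the localizations $R_\fm$, I may replace $R$ by $R_\fm$ and assume $R$ is local; localizations preserve the relevant hypotheses. Write $S \coloneqq R^N$ for the normalization: $S$ is a semilocal Dedekind domain module-finite over $R$ with maximal ideals $\fn_1,\dots,\fn_r$, and each $S_{\fn_i}$ is a DVR that is $F$-solid by Remark~\ref{rem:F-solidity-obs}$(\ref{rem:Fsolid-finite-ext})$ and $(\ref{rem:Fsolid-localization})$ (in characteristic zero, $S_{\fn_i}$ is simply a DVR whose fraction field has characteristic zero). Granting the claim below that such a DVR is excellent, each $\widehat{S_{\fn_i}}$ is a complete DVR whose fraction field is separable over $K \coloneqq \Frac(R)=\Frac(S)$. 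Since $\widehat{R}$ is flat over $R$ and $R \hookrightarrow S$, the map $\widehat{R}\hookrightarrow \widehat{R}\otimes_R S = \prod_i \widehat{S_{\fn_i}}$ is injective, so $\widehat{R}$ is reduced; tensoring over $R$ with $K$ exhibits the generic formal fibre $\widehat{R}\otimes_R K$ as a $K$-subalgebra of $\prod_i \Frac(\widehat{S_{\fn_i}})$. Being a reduced Artinian $K$-algebra, $\widehat{R}\otimes_R K$ is a finite product of fields, each embedding over $K$ into some $\Frac(\widehat{S_{\fn_i}})$ and hence separable over $K$. Thus $R\to\widehat{R}$ has geometrically regular fibres, i.e.\ $R$ is a $G$-ring, and with the J-2 axiom already in hand $R$ is excellent; the global case is then assembled from the local $G$-ring property and the J-2 argument above.

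The main obstacle is the claim that a DVR $R$ which is either $F$-solid of characteristic $p>0$ or has fraction field of characteristic zero is excellent. Since $R$ is regular local, $\widehat{R}$ is automatically a complete DVR, so excellence amounts to separability of $\Frac(\widehat{R})/K$, which is automatic in characteristic zero. In characteristic $p$ this is the new content, and the plan is to argue as follows: by the Radu--Andr\'e theorem (Theorem~\ref{thm:raduandre}), separability of the generic formal fibre is equivalent to $R\to\widehat{R}$ being regular, which is in turn equivalent to linear disjointness of $K^{1/p}$ and $\Frac(\widehat{R})$ over $K$. An $F$-solid DVR is Frobenius split by Proposition~\ref{thm:split-F-regular-DVR}, so $R$ is a direct summand of $F_{R*}R$ as an $R$-module. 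I would then adapt the argument of \cite[Thm.\ 4.1]{DS18}, which settles the generically $F$-finite case by exploiting that $F_{R*}R$ is then module-finite over $R$, so that $\Hom_R(F_{R*}R,-)$ commutes with the faithfully flat base change $R\to\widehat{R}$. In general $\Hom$ cannot be base changed freely, so instead one uses the $R$-linear splitting $F_{R*}R\to R$ together with faithful flatness of $R\to\widehat{R}$ to show that a finite $K$-linearly independent subset of $F_{R*}R$ remains $\widehat{R}$-linearly independent in $F_{R*}\widehat{R}$; proving this linear disjointness statement is the crux of the proof.

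Finally, the ``in particular'' clause follows because a Dedekind domain is normal, hence N-1, and a Frobenius split ring is $F$-solid by Remark~\ref{rem:F-solidity-obs}$(\ref{rem:Fsplit-Fsolid})$, so the previous paragraphs apply; a Dedekind domain whose fraction field has characteristic zero is excellent by the special case already noted.
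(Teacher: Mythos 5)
Your verification of universal catenarity and J-2 is essentially the paper's argument, and deriving the Japanese property from the solidity hypotheses matches the paper as well. The genuine gap is in the $G$-ring axiom. You reduce to the claim that an $F$-solid DVR of prime characteristic is excellent, and then outline a linear disjointness argument that you flag as unfinished; that outline does not close. The cited \cite[Thm.\ 4.1]{DS18} works precisely because of its generic $F$-finiteness hypothesis: there, Frobenius splitting of the DVR upgrades to $F$-finiteness of the whole ring, and excellence follows from Kunz's theorem. Without generic $F$-finiteness there is no such upgrade, and the statement you need --- that a $K$-linearly independent subset of $F_{R*}R$ stays $\widehat{R}$-linearly independent after mapping to $F_{\widehat{R}*}\widehat{R}$ --- is exactly faithful flatness of the relative Frobenius $F_{\widehat{R}/R}\colon \widehat{R}\otimes_R F_{R*}R\to F_{\widehat{R}*}\widehat{R}$, which by Radu--Andr\'e (Theorem~\ref{thm:raduandre}) is the regularity of $R\to\widehat{R}$ you are trying to prove. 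Flatness of $R\to\widehat{R}$ gives linear independence inside the Radu--Andr\'e ring $\widehat{R}\otimes_R F_{R*}R$ for free, but the splitting $\phi\colon F_{R*}R\to R$ lives entirely on the source side and gives no control over the kernel of $F_{\widehat{R}/R}$, so the argument is circular at the decisive step.

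The paper's proof of the $G$-ring condition never touches linear disjointness or the completion's relative Frobenius. Once $R$ is Japanese, each $R_\fp$ is a Nagata ring --- it has only the two primes $(0)$ and $\fp R_\fp$, and the quotients by both are Japanese --- so the Zariski--Nagata theorem \cite[Thm.\ 7.6.4]{EGAIV2} says that the formal fibres of $R_\fp$ are geometrically \emph{reduced}. Since $\dim R = 1$, the generic formal fibre is a zero-dimensional Noetherian $K$-algebra, and for such algebras geometric reducedness and geometric regularity coincide (a reduced Artinian algebra is a finite product of fields after any finite base change). This closes the $G$-ring axiom directly, with no normalization and no passage to DVRs, and makes clear why dimension one is what drives the argument.
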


\begin{proof}
We directly verify all the axioms for $R$ to be excellent
to illustrate to the reader that these axioms are quite
checkable for low-dimensional rings.

\par We will focus on verifying the axioms when $R$ has prime characteristic $p > 0$, and mention how to adapt the proof when the fraction field $K$ of $R$ has characteristic zero.
We have to 
check the following three axioms that characterize an excellent ring (see Definition
\ref{def:excellent}).

(1) \emph{$R$ is universally catenary:} This is immediate because $R$ is a one-dimensional
domain, hence
Cohen--Macaulay. And it is well-known that Cohen--Macaulay rings are 
universally catenary \cite[Thm.\ 33]{Mat80}.

(2) \emph{If $S$ is a finite type $R$-algebra, the regular locus of $S$ is open:} By
\cite[Thm.\ 73]{Mat80}, it suffices to show that for every $\fp \in \Spec(R)$ and for
every finite field extension $K'$ of the residue
field $\kappa(\fp)$ at $\fp$, there exists a finite $R$-algebra $R'$ with
fraction field $K'$, such that $R'$ contains $R/\fp$ and such that the regular
locus in $\Spec(R')$ contains a nonempty open set.
Since $R$ is one-dimensional,
$\fp$ is either a maximal ideal or the zero ideal. If $\fp$ is maximal, then
one can just take $R'$ to equal $K'$. If $\fp = (0)$, then $\kappa(\fp)$ is the
fraction field $K$ of $R$. By $F$-solidity and the fact that $A$ is
N-1, Theorem \ref{thm:solid-N2}$(\ref{thm:F-solid-N1-N2})$ 
implies that $R$ is a Japanese ring. Then
one can take $R'$ to be the integral closure of $R$ in $K'$. Note $R'$ is a
one-dimensional normal domain (hence regular) that is module-finite over $R$
with fraction field $K'$. 

The argument for property (2) only uses that $R$
has Krull dimension one and is Japanese. Note that if $K$
has characteristic zero, then $R$ is automatically Japanese when $R^+$
is a solid $R$-algebra by Theorem \ref{thm:solid-N2}$(\ref{thm:A+-solid-Japanese})$.

(3) \emph{The formal fibers of the local rings of $R$ are geometrically regular:} 
Let $\fp \in \Spec(R)$. We have to check that the map
\[
R_\fp \longrightarrow \widehat{R_\fp}
\]
from $R_\fp$ to its $\fp R_\fp$-adic completion has geometrically regular fibers.
If $\fp = (0)$, the completion map is an isomorphism and there is
nothing to verify. Suppose $\fp$ is a nonzero maximal ideal of $R$. Then
$R_\fp$ is one-dimensional local domain. Thus, $R_\fp$ has two formal fibers, 
the closed fiber and the
generic fiber. The closed formal fiber is just an isomorphism of the residue fields
of $R_\fp$ and $\widehat{R_\fp}$, hence it is always geometrically regular. Let $K$
be the fraction field of $R_\fp$ (which is also the fraction field of $R$), 
and consider the generic formal fiber
\[
(\widehat{R_\fp})_K \coloneqq K \otimes_{R_\fp} \widehat{R_\fp}.
\]
Observe that $(\widehat{R_\fp})_K$ is a zero-dimensional Noetherian ring. 
Thus, to show that $K \hookrightarrow (\widehat{R_\fp})_K$ is geometrically
regular, it suffices to show that $(\widehat{R}_\fp)_K$ 
is a geometrically reduced $K$-algebra because the notions ``geometrically
regular'' and ``geometrically reduced'' agree for zero{-\penalty0\hskip0pt\relax}dimensional Noetherian 
algebras over a field. Indeed, a geometrically regular Noetherian algebra
over a field is always geometrically reduced
 because regular rings are reduced.
Conversely, if $R$ is a zero-dimensional Noetherian geometrically
reduced $k$-algebra, then for every finite field extension $k'$ of $k$,
$k' \otimes_k R$ is also zero-dimensional, Noetherian and reduced.
However, we know that a reduced Noetherian ring of dimension zero
always decomposes as a finite direct product of fields by the Chinese 
Remainder Theorem. Thus,
$k' \otimes_k R$ is regular, which implies that $R$ is geometrically
regular over $k$.

\par To show that the formal fibers of $R_\fp$ are geometrically reduced, 
we will again use that $R$ is Japanese
by $F$-solidity and the N-1 property (Theorem
\ref{thm:solid-N2}$(\ref{thm:F-solid-N1-N2})$).
Since taking integral closures 
commutes with localization,
$R_\fp$ is also a Japanese ring. Now because $R_\fp$ 
has only two prime ideals (it is a one-dimensional local domain),
this implies that $R_\fp$ is in fact a Nagata ring by Definition 
\ref{def:Japanese-Nagata}. 
But a Noetherian local ring is a Nagata ring if and only if
its formal fibers are geometrically reduced by a result of 
Zariski and Nagata \cite[Thm.\ 7.6.4]{EGAIV2}. This proves that all formal 
fibers of all local rings of $R$ are geometrically regular, completing
the proof that an $F$-solid N-1 Noetherian domain of positive
characteristic is excellent.

\par Again we only used that $R$ is Japanese in the verification
of property (3). 
 This shows that one-dimensional Japanese domains whose fraction fields are of
characteristic zero are always excellent. In particular, Dedekind domains
whose fraction fields are of characteristic zero are excellent because 
such rings are N-1, and hence Japanese by Lemma \ref{lem:field-theory}$(\ref{lem:N1-N2-char0})$.
\end{proof}

\par Said differently, Theorem \ref{thm:excellent-dvr} shows that non-excellent
Dedekind domains of prime characteristic $p > 0$ have no nontrivial $p^{-e}$-linear maps.
In particular,
Nagata's non-excellent discrete valuation ring 
$k \otimes_{k^p} k^p\llbracket t\rrbracket$ where $[k:k^p] = \infty$ (see
\cite[App.\ A1, Ex.\ 3]{Nag75}), has no nonzero 
$p^{-e}$-linear maps.

\begin{remark}
  \leavevmode
\begin{enumerate}
  \item Combined with Lemma \ref{lem:fpuren1}, Theorem \ref{thm:excellent-dvr}
    shows that if a Noetherian domain of Krull dimension one is $F$-pure (or
    more generally, $F$-injective) and $F$-solid, then it is locally excellent
without any N-1 hypotheses. 
This is because both $F$-solidity
and $F$-injectivity are preserved under localization (see 
\cite[Prop.\ 3.3]{DMfinj} for the latter fact). Thus, such a ring is locally N-1 and locally F-solid, and hence, locally excellent. Note that F-solidity is crucial and just F-injectivity 
(or even F-purity) alone does not guarantee that a Noetherian domain of Krull dimension $1$ is locally excellent. For example, there exist non-excellent 
DVRs in the function field of $\textbf{P}^2_{\overline{\FF_p}}$ by 
\cite[\S4.1]{DS18}.

\item The N-1 hypothesis in Theorem \ref{thm:excellent-dvr} is necessary 
even if the one-dimensional domain is $F$-solid.
In the next subsection we will construct a one-dimensional locally
excellent Noetherian domain $R$ for which $R_\perf$ is $R$-solid
but $R$ is not N-1, hence also not excellent (Example \ref{ex:F-solid-not-N1}).

\item Theorem \ref{thm:excellent-dvr} has the interesting consequence that if 
$R$ is a normal Noetherian domain of prime characteristic such that $R^+$ 
is $R$-solid, or
more generally, if $R$ is $F$-solid, then for every
height one prime $\fp$ of $R$, the localization $R_\fp$ is excellent.
Indeed, since the formation of absolute integral closures
commutes with localization, and since all rings involved are domains,
one can check that if $S \subset R$ is a multiplicative set, then
$(S^{-1}R)^+$ is also a solid $R$-algebra (resp.\ $S^{-1}R$ is $F$-solid)
by just localizing a nonzero $R$-linear map from $R^+$ (resp.\ $F_{R*}R$) 
to $R$. Thus, Theorem \ref{thm:excellent-dvr} applied to the localization $R_\fp$
implies $R_\fp$ is excellent. In other words, an $F$-solid normal (or, more
generally, N-1) Noetherian domain $R$ of prime characteristic is ``excellent 
in codimension 1.''
\end{enumerate}
\end{remark}

Theorem \ref{thm:excellent-dvr}, the results of this paper,
and those of \cite{DM} (see also \cite{DS18})
allow us to obtain a
classification of $F$-solid DVRs, which we now
collect and summarize for the reader's convenience.

\begin{corollary}
\label{cor:Fsolid-DVRs}
Let $(R,\fm)$ be a DVR of prime characteristic
$p > 0$. Consider the following statements:
\begin{enumerate}[label=$(\roman*)$,ref=\roman*]
	\item\label{cor:dvr-maps} $R$ is $F$-solid.
	\item\label{cor:dvr-F-split} $R$ is Frobenius split.
	\item\label{cor:dvr-split-F} $R$ is split $F$-regular.
	\item\label{cor:excellent-dvr} $R$ is excellent.
\end{enumerate}
Then, $(\ref{cor:dvr-maps})$, $(\ref{cor:dvr-F-split})$, and 
$(\ref{cor:dvr-split-F})$ are equivalent, and always imply
$(\ref{cor:excellent-dvr})$. All four assertions are equivalent
if $R$ is essentially of finite type over a complete local ring,
or the fraction field $K$ of $R$ is such that $K^{1/p}$ is
countably generated over $K$. However, there exist excellent
Henselian DVRs that are not $F$-solid.
\end{corollary}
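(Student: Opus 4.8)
The plan is to assemble the corollary from results already established in the paper, since each of its assertions is an instance of a previously proved statement; the argument is therefore essentially a matter of invoking the right theorem in the right place.

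First I would record the core equivalence: the equivalence of $(\ref{cor:dvr-maps})$, $(\ref{cor:dvr-F-split})$, and $(\ref{cor:dvr-split-F})$ is exactly Proposition \ref{thm:split-F-regular-DVR}, once one recalls that $R$ is $F$-solid precisely when $F_{R*}R$ admits a nonzero $R$-linear map to $R$, i.e., when $R$ has a nonzero $p^{-1}$-linear map. For the implication into $(\ref{cor:excellent-dvr})$ I would note that a DVR is a Noetherian normal domain of Krull dimension one, hence N-1, so Theorem \ref{thm:excellent-dvr} applies verbatim: an $F$-solid DVR of prime characteristic is excellent. This gives the first two sentences of the corollary.

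Next come the two special cases in which all four assertions become equivalent. If $R$ is essentially of finite type over a complete local ring, then $R$ is automatically excellent by Example \ref{ex:excellence}$(\ref{ex:qeeft})$, so $(\ref{cor:excellent-dvr})$ holds; and since $R$ is a domain, hence reduced, Theorem \ref{thm:F-solid-eft} shows $R$ is $F$-solid, so $(\ref{cor:dvr-maps})$ holds too. Thus all four assertions are simply true in this case, hence (vacuously) equivalent. If instead $K^{1/p}$ is countably generated over $K$, I would establish $(\ref{cor:excellent-dvr}) \Rightarrow (\ref{cor:dvr-F-split})$: an excellent ring is Nagata by the Zariski--Nagata theorem, and a Nagata domain is Japanese (apply Definition \ref{def:Japanese-Nagata} to the zero prime); since $F_{K*}K \cong K^{1/p}$ as $K$-modules, the hypothesis says $F_{K*}K$ is countably generated over $K$, so Proposition \ref{prop:colimits}$(\ref{prop:Dedekind-Fsolid})$ shows $R$ is Frobenius split. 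Combined with the equivalence $(\ref{cor:dvr-maps}) \Leftrightarrow (\ref{cor:dvr-F-split}) \Leftrightarrow (\ref{cor:dvr-split-F})$ and the implication $(\ref{cor:dvr-maps}) \Rightarrow (\ref{cor:excellent-dvr})$, this yields the equivalence of all four in this case.

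Finally, the last sentence follows from Proposition \ref{prop:non-F-solid}$(\ref{prop:convergent-not-Fsolid})$ with $n = 1$: for a suitably chosen complete non-Archimedean field $(k,\abs{})$ of characteristic $p > 0$, the convergent power series ring $K_1(k)$ is an excellent Henselian DVR that is not $F$-solid (equivalently, one may quote Corollary \ref{cor:F-regular-different}). I do not expect a genuine obstacle here; the only steps requiring any care are bookkeeping ones — confirming that an excellent DVR meets the Japanese hypothesis of Proposition \ref{prop:colimits}$(\ref{prop:Dedekind-Fsolid})$, recording the $K$-module identification $F_{K*}K \cong K^{1/p}$, and observing that in the essentially-of-finite-type case the equivalence holds because all four conditions are outright true. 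The substantive content lies entirely in the cited results.
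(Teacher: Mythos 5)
Your proposal is correct and matches the paper's own argument in essentially every respect: Proposition \ref{thm:split-F-regular-DVR} for the equivalence of $(\ref{cor:dvr-maps})$--$(\ref{cor:dvr-split-F})$, Theorem \ref{thm:excellent-dvr} for the implication to $(\ref{cor:excellent-dvr})$, the Japanese/Nagata observation together with Proposition \ref{prop:colimits}$(\ref{prop:Dedekind-Fsolid})$ for the countably-generated case, and Proposition \ref{prop:non-F-solid} for the negative example. The only (immaterial) divergence is in the essentially-of-finite-type case, where you invoke Theorem \ref{thm:F-solid-eft} to conclude directly that $R$ is $F$-solid, whereas the paper cites Theorem \ref{thm:splittingwithgamma} (regular $\Rightarrow$ strongly $F$-regular $\Rightarrow$ split $F$-regular for such $R$); both correctly deduce that all four conditions hold outright there.
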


\begin{proof}
The equivalence of $(\ref{cor:dvr-maps})$, $(\ref{cor:dvr-F-split})$, and 
$(\ref{cor:dvr-split-F})$ follow from Proposition 
\ref{thm:split-F-regular-DVR}, while each of these equivalent statements
implies $(\ref{cor:excellent-dvr})$ by Theorem \ref{thm:excellent-dvr}.

\par That all four statements are equivalent when $R$ is essentially of
finite type over a complete local ring, or if the fraction field $K$ of
$R$ has the property that $K^{1/p}$ is countably generated over $K$
follows from Theorem \ref{thm:splittingwithgamma} and Proposition
\ref{prop:colimits}$(\ref{prop:Dedekind-Fsolid})$. Finally, the
existence of non-$F$-solid excellent Henselian discrete valuation rings
follows from Proposition \ref{prop:non-F-solid}, which in turn depends on
the results of \cite{DM}.
\end{proof}

\subsection{A meta construction of Hochster and solidity of perfect closures}
In the previous Subsection we proved that
Noetherian N-1 domains of Krull dimension one and prime characteristic
$p > 0$ that are $F$-solid are also excellent (Theorem \ref{thm:excellent-dvr}). 
Our goal in 
this subsection will be to use the following meta construction due to Hochster to
give examples of non-excellent one-dimensional domains that are $F$-solid, 
and even Frobenius split. In other words, we will show that the N-1 assumption 
cannot be weakened in Theorem \ref{thm:excellent-dvr}.  
Moreover, we will also obtain examples of Noetherian
domains of prime characteristic whose perfect closures are solid, but whose
absolute integral closures are not (Example \ref{ex:F-solid-not-N1}).
In a sense Hochster's construction, and consequently our examples, are 
as nice as possible 
because the rings will be locally excellent.

\begin{citedthm}[{\cite[Props.\ 1 and 2]{Hoc73}}]
\label{thm:meta-Mel}
Let $\mathcal{P}$ be a property of Noetherian local rings. Let $k$ 
be a field, and let $(R, \fm)$ be a local 
ring essentially of finite type over $k$ such that
\begin{enumerate}[label=$(\roman*)$,ref=\roman*]
\item\label{thm:g-integ} $R$ is geometrically integral;
\item\label{thm:trivial-residue} $R/\fm = k$; and
\item\label{thm:not-P} for every field extension $L \supseteq k$, the ring 
$(L \otimes_k R)_{\fm(L \otimes_k R)}$ fails to satisfy $\mathcal{P}$.
\end{enumerate}
Moreover, suppose every field extension $L \supseteq k$ satisfies
$\mathcal{P}$. For all $n \in \ZZ_{> 0}$, let $R_n$ be a copy of 
$R$ with maximal ideal $\fm_n = \fm$. Let  
$R' \coloneqq \bigotimes_{n \in \ZZ_{> 0}} R_n$, where the infinite 
tensor product is taken over $k$. Then, each $\fm_nR'$ is a prime 
ideal of $R'$. Moreover, if 
$S = R' \smallsetminus (\bigcup_n \fm_nR')$, then the ring
\[
T \coloneqq S^{-1}R'
\]
is a Noetherian domain whose locus of primes that 
satisfy $\mathcal{P}$ is not open in $\Spec(T)$. Furthermore, 
the map $n \mapsto \fm_nT$ induces a one-to-one correspondence
between $\ZZ_{> 0}$ and the maximal ideals of $T$, and
\[
T_{\fm_nT} \simeq (L_n \otimes_k R_n)_{\fm_n(L_n \otimes_k R_n)},
\]
where $L_n$ is the fraction field of the domain $\bigotimes_{m \neq n} R_m$. 
Thus, 
each local ring of $T$ is essentially of finite type 
over 
some appropriate field extension of $k$. 
In particular, all local rings 
of $T$ are excellent, that is, $T$ is locally excellent.
\end{citedthm}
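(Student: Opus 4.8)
The plan is to work throughout with the (non-Noetherian) infinite tensor product $R' = \bigotimes_n R_n$ and its localization $T = S^{-1}R'$, postponing Noetherianity of $T$ until after its maximal spectrum has been determined, at which point Nagata's criterion (a domain all of whose localizations at maximal ideals are Noetherian and each of whose nonzero elements lies in only finitely many maximal ideals is Noetherian) applies. First I would use the geometric integrality of $R$ to see $R'$ is a domain: present $R'$ as the filtered union of the finite subproducts $A_F := \bigotimes_{m\in F}R_m$ over finite $F\subseteq \ZZ_{>0}$, argue by induction that each $A_F$ is a domain — if $A = \bigotimes_{m\in F\setminus\{n\}}R_m$ is a domain then $A\otimes_k R_n \hookrightarrow \Frac(A)\otimes_k R_n$, and the target is a domain because $R_n = R$ is geometrically integral over $k$ — and note the transition maps $a\mapsto a\otimes 1$ are split injective (as $k$ is a $k$-direct summand of $R_n$), so the colimit $R'$ is a domain. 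The same argument with $B_n := \bigotimes_{m\neq n}R_m$ gives $R'/\fm_n R' \cong (R_n/\fm_n)\otimes_k B_n \cong B_n$ by hypothesis (ii), so each $\fm_n R'$ is prime; hence $S$ is multiplicative, $0\notin S$, and $T$ is a domain.

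The crux will be a finite-character lemma: every nonzero $x\in R'$ lies in only finitely many of the $\fm_n R'$. This is immediate once one observes that $x$ lies in some $A_F$ and that for $n\notin F$ the composite $A_F\hookrightarrow R'\to R'/\fm_n R'\cong B_n$ is just the inclusion $A_F\hookrightarrow B_n$. From this I would deduce that any prime $\fp\subseteq R'$ with $\fp\cap S=\varnothing$ (equivalently $\fp\subseteq\bigcup_n\fm_n R'$) is contained in a single $\fm_n R'$: intersecting with $A_F$ gives $\fp\cap A_F\subseteq\bigcup_{n\in F}(\fm_n R'\cap A_F)$, a \emph{finite} union of primes of $A_F$ (the terms with $n\notin F$ vanish), so ordinary prime avoidance yields $\fp\cap A_F\subseteq\fm_{n(F)}R'$ for some $n(F)\in F$; fixing $0\neq x\in\fp$ (the case $\fp=0$ being trivial) and letting $F$ run through an exhausting chain, the finite-character lemma forces the $n(F)$ into a finite set $\{n_1,\dots,n_k\}$, so $\fp\subseteq\bigcup_j\fm_{n_j}R'$, and a last application of prime avoidance (now to the prime $\fp$ itself) finishes it. Since the $\fm_n R'$ are pairwise incomparable ($\fm_n R'\cap R_m=0$ for $m\neq n$ by the same observation, while $\fm_m\neq 0$), this identifies the maximal ideals of $T$ exactly with the $\fm_n T$, and $n\mapsto\fm_n T$ with the asserted bijection.

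Next I would identify the local rings. Localizing $R'$ at $\fm_n R'$ — which, since $S\subseteq R'\setminus\fm_n R'$, equals $T_{\fm_n T}$ — and noting that $\fm_n R'$ meets the multiplicative set $1\otimes(B_n\setminus\{0\})$ only in $0$, one sees $(R')_{\fm_n R'}$ is a further localization of $L_n\otimes_k R_n$ (with $L_n=\Frac B_n$) at the extension of $\fm_n R'$, giving $T_{\fm_n T}\cong (L_n\otimes_k R_n)_{\fm_n(L_n\otimes_k R_n)}$. Each of these is essentially of finite type over the field $L_n$, hence Noetherian and excellent (Example \ref{ex:excellence}$(\ref{ex:qeeft})$); a general local ring $T_{\fp T}$ is a localization of one of these (or is $\Frac T$), so it too is essentially of finite type over a field extension of $k$ and excellent — thus $T$ is locally excellent. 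Noetherianity of $T$ then follows from Nagata's criterion, since each $T_\fm$ is Noetherian and, by the finite-character lemma, each nonzero element of $T$ lies in only finitely many $\fm_n T$. Finally, for the $\mathcal P$-locus $U\subseteq\Spec T$: the point $(0)$ has local ring $\Frac R'$, a field, which satisfies $\mathcal P$, so $(0)\in U$; but $T_{\fm_n T}\cong (L_n\otimes_k R_n)_{\fm_n(\cdots)}$ fails $\mathcal P$ by hypothesis (iii) applied with $L=L_n$, so no $\fm_n T$ lies in $U$; were $U$ open it would contain a basic open $D(f)\ni(0)$ with $f\neq 0$, and $D(f)$ contains $\fm_n T$ for all but the finitely many $n$ with $f\in\fm_n R'$ — a contradiction.

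I expect the one genuine obstacle to be pinning down $\Spec T$, i.e. ruling out "stray" primes that fail to lie under any $\fm_n T$: naive countable prime avoidance is false in general, and the whole argument turns on replacing it — via the finite-character lemma together with the filtered presentation $R'=\bigcup_F A_F$ — by finitely many applications of ordinary prime avoidance. Everything else (the colimit and flatness manipulations, the localization identifications, the appeal to Nagata's criterion, and the topological argument for non-openness of $U$) is essentially bookkeeping.
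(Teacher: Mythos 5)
The paper does not prove this statement: it is a \emph{citedthm}, attributed to Hochster \cite[Props.~1 and 2]{Hoc73}, and the authors use it as a black box (adding only the supplementary analysis in Remark~\ref{rem:Hoc-examp-analysis}). So there is no in-paper proof to compare against. Your argument is nevertheless correct, and it is essentially the one in Hochster's original paper: establish the finite-character property of the family $\{\fm_n R'\}$ via the filtered presentation $R'=\bigcup_F A_F$; use it together with ordinary prime avoidance in the Noetherian subrings $A_F$ to pin down $\operatorname{Max}(T)=\{\fm_n T\}$; identify $T_{\fm_n T}$ with $(L_n\otimes_k R_n)_{\fm_n(L_n\otimes_k R_n)}$; invoke the criterion that a domain whose nonzero elements lie in only finitely many maximal ideals and whose localizations at maximal ideals are all Noetherian is itself Noetherian; and conclude non-openness of the $\mathcal{P}$-locus from the finite-character property applied to any $f$ with $(0)\in D(f)\subseteq U$. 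The one point you flagged as a potential obstacle --- that na\"ive countable prime avoidance fails --- is exactly the point Hochster navigates in the same way, by restricting to finite subproducts, so your instinct there is sound. A minor wording caution: what you call ``Nagata's criterion'' is the folklore statement that a domain with the local-Noetherian-plus-finite-character property is Noetherian (the proof is the local-to-global patching you implicitly rely on); it is worth either citing a reference or including the short argument, since the name is not completely standard and could be confused with Nagata's openness criterion.
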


\begin{remark}
\label{rem:Hoc-examp-analysis}
By definition, the infinite tensor product $R' = \bigotimes_{n \in \ZZ_{>0}} R_n$
is the filtered colimit 
\[
\colim_{j \in \ZZ_{> 0}} \biggl( \bigotimes_{n=1}^j R_1 \otimes_k R_2 \otimes_k
\dots \otimes_k R_j \biggr)
\]
of finite tensor products. Also, implicit in Theorem \ref{thm:meta-Mel} 
is the assertion that for each 
$n \in \ZZ_{> 0}$, the ideal $\fm_n(L_n \otimes_k R_n)$ is a prime ideal of
$L_n \otimes_k R_n$ (Hochster calls this property \emph{absolutely prime} in his paper). 
This follows from the fact that since $R_n/\fm_n$
is isomorphic to the base field $k$, 
for ever field extension $L$ of $k$, we have
\[L \otimes_k R_n/\fm_n \simeq L.\]
Thus, $\fm_n(L_n \otimes_k R_n)$ is in fact a maximal ideal of $L_n \otimes_k R_n$ 
for all $n \in \ZZ_{> 0}$.
Moreover, as a consequence of Hochster's construction, it follows
that the dimension of each local ring
$T_{\fm_nT}$ equals the dimension of $R$. Indeed, the extension
$R_n \hookrightarrow L_n \otimes_k R_n$ is faithfully flat, and induces
a faithfully flat local extension 
$R_n = (R_n)_{\fm_n} \hookrightarrow (L_n \otimes_k R_n)_{\fm_n(L_n \otimes_k R_n)}$.
But the closed fiber of this local extension is a field (the maximal ideal of $R_n$ 
expands to the maximal ideal of $(L_n \otimes_k R_n)_{\fm_n(L_n \otimes_k R_n)}$), 
hence is zero-dimensional.
Therefore,
\[
\dim(R) = \dim(R_n) = \dim\bigl((L_n \otimes_k R_n)_{\fm_n(L_n \otimes_k
R_n)}\bigr) = \dim(T_{m_nT}),
\]
where the first equality follows because $R_n$ is just a copy of $R$, the second equality
follows by faithful flatness of the local extension and
\cite[Thm.\ 15.1$(ii)$]{Mat89}, and
the third equality follows by the isomorphism in Hochster's result. In other words, 
each maximal ideal of $T$ has the same height, that is, $T$ is equicodimensional
and $\dim(T) = \dim(R)$. In particular, $T$ has finite Krull dimension.
\end{remark}

\par We can now construct many non-excellent Frobenius split (hence $F$-solid) rings.

\begin{corollary}
\label{cor:non-exc-Fsplit}
Let $(R,\fm)$ be a Noetherian local $\mathbf{F}_p$-algebra 
such that
\begin{enumerate}[label=$(\arabic*)$,ref=\arabic*]
	\item $R$ is essentially of finite type over $\mathbf{F}_p$,\label{item:non-exc-Fsplit-eft}
	\item $R$ is geometrically integral over $\mathbf{F}_p$,
	\item $R/\fm = \mathbf{F}_p$,
	\item $R$ is Frobenius split, and\label{item:non-exc-Fsplit-Fsplit}
	\item $R$ is not regular.\label{item:non-exc-Fsplit-notreg}
\end{enumerate}
Then, the ring $T$ from Theorem \ref{thm:meta-Mel} constructed
using any $R$ satisfying properties $(\ref{item:non-exc-Fsplit-eft})$--$(\ref{item:non-exc-Fsplit-notreg})$ is a locally excellent Frobenius 
split ring whose regular locus
is not open. In particular, $T_\perf$ is a solid $T$-algebra and $T$ is not
excellent.
\end{corollary}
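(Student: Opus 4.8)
The plan is to invoke Hochster's meta-construction (Theorem~\ref{thm:meta-Mel}) with base field $k = \mathbf{F}_p$ and with $\mathcal{P}$ the property of being regular, to produce $T$, and then to check separately that $T$ is Frobenius split; the two conclusions about $T_\perf$ and excellence are then immediate. First I would verify the hypotheses of Theorem~\ref{thm:meta-Mel}. Hypotheses $(\mathit{i})$ and $(\mathit{ii})$ there are exactly conditions $(2)$ and $(3)$ imposed on $R$, and ``every field extension $L \supseteq \mathbf{F}_p$ satisfies $\mathcal{P}$'' holds because fields are (zero-dimensional) regular local rings. For hypothesis $(\mathit{iii})$, fix a field extension $L \supseteq \mathbf{F}_p$: geometric integrality of $R$ makes $L \otimes_{\mathbf{F}_p} R$ a domain, and since $R/\fm \cong \mathbf{F}_p$ we get $(L \otimes_{\mathbf{F}_p} R)/\fm(L \otimes_{\mathbf{F}_p} R) \cong L$, so $\fm(L \otimes_{\mathbf{F}_p} R)$ is maximal. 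The induced local homomorphism $R = R_\fm \to (L \otimes_{\mathbf{F}_p} R)_{\fm(L \otimes_{\mathbf{F}_p} R)}$ is faithfully flat with closed fibre the field $L$, hence with regular closed fibre; since $R$ is not regular by $(5)$, \cite[Thm.\ 23.7]{Mat89} forces $(L \otimes_{\mathbf{F}_p} R)_{\fm(L \otimes_{\mathbf{F}_p} R)}$ to be non-regular. Thus Theorem~\ref{thm:meta-Mel} applies and yields the locally excellent Noetherian domain $T = S^{-1}R'$, $R' = \bigotimes_{n \in \ZZ_{>0}} R_n$, whose regular locus is not open in $\Spec(T)$.

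Next I would show $T$ is Frobenius split. The crucial observation is that a finite tensor product over $\mathbf{F}_p$ of Frobenius split $\mathbf{F}_p$-algebras is Frobenius split: since $\mathbf{F}_p$ is perfect, for $\mathbf{F}_p$-algebras $A, B$ one has a natural isomorphism $F_{(A \otimes_{\mathbf{F}_p} B)*}(A \otimes_{\mathbf{F}_p} B) \cong F_{A*}A \otimes_{\mathbf{F}_p} F_{B*}B$ of $(A \otimes_{\mathbf{F}_p} B)$-algebras (both sides have underlying ring $A \otimes_{\mathbf{F}_p} B$, and $a \otimes b$ acts by multiplication by $a^p \otimes b^p$ on each), under which $\varphi_A \otimes_{\mathbf{F}_p} \varphi_B$ is a Frobenius splitting whenever $\varphi_A, \varphi_B$ are. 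Fixing a Frobenius splitting $\varphi\colon F_{R*}R \to R$ and writing $R^{(j)} \coloneqq R_1 \otimes_{\mathbf{F}_p} \cdots \otimes_{\mathbf{F}_p} R_j$, I would inductively set $\varphi_j \coloneqq \varphi_{j-1} \otimes_{\mathbf{F}_p} \varphi$; these splittings are compatible with the transition maps $R^{(j)} \to R^{(j+1)}$, $x \mapsto x \otimes 1$, and since Frobenius pushforward commutes with filtered colimits of rings, passing to the colimit produces a Frobenius splitting of $R' = \colim_j R^{(j)}$. Finally, Frobenius splitting is preserved under localization (localizations being essentially smooth, cf.\ Proposition~\ref{prop:ascent-regular-maps}$(\ref{prop:Fsplit-ascent})$), so $T = S^{-1}R'$ is Frobenius split.

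To conclude: since $T$ is Frobenius split, the map $T \to T_\perf$ splits by Proposition~\ref{prop:colimits}$(\ref{prop:perf-split})$, so $T_\perf$ is a solid $T$-algebra; and $T$ is not excellent, since an excellent ring is J-$2$ by Definition~\ref{def:excellent}$(\ref{def:excellentj2})$, so (taking the finite-type algebra to be $T$ itself) its regular locus would be open in $\Spec(T)$, contradicting Theorem~\ref{thm:meta-Mel}. The step I expect to require the most care is the Frobenius-splitting statement for the infinite tensor product $R'$—keeping track of the identification of $F_{R'*}R'$ with the colimit of the $F_{R^{(j)}*}R^{(j)}$ and of the compatibility of the splittings—though even there the perfectness of $\mathbf{F}_p$ renders the base change of Frobenius transparent, and all remaining steps are routine bookkeeping.
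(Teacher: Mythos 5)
Your proof is correct and follows essentially the same approach as the paper: both invoke Theorem~\ref{thm:meta-Mel} with $\mathcal{P} =$ regularity (using faithfully flat descent of regularity to verify hypothesis $(\ref{thm:not-P})$), both produce a compatible system of splittings on the finite tensor products $R_1 \otimes_{\FF_p} \cdots \otimes_{\FF_p} R_j$ via the identification $F_{R_1*}R_1 \otimes_{\FF_p} \cdots \otimes_{\FF_p} F_{R_j*}R_j = F_*(R_1 \otimes_{\FF_p} \cdots \otimes_{\FF_p} R_j)$ over the perfect base $\FF_p$, pass to the colimit $R'$, localize to get a Frobenius splitting of $T$, and then cite Proposition~\ref{prop:colimits}$(\ref{prop:perf-split})$ for solidity of $T_\perf$. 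The only difference is cosmetic: the paper additionally records an explicit example of a ring $R$ satisfying (1)--(5) (the localized cone over a conic) before running the abstract argument, but that is not part of the proof of the corollary's assertion.
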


\begin{proof}
First, we give an explicit example of a ring $R$ satisfying properties $(\ref{item:non-exc-Fsplit-eft})$--$(\ref{item:non-exc-Fsplit-notreg})$.
Let $k$ be a field, and consider the homogeneous coordinate ring
\[
  \frac{k[x,y,z]}{(y^2 - xz)}
\]
of a conic in $\mathbf{P}^2_{k}$.
Note that this ring is an integral domain for any $k$ because it can
be identified with the Veronese subring $k[x^2,xy,y^2]$ of $k[x,y]$. 
Moreover, for any field $k$ of prime characteristic $p > 0$, this ring
is split $F$-regular because $k[x^2,xy,y^2]$
is a direct summand of the split $F$-regular ring $k[x,y]$. 
In particular, $k[x,y,z]/(y^2-xz)$ is Frobenius 
split for any $k$, and consequently, so are all its local rings.
Let 
\[
  R = \biggl(\frac{\mathbf{F}_p[x,y,z]}{(y^2-xz)}\biggr)_{(x,y,z)}.
\] 
Then by the above discussion,
it follows that $R$ satisfies properties $(\ref{item:non-exc-Fsplit-eft})$--$(\ref{item:non-exc-Fsplit-Fsplit})$. Furthermore, $R$ is not
regular since it is the local ring at the cone point. Therefore $R$ satisfies 
all five properties in the statement of the corollary. From here on, we do
not need the specifics of what $R$ actually is, but only that it has the 
aforementioned five properties.

Let $\mathcal{P}$ be the property of a Noetherian local ring being regular, and
fix any $\FF_p$-algebra $R$ that satisfies $(\ref{item:non-exc-Fsplit-eft})$--$(\ref{item:non-exc-Fsplit-notreg})$.
Since $R$ is not regular and since regularity descends under faithfully flat
maps \cite[Thm.\ 23.7$(i)$]{Mat89},
it follows that $R$
also satisfies properties $(\ref{thm:g-integ})$-$(\ref{thm:not-P})$ 
of Theorem \ref{thm:meta-Mel}. Construct
$T$ as in Theorem \ref{thm:meta-Mel} using a countable number of copies of $R$. 
By construction, $T$ is locally excellent and the regular locus of $T$ is not
open (we know this locus is nonempty because $T$ is generically regular). 
We will now
show that $T$ is Frobenius split. For this it suffices to show that the ring
\[
  R' \coloneqq \bigotimes_{n \in \mathbf{Z}_{>0}} R_n \coloneqq \colim_{j \in
  \mathbf{Z}_{>0}}\bigl( 
  R_1 \otimes_{\mathbf{F}_p} R_2 \otimes_{\mathbf{F}_p} \dots
  \otimes_{\mathbf{F_p}} R_j\bigr).
\]
is Frobenius split, because $T$ is a localization of $R'$ and Frobenius
splittings are preserved under localization.
Note each $R_n$, being a copy of $R$, is Frobenius split by hypothesis. 
Fix a Frobenius splitting $\varphi\colon F_{R*}R \rightarrow R$ of $R$,
and denote by $\varphi_n\colon F_{R_n*}R_n \rightarrow R_n$ the corresponding splitting of 
the $n$-th copy of $R$. Then, for each $j$, we get an 
$(R_1 \otimes_{\mathbf{F}_p} \dots \otimes_{\mathbf{F}_p} R_j)$-linear
map
\[
\varphi_1 \otimes \dots \otimes \varphi_j\colon
F_{R_1*}R_1 \otimes_{\mathbf{F}_p} \dots \otimes_{\mathbf{F}_p} F_{R_j*}R_j
\longrightarrow
R_1 \otimes_{\mathbf{F}_p} \dots \otimes_{\mathbf{F}_p} R_j
\] 
that maps $1$ to $1$. But note that since the Frobenius map of $\mathbf{F}_p$
is just the identity,
\[
F_{R_1*}R_1 \otimes_{\mathbf{F}_p} \dots \otimes_{\mathbf{F}_p} F_{R_j*}R_j =
F_*(R_1 \otimes_{\mathbf{F}_p} \dots \otimes_{\mathbf{F}_p} R_j).
\]
Thus, for each $j$, $\varphi_1 \otimes \dots \otimes \varphi_j$
is a Frobenius splitting of $R_1 \otimes_{\mathbf{F}_p} \dots \otimes_{\mathbf{F}_p} R_j$.
Moreover, these Frobenius splittings are compatible with the 
transition maps 
 of the filtered system 
$\{R_1 \otimes_{\mathbf{F}_p} \dots \otimes_{\mathbf{F}_p} R_j\}_j$
in the sense that for every pair of indices $\ell > j$, we have a commutative diagram
\[
\begin{tikzcd}
   F_*(R_1 \otimes_{\mathbf{F}_p} \dots \otimes_{\mathbf{F}_p} R_j) 
   \arrow[r] \arrow[d, "\varphi_1 \otimes \dots \otimes \varphi_j"']
    &  F_*(R_1 \otimes_{\mathbf{F}_p} \dots \otimes_{\mathbf{F}_p} R_\ell)  
    \arrow[d, "\varphi_1 \otimes \dots \otimes \varphi_\ell"] \\
   R_1 \otimes_{\mathbf{F}_p} \dots \otimes_{\mathbf{F}_p} R_j \arrow[r]
&R_1 \otimes_{\mathbf{F}_p} \dots \otimes_{\mathbf{F}_p} R_\ell. 
\end{tikzcd}
\]
where the horizontal maps come from the transition map
$R_1 \otimes_{\mathbf{F}_p} \dots \otimes_{\mathbf{F}_p} R_j 
\rightarrow R_1 \otimes_{\mathbf{F}_p} \dots \otimes_{\mathbf{F}_p} R_\ell$
that sends $x_1 \otimes \dots \otimes x_j$ to 
$x_1 \otimes \dots \otimes x_j \otimes 1 \otimes \dots \otimes 1$. 
Taking colimits now gives a Frobenius splitting of the ring $R'$,
which is what we wanted. Note that if $T$ is Frobenius split, 
then $T_\perf$ is a solid
$T$-algebra by Proposition \ref{prop:colimits}$(\ref{prop:perf-split})$. Hence
the last assertion of the Corollary follows.
\end{proof}

Using Corollary \ref{cor:non-exc-Fsplit} we will now show that there exists
a locally excellent Noetherian domain $R$ of Krull dimension one which is 
Frobenius split (in which case $R_\perf$ is a solid $R$-algebra), 
but for which $R^+$ is not a solid $R$-algebra. In particular, this 
example will also show that $F$-solidity alone cannot imply the Japanese 
property without the N-1 hypothesis in 
Theorem \ref{thm:solid-N2}$(\ref{thm:F-solid-N1-N2})$, and that, similarly,
$F$-solidity alone cannot imply excellence in Krull dimension one 
(see Theorem \ref {thm:excellent-dvr}).

\begin{example}
\label{ex:F-solid-not-N1}
Let $R$ be the local ring at the origin of the node over $\mathbf{F}_p$, that is,
\[
  R = \biggl(\frac{\mathbf{F}_p[x,y]}{(y^2 - x^3 - x)}\biggr)_{(x,y)}.
\]
Since the coordinate ring of the node is a domain over any field, it follows
that $R$ is geometrically integral over $\mathbf{F}_p$. We know that $R$ is not
regular (hence not normal since it is one-dimensional). However, an application of
Fedder's criterion \cite[Thm.\ 1.12]{Fed83} shows that $R$ is Frobenius split. By Corollary \ref{cor:non-exc-Fsplit},
the ring $T$ constructed using $R$ is locally excellent, Frobenius split, but not
excellent. In particular, $T_\perf$ is a solid $T$-algebra. That $T$ has
Krull dimension one follows from Remark \ref{rem:Hoc-examp-analysis}. We claim 
that $T^+$ is not a solid $T$-algebra. Indeed, if it is, then
Theorem \ref{thm:solid-N2}$(\ref{thm:A+-solid-Japanese})$ will imply that $T$ is Japanese,
and hence N-1. However, this is impossible by Theorem \ref {thm:excellent-dvr} because an $F$-solid N-1 Noetherian domain of Krull dimension one
is always excellent, whereas $T$ is not.
\end{example}

\begin{remark}
Hochster's construction provides a wealth of examples of locally excellent 
domains that are not excellent (Theorem \ref{thm:meta-Mel}). Said differently, 
excellence in not a local property. Example \ref{ex:F-solid-not-N1}
shows that the properties of being N-1 and Japanese are also not local.
At the same time, the N-1 property is \emph{often} local by \cite[(31.G), Lem.\ 4]{Mat80}.
\end{remark}

Example \ref{ex:F-solid-not-N1} illustrates that $R_\perf$ can be a solid 
$R$-algebra even when $R^+$ is not. 
We now show solidity of perfect closures
for most rings that arise in algebro-geometric applications. To establish this, 
we exploit the following observation of Hochster and Huneke that
is related to the notion of test elements in tight closure theory.

\begin{citedlem}[{\cite[Lem.\ 6.5]{HH90}}]
\label{lem:perf-containment}
Let $\varphi\colon A \rightarrow R$ be a module-finite and generically smooth (equivalently,
generically geometrically reduced) map, where $A$ is a normal domain and
$R$ is torsion-free as an $A$-module. 
Let $r_1,r_2,\dots,r_d \in R$ be a vector space basis for $K' = K \otimes_A R$
over the fraction field $K$ of $A$, and let 
\[c \coloneqq \det\bigl(\Tr_{K'/K}(r_ir_j)\bigr).\]
Then, $c$ is a nonzero element of $A$ such that $cR_\perf \subseteq A_\perf[R]$. 
\end{citedlem}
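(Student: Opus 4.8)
The plan is to promote the finite-dimensional linear algebra over $K$ to the analogous computation over the perfect closure $K_\perf$, via the trace pairing. Set $M := \bigl(\Tr_{K'/K}(r_ir_j)\bigr)_{i,j}$, the Gram matrix of the trace form of $K'/K$ in the basis $r_1,\dots,r_d$, so that $c = \det M$. The two easy assertions come first. Each $r_i$ is integral over $A$ (as $R$ is module-finite over $A$), so each $\Tr_{K'/K}(r_ir_j)$ is integral over $A$ and lies in $K$; normality of $A$ then forces $M$ to have entries in $A$, hence $c\in A$. Generic smoothness says $K' = K\otimes_A R$ is a finite \'etale $K$-algebra, i.e.\ a finite product $\prod_{\nu} L_\nu$ of finite separable field extensions of $K$, and the trace pairing of such an algebra is nondegenerate, so $c = \det M\neq 0$. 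Finally let $N := \operatorname{adj}(M)$; its entries are polynomials in those of $M$, hence lie in $A$, and $NM = MN = cI_d$.

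The heart of the argument is base change to perfect closures. Separability of each $L_\nu/K$ gives $L_\nu^{1/p^e} = L_\nu\cdot K^{1/p^e} = L_\nu\otimes_K K^{1/p^e}$ for all $e$ (linear disjointness of separable and purely inseparable extensions), so taking the union over $e$, the field $(L_\nu)_\perf = L_\nu\otimes_K K_\perf$ is again a finite separable extension of $K_\perf$ of the same degree. Hence $(K')_\perf := \prod_\nu (L_\nu)_\perf = K'\otimes_K K_\perf$ is a finite \'etale $K_\perf$-algebra; the natural map $R_\perf\to(K')_\perf$ is injective (torsion-freeness of $R$ over $A$ embeds $R$ into $K'$, and filtered colimits of injections are injective) and compatible with $R\hookrightarrow K'$; and $r_1,\dots,r_d$ is still a $K_\perf$-basis of $(K')_\perf$ with the \emph{same} trace Gram matrix $M$ (traces of a finite free algebra base change). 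Finally $A_\perf = \bigcup_e A^{1/p^e}$ is a normal domain with fraction field $K_\perf$.

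Now fix $x\in R_\perf$ and write $x = \sum_{j=1}^d \gamma_j r_j$ with $\gamma_j\in K_\perf$. For each $i$, the element $xr_i$ lies in $R_\perf$; choosing $e$ with $x^{p^e}\in R$ shows $(xr_i)^{p^e} = x^{p^e}r_i^{p^e}\in R$, which is integral over $A$, so $xr_i$ is integral over $A$ and hence over $A_\perf$. Therefore $t_i := \Tr_{(K')_\perf/K_\perf}(xr_i)$, being a sum of conjugates of elements integral over $A_\perf$, is integral over $A_\perf$ and lies in $K_\perf$, so $t_i\in A_\perf$ by normality. On the other hand $t_i = \sum_j \Tr_{(K')_\perf/K_\perf}(r_ir_j)\,\gamma_j = \sum_j M_{ij}\gamma_j$; writing this as $(t_i)_i = M(\gamma_j)_j$ and multiplying by $N$ gives $(c\gamma_j)_j = N(t_i)_i$, whence each $c\gamma_j\in A_\perf$. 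Consequently $cx = \sum_j (c\gamma_j)\,r_j\in A_\perf[R]$, which is the desired containment.

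I expect the most delicate step to be the passage to perfect closures in the second paragraph: that $(K')_\perf = K'\otimes_K K_\perf$ is still finite \'etale over $K_\perf$ of degree $d$, that $r_1,\dots,r_d$ remains a basis with unchanged trace form, and that $A_\perf$ is normal with fraction field $K_\perf$. Everything there rests on linear disjointness of separable and purely inseparable field extensions, which is precisely where the generic smoothness hypothesis enters; once these facts are in hand, the remainder is the Cramer's-rule computation above.
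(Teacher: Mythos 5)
This is a \emph{cited lemma}: the paper states it and attributes it to Hochster and Huneke's Lemma~6.5 without giving a proof, so there is no in-paper argument to compare against. Your proof is correct, and it is essentially the argument of Hochster and Huneke (and the standard one for this type of statement): base-change the \'etale generic fibre $K'$ and its trace Gram matrix $M$ from $K$ to $K_\perf$, observe that $\Tr_{(K')_\perf/K_\perf}(xr_i)$ lands in $A_\perf$ by integrality of $xr_i$ over $A$ together with normality of $A_\perf$, and then clear denominators using the adjugate $N$ of $M$, which has entries in $A$ and satisfies $NM = cI_d$. Your handling of the points that require genuine care is sound: the passage from $(xr_i)^{p^e}\in R$ to integrality of $xr_i$ over $A$ (substitute $y^{p^e}$ into a monic equation for $(xr_i)^{p^e}$), the identification $(L_\nu)_\perf = L_\nu\otimes_K K_\perf$ from linear disjointness of separable and purely inseparable extensions, the injectivity of $R_\perf\hookrightarrow(K')_\perf$ via torsion-freeness and exactness of filtered colimits, the normality of $A_\perf$ as an increasing union of normal domains $A^{1/p^e}$ with fraction fields $K^{1/p^e}$, and the identification of $A_\perf[R]$ with the $A_\perf$-module span of $R$ inside $R_\perf$.
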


In the above lemma, $A_\perf[R]$ is the image of the unique map 
$A_\perf \otimes_A R \rightarrow R_\perf$ obtained by the universal property 
of the tensor product that makes the diagram
\[
\begin{tikzcd}
  A \arrow[r] \arrow[d, "\varphi"']
    & A_\perf \arrow[d, "\varphi_\perf"] \\
  R \arrow[r]
&R_\perf
\end{tikzcd}
\]
commute. Moreover, since $\varphi$ is a finite map, having a smooth generic
fiber is the same as having a geometrically reduced generic fiber, which is
in turn the same as having an \'etale generic fiber. 

Using Lemma \ref{lem:perf-containment}, we can show solidity of the 
perfect closure in many geometric situations.

\begin{proposition}
\label{prop:perf-solid}
Let $\varphi\colon A \rightarrow R$ be a module-finite extension of domains of prime
characteristic $p > 0$ such that $A$ is normal and Noetherian. 
If $A_\perf$ is a solid $A$-algebra (for example, if $A$ is Frobenius split),
then $R_\perf$ is a solid $R$-algebra.
\end{proposition}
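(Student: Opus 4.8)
The plan is to reduce first to showing that $R_{\perf}$ is a solid \emph{$A$}-algebra, and then to upgrade $A$-solidity to $R$-solidity by Hom--tensor adjunction. Since $R$ is module-finite over the Noetherian domain $A$, the $R$-module $\omega \coloneqq \Hom_A(R,A)$ is finitely generated, and it is torsion-free of generic rank one over $R$: one has $R \otimes_A \Frac(A) = \Frac(R)$, so $\omega \otimes_R \Frac(R) = \omega \otimes_A \Frac(A) = \Hom_{\Frac(A)}(\Frac(R),\Frac(A))$, which is one-dimensional over $\Frac(R)$ and nonzero. Hence $\omega$ is isomorphic to a nonzero ideal $J \subseteq R$. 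Granting that $\Hom_A(R_{\perf},A) \neq 0$, adjunction gives $\Hom_R(R_{\perf},\omega) \cong \Hom_A(R_{\perf},A) \neq 0$, so $\Hom_R(R_{\perf},J) \neq 0$, and composing with $J \hookrightarrow R$ shows $R_{\perf}$ is $R$-solid. (The parenthetical case is immediate: if $A$ is Frobenius split then $A \to A_{\perf}$ splits by Proposition \ref{prop:colimits}$(\ref{prop:perf-split})$, so $A_{\perf}$ is $A$-solid.) It therefore remains to prove $\Hom_A(R_{\perf},A) \neq 0$.

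I would first do this when $\Frac(A) \subseteq \Frac(R)$ is separable. Then $A \hookrightarrow R$ is module-finite with geometrically reduced (in fact \'etale) generic fiber, and $R$ is a torsion-free $A$-module, so Lemma \ref{lem:perf-containment} provides a nonzero $c \in A$ with $cR_{\perf} \subseteq A_{\perf}[R]$. The ring $A_{\perf}[R]$ is generated over $A_{\perf}$ by the finitely many algebra generators of $R$ over $A$, each integral over $A_{\perf}$; thus $A_{\perf} \hookrightarrow A_{\perf}[R]$ is a finite extension, and since $A_{\perf}$ is a domain, $A_{\perf}[R]$ is $A_{\perf}$-solid by Example \ref{ex:finite-solid}. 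Because $A \hookrightarrow A_{\perf}$ is an extension of domains with algebraic (purely inseparable) extension of fraction fields and $A_{\perf}$ is $A$-solid, Lemma \ref{lem:solid-composition} shows $A_{\perf}[R]$ is $A$-solid; choose $h \colon A_{\perf}[R] \to A$ with $h(1) \neq 0$. Then
\[
  R_{\perf} \xrightarrow{\cdot c} cR_{\perf} \hooklongrightarrow A_{\perf}[R] \xrightarrow{h} A
\]
is $A$-linear and sends $1$ to $h(c) = c\,h(1) \neq 0$ (using $c \in A$, the $A$-linearity of $h$, and that $A$ is a domain); hence $\Hom_A(R_{\perf},A) \neq 0$ and we are done in this case.

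For the general case I would reduce to the separable one by passing to the separable closure of $\Frac(A)$ in $\Frac(R)$. Write $K_0$ for this field, fix $e$ with $\Frac(R)^{p^e} \subseteq K_0$, and let $B$ be the integral closure of $A$ in $K_0$. Since $A$ is a normal Noetherian domain and $K_0/\Frac(A)$ is finite separable, $B$ is module-finite over $A$ and is a normal domain with $\Frac(B) = K_0$ (see \cite[(31.B), Prop.]{Mat80}). Applying the separable case to $A \hookrightarrow B$ gives that $B_{\perf}$ is $B$-solid; as $B$ is $A$-solid (finite extension, Example \ref{ex:finite-solid}) and $\Frac(B)/\Frac(A)$ is algebraic, Lemma \ref{lem:solid-composition} makes $B_{\perf}$ an $A$-solid algebra, so pick $\psi \colon B_{\perf} \to A$ with $\psi(1) \neq 0$. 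Finally, each $r \in R$ is integral over $A$, hence over $B$, and $r^{p^e} \in \Frac(B)$, so $r^{p^e} \in B$ by normality of $B$; thus $R^{p^e} \subseteq B$, whence $R_{\perf} \subseteq B_{\perf}$ compatibly with the $A$-module structures. Restricting $\psi$ to the submodule $R_{\perf}$, which contains $1$, yields an $A$-linear map $R_{\perf} \to A$ sending $1$ to $\psi(1) \neq 0$, so $\Hom_A(R_{\perf},A) \neq 0$ as required.

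The main obstacle is the possible inseparability of $\Frac(R)/\Frac(A)$, which blocks a direct application of Lemma \ref{lem:perf-containment} (which needs a generically smooth map). The fix is the passage to $B$ in the separable part $K_0$ together with the normality of $A$, hence of $B$, which forces the $p^e$-th powers of the integral elements of $R$ into $B$ and thus $R_{\perf}$ into $B_{\perf}$. The one other place needing care is the adjunction step, where one must verify that $\Hom_A(R,A)$ is a finitely generated, torsion-free, rank-one $R$-module so that it realizes as a nonzero ideal of $R$.
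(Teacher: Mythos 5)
Your proposal is correct, and it takes a genuinely different route from the paper's. Both proofs begin the same way: reduce from $R$-solidity to $A$-solidity of $R_\perf$ (you prove this step by hand, exhibiting $\Hom_A(R,A)$ as a rank-one torsion-free $R$-module and using tensor--Hom adjunction; the paper simply cites \cite[Cor.\ 2.3]{Hoc94}, of which your argument is essentially the special case used here). The divergence is in how the two proofs get around the possible inseparability of $\Frac(R)/\Frac(A)$ so that Lemma \ref{lem:perf-containment} applies. The paper works with the subring $A^{1/p^e}[R] = \im(F^e_{R/A})$, identifies it with $(A^{1/p^e}\otimes_A R)_\red$, and invokes \cite[Prop.\ 2.4.2.1]{DS19} to choose $e \gg 0$ so that $A^{1/p^e} \to A^{1/p^e}[R]$ has geometrically reduced generic fiber; the containment lemma is then applied to this Frobenius-twisted map, using the observations $A^{1/p^e}[R]_\perf = R_\perf$ and $(A^{1/p^e})_\perf = A_\perf$. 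You instead split the generic extension $\Frac(A) \subseteq \Frac(R)$ into its separable part $K_0$ and a purely inseparable remainder, take $B$ to be the integral closure of $A$ in $K_0$ (module-finite and normal by \cite[(31.B), Prop.]{Mat80}), apply the containment lemma to the generically \'etale map $A \to B$, and then use the normality of $B$ to force $R^{p^e} \subseteq B$ and hence $R_\perf \subseteq B_\perf$, so that an $A$-linear map on $B_\perf$ restricts to one on $R_\perf$. Your version trades the outside reference to \cite[Prop.\ 2.4.2.1]{DS19} for classical facts about finite field extensions and normalizations, giving a somewhat more self-contained proof; the paper's version is more uniform in that it never leaves the perfection tower of $A$ and $R$, and the use of \cite{DS19} makes the passage to a single well-chosen $e$ immediate. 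One small point of exposition: in the general case you say applying the separable case to $A \hookrightarrow B$ gives ``$B_\perf$ is $B$-solid'' and then pass through Lemma \ref{lem:solid-composition} -- but your separable-case argument already produces a nonzero element of $\Hom_A(B_\perf, A)$ directly, so the detour through $B$-solidity is unnecessary (though harmless).
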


\begin{proof}
Since $A \rightarrow R$ is a module-finite extension of Noetherian domains,
to show that $R_\perf$ is a solid $R$-algebra, it suffices to show that
$R_\perf$ is a solid $A$-algebra by \cite[Cor.\ 2.3]{Hoc94}. Consider
the chain of maps
\[A \longrightarrow A_\perf \xrightarrow{\varphi_\perf} R_\perf.\]
Since $A \rightarrow A_\perf$ induces an algebraic extension of fraction
fields, by Lemma \ref{lem:solid-composition} it further suffices to
show that $R_\perf$ is a solid $A_\perf$-algebra. Note that 
$A_\perf \rightarrow R_\perf$ need not be a finite map even though $A \rightarrow R$
is. Hence more work has to be done to establish $A_\perf$-solidity of $R_\perf$.
For every $e > 0$, let $A^{1/p^e}[R]$ denote the image of the relative Frobenius map
\[F^e_{R/A}\colon A^{1/p^e} \otimes_A R \longrightarrow R^{1/p^e}.\]
We claim that the kernel of $F^e_{R/A}$ is precisely the nilradical of 
$A^{1/p^e} \otimes_A R$.
Indeed, $F^e_{R/A}$ induces a homeomorphism on spectra, which means that
$\ker(F^e_{R/A})$ is contained in the nilradical of $A^{1/p^e} \otimes_A R$.
On the other hand,
the nilradical of $A^{1/p^e} \otimes_A R$ must be contained in $\ker(F^e_{R/A})$
because $R$ is reduced.
The upshot of this argument is that for each $e > 0$, 
\[(A^{1/p^e} \otimes_A R)_{\red} \simeq A^{1/p^e}[R].\]
Note also that the module-finiteness of $\varphi\colon A \rightarrow R$ implies that
for each $e > 0$, the map $A^{1/p^e} \rightarrow A^{1/p^e}[R]$ is a module-finite extension.
By \cite[Prop.\ 2.4.2.1]{DS19}, we may choose $e \gg 0$ such that the induced map
\[A^{1/p^e} \longrightarrow (A^{1/p^e} \otimes_A R)_{\red} \simeq A^{1/p^e}[R]\]
has geometrically reduced generic fiber. Module-finiteness implies
that having a geometrically reduced generic fiber is the same as having a smooth generic
fiber (see the proof of (3) in Theorem \ref{thm:excellent-dvr}). Moreover, $A^{1/p^e}[R]$ is
a torsion-free module over the normal domain $A^{1/p^e}$. Hence, by
Lemma \ref{lem:perf-containment} applied to the finite extension 
$A^{1/p^e} \hookrightarrow A^{1/p^e}[R]$, 
there exists a nonzero $c \in A^{1/p^e}$ such that
\[c\bigl(A^{1/p^e}[R]_\perf\bigr) \subseteq A^{1/p^e}_\perf\bigl[A^{1/p^e}[R]\bigr] =
A^{1/p^e}_\perf[R].\]
But $A^{1/p^e}[R]_\perf = R_\perf$ and $A^{1/p^e}_\perf = A_\perf$, which means that
left multiplication by $c$ induces a nonzero map $R_\perf \rightarrow R_\perf$
whose image lands inside $A_\perf[R]$. Restricting the codomain of this map
shows that $R_\perf$ is a solid $A_\perf[R]$-algebra. On the other hand,
$A_\perf \hookrightarrow A_\perf[R]$
is a module-finite extension of domains
 because it is the composition of the module-finite
maps 
\[A_\perf \xrightarrow{\id \otimes \varphi} A_\perf \otimes_{A} R 
\longtwoheadrightarrow A_\perf[R].\]
Then, $A_\perf[R]$ is $A_\perf$-solid
by Example \ref{ex:finite-solid}. Thus, $R_\perf$ is a solid $A_\perf$-algebra
upon applying Lemma \ref{lem:solid-composition} to the composition
$A_\perf \hookrightarrow A_\perf[R] \hookrightarrow R_\perf$.
\end{proof}

Upon taking a Noether normalization one can now conclude that the perfect closure
of any algebra that is of finite type over a prime characteristic field is 
solid over the algebra. In fact we obtain the following more general consequence.

\begin{corollary}
\label{cor:module-finite-regular}
Let $A \rightarrow R$ be a module-finite extension of domains such that
$A$ is regular and essentially of finite type over a complete local
ring of prime characteristic $p > 0$. Then, $R_\perf$ is a solid $R$-algebra.
\end{corollary}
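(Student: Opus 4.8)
The plan is to reduce the statement immediately to Proposition~\ref{prop:perf-solid}, whose hypotheses are essentially forced by the regularity of $A$ together with the structural assumption. First I would record the easy bookkeeping on $A$: being essentially of finite type over a complete local ring, $A$ is Noetherian; being regular, $A$ is locally a regular local ring, hence locally normal, and since normality is a local property, $A$ is a normal Noetherian domain. This supplies the standing hypotheses on the base ring required by Proposition~\ref{prop:perf-solid}.

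Next I would verify the key input that $A_\perf$ is a solid $A$-algebra. Since $A$ is regular it is strongly $F$-regular by Remark~\ref{rem:fsingimplications} (in particular $F$-pure). As $A$ is moreover essentially of finite type over a complete local ring of prime characteristic $p>0$, Theorem~\ref{thm:splittingwithgamma} upgrades this to: $A$ is split $F$-regular, and a fortiori Frobenius split. By Proposition~\ref{prop:colimits}$(\ref{prop:perf-split})$ the canonical map $A \to A_\perf$ then splits, so $A_\perf$ is indeed $A$-solid. (Equivalently, one may simply invoke the parenthetical ``for example, if $A$ is Frobenius split'' clause in the statement of Proposition~\ref{prop:perf-solid}.)

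With $A$ a normal Noetherian domain, $A \to R$ a module-finite extension of domains of prime characteristic $p>0$, and $A_\perf$ solid over $A$, Proposition~\ref{prop:perf-solid} then yields that $R_\perf$ is a solid $R$-algebra, as desired. I do not expect any genuine obstacle here; the one point deserving a moment's attention is that regularity of $A$ in this possibly non-local setting still forces Frobenius splitting, which is exactly what Theorem~\ref{thm:splittingwithgamma} provides once one notes that regular rings are strongly $F$-regular. This also clarifies the role of the hypothesis ``essentially of finite type over a complete local ring'' rather than merely ``excellent'': it is precisely what (via the gamma construction underlying Theorem~\ref{thm:splittingwithgamma}) guarantees that the regular ring $A$ is Frobenius split, and hence that $A_\perf$ is $A$-solid, which is the only nontrivial hypothesis of Proposition~\ref{prop:perf-solid} that must be checked.
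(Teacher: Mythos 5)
Your proposal is correct and follows essentially the same route as the paper: reduce to Proposition~\ref{prop:perf-solid} by showing $A$ is normal Noetherian and $A_\perf$ is $A$-solid, with the latter coming from Theorem~\ref{thm:splittingwithgamma} plus Proposition~\ref{prop:colimits}$(\ref{prop:perf-split})$. The only cosmetic difference is that the paper passes directly through $F$-purity of $A$ (regular $\Rightarrow$ $F$-pure $\Rightarrow$ Frobenius split via Theorem~\ref{thm:splittingwithgamma}), whereas you detour through strong $F$-regularity and split $F$-regularity; both chains land in the same place.
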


\begin{proof}
Since regular rings are $F$-pure, $A$ is Frobenius split by Theorem 
\ref{thm:splittingwithgamma}, and hence $A_\perf$ is $A$-solid by
Proposition \ref{prop:colimits}$(\ref{prop:perf-split})$. 
Since $A$ is also normal, we are done by Proposition \ref{prop:perf-solid}.
\end{proof}

Finally, we observe that the analogue of Proposition \ref{prop:perf-solid} for
the solidity of the absolute integral closure has a significantly simpler proof.

\begin{proposition}
\label{prop:plus-solid}
Let $A \rightarrow R$ be a module-finite extension of Noetherian domains. 
If $A^+$ is a solid $A$-algebra, then $R^+$ is a solid $R$-algebra.
\end{proposition}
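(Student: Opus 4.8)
The plan is to observe that a module-finite extension of domains does not change the absolute integral closure, and then to invoke descent of solidity along finite extensions of Noetherian domains.

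First I would establish the ring-theoretic identity $R^+ = A^+$. Since $A \to R$ is module-finite, the induced extension of fraction fields $\Frac(A) \hookrightarrow \Frac(R)$ is finite; hence, fixing an algebraic closure $\Omega$ of $\Frac(R)$, the field $\Omega$ is also an algebraic closure of $\Frac(A)$. Because $R$ is integral over $A$, transitivity of integral dependence shows that an element of $\Omega$ is integral over $R$ if and only if it is integral over $A$, so the integral closure of $R$ in $\Omega$ coincides with the integral closure of $A$ in $\Omega$; that is, $R^+ = A^+$ as $A$-algebras.

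With this identification in hand, the hypothesis that $A^+$ is a solid $A$-algebra says precisely that $R^+$ is a solid $A$-algebra. Since $A \to R$ is a module-finite extension of Noetherian domains, I would then invoke \cite[Cor.\ 2.3]{Hoc94}, exactly as in the proof of Proposition \ref{prop:perf-solid}, to conclude that an $R$-algebra which is $A$-solid is also $R$-solid; applied to $R^+$ this yields that $R^+$ is a solid $R$-algebra. (If one prefers to avoid \cite{Hoc94}: $R$ is itself $A$-solid by Example \ref{ex:finite-solid}, so $\Hom_A(R,A) \neq 0$, and tensor--hom adjunction $\Hom_A(R^+,A) \cong \Hom_R\bigl(R^+,\Hom_A(R,A)\bigr)$ together with the fact that a nonzero finitely generated torsion-free module over a Noetherian domain is solid produces a nonzero $R$-linear map $R^+ \to R$.)

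There is no real obstacle here; the only point demanding a moment's care is the identity $R^+ = A^+$, and even that is standard. The substance of the statement lies in the contrast with Proposition \ref{prop:perf-solid}: its perfect-closure analogue genuinely required the Hochster--Huneke trace estimate (Lemma \ref{lem:perf-containment}) because $A_\perf \to R_\perf$ need not be finite, whereas here no such work is needed, since $A^+$ is already maximal among integral extensions and is literally unchanged upon replacing $A$ by the finite extension $R$.
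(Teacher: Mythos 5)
Your proof matches the paper's almost verbatim: both establish the identification $R^+ = A^+$ coming from the module-finite extension $A \to R$, and both then cite \cite[Cor.\ 2.3]{Hoc94} to descend $A$-solidity of $R^+$ to $R$-solidity. The alternative tensor--hom argument you sketch in parentheses is a correct, self-contained substitute for the Hochster citation, but it is not needed and does not change the substance.
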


\begin{proof}
The module-finite extension $A \rightarrow R$ induces an 
an isomorphism $A^+ \overset{\sim}{\to} R^+$. Thus, $R^+$
is a solid $A$-algebra, and so, $R^+$ is a solid $R$-algebra
by \cite[Cor.\ 2.3]{Hoc94}.
\end{proof}

\begin{remark}
If $R$ is a domain that is also a finite type algebra over a field $k$, 
then Proposition \ref{prop:plus-solid} shows that $R$-solidity of 
$R^+$ reduces to the question of the solidity of the absolute
integral closure of a polynomial ring over $k$ upon taking a suitable 
Noether normalization. However, somewhat surprisingly, 
the question of whether $k[x_1,x_2,\dots,x_n]^+$
is a solid $k[x_1,x_2,\dots,x_n]$-algebra seems to be completely open unless
unless $k$ has characteristic zero, in which case  
solidity follows by Proposition \ref{prop:colimits}$(\ref{prop:A+-solid-char0})$. 
See Question \ref{q:absolute-solid} and Proposition \ref{prop:evaluation-1}
for some further details regarding this.
In fact, the authors do not know of a single non-excellent example
of a Noetherian domain $A$ of positive characteristic for which $A^+$
is a solid $A$-algebra (Question \ref{q:A+-solid-nonexc}).
\end{remark}

\addtocontents{toc}{\protect\smallskip}
\section{Some open questions}\label{sect:openquestions}
\counterwithout{subsubsection}{subsection}
\counterwithin{subsubsection}{section}
For the reader's convenience, we collect a few questions 
that to the best of our knowledge are open and 
related to the results of this paper and \cite{DM}. We will
summarize what 
is known about these questions.\medskip

\par We begin with some questions 
on the variants of the usual notion of strong $F$-regularity.
While split $F$-regularity and $F$-pure regularity do not coincide even 
for excellent Henselian 
regular local rings (Theorem \ref{thm:F-pure-not-split-F}),
we showed that every regular ring essentially of finite type over an
excellent local ring is $F$-pure regular (Theorem \ref{eft-G-ring}). 
This raises the following question.

\begin{question}
\label{q:regular-F-pure}
Is every excellent regular ring of prime characteristic $F$-pure regular?
\end{question}

\noindent We expect (though cannot show) that there are regular rings that 
do not satisfy the ideal-theoretic characterization of $F$-pure regularity 
from Proposition \ref{prop:ideal-theoretic}.
However, we do not know if any such examples can be excellent.
Theorem \ref{eft-G-ring} and Remark \ref{rem:whyexcellentinq1} show that the
characterization of $F$-pure regularity in Proposition
\ref{prop:ideal-theoretic} is satisfied for regular or even strongly $F$-regular
rings of prime characteristic that are essentially of finite type over an
excellent local ring.
This suggests that while Question \ref{q:regular-F-pure} may be true for
excellent regular rings, there may be counterexamples for non-excellent regular
rings.
On the other hand, any example of a regular ring
that is not $F$-pure regular, excellent or not, would be interesting in 
its own right because it will show that Hochster's tight closure notion of
strong $F$-regularity (Definition \ref{def:freg}$(\ref{def:fsingspureregapp})$)
does not coincide with $F$-pure regularity for regular rings. In particular,
this would indicate that Definition \ref{def:freg}$(\ref{def:fsingspureregapp})$
is the most appropriate variant of the usual notion of strong $F$-regularity
in a non-$F$-finite setting.

\par In \cite{DM}, we obtained examples of excellent $F$-pure rings that are not
Frobenius split. However, the rings that we construct satisfy the stronger property 
that they do not admit any nonzero $p^{-1}$-linear maps. Thus the following 
question arises.

\begin{question}
\label{q:Fpure-Fsplit}
Suppose $R$ is a Noetherian $F$-pure ring of prime characteristic $p > 0$ that admits a nonzero
$p^{-1}$-linear map. Is $R$ Frobenius split?
\end{question}

\noindent We showed in Proposition \ref{thm:split-F-regular-DVR} that Question
\ref{q:Fpure-Fsplit} has an affirmative answer when $R$ is a
DVR, and in \cite{DS18} it was shown
that the question has an affirmative answer when $R$ is a generically
$F$-finite domain. However, to the best of our knowledge Question
\ref{q:Fpure-Fsplit} is open even for excellent local rings of Krull 
dimension one that are not
regular, and for regular local rings of Krull dimension greater than one. We expect 
any progress on Question \ref{q:Fpure-Fsplit} to also shed light on the 
following question.

\begin{question}
\label{q:Fsplit-reg}
Suppose $R$ is a regular local ring that is Frobenius split. Is $R$ split
$F$-regular?
\end{question}

\noindent This question also seems to be open when $R$ has Krull dimension
greater than one, while the case of Krull dimension one follows by
Proposition \ref{thm:split-F-regular-DVR}. A likely first step is to try and
see if the convergent power series rings $K_n(k)$ are split $F$-regular 
whenever $k$ is a complete non-Archimedean field that admits nonzero continuous
functionals $k^{1/p} \rightarrow k$ and $n > 1$. 
Note that we know that $K_n(k)$ is Frobenius
split for any such $k$ by \cite[Thm.\ 4.4]{DM}, while split $F$-regularity
is known in a few cases;  see Proposition \ref {prop:split-F-regular-Tate}
and  Remark \ref{rem:split-Freg-TnKn}.\medskip

\par Switching gears, we now highlight some open questions on the solidity
of absolute integral closures. The next question is particularly
surprising.

\begin{question}
\label{q:absolute-solid}
Let $k$ be a field of prime characteristic $p > 0$ (one can even take
$k = \FF_p$). Is the absolute integral closure $k[x]^+$ a solid
$k[x]$ algebra? 
\end{question}

\noindent Note that the question has an affirmative answer if we replace
the polynomial ring over $k$ by the power series ring $k\llbracket x\rrbracket$ by
Lemma \ref{lem:fed12}, because the map 
$k\llbracket x\rrbracket \rightarrow k\llbracket x\rrbracket^+$ is pure since
$k\llbracket x\rrbracket$ is regular,
and hence a splinter. We would now like to show that if $k$
is $F$-finite, then solidity of $k[x]^+$ implies that the map
$k[x] \rightarrow k[x]^+$ splits. 

\begin{proposition}
\label{prop:evaluation-1}
Let $R$ be any domain of prime characteristic $p > 0$. Then, we 
have the following:
\begin{enumerate}[label=$(\roman*)$,ref=\roman*]
  \item\label{prop:im1} Let $S$ be any $R$-algebra. Then,
    \[
	I_{S/R} \coloneqq \im\bigl(\Hom_R(S,R) \xrightarrow{\ev @ 1} R\bigr)
	\]
	is a uniformly $F$-compatible ideal of $R$.
	
	\item\label{prop:splits}
    	Suppose $R$ is Noetherian, generically $F$-finite, and split $F$-regular.
	If $S$ is a solid $R$-algebra, then $R \rightarrow S$ splits.
	
	\item\label{prop:A+-splits} 
	If $R$ is Noetherian, generically $F$-finite, split $F$-regular,
	and $R^+$ is a solid $R$-algebra, then $R \rightarrow R^+$ splits.
	\end{enumerate}
\end{proposition}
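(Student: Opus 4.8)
The plan is to prove the three parts in order, since each feeds the next. For $(\ref{prop:im1})$, recall that an ideal $J$ of $R$ is \emph{uniformly $F$-compatible} if $\phi(F^e_{R*}J) \subseteq J$ for every $e > 0$ and every $\phi \in \Hom_R(F^e_{R*}R, R)$. First I would fix $\phi \in \Hom_R(F^e_{R*}R,R)$ and show $\phi(F^e_{R*}I_{S/R}) \subseteq I_{S/R}$. An element of $I_{S/R}$ is $\psi(1)$ for some $\psi \in \Hom_R(S,R)$; I must show $\phi(F^e_{R*}\psi(1)^{p^e}\cdot(\text{something}))$... more carefully: a typical generator of $F^e_{R*}I_{S/R}$ as an $R$-module is $F^e_{R*}(\psi(1))$ where I think of $\psi(1)\in R \cong F^e_{R*}R$ as a scalar times $1$, so $\phi(F^e_{R*}\psi(1)) = \phi(\psi(1)^{p^e}\cdot 1) = \psi(1)^{p^e}\phi(1)$. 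The key observation is that this equals $(\phi \circ F^e_{S*}\psi')(1)$ for a suitable $\psi'$: precompose $\psi$ with the Frobenius and with multiplication, and use that $S \xrightarrow{F^e_S} F^e_{S*}S \xrightarrow{F^e_{S*}\psi} F^e_{R*}R \xrightarrow{\phi} R$ is an $R$-linear map $S \to R$ sending $1$ to $\psi(1)^{p^e}\phi(1)$ (using $\psi$ is $R$-linear so $F^e_{S*}\psi$ sends $1$ to $F^e_{R*}\psi(1)$, interpreted correctly). Hence $\psi(1)^{p^e}\phi(1) \in I_{S/R}$, which is what we need. I would write this out with the evaluation-at-$1$ formalism of Remark \ref{rem:useful-solid-facts} to keep the bookkeeping honest.

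For $(\ref{prop:splits})$, suppose $S$ is a solid $R$-algebra, so $I_{S/R} \neq 0$ by Remark \ref{rem:useful-solid-facts}. By $(\ref{prop:im1})$, $I_{S/R}$ is a nonzero uniformly $F$-compatible ideal of $R$. Now I would invoke the fact that in a split $F$-regular ring the only nonzero uniformly $F$-compatible ideal is the unit ideal — this is a standard consequence of split $F$-regularity (for every $c$ in $R^\circ$, equivalently every nonzero $c$ since $R$ is a domain, there is $e>0$ and $\phi \in \Hom_R(F^e_{R*}R,R)$ with $\phi(F^e_{R*}c) = 1$, so any nonzero $F$-compatible ideal contains $1$). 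Since $R$ is generically $F$-finite, one should check split $F$-regularity genuinely gives what is needed here; but the cleaner route is: because $1 \in I_{S/R}$, there is $\psi \in \Hom_R(S,R)$ with $\psi(1) = 1$, i.e.\ $R \to S$ splits. I would phrase the "only unit ideal is $F$-compatible" step carefully, perhaps citing the relevant property of split $F$-regular rings or proving it inline from Definition \ref{def:freg}$(\ref{def:fsingssplitreg})$.

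Finally $(\ref{prop:A+-splits})$ is immediate: take $S = R^+$ in $(\ref{prop:splits})$. The main obstacle is really in $(\ref{prop:im1})$ — getting the Frobenius-twisted composition of $R$-linear maps to land correctly and confirming the identity $\phi(F^e_{R*}\psi(1)) = (\phi\circ F^e_{S*}\psi\circ F^e_S)(1)$ with all the module structures tracked properly; once that is nailed down, the rest follows formally from the ideal-theoretic characterization of split $F$-regularity. I expect no serious difficulty in $(\ref{prop:splits})$ or $(\ref{prop:A+-splits})$ beyond invoking that in a split $F$-regular ring every nonzero uniformly $F$-compatible ideal is the unit ideal.
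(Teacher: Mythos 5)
Your argument for $(\ref{prop:im1})$ has the same structure as the paper's: composing $\psi\in\Hom_R(S,R)$, restricted along $F^e$, with a given $\phi\in\Hom_R(F^e_{R*}R,R)$ yields a new element of $\Hom_R(S,R)$ whose value at $1$ is $\phi(F^e_{R*}\psi(1))$, which therefore lies in $I_{S/R}$. But there is a genuine bookkeeping slip in your intermediate calculation: $F^e_{R*}(\psi(1))$ is \emph{not} the scalar $\psi(1)$ acting on $F^e_{R*}1$. The element $F^e_{R*}(\psi(1))\in F^e_{R*}R$ has underlying ring element $\psi(1)$, whereas $\psi(1)\cdot F^e_{R*}1 = F^e_{R*}(\psi(1)^{p^e})$; these coincide only when $\psi(1)$ is a $p^e$-th power in $R$. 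So $\phi(F^e_{R*}\psi(1))\neq\psi(1)^{p^e}\phi(1)$ in general, and ``$\psi(1)^{p^e}\phi(1)\in I_{S/R}$'' is not the statement you actually need. Luckily, the composition you exhibit, $\phi\circ F^e_{S*}\psi\circ F^e_S\colon S\to R$, really does send $1$ to $\phi(F^e_{R*}\psi(1))$ (not to $\psi(1)^{p^e}\phi(1)$), which is exactly what needs to lie in $I_{S/R}$; once the spurious identity is removed, your argument coincides with the paper's commutative-diagram proof, which packages this same observation via the map $\eta_\phi$.

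For $(\ref{prop:splits})$ your route is genuinely different from, and simpler than, the paper's. The paper first uses generic $F$-finiteness together with \cite[Thm.\ 3.2]{DS18} to conclude $R$ is $F$-finite, then invokes Schwede's theorem that in this setting the big test ideal is the smallest nonzero uniformly $F$-compatible ideal, and finally uses that this ideal equals $R$ for $F$-finite split $F$-regular rings. Your direct argument bypasses all of that: any nonzero $c$ in a nonzero uniformly $F$-compatible ideal $J$ lies in $R^\circ$ because $R$ is a domain, so split $F$-regularity gives $e>0$ and $\phi\in\Hom_R(F^e_{R*}R,R)$ with $\phi(F^e_{R*}c)=1$, hence $1\in J$ by compatibility. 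Applied to $J=I_{S/R}$, which is nonzero by solidity of $S$ and uniformly $F$-compatible by $(\ref{prop:im1})$, this gives $1\in I_{S/R}$, i.e.\ a splitting. Notice your proof uses neither generic $F$-finiteness nor Noetherianity, so it establishes $(\ref{prop:splits})$ (and hence $(\ref{prop:A+-splits})$, by taking $S=R^+$) under weaker hypotheses than stated; it is a worthwhile simplification. One sentence of care is warranted when you hedge about whether ``split $F$-regularity genuinely gives what is needed'': it does, with no appeal to the generic $F$-finiteness hypothesis, for precisely the reason you give in parentheses.
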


\noindent Recall that an ideal $I$ of a ring $R$ of prime characteristic is 
\emph{uniformly $F$-compatible} if, for all $e \in \ZZ_{> 0}$ and
for every $p^{-e}$-linear map $\varphi$, we have 
$\varphi(F^e_{R*}I) \subseteq I$.
See \cite[Def.\ 3.1]{Sch10}.

\begin{proof}
The basic idea of the proof of $(\ref{prop:im1})$ comes from the 
proof of \cite[Thm.\ 5]{MS19}.
If $R$ is not $F$-solid, then $I_{S/R}$ is uniformly $F$-compatible because 
the only $p^{-e}$-linear map on $R$ is the trivial one for any $e > 0$ by
Remark \ref{rem:F-solidity-obs}$(\ref {rem:Fsolid-all-e})$.
It therefore suffices to consider the case when $R$ is $F$-solid. Consider an
integer $e > 0$, and a $p^{-1}$-linear map $\phi \in \Hom_R(F^e_{R*}R, R)$. 
Note that we have a natural map
\[
\eta_\phi\colon \Hom_{F^e_{R*}R}(F^e_{S*}S, F^e_{R*}R) \longrightarrow \Hom_R(S,R)
\]
given by $\eta_\phi(\psi) \coloneqq \phi \circ \psi \circ F^e_S$, 
where $F^e_S$ is the $e$-th
iterate of the Frobenius on $S$. We then get a commutative square
\[
\begin{tikzcd}
  \Hom_{F^e_{R*}R}(F^e_{S*}S, F^e_{R*}R) \arrow[r, "\eta_\phi"] \arrow[d, "\ev @
  1"']
    &  \Hom_R(S,R) \arrow[d, "\ev @ 1"] \\
  F_{R*}R \arrow[r, "\phi"]
&R \end{tikzcd}
\]
The image of the left vertical evaluation map is just $F^e_{R*}I_{S/R}$. By the
commutativity of the diagram and the definition of $I_{S/R}$, it now follows that 
\[
\phi(F^e_{R*}I_{S/R}) \subseteq I_{S/R}.
\]
Since $\phi$ was an arbitrary $p^{-e}$-linear map of $R$ for an arbitrary $e > 0$, 
this proves $(\ref{prop:im1})$.

\par $(\ref{prop:A+-splits})$ clearly follows from  $(\ref{prop:splits})$, and hence
it remains to show $(\ref{prop:splits})$. If $R$ is Noetherian,
generically $F$-finite and split $F$-regular, then $R$ is $F$-solid, and hence,
$F$-finite by \cite[Thm.\ 3.2]{DS18}. Then, $R$ has a smallest nonzero uniformly $F$-compatible ideal
with respect to inclusion,
namely the big or non-finitistic test ideal $\tau$ \cite[Thm.\ 6.3]{Sch10}. However, 
$F$-finiteness and split $F$-regularity implies that $\tau = R$. Now if
$S$ is a solid $R$-algebra, then the ideal $I_{S/R}$ from 
$(\ref{prop:im1})$ is nonzero by Remark \ref{rem:useful-solid-facts}.
Since $I_{S/R}$ is a uniformly $F$-compatible ideal by $(\ref{prop:im1})$ ,
it follows that $R = \tau \subseteq I_{S/R}$ by the minimality of $\tau$. 
Thus, $1 \in I_{S/R}$, which is equivalent to the splitting of  $R \rightarrow S$.
\end{proof}

\begin{remark}
A result of Hochster and Huneke shows that every weakly $F$-regular Noetherian domain $R$ is 
a splinter, in the sense that every module-finite extension of domains $R \hookrightarrow S$ 
splits \cite[Thm.\ 5.17]{HH94splitting}. Note that $S$ is a solid $R$-algebra for any such finite extension by Example
\ref{ex:finite-solid}. Thus, it appears that the property of being a
Noetherian $F$-finite split $F$-regular domain is quite a bit stronger than that
of being a splinter by Proposition \ref{prop:evaluation-1}$(\ref{prop:splits})$
because this property implies splitting of any solid algebra, module-finite or not.
Proposition \ref{prop:evaluation-1} and Lemma \ref{lem:fed12}
also have the interesting consequence
that when $(R,\fm)$ is $F$-finite, split $F$-regular, and complete
local, then purity of an $R$-algebra $S$ is equivalent to the 
$R$-solidity of $S$.
\end{remark}

\par Since conjecturally Noetherian $F$-finite splinters are split $F$-regular,
the following question then arises.

\begin{question}
\label{q:splinters}
Let $R$ be a Noetherian $F$-finite splinter. If $S$ is a solid $R$-algebra 
(not necessarily module-finite),
then does $R \rightarrow S$ split?
\end{question}

\noindent A negative answer to Question \ref{q:splinters} will imply that Noetherian
$F$-finite splinters are not split $F$-regular in general, although both notions
are known to coincide for $\QQ$-Gorenstein rings \cite[Thm.\ 1.1]{Sin99}. 
On the other hand, a positive answer
will likely shed light on the equivalence of weak and split $F$-regularity for
Noetherian $F$-finite rings, which is one of the outstanding open problems in
prime characteristic commutative algebra.

\par The uniform $F$-compatibility of the images of evaluation at $1$ maps
raises the question of characterizing the images of such maps, at least for
some special extensions of rings.

\begin{question}
\label{q:image-ev1}
Suppose $R$ is a Noetherian excellent domain of prime characteristic
$p > 0$ (one may assume $R$ is $F$-finite or complete local).
If $R^+$ is a solid $R$-algebra, then what is the image of evaluation
at $1$ map
\[
  \Hom_R(R^+,R) \xrightarrow{\ev @ 1} R\,?
\]
\end{question}

\noindent Note that the solidity assumption is important in Question
\ref{q:image-ev1} because Proposition \ref{prop:BCM-not-solid} shows that there are excellent 
Henselian regular local rings $R$ of prime characteristic for which $R^+$
is not a solid $R$-algebra.
Using the notation of Proposition \ref{prop:evaluation-1},
suppose $I_{R^+/R}$ is the image of this evaluation at $1$ map.
If $I_{R^+/R} = R$, then $R$ is a splinter because the map 
$R \hookrightarrow R^+$ splits, and so, must also be pure. However,
we do not know if the converse holds,
even in the case where $R$ is $F$-finite. 
In Appendix \ref{Karen-appendix},
Karen E. Smith will provide an answer to Question \ref{q:image-ev1}
when $R$ is Gorenstein and either complete local or $\NN$-graded and 
finitely generated over $R_0 = k$ a field, by reinterpreting results
from her fundamental thesis (see Theorems \ref {theorem:Smith-thesis} and
\ref {theorem:Smith-thesis2}).

\par One can replace $R^+$ by $R_\perf$ in Question \ref{q:image-ev1} and
ask what the image $I_{R_\perf/R}$ of the evaluation at $1$ map
$\Hom_R(R_\perf,R) \rightarrow R$ is. In this case, regardless of whether 
$R$ is excellent,  $I_{R_\perf/R} = R$ if and only if $R$
is Frobenius split. This equivalence follows by Proposition 
\ref{prop:colimits}$(\ref{prop:perf-split})$. Now suppose $R$ is not necessarily
Frobenius split. Then $I_{R_\perf/R} \neq R$. Let $\fp$ be a prime ideal
of $R$ such that $I_{R_\perf/R} \nsubseteq\fp$ (no such prime ideal will exist
if $R_\perf$ is not a $R$-solid). Choose an element 
$a \in I_{R_\perf/R}$ such that $a \notin \fp$. Then there exists an $R$-linear map
$\phi\colon R_\perf \rightarrow R$ that sends $1$ to $a$. Localizing at $\fp$
and using the fact that perfect closures commute with localization, we then
get a map $\phi_\fp\colon (R_\fp)_\perf \rightarrow R_\fp$ that maps $1$ to a 
unit in $R_\fp$ because $a \notin \fp$. Then the composition
\[
  (R_\fp)_\perf \overset{\phi_\fp}{\longrightarrow} R_\fp \xrightarrow{-\cdot a^{-1}} R_\fp
\]
sends $1$ to $1$. Consequently, $R_\fp$ is Frobenius split. 
Thus, the closed subset $Z$ of $\Spec(R)$ defined by $I_{R_\perf/R}$ contains the
non-Frobenius split locus of $\Spec(R)$. However, since 
\[
R_\fp \otimes_R \Hom_R(R_\perf,R) \neq \Hom_{R_\fp}\bigl((R_\fp)_\perf,
R_\fp\bigr)
\]
in general,
it is not clear to the authors if $Z$ coincides with the non-Frobenius split 
locus of $\Spec(R)$. Thus, we raise the following question:

\begin{question}
\label{q:non-F-split-locus}
Let $R$ be a Noetherian domain of prime characteristic $p > 0$. Suppose $R_\perf$
is a solid $R$-algebra. Then, does the image of the evaluation at $1$ map
\[
  \Hom(R_\perf, R) \xrightarrow{\ev @ 1} R
\]
define the non-Frobenius split
locus of $R$?
\end{question}

\par We showed that a Noetherian domain $R$
with solid $R^+$ is Japanese. However, we do not know any examples 
in prime characteristic of Noetherian domains with solid absolute integral closure
when such domains are not complete local.
This raises the final question in our summary of open questions.

\begin{question}
\label{q:A+-solid-nonexc}
Is there a non-excellent Noetherian domain of prime characteristic $p > 0$
for which $R^+$ is a solid $R$-algebra?
\end{question}

\noindent Examples of non-excellent Noetherian domains $R$ with solid $R^+$
abound when $R$ contains $\QQ$ because solidity of $R^+$ is then equivalent
to $R$ being Japanese by Corollary \ref{cor:N-1-char0}. In prime characteristic,
$R^+$ can be replaced by $R_\perf$ and then the analogue of 
Question \ref{q:A+-solid-nonexc} for $R_\perf$ has an affirmative answer by Corollary \ref{cor:non-exc-Fsplit}.

\addtocontents{toc}{\protect\medskip}
\makeatletter
\addtocontents{toc}{\@tocline{-1}{0pt}{0pt}{}{\itshape}{Appendix by Karen E. Smith}{}}
\makeatother
\appendix
\section{Solidity of absolute integral closures and the test ideal\texorpdfstring{\except{toc}{\\{\footnotesize\MakeUppercase{Karen E.
Smith}}}}{ (by Karen E. Smith)}}
\label{Karen-appendix}
\counterwithout{subsubsection}{section}
\counterwithin{subsubsection}{subsection}

The purpose of this appendix is to 
 address the following question raised in Section \ref{sect:openquestions}:

\begin{question}[see Question \ref {q:image-ev1}]
\label{q1}
Suppose  $R$ is a complete local domain of prime characteristic $p > 0$. What is the 
 image of  the ``evaluation at 1"  map
 \[
\Hom_R(R^+, R) \longrightarrow R?
\]
\end{question}

Here, $R^+$ denotes the {\it absolute integral closure}  of $R$ -- that is,  the integral closure of $R$ in an algebraic closure of its fraction field. 

\medskip
We prove the following satisfying answer, at least in the Gorenstein case: 

\begin{theorem}
\label{theorem:Smith-thesis}
Let $(R,\fm)$ be a complete local Noetherian Gorenstein domain of prime characteristic
$p > 0$. Then the image of the evaluation at $1$ map
\[
\Hom_R(R^+, R) \xrightarrow{\ev @ 1} R
\]
is the test ideal $\tau(R)$ of $R$.
\end{theorem}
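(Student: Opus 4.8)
The plan is to show the two containments $\tau(R) \subseteq I_{R^+/R}$ and $I_{R^+/R} \subseteq \tau(R)$, where $I_{R^+/R}$ denotes the image of the evaluation-at-$1$ map $\Hom_R(R^+,R) \to R$, and $\tau(R)$ is the (big/non-finitistic) test ideal. Since $R$ is complete local Gorenstein, it is approximately Gorenstein and $F$-finiteness considerations can be replaced by Matlis duality arguments; moreover the test ideal can be computed via the tight closure of the zero submodule in $E = E_R(\kappa)$, namely $\tau(R) = \Ann_R(0^*_E)$. For the containment $I_{R^+/R} \subseteq \tau(R)$: given $\varphi \in \Hom_R(R^+,R)$ with $c = \varphi(1)$, one shows $c$ multiplies $0^*_E$ into $0$. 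This uses the key fact (Hochster--Huneke) that $R^+$ is a balanced big Cohen--Macaulay $R$-algebra when $R$ is an excellent local domain of prime characteristic, so that the map $R \to R^+$ detects tight closure: if $z \in 0^*_E$, then $z$ maps to $0$ in $E \otimes_R R^+$, and applying $\varphi$ (after the appropriate identifications via Matlis duality, using that $R$ is Gorenstein so $E$ is $H^d_\fm(R)$ up to completion) yields $cz = 0$. Hence $c \in \Ann_R(0^*_E) = \tau(R)$.

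For the reverse containment $\tau(R) \subseteq I_{R^+/R}$, the idea is to reinterpret the characterization of test elements: a test element $c$ is precisely an element such that $c$ times any tight closure relation ``is captured'' — and in the Gorenstein complete local setting, one can build, for each test element $c$ (at least each element of $\tau(R)$, using that $\tau(R)$ is generated by or contains test elements), an explicit $R$-linear map $R^+ \to R$ sending $1$ to $c$. Concretely, because $R$ is Gorenstein, $\Hom_R(R^+,R) \cong \Hom_R(R^+, \omega_R)$, and the latter is computed by local duality as a submodule of something like $\Hom_R(H^d_\fm(R^+), E)^\vee$; the big Cohen--Macaulayness of $R^+$ forces $H^i_\fm(R^+) = 0$ for $i < d$, and $H^d_\fm(R^+)$ receives $H^d_\fm(R) \to H^d_\fm(R^+)$ whose kernel is exactly $0^*_{H^d_\fm(R)}$. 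Dualizing, one gets a surjection from $\Hom_R(R^+, E)$-type data onto $\Ann$-information, and tracking where $1 \in R^+$ goes recovers $\tau(R)$. This is essentially the content of Smith's thesis reinterpreted: the test ideal equals the ``finitistic tight closure'' annihilator, and in the Gorenstein case this matches the image of the dual of $R \hookrightarrow R^+$.

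The main obstacle I expect is the careful bookkeeping in the Matlis/local duality dictionary that converts ``$\varphi(1) = c$'' into ``$c \cdot 0^*_E = 0$'' and back — in particular making precise, for the non-Noetherian, non-finitely-generated module $R^+$, that $\Hom_R(R^+, R) \otimes$-duality commutes correctly with local cohomology, and that the identification $\tau(R) = \Ann_R(0^*_{H^d_\fm(R)})$ (Smith's theorem that the test ideal equals the parameter test ideal for Gorenstein rings) is available in exactly this generality. One should cite the relevant results of the excerpt and of \cite{Sm93,HH90,HH94} for: (i) $R^+$ is a big Cohen--Macaulay algebra for excellent local domains of characteristic $p$; (ii) tight closure in $E$ (equivalently in $H^d_\fm(R)$ for Gorenstein $R$) is detected by $R \to R^+$; and (iii) the equality of the test ideal with the annihilator of this tight closure. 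With those in hand, the proof is a diagram chase through Matlis duality; the novelty is the reinterpretation, not the mechanics.
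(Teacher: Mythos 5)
Your proposal is correct and follows essentially the same route as the paper's proof: the key ingredients are identical, namely Smith's theorem identifying $\ker\bigl(H^d_\fm(R) \to H^d_\fm(R^+)\bigr)$ with $0^{*\fg}_{H^d_\fm(R)}$, the Matlis duality dictionary converting this kernel into the image of the evaluation-at-$1$ map, and the Gorenstein identifications $\omega_R \cong R$ and $E \cong H^d_\fm(R)$. The only cosmetic difference is that you phrase the argument as two containments while the paper runs the Matlis duality chase once to get the equality directly; your description of the bookkeeping (e.g.\ the adjunction turning the dual of $H^d_\fm(R) \to H^d_\fm(R^+)$ into evaluation at $\phi(1)$) matches the paper's Lemmas \ref{lem:Matlis-dual} and \ref{lem2}.
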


A graded version also holds; see Theorem \ref{theorem:Smith-thesis2}.
\medskip

The image of the  evaluation map in Question \ref{q1}  was shown to be a  {\it
  uniformly $F$-compatible ideal}  in general in  Proposition \ref{prop:evaluation-1} of the main paper. 
  Uniformly $F$-compatible ideals,
introduced by Schwede as a  prime characteristic analog of  {\it log canonical centers }  (see \cite[Def.\ 3.1]{Sch10}), are a generalization of the $F$-ideals introduced in \cite{Sm94} as annihilators of Frobenius-stable submodules of the top local cohomology module of a Noetherian 
local ring $(R, \fm)$ and later generalized in 
\cite{Sm95b} and \cite{Gennady01}.
In the Gorenstein case, there is a unique smallest nonzero $F$-ideal (or uniformly  $F$-compatible ideal), which was shown in \cite{Sm94} to be the famous {\bf test ideal}  $\tau(A) $ of Hochster and Huneke's tight closure theory.  This was a key step in the proof that the tight closure of a parameter ideal in a local excellent ring is equal to its plus closure \cite{Sm94}.
Our proof of Theorem \ref{theorem:Smith-thesis} invokes the methods of  \cite{Sm94}.

\begin{remark}
Theorem \ref{theorem:Smith-thesis}, in particular, implies that $R^+$ is a {\it solid} $R$-algebra (see Definition \ref{def:solid-modules}), in light of 
Hochster and Huneke's result that a complete local domain always admits a  {\it test element} 
\cite[\S6]{HH90}.
\end{remark}

\subsection{Tight Closure and Test Ideals}

Tight closure, introduced by Hochster and Huneke, is a closure operation on submodules of a fixed ambient module $M$ over an excellent ring of prime characteristic $p > 0$. One property is that, for modules over a regular ring, all submodules are tightly closed. Another property is that local domains  for which all ideals are tightly closed are Cohen-Macaulay \cite[Rem.\ 7.1]{HH90}.  Using these two properties,  Hochster and Huneke gave  a simple conceptual proof that direct summands of regular rings are Cohen-Macaulay in prime characteristic, and many other important results in commutative algebra.  The basic theory is developed in  \cite{HH90}.

\begin{definition}
Let $R$ be a Noetherian domain of prime characteristic $p > 0$. Let $M$ be any $R$-module, and let $N\subset M$ be a submodule. 
The \emph{tight closure of $N$ in $M$}, denoted $N^*_M$, is the 
collection of elements $m \in M$ for which there exists a nonzero element $c \in R$ such
that $c \otimes m$ lies in the image of the canonical map
\begin{equation}\label{eq1}
F^e_*R \otimes_R N \rightarrow F^e_*R \otimes_R M
\end{equation}
for all $e \gg 0$.
\end{definition}

Two special cases are of particular interest. One is when the ambient module $M$ is $R$, so we are computing tight closures of {\it ideals}, and the other is when $M$ is arbitrary but $N$ is zero.

To verify a module is tightly closed using the definition above, we would need to consider maps of the form  (\ref{eq1}) above {\it for all nonzero} $c$. 
Fortunately, it turns out that so-called {\bf test elements} exist that can be used to check an element is in the  tight closure  by checking relations (\ref{eq1})  for {\it just one ``test element" $c$}.

\begin{definition}
\label{def:test-element}
Let $R$ be a Noetherian domain of prime characteristic $p > 0$. A nonzero
 element $c \in R$ is a \emph{test element} if for all  ideals $I$ of $R$, 
$$cI^* \subseteq I$$ (equivalently, for all $a \in I^*$, $ca^{p^e} \in I^{[p^e]}$ for all
$e > 0$). 
Equivalently, $c$ is a test element if and only if  $c$ annihilates the tight closure of zero in every finitely generated module $M$. 
\end{definition}
\begin{definition}
\label{def:test-ideal}
 The \emph{test ideal of $R$}, denoted $\tau(R)$, is the ideal of $R$
generated by all test elements.
\end{definition}

A deep theorem of Hochster and Huneke ensures that the test ideal $\tau(R)$ is
nonzero when $(R, \mathfrak{m}) $ is a complete local domain (and quite a bit more generally);  see 
\cite[$\mathsection 6$]{HH94}.

The  behavior of tight closure  is a bit murky for non-finitely generated ambient modules.  For this reason, the following definition is needed:   For any module $M$, the
{\it  finitistic tight closure of zero}  in $M$ is
$$0^{*\fg}_M \coloneqq \bigcup_{M'} 0^{*}_{M'},$$ 
where $M'$ ranges over all finitely
generated submodules of $M$. Note that $0^{*\fg}_M \subseteq 0^*_M$, and that these are equal if $M$ is finitely generated. 

\medskip

The test ideal  can be characterized  as follows:

\begin{citedprop}[{\cite[Prop.\ (8.23)(d)]{HH90}}]
\label{prop:test-ideal-tight-closure}
Let $(R,\fm)$ be a Noetherian local domain of prime characteristic $p > 0$.
Let $E$ denote an injective hull of its residue field. Then
\[
\tau(R) = \Ann_R(0^{*\fg}_E).
\]
\end{citedprop}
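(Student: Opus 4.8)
The plan is to establish the two inclusions $\tau(R)\subseteq\Ann_R(0^{*\fg}_E)$ and $\Ann_R(0^{*\fg}_E)\subseteq\tau(R)$ by elementary means, relying only on module theory over the Noetherian local ring $(R,\fm,\kappa)$ and on the functoriality of tight closure of $0$ under surjections; in particular no completeness or Cohen structure theory is needed. A useful preliminary reformulation is that $\tau(R)=\bigcap_{M}\Ann_R(0^*_M)$, where $M$ ranges over finitely generated $R$-modules: using the equivalent characterizations of test elements in Definition~\ref{def:test-element}, one checks that the set of test elements together with $0$ is stable under addition and under multiplication by elements of $R$, hence forms an ideal, and that ideal is by definition $\tau(R)$.

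The inclusion $\tau(R)\subseteq\Ann_R(0^{*\fg}_E)$ is then immediate: $0^{*\fg}_E=\bigcup_{E'}0^*_{E'}$ is an increasing union over the finitely generated submodules $E'\subseteq E$, and a test element annihilates $0^*_{E'}$ for every such $E'$ (as each $E'$ is finitely generated), hence annihilates the union.

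The real content is the reverse inclusion. Given $0\neq c\in\Ann_R(0^{*\fg}_E)$, the preliminary reformulation reduces the problem to showing $c\cdot 0^*_M=0$ for an arbitrary finitely generated $M$. Assume otherwise and pick $u\in 0^*_M$ with $v\coloneqq cu\neq 0$. By Noetherianity of $M$ there is a submodule $N\subseteq M$ maximal among those not containing $v$; then every nonzero submodule of $M/N$ contains the image $\bar v$ of $v$, so $M/N$ is an essential extension of $R\bar v$ with one-dimensional socle $\kappa\bar v$, and therefore the finitely generated module $M/N$ embeds into $E=E_R(\kappa)$, say as the finitely generated submodule $E'\subseteq E$. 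On the other hand, applying the right-exact functor $F^e_{R*}R\otimes_R(-)$ to the surjection $M\twoheadrightarrow M/N$ and transporting a witness relation for $u$ shows that the image $\bar u$ of $u$ lies in $0^*_{M/N}$, and $0^*_{M/N}$ depends only on the isomorphism class of $M/N$, so $\bar u\in 0^*_{E'}\subseteq 0^{*\fg}_E$. Hence $c\bar u=0$; but $c\bar u=\overline{cu}=\bar v\neq 0$, a contradiction. Thus $c$ annihilates $0^*_M$ for all finitely generated $M$, so $c$ is a test element and $c\in\tau(R)$.

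I expect the delicate step to be this reduction: producing, for a given element of the possibly large module $0^*_M$, a \emph{finitely generated} submodule of $E$ in which it survives. Its correctness rests on three facts — existence of the maximal submodule $N$ (Noetherianity of $M$), the resulting essential simple socle forcing $M/N\hookrightarrow E_R(\kappa)$ (which is precisely why the injective hull of $\kappa$, and the \emph{finitistic} rather than full tight closure of $0$ in $E$, is the correct object), and compatibility of $0^*$ with the quotient map $M\to M/N$ — each of which I would verify in turn. The only input borrowed from the surrounding text beyond Definition~\ref{def:freg} is the equivalence, asserted in Definition~\ref{def:test-element}, between ``$cI^*\subseteq I$ for all ideals $I$'' and ``$c$ annihilates $0^*_M$ for every finitely generated $M$.''
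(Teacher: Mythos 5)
Your argument is correct, but note that the paper itself gives no proof of this statement: it is quoted verbatim from \cite[Prop.\ (8.23)(d)]{HH90}. What you have written is essentially a reconstruction of the Hochster--Huneke argument. The forward inclusion is, as you say, immediate from the definition of $0^{*\fg}_E$ as a union over finitely generated submodules of $E$. For the reverse inclusion, your reduction is the standard one: given $c\in\Ann_R(0^{*\fg}_E)$, $u\in 0^*_M$ with $v=cu\neq 0$, the choice of $N\subseteq M$ maximal with $v\notin N$ makes $M/N$ an essential extension of the simple module $R\bar v\simeq\kappa$, so $M/N$ embeds as a finitely generated submodule $E'\subseteq E$; functoriality of tight closure of zero under $M\twoheadrightarrow M/N$ puts $\bar u$ in $0^*_{E'}\subseteq 0^{*\fg}_E$, and $c\bar u=\bar v\neq 0$ gives the contradiction. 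All three steps you flag as delicate (existence of $N$, the embedding into $E_R(\kappa)$ via injectivity plus essentiality, and compatibility of $0^*$ with the quotient map) check out. The one genuine input you import --- that $c$ being a test element in the ideal-theoretic sense is equivalent to $c$ annihilating $0^*_M$ for every finitely generated $M$, whence $\tau(R)=\bigcap_M\Ann_R(0^*_M)$ --- is itself a nontrivial result of \cite{HH90}, but since the paper asserts this equivalence in Definition \ref{def:test-element}, relying on it is legitimate here, and you state this dependence explicitly.
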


\subsection{The Proof of Theorem
\except{toc}{\ref{theorem:Smith-thesis}}\for{toc}{\ref*{theorem:Smith-thesis}}}

We will deduce Theorem \ref{theorem:Smith-thesis} from the following  result:

\begin{citedthm}[{\cite[Thm.\ 5.6.1.]{Sm94}}]
\label{prop:smith-key-insight}
Let $(R,\fm)$ be an excellent local domain of prime characteristic $p > 0$
and dimension $d$. Then the kernel of the canonical map
\[
H^d_\fm(R) \rightarrow H^d_\fm(R^+)
\]
is precisely $0^{*\fg}_{H^d_\fm(R)}$.
\end{citedthm}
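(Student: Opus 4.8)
The plan is to prove the two containments $0^{*\fg}_{H^d_\fm(R)}\subseteq\ker\delta$ and $\ker\delta\subseteq 0^{*\fg}_{H^d_\fm(R)}$, where $\delta\colon H^d_\fm(R)\to H^d_\fm(R^+)$ is the canonical map. Fix a system of parameters $\underline{x}=x_1,\dots,x_d$ for $R$ and use the presentation $H^d_\fm(R)=\varinjlim_{t}R/(x_1^t,\dots,x_d^t)$ with transition maps multiplication by $x_1\cdots x_d$. Since top \v{C}ech cohomology on $d$ elements is always of this colimit form and $\underline{x}$ generates an ideal of $R^+$ with radical $\fm R^+$, one likewise has $H^d_\fm(R^+)=H^d_{\underline{x}R^+}(R^+)=\varinjlim_{t}R^+/(x_1^t,\dots,x_d^t)R^+$, with $\delta$ induced levelwise by $R\hookrightarrow R^+$. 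Two structural facts about $R^+$ drive everything. First, by Hochster--Huneke's theorem \cite{HH93} (recalled in Remark~\ref{rem:fsolidityofrplus}), $R^+$ is a balanced big Cohen--Macaulay $R$-algebra, so $\underline{x}$ — hence $x_1^t,\dots,x_d^t$ for any $t$ — is a regular sequence on $R^+$; this yields the colon-capturing identity $(x_1^t,\dots,x_d^t)R^+:_{R^+}(x_1\cdots x_d)^N=(x_1^t,\dots,x_d^t)R^+$. Second, $R^+$ is a \emph{perfect} ring: the $p$-th root of any $a\in R^+$ is integral over $R$, hence again lies in $R^+$, so each $F^e\colon R^+\to R^+$ is a ring automorphism; combined with the characteristic $p$ identity $(\sum u_i)^{p^e}=\sum u_i^{p^e}$, this makes $F^e\colon R^+/(x_1^t,\dots,x_d^t)R^+\xrightarrow{\,\sim\,}R^+/(x_1^{tp^e},\dots,x_d^{tp^e})R^+$ an isomorphism.

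For $0^{*\fg}_{H^d_\fm(R)}\subseteq\ker\delta$: given $\eta\in 0^{*\fg}_{H^d_\fm(R)}$, it lies in $0^*_{M'}$ for a finitely generated submodule $M'\ni\eta$. For a parameter ideal this unwinds to a representative $z+(x_1^t,\dots,x_d^t)$ of $\eta$ and a nonzero $c\in R$ with $c\,z^{p^e}\in(x_1^{tp^e},\dots,x_d^{tp^e})R$ for all $e\gg0$ — the saturation by $(x_1\cdots x_d)^\infty$ implicit in passing through $M'$ being absorbed, harmlessly in $R^+$, by the colon-capturing above. Pushing this into $R^+$ and writing $c=(c^{1/p^e})^{p^e}$, the isomorphism $F^e$ turns it into $F^e(c^{1/p^e}\bar z)=0$ in $R^+/(x_1^{tp^e},\dots,x_d^{tp^e})R^+$, where $\bar z$ is the class of $z$ in $R^+/(x_1^t,\dots,x_d^t)R^+$; hence $c^{1/p^e}z\in(x_1^t,\dots,x_d^t)R^+$ for all $e\gg 0$. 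The crucial — and delicate — final step is to remove the factor $c^{1/p^e}$ and conclude $z\in(x_1^t,\dots,x_d^t)R^+$, i.e., $\delta(\eta)=0$. For this I would choose $c$ to be a test element for which $R_c$ is regular (possible by Hochster--Huneke's test element theory \cite{HH94} together with openness of the regular locus for excellent $R$); then $R_c$ is a splinter, so $R_c\hookrightarrow R^+_c$ is cyclically pure, and localizing the relation $c^{1/p^e}z\in(x_1^t,\dots,x_d^t)R^+$ shows $z/1\in(x_1^t,\dots,x_d^t)R_c$. One then feeds this back — using again that $\underline{x}$ is a regular sequence on $R^+$, so that $c$ is a nonzerodivisor on $R^+/(x_1^t,\dots,x_d^t)R^+$ once the singular locus has been inverted — to clear the denominator and land back in $(x_1^t,\dots,x_d^t)R^+$.

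For $\ker\delta\subseteq 0^{*\fg}_{H^d_\fm(R)}$: if $\delta(\eta)=0$ then, since $H^d_\fm(R^+)=\varinjlim_S H^d_\fm(S)$ over module-finite subextensions $R\subseteq S\subseteq R^+$, already $\eta\mapsto 0$ in $H^d_\fm(S)$ for some such $S$. Each $S$ is a domain, hence a solid $R$-algebra (Example~\ref{ex:finite-solid}): pick $\psi\colon S\to R$ with $\psi(1)=:c\ne0$. Functoriality of $H^d_\fm(-)$ gives $H^d_\fm(\psi)\colon H^d_\fm(S)\to H^d_\fm(R)$, and since $\psi\circ\iota=c\cdot\id_R$ for the inclusion $\iota\colon R\hookrightarrow S$, the composite $H^d_\fm(R)\to H^d_\fm(S)\xrightarrow{H^d_\fm(\psi)}H^d_\fm(R)$ is multiplication by $c$; applying it to $\eta$ gives $c\eta=H^d_\fm(\psi)(0)=0$. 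It remains to upgrade ``$\eta$ is killed by a nonzero element'' to ``$\eta\in 0^{*\fg}$''. Here I would reduce to $R$ complete local (legitimate since $R$ is excellent: $R^N$ is module-finite over $R$, $\widehat R$ is reduced, and both $R^+$ and $0^{*\fg}_{H^d_\fm(-)}$ behave well under this reduction by faithful flatness, the minor annoyance that $\widehat R$ need not be a domain being handled by passing through the finitely many minimal primes of $\widehat{R^N}$), and then invoke local duality: $\ker\delta$ is an $F$-stable submodule of the Artinian module $H^d_\fm(R)$ each of whose elements is killed by a nonzero element of $R$, and one identifies it with $0^{*\fg}_{H^d_\fm(R)}$ — characterized via Proposition~\ref{prop:test-ideal-tight-closure} (and, in the Gorenstein case, via $H^d_\fm(R)\cong E$) as the largest $F$-stable submodule annihilated by a test element — using the already-established inclusion $0^{*\fg}\subseteq\ker\delta$ to force equality.

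The step I expect to be the main obstacle is removing the factor $c^{1/p^e}$ in the first containment, which is exactly where the \emph{finitistic} nature of $0^{*\fg}$ (as opposed to $0^*$) and the full strength of the big Cohen--Macaulay property of $R^+$ are needed: the Frobenius/perfectness bookkeeping produces only the weaker membership $c^{1/p^e}z\in(x_1^t,\dots,x_d^t)R^+$, and converting it to $z\in(x_1^t,\dots,x_d^t)R^+$ genuinely uses Hochster--Huneke's theorem for $R^+$ (through colon-capturing and test elements with $R_c$ regular), not merely formal manipulation; the reduction from the excellent to the complete local case, though routine, likewise requires care because $\widehat R$ need not be a domain.
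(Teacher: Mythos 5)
This statement is not proved in the paper at all --- it is quoted from \cite[Thm.\ 5.6.1]{Sm94} --- so the comparison is with Smith's original argument, whose engine is the Hochster--Huneke ``equational lemma'': from the relations $cz^{p^e}\in(x_1^{tp^e},\dots,x_d^{tp^e})R$ one builds explicit monic equations whose roots generate a module-finite extension $R\subseteq S\subseteq R^+$ in which the class of $z$ actually dies; big Cohen--Macaulayness of $R^+$ enters on top of that, not instead of it. Your proposal has a genuine gap at exactly the step you flag as crucial, and that step is the entire content of the theorem (it is equivalent to the hard containment $I^*\subseteq IR^+$ of Smith's main theorem for parameter ideals). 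From $c^{1/p^e}z\in(x_1^t,\dots,x_d^t)R^+$ for all $e$ you localize at a test element $c$ with $R_c$ regular, and since $c^{1/p^e}$ is a unit in $R^+_c$ you correctly get $z\in(x_1^t,\dots,x_d^t)R^+_c$ and then $z\in(x_1^t,\dots,x_d^t)R_c$; but what is needed is $z\in(x_1^t,\dots,x_d^t)R^+$, and there is no way to descend. Your justification --- that $c$ is a nonzerodivisor on $R^+/(x_1^t,\dots,x_d^t)R^+$ ``once the singular locus has been inverted'' --- is either vacuous (after inverting $c$ it only returns you to $(x_1^t,\dots,x_d^t)R^+_c$, where you already are) or false before inverting: for instance when $R$ is regular local, $R^+$ is faithfully flat over $R$, so $\mathrm{Ass}_R\bigl(R^+/(x_1^t,\dots,x_d^t)R^+\bigr)=\{\fm\}$ and \emph{every} element of $\fm$, in particular any non-unit test element, is a zerodivisor on that quotient. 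Formal manipulation with ``$R^+$ is perfect and big Cohen--Macaulay'' plus colon capturing cannot close this: the passage from the almost-containment $c^{1/p^\infty}z\subseteq(x_1^t,\dots,x_d^t)R^+$ to honest membership is precisely where the equational-lemma construction (or, in later treatments, Huneke--Lyubeznik's refinement) is unavoidable.

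The second containment also has a gap, though a fixable one. The solid-algebra trick only shows that each $\eta\in\ker\delta$ is killed by \emph{some} nonzero $c$ depending on $\eta$ (and, for the Frobenius iterates, depending on $e$); but ``killed by a nonzero element'' is vacuous in an Artinian module --- every element of $H^d_\fm(R)$ is killed by a power of $\fm$ --- and your proposed repair (``force equality'' from the inclusion $0^{*\fg}\subseteq\ker\delta$ via duality and the characterization of $0^{*\fg}$ as the largest $F$-stable submodule killed by a test element) presupposes a \emph{uniform} annihilator for $\ker\delta$, which is what has to be proved. The standard, and easy, argument is levelwise: by big Cohen--Macaulayness the transition maps in $H^d_\fm(R^+)=\varinjlim_t R^+/(x_1^t,\dots,x_d^t)R^+$ are injective, so $\delta(\eta)=0$ means the representative $z$ lies in $(x_1^t,\dots,x_d^t)S$ for a \emph{single} module-finite $S\subseteq R^+$; raising that one equation to $p^e$-th powers and applying a fixed nonzero $R$-linear map $\psi\colon S\to R$ with $\psi(1)=c$ yields $cz^{p^e}\in(x_1^{tp^e},\dots,x_d^{tp^e})R$ for all $e$, hence $\eta\in 0^{*\fg}_{H^d_\fm(R)}$ with the uniform multiplier $c$ built in from the start.
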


\begin{remark}
A key point in the proof of Theorem \ref{prop:smith-key-insight} above is that
\[
 0^{*\fg}_{H^d_\fm(R)} = 0^*_{H^d_\fm(R)}
\] 
in the Gorenstein case. 
An open question predicts that  $0^*_E = 0^{*\fg}_E$ in general for the injective hull of the residue field $E$ of a local ring \cite[\S 7]{Sm93}. This is equivalent to the long-standing open question about the equivalence of 
strong and weak $F$-regularity; see \cite[Prop.\  7.1.2]{Sm93}, \cite[Prop.\ 2.9]{Gennady01}. 
In general, since $0^{*\fg}_E \subseteq 0^*_E$, this conjecture is equivalent to saying that the corresponding inclusion of annihilators  
$\Ann_R(0^*_E) \subseteq \Ann_R(0^{*\fg}_E)$ is an equality by 
\cite[Lem.\ 3.1(v)]{Sm94}.{\footnote{
The ideal $\Ann_R(0^*_E)$ of $R$ is called the {\emph{non-finitistic test ideal} or the \emph{big test ideal}}, although the latter term is a bit confusing since 
it is {\it a priori} {\bf smaller} than the usual (finitistic) test ideal $\Ann_R(0^{*\fg}_E)$. The name ``big" comes from the fact that the elements of $\Ann_R(0^*_E)$ can be used as test elements even for non-finitely generated modules whereas the 
elements of the usual test ideal $\Ann_R(0^{*\fg}_E)$ works ({\it a priori}) only for tests where the ambient module is Noetherian.
 }}
\end{remark}

Theorem  \ref{theorem:Smith-thesis} will follow by applying Matlis duality to Theorem \ref{prop:smith-key-insight}.

\subsubsection{Matlis Duality}  Let $(R, \fm)$ be a complete Noetherian
 local ring of dimension $d$, and let  $E$ be an injective hull of its residue field. 
Matlis duality is the exact contravariant  functor on $R$-modules sending
each $M$ to $M^{\vee} := \Hom_R(M, R)$. This functor takes Noetherian modules to Artinian modules, and vice versa, and is involutive when
 restricted to either class of modules: that is $(M^{\vee})^{\vee} \cong M$ if $M$ satisfies either the ACC or DCC condition on submodules.

 \par The {\bf canonical module} of $(R, \fm)$ in this context is defined as any  $R$-module $\omega_R$ that is a Matlis dual to the
 local cohomology module $H^d_{\fm}(R)$. Since $H^d_{\fm}(R)$ is Artinian,  $\omega_R$ is Noetherian. 
The ring $R$ is a canonical module of itself when $R$ is Gorenstein -- a 
sister fact to the fact that $E$ can be taken to be  $H^d_{\fm}(R)$. See \cite{BH98, HK71} for basics on Matlis duality and the canonical module.
 
 Matlis duality gives us a perfect pairing
$$
\omega_R \times H^d_{\fm}(R) \rightarrow E
$$
which can be concretely understood by viewing  $w \in \omega_R$  as inducing the  $R$-linear map ``evaluation map"
\[
\varphi_w: H^d_\fm(R) \rightarrow   E 
\]
given that, by definition, $\omega_R = \Hom_R(H^d_m(R), E)$. 
In particular, for an $R$-submodule $M$ of $H^d_\fm(R)$, we can define 
 the {\bf annihilator of $M$ in $\omega_R$}, denoted $\Ann_{\omega_R} M$,
\[
\{w \in \omega_R \,\, | \,\,  \varphi_w(m) = 0  \,\,\, {\rm{for \,\,all\,\, }} m \in M\} \, \subset \,\omega_R
\]
where 
$ \varphi_w: H^d_\fm(R) \rightarrow E$ is as defined above. 
See \cite[\S 2.4]{Sm93}  for details on this point of view. 

\medskip

With this definition and notation, we collect a few facts.

\begin{lemma}
\label{lem:Matlis-dual}
Let $(R,\fm)$ be a Noetherian, $S_2$, complete local ring of dimension $d$, and  let $E$ be an injective hull of its residue field. 
 If $M$ is a submodule of $H^d_\fm(R)$, then $M^\vee \cong \omega_R/\Ann_{\omega_R} M$.
\end{lemma}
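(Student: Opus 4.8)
The plan is to apply the exact contravariant Matlis dual functor $(-)^\vee$ to the tautological short exact sequence
\[
0 \longrightarrow M \longrightarrow H^d_\fm(R) \longrightarrow H^d_\fm(R)/M \longrightarrow 0
\]
attached to the submodule $M \subseteq H^d_\fm(R)$, and then to identify the submodule of $\omega_R$ thereby produced with $\Ann_{\omega_R} M$. First I would record that because $E$ is injective, $(-)^\vee = \Hom_R(-,E)$ is exact, so applying it to the displayed sequence yields an exact sequence
\[
0 \longrightarrow \bigl(H^d_\fm(R)/M\bigr)^\vee \longrightarrow \bigl(H^d_\fm(R)\bigr)^\vee \longrightarrow M^\vee \longrightarrow 0 .
\]
By the definition of the canonical module in the complete local setting, $\bigl(H^d_\fm(R)\bigr)^\vee \cong \omega_R$. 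Hence this sequence exhibits $M^\vee$ as the cokernel of the injection $\iota\colon \bigl(H^d_\fm(R)/M\bigr)^\vee \hookrightarrow \omega_R$, and it remains only to compute $\im(\iota)$.

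For that, I would unwind the maps. The homomorphism $\iota$ is precomposition with the surjection $\pi\colon H^d_\fm(R) \twoheadrightarrow H^d_\fm(R)/M$, so, under the identification $\omega_R = \Hom_R\bigl(H^d_\fm(R),E\bigr)$, its image consists of exactly those $R$-linear maps $\varphi\colon H^d_\fm(R) \to E$ that factor through $\pi$, equivalently those $\varphi$ vanishing on $M$. Writing $w \in \omega_R$ as the map $\varphi_w\colon H^d_\fm(R) \to E$ it represents---this is the identification used in the excerpt to define $\Ann_{\omega_R} M$---this says precisely that $\im(\iota) = \{\, w \in \omega_R : \varphi_w(m) = 0 \text{ for all } m \in M \,\} = \Ann_{\omega_R} M$. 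Combined with the previous paragraph, this gives $M^\vee \cong \omega_R/\Ann_{\omega_R} M$, as desired.

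I do not expect any real obstacle here: the only points needing care are the exactness of $(-)^\vee$ (which is the reason $E$ must be injective) and the bookkeeping that identifies $\im(\iota)$ with the pairing-theoretic annihilator $\Ann_{\omega_R} M$. I would also note that the argument uses neither the completeness nor the $S_2$ hypothesis beyond what guarantees that $\omega_R$ is a Matlis dual of $H^d_\fm(R)$; those hypotheses are carried along for the subsequent uses of the lemma.
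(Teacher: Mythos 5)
Your proof is correct and is essentially the argument behind the paper's citation to \cite[Prop. 2.4.1.(i)]{Sm93}: dualize the short exact sequence $0 \to M \to H^d_\fm(R) \to H^d_\fm(R)/M \to 0$ using exactness of $\Hom_R(-,E)$, identify $(H^d_\fm(R))^\vee$ with $\omega_R$, and recognize the image of the injection $(H^d_\fm(R)/M)^\vee \hookrightarrow \omega_R$ as exactly the maps vanishing on $M$, i.e.\ $\Ann_{\omega_R} M$. Your closing observation is also accurate: the $S_2$ hypothesis plays no role in this computation and is carried in the statement only because the surrounding development (and the cited reference) works under that assumption.
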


\begin{proof}
See  \cite[Prop. 2.4.1.(i)]{Sm93}. 
\end{proof}

\begin{lemma}\label{lem2}
$(R,\fm)$ be a Noetherian complete local ring of dimension $d$, and  let $E$ be an injective hull of its residue field. For any $R$-module map $\phi: R \rightarrow S$, the Matlis dual of the induced map 
$H^d_m(R) \overset{\phi^*}\longrightarrow H^d_m(S)$ can be identified with the map 
$$
\Hom_R(S, \omega_R) \rightarrow  \omega_R \,\,\,\,\,\,\,\,\,\, \,\,\,\,\,\,\,\,\,\,  \Psi \mapsto \Psi(\phi(1)).
$$
\end{lemma}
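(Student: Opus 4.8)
The plan is to reduce the statement to two purely formal facts: a comparison isomorphism for top local cohomology, and tensor--hom adjunction.

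First I would establish that there is an isomorphism $\theta_M\colon H^d_\fm(R)\otimes_R M \xrightarrow{\ \sim\ } H^d_\fm(M)$, natural in the $R$-module $M$, with $\theta_R$ the canonical identification $H^d_\fm(R)\otimes_R R \cong H^d_\fm(R)$. Both $H^d_\fm(-)$ and $H^d_\fm(R)\otimes_R(-)$ are right exact functors on $R$-modules: the latter since tensoring is right exact, the former since $H^{d+1}_\fm$ vanishes when $d=\dim R$ by Grothendieck vanishing, so the long exact sequence of local cohomology terminates in degree $d$. Both functors also commute with arbitrary direct sums, and they agree on $R$ (hence on free modules). Choosing a free presentation $R^{(J)}\to R^{(I)}\to M\to 0$ and comparing the two right exact functors on it produces the natural isomorphism $\theta_M$. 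Applying naturality of $\theta$ to the $R$-module map $\phi\colon R\to S$ yields a commutative square identifying $\phi^*\colon H^d_\fm(R)\to H^d_\fm(S)$ with $\id_{H^d_\fm(R)}\otimes\phi\colon H^d_\fm(R)\otimes_R R\to H^d_\fm(R)\otimes_R S$.

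Next I would apply the Matlis dual $(-)^\vee=\Hom_R(-,E)$ to this square. On the left the target is $\bigl(H^d_\fm(R)\bigr)^\vee=\omega_R$ by definition of the canonical module. On the right, tensor--hom adjunction gives a natural isomorphism
\[
\bigl(H^d_\fm(R)\otimes_R S\bigr)^\vee=\Hom_R\bigl(H^d_\fm(R)\otimes_R S,\,E\bigr)\cong\Hom_R\bigl(S,\,\Hom_R(H^d_\fm(R),E)\bigr)=\Hom_R(S,\omega_R),
\]
which for $S=R$ recovers $\omega_R$. Because this adjunction isomorphism is natural in $S$, the dual of $\id\otimes\phi$ becomes the precomposition map $\Hom_R(S,\omega_R)\to\Hom_R(R,\omega_R)$, $\Psi\mapsto\Psi\circ\phi$. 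Finally, identifying $\Hom_R(R,\omega_R)$ with $\omega_R$ via evaluation at $1$ turns this into $\Psi\mapsto(\Psi\circ\phi)(1)=\Psi(\phi(1))$. Composing with the dualized commutative square from the previous paragraph then gives exactly the claimed identification of $(\phi^*)^\vee$ with the map $\Hom_R(S,\omega_R)\to\omega_R$, $\Psi\mapsto\Psi(\phi(1))$.

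The content here is entirely formal, so the only real work is bookkeeping: one must check that each canonical isomorphism used (the comparison $\theta$, tensor--hom adjunction, evaluation at $1$) is compatible with the relevant maps, i.e.\ that the squares one writes down genuinely commute, and one must keep straight the direction of the adjunction. The point most worth spelling out is the right exactness of $H^d_\fm$ in degree $d=\dim R$, since that is what makes the comparison isomorphism $\theta$ available; everything after that is a routine diagram chase.
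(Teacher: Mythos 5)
Your proof is correct and follows essentially the same route as the paper's: identify $\phi^*\colon H^d_\fm(R)\to H^d_\fm(S)$ with $\id_{H^d_\fm(R)}\otimes\phi$, apply the Matlis dual, use tensor--hom adjunction, and finish by identifying $\Hom_R(R,\omega_R)$ with $\omega_R$ via evaluation at $1$. The one thing you do beyond the paper is to justify the comparison isomorphism $H^d_\fm(M)\cong H^d_\fm(R)\otimes_R M$ (via right exactness of $H^d_\fm$ in top degree, compatibility with direct sums, and the agreement on free modules), which the paper takes for granted; that is a worthwhile bit of extra care, not a different approach.
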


\begin{proof} This is really a much more general fact, following from the adjointness of tensor and Hom.
 First note that $H^d_\fm(R) \overset{\phi^*}\longrightarrow H^d_\fm(S)$ can be identified with the map
$$
R \otimes_R H^d_\fm(R) \xrightarrow {\phi \otimes id} S \otimes_R H^d_\fm(R).
$$
So applying the Matlis dual functor $\Hom_R(-, E)$ we have 
$$
\Hom_R(S \otimes_R H^d_{\fm}(R), E) \rightarrow \Hom_R(R \otimes_R H^d_\fm(R), E) \,\,\,\,\,\,\,\, \,\, {\text{sending}}  \,\,\,\,\,\,\, \Phi  \mapsto   \Phi \circ (\phi \otimes id),
$$
which, under adjunction, becomes   the map
$$
\Hom_R\big{(}S, \, \Hom_R( H^d_\fm(R), E)\big{)} \rightarrow \Hom_R\big{(}R, \, \Hom_R( H^d_\fm(R), E) \big{)} \,\,\,\,\,\,\,\,{\text{sending}} \,\,\,\,\,\,\, \Psi  \mapsto   \Psi \circ \phi.
$$
Now using the identification of  $ \Hom_R( H^d_\fm(R), E)$ with $\omega_R$, we have a map 
 $$
\Hom_R(S, \omega_R) \rightarrow \Hom_R(R, \omega_R) \,\,\,\,\,\,\,\, {\text{sending}} \,\,\,\,\,\,\, \Psi  \mapsto   \Psi \circ \phi,
$$
 and since the latter is identified with $\omega_R$ via the map $f\mapsto f(1)$, this becomes
 $$
\Hom_R(S, \omega_R) \rightarrow  \omega_R \,\,\,\,\,\,\,\, \,\,\,\,\,\,\, \Psi  \mapsto   \Psi (\phi (1)),
$$
that is, the ``evaluation at $\phi(1)$" map.
\end{proof}

 \begin{proof}[Proof of Theorem \ref{theorem:Smith-thesis}]  Let $(R, \fm)$ be a complete local Gorenstein domain of dimension $d$. We first recall that because $(R, \fm)$ is Gorenstein,  the local cohomology module $H^d_{\fm}(R)$ is an injective hull of the residue field of $R$. Thus the test ideal for $R$ is the annihilator of the $R$-module 
$0^{*\fg}_{H^d_{\fm}(R)}$.  By Theorem \ref{prop:smith-key-insight}, the
 test ideal is then also the annihilator of the kernel of the natural map
 \[
H^d_\fm(R) \rightarrow H^d_\fm(R^+).
\]
To relate this to the evaluation at $1$ map, we apply Matlis duality to the  exact sequence
\begin{equation}
\label{eq:exact}
0 \rightarrow 0^{*\fg}_{H^d_\fm(R)} \rightarrow H^d_\fm(R) \rightarrow H^d_\fm(R^+).
\end{equation}
Using Lemma 
\ref{lem2} to analyze the Matlis  dual of this  sequence, we get an exact sequence
\[
\Hom_R(R^+, \omega_R)\xrightarrow{\ev @1} \omega_R \rightarrow 
(0^{*\fg}_{H^d_\fm(R)})^\vee \rightarrow 0.
\]
On the other hand, this sequence can be rewritten as 
\begin{equation*}
\Hom_R(R^+, \omega_R)\xrightarrow{\ev @1} \omega_R \rightarrow  
\omega_R/\Ann_{\omega_R} 0^{*\fg}_{H^d_\fm(R)} \rightarrow 0,
\end{equation*}
where we have used Lemma \ref{lem:Matlis-dual} for the identification
\[
(0^{*\fg}_{H^d_\fm(R)})^\vee \cong \omega_R/\Ann_{\omega_R} 0^*_{H^d_\fm(R)}.
\]
 Thus, by exactness, the evaluation at $1$ map 
\[
 \Hom_R(R^+, \omega_R)\xrightarrow{\ev @1} \omega_R   \,\,\,\,\,\,\,\,\,\,\,\,\,\,\,\,\,\,  \,\,\,\,\,\,\,\,\,\,\,\,\,\,\,\,\,\, \phi\mapsto \phi(1)
\]
 has image
 \[
  \Ann_{\omega_R} 0^{*\fg}_{H^d_\fm(R)},
  \]
 an important submodule of $\omega_R$ called the {\it parameter test module} in \cite{Sm95b}.
 In the Gorenstein case, using the fact that $\omega_R \cong R$ and $H^d_m(R) \cong E$, we finally conclude that  
 the image of 
\[
\Hom_R(R^+, R)\xrightarrow{\ev @1} r  \,\,\,\,\,\,\,  \,\,\,\,\,\,\,\,\,\,\,\,\,\,\,\,\,\, \phi \mapsto \phi(1)
\]
is the test ideal by Proposition \ref {prop:test-ideal-tight-closure}. 
 Theorem \ref{theorem:Smith-thesis} now follows.
\end{proof}

\subsection{Graded variants of absolute integral closures}

Suppose that $(R, \fm)$ is an $\NN$-graded domain, finitely generated over $R_0  = k$, a field, and with unique homogeneous maximal ideal $\fm$.  In this case, there are two graded variants of $R^{+}$.

Take any $z\in R^+$, and let 
\begin{equation}\label{eq2}
f(x) = x^n + a_1 x^{n-1} + \cdots + a_{n-1}x + a_n
\end{equation}
be a monic polynomial over $R$ witnessing the integrality of $z$ over $R$. We say  that $f(x)$ is {\bf homogenous} if 
 each $a_i$ is homogeneous in $R$ and there is a choice of rational number $\beta \in \QQ$ such that, setting $\deg x = \beta$, the polynomial $f(x)$ becomes homogeneous. Equivalently, 
this says that there exists a rational number $\beta$ such that
 $\deg a_i = i \beta $ for all $i$.  We say that $z \in R^+$ has degree  
$\beta$ in this case; it is not hard to see that this is well-defined \cite[Lemma 4.1]{HH93}.
Using this, Hochster and Huneke defined the following two natural subrings
 of  $R^+$ \cite{HH93}:

\begin{definition}
\label{def:graded-abs-int-cl}
 Let $R$ be an $\NN$-graded domain, finitely generated over $R_0  = k$, where $k$ is a field of characteristic $p>0$. Then 
\begin{itemize}
\item $R^{+\GR} $ is the subring of $T^+$ generated by all elements $a\in T^+$ that satisfy some {\it homogeneous} equation (\ref{eq2}) of integral dependence. Note that $R^{+\GR} $ is $\QQ$-graded. 
\item $R^{+\gr} $ is the subring of $R^+$ generated by all elements $a\in R^{+\GR}$ of {\it integral degree}. Note that $R^{+\gr}$ is $\NN$-graded.
\end{itemize}
\end{definition}

\noindent Both $R^{+\gr}$ and $R^{+\GR}$ are big Cohen-Macaulay $R$-algebras in prime characteristic $p > 0$, by \cite[Main Thm.\ 5.15]{HH93}.

\medskip

We work in the category of $\ZZ$-graded $R$-modules. In particular, 
for $\ZZ$-graded $R$-modules $M$ and $N$, and $d \in \ZZ$,
 we say that an $R$-linear homomorphism $f:M\rightarrow N$ has \emph{degree $d$} 
if $f(M_n) \subset N_{n+d}$ for all $n\in \ZZ$.
The set of all homomorphisms of degree $d$ is written $\Hom_d(M, N)$. 
We are interested in the module of graded homomorphisms 
\[
  \grHom_R(M, N) = \bigoplus_{d\in \ZZ} \Hom_d(M, N),
\]
which is a $\ZZ$-graded $R$-module. If  $M$ is finitely generated, it is not
hard to see that $\grHom_R(M, N) $ is  the same as $\Hom_R(M, N)$, but in general the former module may be  strictly smaller.

The following analogue of Theorem \ref{theorem:Smith-thesis} holds:

\begin{theorem}
\label{theorem:Smith-thesis2}
Let $R$ be an $\NN$-graded  Gorenstein domain, finitely generated over $R_0  = k$, where $k$ is a field of characteristic $p>0$.  Then the image of the evaluation at $1$ map
\[
\grHom_R(R^{+\gr},\,\, R) \xrightarrow{\ev @ 1} R
\]
is the test ideal $\tau(R)$ of $R$.
The same holds if we replace $R^{+\gr}$ by $R^{+\GR}$, and define 
\[
\grHom_R(R^{+\GR}, R) = \bigoplus_{d\in \QQ} \Hom_d(R^{+\GR}, R).
\]
\end{theorem}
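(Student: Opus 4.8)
The plan is to mimic the proof of Theorem \ref{theorem:Smith-thesis} in the graded category, replacing Matlis duality by graded Matlis duality and $\Hom_R$ by $\grHom_R$ throughout. The main structural input is again Theorem \ref{prop:smith-key-insight}, which identifies $\ker\bigl(H^d_\fm(R) \to H^d_\fm(R^+)\bigr)$ with $0^{*\fg}_{H^d_\fm(R)}$; since $R^{+\gr}$ and $R^{+\GR}$ contain enough of $R^+$ to detect tight closure relations (both are big Cohen--Macaulay $R$-algebras by \cite[Main Thm.\ 5.15]{HH93}, and more to the point the parameter test module computations in \cite{Sm95b} are carried out precisely with these graded rings in mind), I would first record that the natural maps
\[
H^d_\fm(R) \longrightarrow H^d_\fm(R^{+\gr}) \quad\text{and}\quad H^d_\fm(R) \longrightarrow H^d_\fm(R^{+\GR})
\]
have the same kernel $0^{*\fg}_{H^d_\fm(R)}$ as the map to $H^d_\fm(R^+)$. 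For the $\GR$-version this uses that every element of $0^{*}_{H^d_\fm(R)}$, being killed after tensoring with $F^e_*R$ and multiplying by a test element $c$ (which may be taken homogeneous in the $\NN$-graded setting), already dies in $H^d_\fm(R^{+\GR})$ because the relevant equations of integral dependence can be homogenized; the same argument with attention to integral degrees handles $R^{+\gr}$.

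Next I would set up graded local duality: for $(R,\fm)$ an $\NN$-graded Gorenstein domain finitely generated over $R_0 = k$, the graded canonical module is (a shift of) $R$ itself, and $H^d_\fm(R)$ is a graded injective hull $E$ of $k = R/\fm$. Graded Matlis duality $M \mapsto \grHom_R(M,E)$ is exact, involutive on the appropriate subcategories, and — this is the point where one must be slightly careful — for a finitely generated module $M$ one has $\grHom_R(M,N) = \Hom_R(M,N)$, so the submodule $0^{*\fg}_{H^d_\fm(R)}$, being a submodule of the Artinian module $H^d_\fm(R)$, has a genuine (ungraded) Matlis dual, but the dual of $R^{+\gr}$ (or $R^{+\GR}$), which is \emph{not} finitely generated, is correctly computed only with $\grHom$. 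Applying the graded versions of Lemma \ref{lem:Matlis-dual} and Lemma \ref{lem2} to the exact sequence
\[
0 \longrightarrow 0^{*\fg}_{H^d_\fm(R)} \longrightarrow H^d_\fm(R) \longrightarrow H^d_\fm(R^{+\gr})
\]
then yields an exact sequence
\[
\grHom_R(R^{+\gr}, \omega_R) \xrightarrow{\ev @ 1} \omega_R \longrightarrow \omega_R / \Ann_{\omega_R} 0^{*\fg}_{H^d_\fm(R)} \longrightarrow 0,
\]
exactly as in the ungraded case, and hence the image of evaluation at $1$ is $\Ann_{\omega_R} 0^{*\fg}_{H^d_\fm(R)}$; in the Gorenstein case $\omega_R \cong R$, $H^d_\fm(R) \cong E$, and this annihilator is $\tau(R)$ by Proposition \ref{prop:test-ideal-tight-closure}. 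The $\GR$-version is identical, reading $\grHom_R(R^{+\GR},R) = \bigoplus_{d \in \QQ}\Hom_d(R^{+\GR},R)$ and noting that $R^{+\GR}$, while $\QQ$-graded, is a directed union of finitely generated $\NN$-graded $R$-submodules (generated by finitely many homogeneous integral elements), so graded Matlis duality applies to each and passes to the colimit.

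I expect the main obstacle to be the bookkeeping around degrees — specifically, verifying that the evaluation-at-$1$ map is compatible with the gradings and that the graded Matlis dual of the injection $0^{*\fg}_{H^d_\fm(R)} \hookrightarrow H^d_\fm(R)$ really is the \emph{graded} surjection $\omega_R \twoheadrightarrow (0^{*\fg}_{H^d_\fm(R)})^\vee$ with no hidden degree shift that would invalidate the identification of the image with $\tau(R)$ (which is a homogeneous ideal, so any shift would be visible). The cleanest way around this is to cite \cite[\S 2.4]{Sm93} and \cite{Sm95b}, where the parameter test module in the graded category and the requisite graded duality statements are established; the new content here is only the reinterpretation via $\grHom_R(R^{+\gr},R)$ and $\grHom_R(R^{+\GR},R)$, which is formal once Theorem \ref{prop:smith-key-insight} is known to hold with $R^+$ replaced by these graded absolute integral closures.
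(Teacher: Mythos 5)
Your overall plan is the paper's: take the graded analogue of the exact sequence $0 \to 0^{*\fg}_{H^d_\fm(R)} \to H^d_\fm(R) \to H^d_\fm(R^{+\gr})$, apply graded Matlis duality, and read off the image of the evaluation-at-$1$ map as the test ideal. That much is right, and the structural diagram you outline is the one the paper carries out. Two details need attention, however.

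First, your justification for the graded analogue of Theorem \ref{prop:smith-key-insight} -- that the kernel of $H^d_\fm(R) \to H^d_\fm(R^{+\gr})$ (or to $H^d_\fm(R^{+\GR})$) is $0^{*\fg}_{H^d_\fm(R)}$ -- is only a sketch and does not really establish the claim. The assertion that the relevant equations of integral dependence ``can be homogenized'' is not by itself an argument that an element of $0^*_{H^d_\fm(R)}$ already dies in $H^d_\fm(R^{+\gr})$; what is actually needed is the main theorem of \cite{Sm95}, namely $I^* = IR^{+\gr}\cap R = IR^{+\GR}\cap R$ for homogeneous parameter ideals $I$, combined with the method of \cite{Sm94}. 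The paper cites exactly this; you should too, rather than gesture at homogenization.

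Second, your treatment of $R^{+\GR}$ is where the two approaches genuinely diverge, and yours has a real flaw: $R^{+\GR}$ is $\QQ$-graded, so its finitely generated graded $R$-submodules are $\QQ$-graded, not $\NN$-graded as you claim. Making graded Matlis duality work directly over a $\QQ$-graded colimit is fussier than you let on (one must be careful that $\grHom$ still commutes with the colimit and that the duality pairing respects the fractional grading). The paper sidesteps this entirely with a one-line observation that is worth internalizing: $R^{+\gr} \hookrightarrow R^{+\GR}$ splits as $R^{+\gr}$-modules purely for degree reasons (project onto integer degrees), hence splits over $R$, so the two evaluation maps $\grHom_R(R^{+\gr},R) \to R$ and $\grHom_R(R^{+\GR},R) \to R$ have identical image. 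This reduces the $\GR$-statement to the $\gr$-statement and avoids $\QQ$-graded duality altogether. With this reduction in hand, and with the kernel statement properly attributed to \cite{Sm95} and \cite{Sm94}, the remainder of your argument goes through as written.
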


\begin{proof}[Proof] First note that for simple degree reasons, $R^{+\gr}\hookrightarrow R^{+\GR}$ splits over $R^{+\gr}$ and hence over $R$. This implies that the maps
 $$
\grHom_R(R^{+\gr}, R) \xrightarrow{\ev @ 1} R \,\,\,\,\,\,\,\,\, {\text{and}}\,\,\,\,\,\,\,\,\  \grHom_R(R^{+\GR}, R) \xrightarrow{\ev @ 1} R
$$ have the same image. So it suffices to prove the statement for the $\ZZ$-graded $R$-module  $R^{+\gr}$. 

Now the  proof is essentially the same as in the complete case, but we work in the graded  (${}^*$) category instead, using graded Matlis duality and a graded analog of Theorem \ref{prop:smith-key-insight}. 

\subsubsection{Graded Matlis duality} [For details, see \cite{BH98}.]
If $E$ denotes an injective hull of the residue field of the graded ring $(R, \fm)$ at the unique homogeneous maximal ideal, then $E$ is $\ZZ$-graded, and we can define a {\bf graded Matlis dual functor}
$$
M\mapsto  M^{\vee} =  \grHom_R(M, E) 
$$
which has all the same properties we reviewed for complete local Noetherian rings, provided we restrict to graded $R$-modules.
In particular, because $H^d_\fm(R)$ is graded, its graded Matlis dual is the canonical module $\omega_R$, which is a graded $R$-module. We  still have a perfect 
 pairing   $\omega_R \times H^d_{\fm}(R) \rightarrow E$ which respects the grading, so the annihilator of a graded submodule of 
 $H^d_{\fm}(R) $ in 
 $\omega_R$ can be defined. With this set-up,
 the following graded analogues of Lemmas \ref{lem:Matlis-dual} and 
\ref{lem2} hold, whose proof we omit.

\begin{lemma}
\label{gradedMatlis-dual}
Let $(R,\fm)$ be an  $S_2$  $\NN$-graded domain of dimension $d$,  finitely generated over $R_0  = k$, a field, and with unique homogeneous maximal ideal $\fm$. Let $E$ be an injective hull of its residue field $R/\fm$. 
\begin{enumerate}
\item  If $M$ is a graded submodule of $H^d_\fm(R)$, then $M^\vee \cong \omega_R/\Ann_{\omega_R} M$.
\item  For any graded map of graded $R$-modules  $\phi: R \rightarrow S$, the graded Matlis dual of the induced map 
$H^d_m(R) \overset{\phi^*}\longrightarrow H^d_m(S)$ can be identified with the map 
$$
\grHom_R(S, \omega_R) \rightarrow  \omega_R \,\,\,\,\,\,\,\,\,\, \,\,\,\,\,\,\,\,\,\,  \Psi \mapsto \Psi(\phi(1)).
$$
\end{enumerate}
\end{lemma}

\medskip
Returning to the proof of Theorem \ref {theorem:Smith-thesis2}, 
the main theorem of \cite{Sm95} states  that 
$I^* = IR^{+\gr}\cap R = IR^{+\GR}\cap R$ for homogeneous parameter
 ideals $I$ in $R$. So using  the method of \cite{Sm94}, we see also that 
the finitistic tight closure of zero in $H^d_{\fm}(R)$  is graded, and is  the kernel of either of the natural maps $H^d_{\fm}(R) \rightarrow H^d_{\fm}(R^{+\gr})$ or $H^d_{\fm}(R) \rightarrow H^d_{\fm}(R^{+\GR})$.
In particular,  we have an exact sequence of graded modules
$$
0 \rightarrow 0^{*\fg}_{H^d_{\fm}(R)} \rightarrow H_{\fm}^d(R) \rightarrow H_{\fm}^d(R^{+\gr})
$$
whose graded Matlis dual produces an exact sequence of graded $R$-modules
$$
\grHom_R(R^{+\gr}, \omega_R) \xrightarrow{\ev @ 1} \omega_R \rightarrow \omega_R/\Ann_{\omega_R}(0^{*\fg}_{H^d_{\fm}(R)} )
\rightarrow 0 
$$
as before. Using the fact that $R$ is Gorenstein, we conclude that the image of  
$$\grHom_R(R^{+\gr}, R) \xrightarrow{\ev @ 1} R$$  is the test ideal  $\tau(R)$ following the same argument as in the complete case, finishing the proof of Theorem \ref {theorem:Smith-thesis2}.
 \end{proof}

\addtocontents{toc}{\protect\medskip}


\begin{thebibliography}{MSTWW}

  \bibitem[And93]{And93}
    M. Andr\'e. ``Homomorphismes r\'eguliers en caract\'eristique $p$.''
    \textit{C. R. Acad. Sci. Paris S\'er. I Math.} 316.7 (1993), pp. 643--646. 
    \textsc{url}: \url{https://gallica.bnf.fr/ark:/12148/bpt6k5471009x/f645.item}.
    \textsc{mr}: \burlalt{https://mathscinet.ams.org/mathscinet-getitem?mr=1214408}{1214408}.
    
  \bibitem[And18]{And18a}
    Y. Andr\'e. ``La conjecture du facteur direct.''
    \textit{Publ. Math. Inst. Hautes \'{E}tudes Sci.} 127 (2018), pp. 71--93.
    \textsc{doi}: \burlalt{https://doi.org/10.1007/s10240-017-0097-9}{10.1007/s10240-017-0097-9}.
    \textsc{mr}: \burlalt{https://mathscinet.ams.org/mathscinet-getitem?mr=3814651}{3814651}.
    
  \bibitem[And20]{And18b}
    \bysame. ``Weak functoriality of Cohen-Macaulay algebras.''
    \textit{J. Amer. Math. Soc.} 33.2 (2020), pp. 363--380.
    \textsc{doi}: \burlalt{https://doi.org/10.1090/jams/937}{10.1090/jams/937}.
    \textsc{mr}: \burlalt{https://mathscinet.ams.org/mathscinet-getitem?mr=4073864}{4073864}.

  \bibitem[Ber93]{Ber93}
    V. G. Berkovich. ``\'Etale cohomology for non-Archimedean analytic spaces.''
    \textit{Inst. Hautes \'Etudes Sci. Publ. Math.} 78 (1993), pp. 5--161.
    \textsc{doi}: \burlalt{https://doi.org/10.1007/BF02712916}{10.1007/BF02712916}.
    \textsc{mr}: \burlalt{https://mathscinet.ams.org/mathscinet-getitem?mr=1259429}{1259429}.

  \bibitem[BH98]{BH98}
    W. Bruns and J. Herzog. \textit{Cohen-Macaulay rings.} Revised ed. Cambridge
    Stud. Adv. Math., Vol. 39. Cambridge: Cambridge Univ. Press, 1998.
    \textsc{doi}: \burlalt{https://doi.org/10.1017/CBO9780511608681}{10.1017/CBO9780511608681}.
    \textsc{mr}: \burlalt{https://mathscinet.ams.org/mathscinet-getitem?mr=1251956}{1251956}.

  \bibitem[Bha19]{Bha19}
    B. Bhatt. ``The Hodge-Tate decomposition via perfectoid spaces.''
    \textit{Perfectoid spaces.} Math. Surveys Monogr., Vol. 242. Providence, RI:
    Amer. Math. Soc., 2019, pp. 193--244.
    \textsc{doi}: \burlalt{https://doi.org/10.1090/surv/242/03}{10.1090/surv/242/03}.
  
  \bibitem[BouCA]{BouCA}
    N. Bourbaki. \textit{Elements of Mathematics. Commutative algebra. Chapters
    1--7.} Translated from the French. Reprint of the 1989 English translation.
    Berlin: Springer-Verlag, 1998.
    \textsc{mr}: \burlalt{https://mathscinet.ams.org/mathscinet-getitem?mr=1727221}{1727221}.
    
  \bibitem[Bos14]{Bos14}
    S. Bosch. \textit{Lectures on formal and rigid geometry.} Lecture Notes in
    Math., Vol. 2105. Cham: Springer, 2014.
    \textsc{doi}: \burlalt{https://doi.org/10.1007/978-3-319-04417-0}{10.1007/978-3-319-04417-0}.
    \textsc{mr}: \burlalt{https://mathscinet.ams.org/mathscinet-getitem?mr=3309387}{3309387}.

  \bibitem[DI]{DI}
    T. Dumitrescu and C. Ionescu. ``A locally F-finite Noetherian domain that is
    not F-finite.'' Jun. 19, 2020.
    \burlalt{https://arxiv.org/abs/2006.06043v3}{ arXiv:2006.06043v3}
    \burlalt{https://arxiv.org/abs/2006.06043v3}{[math.AC]}.

  \bibitem[DM(a)]{DMfinj}
    R. Datta and T. Murayama. ``Permanence properties of $F$-injectivity.''
    Jan. 31, 2020. Submitted.
    \burlalt{https://arxiv.org/abs/1906.11399v2}{ arXiv:1906.11399v2}
    \burlalt{https://arxiv.org/abs/1906.11399v2}{[math.AC]}.
    
  \bibitem[DM(b)]{DM}
    \bysame. ``Tate algebras and Frobenius non-splitting of
    excellent regular rings.'' Apr. 12, 2020. Submitted.
    \burlalt{https://arxiv.org/abs/2003.13714v2}{ arXiv:2003.13714v2}
    \burlalt{https://arxiv.org/abs/2003.13714v2}{[math.AC]}.

  \bibitem[DS16]{DS16}
    R. Datta and K. E. Smith. ``Frobenius and valuation rings.'' \textit{Algebra
    Number Theory} 10.5 (2016), pp. 1057--1090.
    \textsc{doi}: \burlalt{https://doi.org/10.2140/ant.2016.10.1057}{10.2140/ant.2016.10.1057}.
    \textsc{mr}: \burlalt{https://mathscinet.ams.org/mathscinet-getitem?mr=3531362}{3531362}.
    See also \cite{DS17}.

  \bibitem[DS17]{DS17}
    \bysame. ``Correction to the article ``Frobenius and
    valuation rings''.'' \textit{Algebra Number Theory} 11.4 (2017), pp.
    1003--1007.
    \textsc{doi}: \burlalt{https://doi.org/10.2140/ant.2017.11.1003}{10.2140/ant.2017.11.1003}.
    \textsc{mr}: \burlalt{https://mathscinet.ams.org/mathscinet-getitem?mr=3665644}{3665644}.

  \bibitem[DS18]{DS18}
    \bysame. ``Excellence in prime characteristic.''
    \textit{Local and global methods in algebraic geometry.} Contemp. Math.,
    Vol. 712. Providence, RI: Amer. Math. Soc., 2018, pp. 105--116.
    \textsc{doi}: \burlalt{https://doi.org/10.1090/conm/712/14344}{10.1090/conm/712/14344}.
    \textsc{mr}: \burlalt{https://mathscinet.ams.org/mathscinet-getitem?mr=3832401}{3832401}.
    
    \bibitem[DS19]{DS19}
    R. Datta and A. Simpson. ``Hilbert-Kunz multiplicity of fibers and Bertini theorems.'' 
    Aug. 13, 2019.
    \burlalt{https://arxiv.org/abs/1908.04819v2}{ arXiv:1908.04819v2}
    \burlalt{https://arxiv.org/abs/1908.04819v2}{[math.AG]}.

  \bibitem[EG]{EG}
    N. Endo and S. Goto. ``Weakly Arf rings and Frobenius actions.'' Jun. 3,
    2020.
    \burlalt{https://arxiv.org/abs/2006.02892v1}{ arXiv:2006.02892v1}
    \burlalt{https://arxiv.org/abs/2006.02892v1}{[math.AC]}.

  \bibitem[EGAIV\textsubscript{1}]{EGAIV1}
    A. Grothendieck and J. Dieudonn\'e. ``\'El\'ements de g\'eom\'etrie
    alg\'ebrique. IV. \'Etude locale des sch\'emas et des morphismes de
    sch\'emas. I.'' \textit{Inst. Hautes \'Etudes Sci. Publ. Math.} 20 (1964),
    pp. 1--259.
    \textsc{doi}: \burlalt{https://doi.org/10.1007/BF02684747}{10.1007/BF02684747}.
    \textsc{mr}: \burlalt{https://mathscinet.ams.org/mathscinet-getitem?mr=173675}{173675}.

  \bibitem[EGAIV\textsubscript{2}]{EGAIV2}
    \bysame. ``\'El\'ements de g\'eom\'etrie
    alg\'ebrique. IV. \'Etude locale des sch\'emas et des morphismes de
    sch\'emas. II.'' \textit{Inst. Hautes \'Etudes Sci. Publ. Math.} 24 (1965),
    pp. 1--231.
    \textsc{doi}: \burlalt{https://doi.org/10.1007/BF02684322}{10.1007/BF02684322}.
    \textsc{mr}: \burlalt{https://mathscinet.ams.org/mathscinet-getitem?mr=199181}{199181}.
  
  \bibitem[Fed83]{Fed83}
    R. Fedder. ``$F$-purity and rational singularity.'' \textit{Trans. Amer.
    Math. Soc.} 278.2 (1983), pp. 461--480.
    \textsc{doi}: \burlalt{https://doi.org/10.2307/1999165}{10.2307/1999165}.
    \textsc{mr}: \burlalt{https://mathscinet.ams.org/mathscinet-getitem?mr=701505}{701505}.

  \bibitem[GR]{GR}
    O. Gabber and L. Ramero. \textit{Foundations for almost ring theory.}
    Release 8. Jul. 14, 2020.
    \textsc{url}: \url{http://math.univ-lille1.fr/~ramero/hodge.pdf}.

  \bibitem[GV74]{GV74}
    S. Greco and P. Valabrega. ``On the excellent property for strictly
    convergent power series over a complete non Archimedean valued field.''
    \textit{Atti Accad. Sci. Torino Cl. Sci. Fis. Mat. Natur.} 108.3-4 (1974),
    pp. 529--538.
    \textsc{mr}: \burlalt{https://mathscinet.ams.org/mathscinet-getitem?mr=392986}{392986}.

  \bibitem[Has10]{Has10}
    M. Hashimoto. ``$F$-pure homomorphisms, strong $F$-regularity, and
    $F$-injectivity.'' \textit{Comm. Algebra} 38.12 (2010), pp. 4569--4596.
    \textsc{doi}: \burlalt{https://doi.org/10.1080/00927870903431241}{10.1080/00927870903431241}.
    \textsc{mr}: \burlalt{https://mathscinet.ams.org/mathscinet-getitem?mr=2764840}{2764840}.

  \bibitem[Has15]{Has15}
    \bysame. ``$F$-finiteness of homomorphisms and its descent.''
    \textit{Osaka J. Math.} 52.1 (2015), pp. 205--213.
    \textsc{url}: \url{https://projecteuclid.org/euclid.ojm/1427202878}.
    \textsc{mr}: \burlalt{https://mathscinet.ams.org/mathscinet-getitem?mr=3326608}{3326608}.

  \bibitem[HH90]{HH90}
    M. Hochster and C. Huneke. ``Tight closure, invariant theory, and the
    Brian\c{c}on-Skoda theorem.'' \textit{J. Amer. Math. Soc.} 3.1 (1990), pp.
    31--116.
    \textsc{doi}: \burlalt{https://doi.org/10.2307/1990984}{10.2307/1990984}.
    \textsc{mr}: \burlalt{https://mathscinet.ams.org/mathscinet-getitem?mr=1017784}{1017784}.
    
    \bibitem[HH93]{HH93}
    \bysame. ``Infinite integral extensions and big Cohen--Macaulay
    algebras.'' \textit{Ann. of Math.} 135.1 (1993), pp. 53--89.
    \textsc{doi}: \burlalt{https://doi.org/10.2307/2946563}{10.2307/2946563}.
    \textsc{mr}: \burlalt{https://mathscinet.ams.org/mathscinet-getitem?mr=1147957}{1147957}.
    
  \bibitem[HH94(a)]{HH94}
    \bysame. ``$F$-regularity, test elements, and smooth base
    change.'' \textit{Trans. Amer. Math. Soc.} 346.1 (1994), pp. 1--62.
    \textsc{doi}: \burlalt{https://doi.org/10.2307/2154942}{10.2307/2154942}.
    \textsc{mr}: \burlalt{https://mathscinet.ams.org/mathscinet-getitem?mr=1273534}{1273534}.

  \bibitem[HH94(b)]{HH94splitting}
    \bysame. ``Tight closure of parameter ideals and splitting in module-finite
    extensions.'' \textit{J. Algebraic Geom.} 3.4 (1994), pp. 599--670.
    \textsc{mr}: \burlalt{https://mathscinet.ams.org/mathscinet-getitem?mr=1297848}{1297848}.
    
    \bibitem[HH95]{HH95}
    \bysame. ``Applications of the existence of big Cohen--Macaulay
    algebras.'' \textit{Adv. Math.} 113.1 (1995), pp. 45--117.
    \textsc{doi}: \burlalt{https://doi.org/10.1006/aima.1995.1035}{10.1006/aima.1995.1035}.
    \textsc{mr}: \burlalt{https://mathscinet.ams.org/mathscinet-getitem?mr=1332808}{1332808}.

  \bibitem[HK71]{HK71}
    J. Herzog and E. Kunz, eds. \textit{Der kanonische Modul eines
    Cohen-Macaulay-Rings}. Seminar \"uber die lokale Kohomologietheorie von
    Grothendieck, Universit\"at Regensburg, Wintersemester 1970/1971.
    With the collaboration of M. Engelken, H. Schmitt, and R. Waldi.
    \textsc{doi}: \burlalt{https://doi.org/10.1007/BFb0059377}{10.1007/BFb0059377}.
    \textsc{mr}: \burlalt{https://mathscinet.ams.org/mathscinet-getitem?mr=412177}{412177}.
    
	\bibitem[HM18]{HM18}
	R. Heitmann and L. Ma. ``Big Cohen-Macaulay algebras and the vanishing conjecture for maps 
	of Tor in mixed characteristic.'' \textit{Algebra Number Theory} 12.7 (2018), pp. 1659--1674.
	\textsc{doi}: \burlalt{https://doi.org/10.2140/ant.2018.12.1659}{10.2140/ant.2018.12.1659}.
  \textsc{mr}: \burlalt{https://mathscinet.ams.org/mathscinet-getitem?mr=3871506}{3871506}.
	
    \bibitem[Hoc73]{Hoc73}
  M. Hochster. ``Non-openness of loci in Noetherian rings.'' \textit{Duke Math. J.} 40.1 (1973), pp. 215--219.
  \textsc{doi}: \burlalt{https://doi.org/10.1215/S0012-7094-73-04020-9}{10.1215/S0012-7094-73-04020-9}.
  \textsc{mr}: \burlalt{https://mathscinet.ams.org/mathscinet-getitem?mr=311653}{311653}.

  \bibitem[Hoc77]{Hoc77}
    \bysame. ``Cyclic purity versus purity in excellent Noetherian rings.''
    \textit{Trans. Amer. Math. Soc.} 231.2 (1977), pp. 463--488.
    \textsc{doi}: \burlalt{https://doi.org/10.2307/1997914}{10.2307/1997914}.
    \textsc{mr}: \burlalt{https://mathscinet.ams.org/mathscinet-getitem?mr=463152}{463152}.

  \bibitem[Hoc94]{Hoc94}
  \bysame. ``Solid closure.'' \textit{Commutative algebra: syzygies,
  multiplicities, and birational algebra (South Hadley, MA, 1992).} Contemp.
  Math., Vol. 159. Providence, RI: Amer. Math. Soc., 1994, pp. 103--172.
  \textsc{doi}: \burlalt{https://doi.org/10.1090/conm/159/01508}{10.1090/conm/159/01508}.
  \textsc{mr}: \burlalt{https://mathscinet.ams.org/mathscinet-getitem?mr=1266182}{1266182}.
	
  \bibitem[HR76]{HR76}
    M. Hochster and J. L. Roberts. ``The purity of the Frobenius and local
    cohomology.'' \textit{Advances in Math.} 21.2 (1976), pp. 117--172.
    \textsc{doi}: \burlalt{https://doi.org/10.1016/0001-8708(76)90073-6}{10.1016/0001-8708(76)90073-6}.
    \textsc{mr}: \burlalt{https://mathscinet.ams.org/mathscinet-getitem?mr=417172}{417172}.

   \bibitem[Kaw02]{Kaw02}
    T. Kawasaki. ``On arithmetic Macaulayfication of Noetherian rings.''
    \textit{Trans. Amer. Math. Soc.} 354.1 (2002), pp. 123--149.
    \textsc{doi}: \burlalt{https://doi.org/10.1090/S0002-9947-01-02817-3}{10.1090/S0002-9947-01-02817-3}.
    \textsc{mr}: \burlalt{https://mathscinet.ams.org/mathscinet-getitem?mr=1859029}{1859029}.
    
    \bibitem[Kun69]{Kun69}
    E. Kunz. ``Characterizations of regular local rings of characteristic $p$.''
    \textit{Amer. J. Math.} 91 (1969), pp. 772--784.
    \textsc{doi}: \burlalt{https://doi.org/10.2307/2373351}{10.2307/2373351}.
    \textsc{mr}: \burlalt{https://mathscinet.ams.org/mathscinet-getitem?mr=252389}{252389}.
    
     \bibitem[Kun76]{Kun76}
    \bysame. ``On Noetherian rings of characteristic $p$.''
    \textit{Amer. J. Math.} 98.4 (1976), pp. 999--1013.
    \textsc{doi}: \burlalt{https://doi.org/10.2307/2374038}{10.2307/2374038}.
    \textsc{mr}: \burlalt{https://mathscinet.ams.org/mathscinet-getitem?mr=432625}{432625}.

  \bibitem[LS01]{Gennady01}
    G. Lyubeznik and K. E. Smith. ``On the commutation of the test ideal with
    localization and completion.'' \textit{Trans. Amer. Math. Soc.} 353.8
    (2001), pp. 3149--3180.
    \textsc{doi}: \burlalt{https://doi.org/10.1090/S0002-9947-01-02643-5}{10.1090/S0002-9947-01-02643-5}.
    \textsc{mr}: \burlalt{https://mathscinet.ams.org/mathscinet-getitem?mr=1828602}{1828602}.
    
  \bibitem[Mat80]{Mat80}
    H. Matsumura. \textit{Commutative algebra.} Second ed. Mathematics Lecture
    Note Series, Vol. 56. Reading, MA: Benjamin/Cummings Publishing Co., Inc.,
    1980.
    \textsc{mr}: \burlalt{https://mathscinet.ams.org/mathscinet-getitem?mr=575344}{575344}.

  \bibitem[Mat89]{Mat89}
    \bysame. \textit{Commutative ring theory.} Second ed. Translated from
    the Japanese by M. Reid. Cambridge Stud. Adv. Math., Vol. 8. Cambridge:
    Cambridge Univ. Press, 1989.
    \textsc{doi}: \burlalt{https://doi.org/10.1017/CBO9781139171762}{10.1017/CBO9781139171762}.
    \textsc{mr}: \burlalt{https://mathscinet.ams.org/mathscinet-getitem?mr=1011461}{1011461}.
    
  \bibitem[MR85]{MR85}
    V. B. Mehta and A. Ramanathan. ``Frobenius splitting and cohomology
    vanishing for Schubert varieties.'' \textit{Ann. of Math. (2)} 122.1 (1985),
    pp. 27--40.
    \textsc{doi}: \burlalt{https://doi.org/10.2307/1971368}{10.2307/1971368}.
    \textsc{mr}: \burlalt{https://mathscinet.ams.org/mathscinet-getitem?mr=799251}{799251}.
    
  \bibitem[MS]{MS19}
    L. Ma and K. Schwede. ``Singularities in mixed characteristic via perfectoid big 
    Cohen-Macaulay algebras.'' Jun. 22, 2019. Submitted.
    \burlalt{https://arxiv.org/abs/1806.09567v3}{ arXiv:1806.09567v3}
    \burlalt{https://arxiv.org/abs/1806.09567v3}{[math.AC]}.

  \bibitem[MSTWW]{MSTWW}
    L. Ma, K. Schwede, K. Tucker, J. Waldron, and J. Witaszek. ``An analog of
    adjoint ideals and PLT singularities in mixed characteristic.'' Jul. 12,
    2020. Submitted.
    \burlalt{https://arxiv.org/abs/1910.14665v3}{ arXiv:1910.14665v3}
    \burlalt{https://arxiv.org/abs/1910.14665v3}{[math.AG]}.

  \bibitem[Mur]{Mur}
    T. Murayama. ``The gamma construction and asymptotic invariants of line
    bundles over arbitrary fields.'' To appear in \textit{Nagoya Math. J.}
    \textsc{doi}: \burlalt{https://doi.org/10.1017/nmj.2019.27}{10.1017/nmj.2019.27}.

  \bibitem[Nag59]{Nag59}
    M. Nagata. ``On the closedness of singular loci.'' \textit{Inst. Hautes
    \'Etudes Sci. Publ. Math.} 2 (1959), pp. 5--12.
    \textsc{doi}: \burlalt{https://doi.org/10.1007/BF02684705}{10.1007/BF02684705}.
    \textsc{mr}: \burlalt{https://mathscinet.ams.org/mathscinet-getitem?mr=106908}{106908}.
    
  \bibitem[Nag75]{Nag75}
    \bysame. \textit{Local rings.} Corrected reprint. Huntington, NY: Robert
    E. Krieger Publ. Co., 1975.
    \textsc{mr}: \burlalt{https://mathscinet.ams.org/mathscinet-getitem?mr=460307}{460307}.
    
  \bibitem[PGS10]{PGS10}
    C. Perez-Garcia and W. H. Schikhof. \textit{Locally convex spaces over
    non-Archimedean valued fields.} Cambridge Stud. Adv. Math., Vol. 119.
    Cambridge: Cambridge Univ. Press, 2010.
    \textsc{doi}: \burlalt{https://doi.org/10.1017/CBO9780511729959}{10.1017/CBO9780511729959}.
    \textsc{mr}: \burlalt{https://mathscinet.ams.org/mathscinet-getitem?mr=2598517}{2598517}.

\bibitem[QS17]{QS17}
    P. H. Quy and K. Shimomoto. ``$F$-injectivity and Frobenius closure of
    ideals in Noetherian rings of characteristic $p > 0$.'' \textit{Adv. Math.}
    313 (2017), pp. 127--166.
    \textsc{doi}: \burlalt{https://doi.org/10.1016/j.aim.2017.04.002}{10.1016/j.aim.2017.04.002}.
    \textsc{mr}: \burlalt{https://mathscinet.ams.org/mathscinet-getitem?mr=3649223}{3649223}.


  \bibitem[Rad92]{Rad92}
    N. Radu. ``Une classe d'anneaux noeth\'eriens.'' \textit{Rev. Roumaine Math.
    Pures Appl.} 37.1 (1992), pp. 79--82.
    \textsc{mr}: \burlalt{https://mathscinet.ams.org/mathscinet-getitem?mr=1172271}{1172271}.

  \bibitem[Sch10]{Sch10}
    K. Schwede. ``Centers of $F$-purity.'' \textit{Math. Z.} 265.3 (2010), pp.
    687--714.
    \textsc{doi}: \burlalt{https://doi.org/10.1007/s00209-009-0536-5}{10.1007/s00209-009-0536-5}.
    \textsc{mr}: \burlalt{https://mathscinet.ams.org/mathscinet-getitem?mr=2644316}{2644316}.
    
    \bibitem[SGA5]{SGA5}
    A. Grothendieck. \textit{S\'eminaire de g\'eom\'etrie alg\'ebrique du Bois
    Marie, 1965--1966. Cohomologie $l$-adique et fonctions $L$ (SGA 5).} With
    the collaboration of I. Bucur, C. Houzel, L. Illusie, J.-P. Jouanolou, and
    J.-P. Serre. Edited by L. Illusie. Lecture Notes in Math., Vol.\ 589.
    Berlin-New York: Springer-Verlag, 1977.
    \textsc{doi}: \burlalt{https://doi.org/10.1007/BFb0096802}{10.1007/BFb0096802}.
    \textsc{mr}: \burlalt{https://mathscinet.ams.org/mathscinet-getitem?mr=491704}{491704}.
    
    \bibitem[Shi18]{Shi18}
    K. Shimomoto. ``Integral perfectoid big {C}ohen-{M}acaulay algebras via
              {A}ndr\'{e}'s theorem.'' 
    \textit{Math. Ann.} 372.3-4 (2018), pp. 1167--1188.
    \textsc{doi}: \burlalt{https://doi.org/10.1007/s00208-018-1704-x}{10.1007/s00208-018-1704-x}.
    \textsc{mr}: \burlalt{https://mathscinet.ams.org/mathscinet-getitem?mr=3880296}{3880296}.

  \bibitem[Sin99]{Sin99}
    A. K. Singh. ``$\mathbf{Q}$-Gorenstein splinter rings of characteristic $p$
    are F-regular.'' \textit{Math. Proc. Cambridge Philos. Soc.} 127.2 (1999),
    pp. 201--205.
    \textsc{doi}: \burlalt{https://doi.org/10.1017/S0305004199003710}{10.1017/S0305004199003710}.
    \textsc{mr}: \burlalt{https://mathscinet.ams.org/mathscinet-getitem?mr=1735920}{1735920}.

  \bibitem[Smi93]{Sm93}
    K. E. Smith. \textit{Tight closure of parameter ideals and
    $F$-rationality.} Thesis (Ph.D.)--University of Michigan.
    Ann Arbor, MI: ProQuest LLC, 1996.
    \textsc{url}: \url{https://search.proquest.com/docview/304062949}.
    \textsc{mr}: \burlalt{https://mathscinet.ams.org/mathscinet-getitem?mr=2689065}{2689065}.
    
  \bibitem[Smi94]{Sm94}
    \bysame. ``Tight closure of parameter ideals.'' \textit{Invent. Math.} 115.1
    (1994), pp. 41--60.
    \textsc{doi}: \burlalt{https://doi.org/10.1007/BF01231753}{10.1007/BF01231753}.
    \textsc{mr}: \burlalt{https://mathscinet.ams.org/mathscinet-getitem?mr=1248078}{1248078}.

  \bibitem[Smi95(a)]{Sm95b}
    \bysame. ``Test ideals in local rings.'' \textit{Trans. Amer. Math. Soc.}
    347.9 (1995), pp. 3453--3472.
    \textsc{doi}: \burlalt{https://doi.org/10.2307/2155019}{10.2307/2155019}.
    \textsc{mr}: \burlalt{https://mathscinet.ams.org/mathscinet-getitem?mr=1311917}{1311917}.
    
  \bibitem[Smi95(b)]{Sm95}
    \bysame. ``Tight closure and graded integral extensions.'' \textit{J.
    Algebra} 175.2 (1995), pp. 568--574.
    \textsc{doi}: \burlalt{https://doi.org/10.1006/jabr.1995.1201}{10.1006/jabr.1995.1201}.
    \textsc{mr}: \burlalt{https://mathscinet.ams.org/mathscinet-getitem?mr=1339656}{1339656}.
    
  \bibitem[Stacks]{stacks-project}
    The Stacks project authors. \textit{The Stacks project.} 2020.
    \textsc{url}: \url{https://stacks.math.columbia.edu}.

  \bibitem[V\'el95]{Vel95}
    J. D. V\'elez. ``Openness of the F-rational locus and smooth base
    change.'' \textit{J. Algebra} 172.2 (1995), pp. 425--453.
    \textsc{doi}: \burlalt{https://doi.org/10.1016/S0021-8693(05)80010-9}{10.1016/S0021-8693(05)80010-9}.
    \textsc{mr}: \burlalt{https://mathscinet.ams.org/mathscinet-getitem?mr=1322412}{1322412}.

\end{thebibliography}
\end{document}